\documentclass{article}

\usepackage[latin2]{inputenc}
\usepackage{amssymb,amsfonts,amsmath,amsbsy,amscd}
\usepackage{dina42e}
\usepackage{graphicx}
\usepackage{esint}
\usepackage{color}
\usepackage{amsthm}
\allowdisplaybreaks
\numberwithin{equation}{section}
\usepackage{epsfig}
\usepackage{tikz}
\usepackage[english]{babel}

\usepackage{bookmark,hyperref}

\usepackage{stmaryrd}
\usepackage{extarrows}
\usepackage{csquotes}

\newtheorem{theorem}{Theorem}[section]
\newtheorem{proposition}[theorem]{Proposition}
\newtheorem{lemma}[theorem]{Lemma}
\newtheorem{corollary}[theorem]{Corollary}

\newtheorem{remark}[theorem]{Remark}

\newtheorem*{theorem*}{Theorem}

\def\XXint#1#2#3{{\setbox0=\hbox{$#1{#2#3}{\int}$ }
\vcenter{\hbox{$#2#3$ }}\kern-.6\wd0}}

\definecolor{Yellow}{rgb}{0.95,0.9,0.0} 
\definecolor{Red}{rgb}{0.8,0.1,0.1}
\definecolor{Green}{rgb}{0.1,0.65,0.2}
\definecolor{Blue}{rgb}{0.1,0.1,0.8}
\definecolor{Purple}{rgb}{0.7,0.1,0.7}
\definecolor{Grey}{rgb}{0.6,0.6,0.6}

\newcommand{\supp}{\operatorname{supp}}

\newcommand{\dist}{\operatorname{dist}}

\newcommand{\R}{\mathbb{R}}
\newcommand{\Rd}{{\mathbb{R}^d}}
\newcommand{\Hc}{\mathcal{H}}
\newcommand{\Mc}{\mathcal{M}}
\newcommand{\ve}{\mathbf{v}}

\newcommand{\we}{\mathbf{w}}

\newcommand{\BV}{\operatorname{BV}}

\newcommand{\dy}{{\mathrm{d}}y}

\newcommand{\Div}{\operatorname{div}}
\newcommand{\no}{\mathbf{n}} 
\newcommand{\C}{\mathbb{C}}

\newcommand{\eps}{\varepsilon}
\renewcommand{\vec}[1]{\operatorname{\mathbf{#1}}}

\begin{document}
\begin{titlepage}
\title{Approximation of Classical Two-Phase Flows of Viscous
Incompressible Fluids by a Navier-Stokes/Allen-Cahn System}
\author{Helmut Abels, Julian Fischer, and Maximilian Moser}
\date{}
\end{titlepage}

\maketitle
\begin{abstract}
We show convergence of the Navier-Stokes/Allen-Cahn system to a classical sharp interface model for the two-phase flow of two viscous incompressible fluids with same viscosities in a smooth bounded domain in two and three space dimensions as long as a smooth solution of the limit system exists. Moreover, we obtain error estimates with the aid of a relative entropy method. Our results hold provided that the mobility $m_\eps>0$ in the Allen-Cahn equation tends to zero in a subcritical way, i.e., $m_\eps= m_0 \eps^\beta$ for some $\beta\in (0,2)$ and $m_0>0$.
The proof proceeds by showing via a relative entropy argument that the solution to the Navier-Stokes/Allen-Cahn system remains close to the solution of a perturbed version of the two-phase flow problem, augmented by an extra mean curvature flow term $m_\eps H_{\Gamma_t}$ in the interface motion. In a second step, it is easy to see that the solution to the perturbed problem is close to the original two-phase flow.
\end{abstract}

{\small\noindent
{\bf Mathematics Subject Classification (2020):}
Primary: 76T06; Secondary:
35Q30, 
35Q35, 
35R35,
76D05, 
76D45\\ 
{\bf Key words:} Two-phase flow, diffuse interface model, sharp interface limit, Allen-Cahn equation, Navier-Stokes equations, relative entropy method
}

\section{Introduction}

During its evolution, the interface between two immiscible fluids may undergo topological changes, such as the merging or pinchoff of droplets. Mathematically, when modeling the interface classically as a $(d-1)$-dimensional manifold, this creates challenges for the analysis and for numerical approximations. Diffuse-interface models circumvent these problems by replacing the sharp interface by a diffuse transition layer of a finite width $\eps>0$, reducing the problem to a set of PDEs posed on the entire domain. However, this procedure comes at the cost of introducing an additional modeling error: For many diffuse-interface models for fluid-fluid interfaces it has remained an open problem to rigorously show {convergence} to the original sharp-interface model in the limit of vanishing interface width $\eps\rightarrow 0$, even prior to any topology change. In the present work we prove convergence of a diffuse-interface approximation for one of the most fundamental macroscopic models for a fluid-fluid interface, the Navier-Stokes equation for two immiscible incompressible fluids separated by a sharp interface with surface tension. To the best of our knowledge, the present work is the first general\footnote{The only previous work in this direction \cite{AbelsFeiMoser} only explicitly covers the case of the scaling regime $m_\eps=\eps^{1/2}$ and works under substantially stronger assumptions on the initial data.} quantitative convergence result  for any diffuse-interface approximation of the standard free boundary problem for the interface between two immiscible incompressible viscous fluids.

More specifically, in this contribution we rigorously identify the sharp-interface limit of the following Navier-Stokes/Allen-Cahn system
\begin{subequations}
	\begin{alignat}{2}\label{eq_NSAC1}
  \partial_t \ve_\eps +\ve_\eps\cdot \nabla \ve_\eps-\Delta \ve_\eps  +\nabla p_\eps & = -\eps \Div (\nabla \varphi_\eps \otimes \nabla \varphi_\eps)&\quad & \text{in}\ \Omega\times(0,T_0),\\\label{eq_NSAC2}
  \Div \ve_\eps& = 0&\quad & \text{in}\ \Omega\times(0,T_0),\\\label{eq_NSAC3}
  \partial_t \varphi_\eps +\ve_\eps\cdot \nabla \varphi_\eps & =m_\eps\left(\Delta \varphi_\eps - \tfrac{1}{\eps^{2}} W'(\varphi_\eps)\right) &\quad & \text{in}\ \Omega\times(0,T_0),\\
  \label{eq_NSAC4}
  (\ve_\eps,\varphi_\eps)|_{\partial\Omega}&= (0,-1)&\quad & \text{on }\partial\Omega\times(0,T_0),\\
  \label{eq_NSAC5}
  (\ve_\eps,\varphi_\eps) |_{t=0}& = (\ve_{0,\eps},\varphi_{0,\eps})&\quad& \text{in }\Omega,
  \end{alignat}
\end{subequations}
in a bounded domain $\Omega\subseteq \R^d$, $d=2,3$, with smooth boundary. Here $\ve_\eps\colon \Omega\times (0,T_0)\to \R^d$ is the mean velocity of the fluid mixture, $p_\eps\colon \Omega\times (0,T_0)\to \R$ its pressure and $\varphi_\eps\colon \Omega\times (0,T_0)\to \R$ an order parameter (e.g.\ the volume fraction difference of the fluids) related to the different phases, where the values $\varphi_\eps=\pm 1$ describe that only one fluid is present. Moreover, $\eps>0$ is a constant related to the thickness of the diffuse interface and $m_\eps>0$ is a constant diffusion coefficient, which depends on $\eps>0$. Here $W\colon \R\to \R$ is a double well-potential, satisfying standard assumptions. More precisely, we assume that $W$ is twice continuously differentiable and we have for some $c>0$
\begin{equation*}
  W(\pm 1) = 0, \quad W(-s)=W(s), \quad W(s)\geq c\min \{|s-1|^2, |s+1|^2\}
\end{equation*}
for all $s\in\R$. A standard example is $W(s) = c(1-s^2)^2$ for $c>0$.

This model was introduced by Liu and Shen in \cite{LiuShenModelH} to describe a two-phase flow for incompressible fluids with the same viscosity and densities. For simplicity we have set the densities and viscosities to one. The model can be considered as the analogue of the well-know ``model H'', cf.\ \cite{GurtinTwoPhase,HohenbergHalperin}, if one replaces the convective Cahn-Hilliard equation by a convective Allen-Cahn equation. A first analytic study of the system \eqref{eq_NSAC1}-\eqref{eq_NSAC5} was done by Gal and Grasselli~\cite{GalGrasselliDCDS} in the case of a bounded smooth domain in two space dimensions, where the existence of global and exponential attractors and convergence to stationary solutions was shown. For a more general model with different densities and viscosities Jiang, Li, Liu~\cite{VariableDensityNSAC} proved the existence of weak solutions globally in time (in two and three space dimensions). Moreover, in the case of two space dimensions they proved global well-posedness in the strong sense and studied the longtime behavior of strong solutions. We refer to Giorgini, Grasselli and Wu~\cite{GiorginiGrasselliWuJFA} for a mass-conserving variant of the Navier-Stokes/Allen-Cahn system with different densities  and further references.

It is the purpose of this contribution to study the limit \eqref{eq_NSAC1}-\eqref{eq_NSAC5} as $\eps\to 0$ in the case that $m_\eps\to_{\eps\to 0} 0$ suitably. Then one expects to have convergence to solutions of the classical two-phase Navier-Stokes equation with surface tension: 
\begin{subequations}
\begin{alignat}{2}\label{eq_TwoPhase1}
	\partial_t \ve_0^\pm+\ve_0^\pm\cdot\nabla\ve_0^\pm-\Delta \ve_0^\pm  +\nabla p^\pm_0 &= 0 &\qquad &\text{in }\Omega^{\pm}_t, t\in [0,T_0],\\\label{eq_TwoPhase2}
	\Div \ve_0^\pm &= 0 &\qquad &\text{in }\Omega^{\pm}_t, t\in [0,T_0],\\\label{eq_TwoPhase3}
	\llbracket 2D\ve_0^\pm -p_0^\pm \textup{I}\rrbracket\no_{\Gamma_t} &= -\sigma H_{\Gamma_t}\no_{\Gamma_t} && \text{on }\Gamma_t, t\in [0,T_0],\\ \label{eq_TwoPhase4}
	\llbracket\ve_0^\pm \rrbracket &=0 && \text{on }\Gamma_t, t\in [0,T_0],\\
	\label{eq_TwoPhase5}
	V_{\Gamma_t}  &=\no_{\Gamma_t}\cdot \ve_0^\pm && \text{on }\Gamma_t, t\in [0,T_0],\\
	\label{eq_TwoPhase6}
	\ve_0^-|_{\partial\Omega}&= 0&&\text{on }\partial\Omega\times(0,T_0),\\
	\Gamma_0=\Gamma^0,\qquad \ve_0^\pm|_{t=0}&= \ve_{0,0}^\pm && \text{ in }\Omega^{\pm}_0,\label{eq_TwoPhase7}
\end{alignat}
\end{subequations}
      where $(\Gamma_t)_{t\in[0,T_0]}$ is an evolving $(d-1)$-dimensional submanifold of $\Omega$ such that $\Omega$ is the disjoint union of two smooth domains $\Omega^\pm_t$ and $\Gamma_t$ as well as $\partial\Omega_t^\pm =\Gamma_t$ for every $t\in[0,T_0]$. Here the absence of a boundary contact of $\Gamma_t$ is assumed for all $t\in[0,T_0]$. Moreover,  $\ve_0^\pm(.,t)\colon\Omega^\pm_t\to \R^d$ and $p_0^\pm(.,t)\colon\Omega^\pm_t\to \R$ are the velocity and pressure of two fluids filling $\Omega^+_t$ and $\Omega_t^-$ for every $t\in [0,T_0]$, $\no_{\Gamma_t}$ denotes the interior normal of $\Gamma_t$ with respect to $\Omega^+_t$, $H_{\Gamma_t}$ and $V_{\Gamma_t}$ denote the mean curvature (sum of principle curvature) and normal velocity, resp., of $\Gamma_t$ with respect to the orientation given by $\no_{\Gamma_t}$. Furthermore, $\llbracket f \rrbracket(s)= \lim_{h\to 0+}(f(s+h\no_{\Gamma_t}(s))-f(s+h\no_{\Gamma_t}(s)))$, $s\in\Gamma_t$, denotes the jump of a function $f$ defined in a neighborhood of $\Gamma_t$ and $D\ve_0^\pm:=\frac{1}{2}\left(\nabla\ve_0^\pm+(\nabla\ve_0^\pm)^\top\right)$ is the symmetric gradient. Finally, $\sigma= \int_{-1}^1\sqrt{2W(s)}\,ds>0$ is a surface tension coefficient. For the following we set
      \begin{equation}
        \label{eq:defnGamma}
      \Gamma:=\bigcup_{t\in[0,T_0]}\Gamma_t\times\{t\},\quad \Omega^\pm:=\bigcup_{t\in[0,T_0]}\Omega_t^\pm\times\{t\}, \quad
      \ve^0:=\sum_\pm \ve_0^\pm\chi_{\Omega^\pm}.        
      \end{equation}
      For given $\ve_\eps=\ve$ the convective Allen-Cahn equation, i.e., \eqref{eq_NSAC3}-\eqref{eq_NSAC4}, was discussed formally by the first author in \cite{SILConvectiveAC} and it was shown formally that the limit system is given by the transport equation \eqref{eq_TwoPhase5}-\eqref{eq_TwoPhase6}  in the case that $m_\eps=m_0 \eps$ for some $m_0>0$. We note that in this case these arguments can be extended to show formally  convergence of the full system \eqref{eq_NSAC1}-\eqref{eq_NSAC5} to \eqref{eq_TwoPhase1}-\eqref{eq_TwoPhase6} by combining it with the arguments in \cite{AGG} for a Navier-Stokes/Cahn-Hilliard system. But in the case that $m_\eps=m_0 \eps^\beta$ for some $\beta>2$ nonconvergence was shown in the sense that in general
      \begin{equation*}
        \varphi_\eps (x,t) = \theta_0\left(\frac{d_{\Gamma_t}(x)}\eps\right) + O(\eps)\qquad \text{as }\eps\to 0,
      \end{equation*}
      where $d_{\Gamma_t}$ is the signed distance function to $\Gamma_t$,
       no longer holds and
      a weak formulation of the right-hand side of \eqref{eq_NSAC1} does not converge to the mean curvature functional $\sigma H_{\Gamma_t}\no_{\Gamma_t}\delta_{\Gamma_t}$, which appears in a weak formulation of \eqref{eq_TwoPhase1} and \eqref{eq_TwoPhase3}. Here $\theta_0\colon \R \to \R$ is the so-called optimal profile, which is the unique solution of
      \begin{equation*}
        -\theta_0''(s)+ W'(\theta_0(s))= 0\quad \text{for all }s\in\R,\quad \theta_0(s)\to_{s\to\pm \infty} \pm 1,\quad \theta_0(0)=0. 
      \end{equation*}
      Therefore convergence of the full system \eqref{eq_NSAC1}-\eqref{eq_NSAC5} cannot be expected in this case. We note that in this case the counterexample given in \cite{AbelsLengeler} for a Navier-Stokes/Cahn-Hilliard system in a radially symmetric situation can be adapted to the present Navier-Stokes/Allen-Cahn system. It is the purpose of the present contribution to show convergence of solutions of \eqref{eq_NSAC1}-\eqref{eq_NSAC5} to the smooth solution of \eqref{eq_TwoPhase1}-\eqref{eq_TwoPhase6} on a time interval $[0,T_0]$ for which the latter exists in the case of a subcritical scaling of the mobility $m_\eps = m_0\eps^\beta$ for some $\beta\in (0,2)$. Moreover, we will derive error estimates with the aid of a relative entropy method.
      We note that this is the first rigorous convergence result for a vanishing mobility $m_\eps \to_{\eps\rightarrow0}0$ in this regime, which includes the natural choice $m_\eps=m_0\eps$.
Finally, let us remark that the derivation of a similar convergence result using a relative entropy method was attempted in the recent work \cite{JiangEtAlArXiv}; however, as the approach of \cite{JiangEtAlArXiv} relies on the invalid estimate \cite[equation (2.4)]{JiangEtAlArXiv}, it overlooks the need to devise a careful estimate for the critical interface stretching term that forms the main challenge for our result.

      Except to \cite{AbelsFeiMoser}, so far only convergence in the case of a non-vanishing mobility $m_\eps = m_0>0$ for all $\eps>0$ was shown. First this was done in the case of a Stokes/Allen-Cahn system with same viscosities by Abels and Liu~\cite{AbelsLiuNSAC}, then for a Navier-Stokes/Allen-Cahn system with different viscosities in Abels and Fei~\cite{AbelsFei}, both in two space dimensions, and by Hensel and Liu~\cite{HenselLiu} for the Navier-Stokes/Allen-Cahn system with same viscosities in two and three space dimensions. We note that the first two results are based on a refined spectral estimate for the linearized Allen-Cahn operator and rigorous asymptotic expansions, while the latter result uses the relative entropy method similarly as for the convergence of the Allen-Cahn equation to the mean curvature flow shown by Fischer, Laux, and Simon~\cite{FischerLauxSimon}. In the case of a non-vanishing mobility the limit system consists of a system, where \eqref{eq_TwoPhase5} is replaced by a convective mean curvature flow equation:
      \begin{equation*}
        V_{\Gamma_t} = \no_{\Gamma_t}\cdot \ve_0^\pm + m_0 H_{\Gamma_t}\qquad \text{on }\Gamma_t,t\in[0,T_0].
      \end{equation*}
      In the contribution by Abels, Fei, and Moser \cite{AbelsFeiMoser} convergence of a Navier-Stokes/Allen-Cahn system with different viscosities was shown in the special case of $m_\eps= m_0 \sqrt{\eps}$ and two space dimensions using the same method as in \cite{AbelsLiuNSAC, AbelsFei} refined for this degenerate case. We note that the arguments could be extended to $m_\eps=m_0\eps^{\beta}$ for $\beta\in (0,\tfrac12]$, but the case $\beta=\frac12$ seems to be critical for the estimates in this contribution and new ideas and refinements seem to be needed to treat the cases with $\beta>\frac12$ with this method. At this stage the relative entropy method used in the present contribution appears to be more flexible.

      In the following we consider a situation, in which the limit system \eqref{eq_TwoPhase1}-\eqref{eq_TwoPhase7} is known to possess a unique smooth solution for some $T_0>0$. We note that strong well-posedness of this system was extensively studied starting with the results by Denisova and Solonnikov~\cite{DenisovaTwoPhase}. Moreover, it was shown by Pr\"uss and Simonett~\cite{AnalyticityTwoPhaseFlow} that strong solutions become analytic instantaneously in time. We refer to Köhne, Prüss, and Wilke \cite{KoehnePruessWilkeTwoPhase} and the monograph by Pr\"uss and Simonett~\cite{PruessSimonettMovingInterfaces} for results on local well-posedness in an $L^p$-setting and further references. Finally, we note that global-in-time existence of a notion of weak solutions, called varifold solutions, was shown in \cite{GeneralTwoPhaseFlow} and weak-strong uniqueness for these kind of solutions was shown by Fischer and Hensel~\cite{FischerHensel}.

      In general, there are two main mathematical approaches to the quantitative justification of sharp-interface limits: An approach pioneered by de~Mottoni and Schatzman \cite{DeMottoniSchatzman} and Chen \cite{Chen} relies on a matched asymptotic expansion around the sharp-interface limit to obtain an approximate solution to the diffuse interface model; by means of a stability analysis of the linearized operator, it is possible to derive rates of convergence. This approach has recently also been successfully adapted to our Navier-Stokes/Allen-Cahn system with mobility $m_\eps=\eps^{1/2}$ by Fei and the first and the third author \cite{AbelsFeiMoser}. An alternative approach -- recently developed by the second author, Laux, and Simon \cite{FischerLauxSimon} -- proceeds via a suitably defined relative entropy.
In \cite{FischerLauxSimon}, the relative entropy approach is used to give a short proof of convergence of the Allen-Cahn equation towards mean curvature flow (valid for well-prepared initial data and as long as a classical solution to the latter exists); this result was extended in \cite{HenselMoser} to interfaces with boundary contact and in \cite{LauxStinsonUllrich} to the anisotropic case. The general approach has found numerous further applications: In \cite{FischerMarveggio}, convergence of the vectorial Allen-Cahn equation with multi-well potential towards multiphase mean curvature flow has been established by the second author and Marveggio; the case of the vectorial Allen-Cahn equation with two-well potential has been considered by Liu \cite{Liu1,Liu2}. Furthermore, Laux and Liu \cite{LauxLiu} have obtained the sharp-interface limit for a model for liquid crystals. Hensel and Liu \cite{HenselLiu} have used the relative entropy approach to study the sharp-interface limit of the Navier-Stokes/Allen-Cahn system \eqref{eq_NSAC1}-\eqref{eq_NSAC5} in the regime of nonvanishing mobility, deriving a Navier-Stokes/mean curvature flow system in the limit. In general, a key advantage of the relative entropy approach to sharp interface limits is its robustness, for instance requiring only convergence of the initial energy of solutions to the phase-field model. In contrast, the approach of matched asymptotic expansions may be used to establish an approximation of the diffuse interface model to arbitrary order.

      The main result of our contribution is as follows:
\begin{theorem}[\textbf{Convergence}]\label{th_conv_intro}
	Let $m_\eps:=m_0\eps^\beta$ for $\eps>0$, where $m_0>0$ and $\beta\in(0,2)$ are fixed, $q=2$ if $d=2$, and $q=\frac43$ if $d=3$. Moreover, let $T_0>0$ be such that the two-phase Navier-Stokes system with surface tension \eqref{eq_TwoPhase1}-\eqref{eq_TwoPhase7} has a smooth solution $(\ve_0^\pm,p_0^\pm,\Gamma)$ on $[0,T_0]$. Let $(\ve_\eps, \varphi_\eps)$ with $\ve_\eps\in L^\infty(0,T_0;L^2_\sigma(\Omega))\cap L^2(0,T_0;H^1_0(\Omega)^d)$, $\varphi_\eps\in L^2(0,T_0;H^2(\Omega))\cap W^1_{q}(0,T_0;L^2(\Omega))$ for $\eps>0$ be energy-dissipating weak solutions to the Navier-Stokes/Allen-Cahn system \eqref{eq_NSAC1}-\eqref{eq_NSAC5} on $[0,T_0]$ as in Remark \ref{th_weak_sol_NS} below for the constant mobility $m_\eps$ and for initial data $(\ve_{0,\eps},\varphi_{0,\eps})$ with energy uniformly bounded with respect to $\eps$ and satisfying
	\begin{align}\begin{split}\label{eq_well_prep0}
			\sum_{\pm}\int_{\Omega_0^\pm} \frac{1}{2}|\ve_{0,\eps}-\ve^\pm_{0,0}|^2\,dx + \int_\Omega\frac{\eps}{2}|\nabla\varphi_{0,\eps}|^2+ \frac{1}{\eps} W(\varphi_{0,\eps})-(\xi\cdot\nabla\psi_{0,\eps})\,dx\\ 
			+ \int_\Omega|\sigma\chi_{\Omega^+_0} -\psi_{0,\eps}|\min\{\dist_{\Gamma^0},1\}\,dx\leq C\frac{\eps^2}{m_\eps}\end{split}
	\end{align}
	for $\varepsilon>0$ sufficiently small, where $\psi(r):=\int_{-1}^r\sqrt{2W(s)}\,ds$ and $\psi_{0,\eps}:=\psi(\varphi_{0,\eps})$. Set $\psi_\eps:=\psi(\varphi_\eps)$. 
	
	Then for $\varepsilon>0$ small and a.e. $T\in[0,T_0]$ it holds
	\begin{subequations}\label{eq_conv0}
	\begin{align}
		\|(\ve_\eps-\ve^0)(.,T)\|_{L^2(\Omega)} + \|\sigma\chi_{\Omega^+_T} -\psi_\eps(.,T)\|_{L^1(\Omega)} &\leq C\left(\frac{\eps}{\sqrt{m_\eps}}+m_\eps\right)\\
		\|\nabla\ve_\eps-\nabla\ve^0\|_{L^2(0,T_0;L^2(\Omega))} & \leq C\left(\frac{\eps}{\sqrt{m_\eps}}+m_\eps\right)
	\end{align}
	\end{subequations}
        for some $C>0$ independent of $\eps>0$ and $T\in [0,T_0]$.
	Finally, there are well-prepared initial data $(\ve_{0,\eps},\varphi_{0,\eps})$ for $\eps>0$ small in the sense that \eqref{eq_well_prep0} is satisfied, even with rate $\eps^2$.
\end{theorem}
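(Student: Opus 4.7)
My plan follows the two-step strategy announced in the introduction. I first introduce a \emph{perturbed} sharp-interface problem in which \eqref{eq_TwoPhase5} is replaced by the modified kinematic law
\[V_{\Gamma_t^\eps}=\no_{\Gamma_t^\eps}\cdot\ve_0^{\pm,\eps}+m_\eps H_{\Gamma_t^\eps},\]
while \eqref{eq_TwoPhase1}-\eqref{eq_TwoPhase4} and the initial/boundary data are kept unchanged. Because this is an $O(m_\eps)$ perturbation that additionally regularizes the interface motion by mean curvature flow, local-in-time existence of a smooth solution $(\ve_0^{\pm,\eps},p_0^{\pm,\eps},\Gamma^\eps)$ on $[0,T_0]$ follows by a standard contraction argument around the unperturbed solution, with $(\ve_0^{\pm,\eps},\Gamma^\eps)$ differing from $(\ve_0^\pm,\Gamma)$ by $O(m_\eps)$ in all relevant norms. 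The theorem then reduces to (a) a quantitative $O(\eps/\sqrt{m_\eps})$ comparison between the diffuse and perturbed sharp-interface solutions via a relative entropy; (b) the routine $O(m_\eps)$ stability between the perturbed and unperturbed sharp-interface systems.

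For step (a), I define the relative entropy
\begin{align*}
\mathcal{E}_\eps(t) &:=\sum_\pm\int_{\Omega_t^{\pm,\eps}}\tfrac12|\ve_\eps-\ve_0^{\pm,\eps}|^2\,dx+\int_\Omega\Big(\tfrac{\eps}{2}|\nabla\varphi_\eps|^2+\tfrac{1}{\eps}W(\varphi_\eps)-\xi\cdot\nabla\psi_\eps\Big)dx\\
&\quad+\int_\Omega|\sigma\chi_{\Omega_t^{+,\eps}}-\psi_\eps|\min\{\dist_{\Gamma_t^\eps},1\}\,dx,
\end{align*}
where $\xi$ is a smooth compactly supported extension of $\no_{\Gamma_t^\eps}$ to a tubular neighbourhood satisfying $|\xi|\leq 1$ and $|\xi|\leq 1-c\,\dist_{\Gamma_t^\eps}^2$ near $\Gamma_t^\eps$. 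Using $\nabla\psi_\eps=\sqrt{2W(\varphi_\eps)}\,\nabla\varphi_\eps$, the middle integrand factorizes as
\[\tfrac{\eps}{2}\Big(|\nabla\varphi_\eps|-\tfrac{\sqrt{2W(\varphi_\eps)}}{\eps}\Big)^2+\sqrt{2W(\varphi_\eps)}\big(|\nabla\varphi_\eps|-\xi\cdot\nabla\varphi_\eps\big)\geq 0,\]
yielding nonnegativity together with coercive control of both the Ginzburg-Landau equipartition defect and the deviation of $\nabla\varphi_\eps/|\nabla\varphi_\eps|$ from the target normal $\xi$. The hypothesis \eqref{eq_well_prep0} ensures $\mathcal{E}_\eps(0)\leq C\eps^2/m_\eps$, so it suffices to establish a Gr\"onwall-type estimate $\tfrac{d}{dt}\mathcal{E}_\eps\leq C\mathcal{E}_\eps+R_\eps$ with $\int_0^{T_0}R_\eps\,dt\leq C\eps^2/m_\eps$.

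Computing $\tfrac{d}{dt}\mathcal{E}_\eps$ is the heart of the argument. I would test \eqref{eq_NSAC1} by $\ve_\eps-\ve_0^{\pm,\eps}$, use the energy identity for \eqref{eq_NSAC3} which supplies the Allen-Cahn dissipation $m_\eps\eps\int\big(\Delta\varphi_\eps-\tfrac{1}{\eps^2}W'(\varphi_\eps)\big)^2dx$, and differentiate the modulating fields $\xi$ and $\chi_{\Omega_t^{+,\eps}}$ using the transport and level-set equations induced by the perturbed interface motion. The design of the perturbation is decisive: the extra $m_\eps H_{\Gamma_t^\eps}$ in the kinematic law produces exactly the counter-term needed to absorb a curvature-squared contribution generated by the Allen-Cahn dissipation, an observation missed in \cite{JiangEtAlArXiv}. \textbf{The main obstacle} is the interface stretching term, schematically of the form
\[I_{\mathrm{str}}=\int_\Omega\nabla\ve_\eps:\big(\eps\,\nabla\varphi_\eps\otimes\nabla\varphi_\eps-\xi\otimes\nabla\psi_\eps\big)\,dx,\]
arising from the capillary stress $\eps\Div(\nabla\varphi_\eps\otimes\nabla\varphi_\eps)$ in \eqref{eq_NSAC1}. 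I would split $\nabla\ve_\eps=\nabla(\ve_\eps-\ve_0^{\pm,\eps})+\nabla\ve_0^{\pm,\eps}$: the smooth-background piece is bounded by $C\mathcal{E}_\eps$ because the tensor $\eps\,\nabla\varphi_\eps\otimes\nabla\varphi_\eps-\xi\otimes\nabla\psi_\eps$ is pointwise controlled by the coercive integrand of $\mathcal{E}_\eps$ after using $\nabla\varphi_\eps\approx(\sqrt{2W(\varphi_\eps)}/\eps)\xi$, whereas the error-velocity piece is absorbed via Cauchy-Schwarz into the viscous dissipation $\|\nabla(\ve_\eps-\ve_0^{\pm,\eps})\|_{L^2}^2$ by using the square root of the same coercive density as the required small factor. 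This is the delicate step where subcriticality $\beta<2$ enters through the interplay of the Allen-Cahn length scale $\eps$ with the mobility $m_\eps$.

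Finally, to construct well-prepared initial data I take $\varphi_{0,\eps}(x)=\theta_0(d_{\Gamma^0}(x)/\eps)$ in a tubular neighbourhood of $\Gamma^0$, glued smoothly to $\pm 1$ outside in a manner compatible with the boundary condition $\varphi_{0,\eps}|_{\partial\Omega}=-1$, and $\ve_{0,\eps}$ a divergence-free smooth extension of $\ve_{0,0}^\pm$ vanishing on $\partial\Omega$. A direct computation using $-\theta_0''+W'(\theta_0)=0$ together with the exponential decay $|\theta_0(s)\mp 1|\leq Ce^{-c|s|}$ yields \eqref{eq_well_prep0} with rate $\eps^2$, which is comfortably within the required tolerance $C\eps^2/m_\eps$ since $m_\eps\to 0$.
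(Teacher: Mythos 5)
Your overall architecture (compare the diffuse-interface solution to a perturbed sharp-interface flow carrying an extra $m_\eps H_{\Gamma_t}$ term, then compare that to the true limit, and prepare initial data via the optimal profile) is the same as the paper's, and your identification of the stretching term as the crux is correct. However, there is a genuine gap in exactly the decisive step.

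You claim that the ``smooth-background piece'' $\int\nabla\ve_0^{\pm,\eps}:\bigl(\eps\nabla\varphi_\eps\otimes\nabla\varphi_\eps-\xi\otimes\nabla\psi_\eps\bigr)\,dx$ is bounded by $C\mathcal{E}_\eps$ ``because the tensor $\dots$ is pointwise controlled by the coercive integrand of $\mathcal{E}_\eps$.'' That is false. Writing $\nabla\varphi_\eps=\vec{n}_\eps|\nabla\varphi_\eps|$ and $\nabla\psi_\eps=\vec{n}_\eps|\nabla\psi_\eps|$, one component of the tensor is proportional to
\begin{equation*}
\eps|\nabla\varphi_\eps|^2-|\nabla\psi_\eps|
=\sqrt{\eps}\,|\nabla\varphi_\eps|\left(\sqrt{\eps}\,|\nabla\varphi_\eps|-\tfrac{1}{\sqrt{\eps}}\sqrt{2W(\varphi_\eps)}\right),
\end{equation*}
which is \emph{linear}, not quadratic, in the equipartition defect. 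Integrating it against the $O(1)$ multiplier $\xi\otimes\xi:\nabla B$ and applying Cauchy--Schwarz with the coercivity estimate $\int\frac12\bigl(\sqrt\eps|\nabla\varphi_\eps|-\tfrac{1}{\sqrt\eps}\sqrt{2W}\bigr)^2\,dx\le E$ produces only $\sqrt{\mathcal{E}_\eps}$, which is useless for Gr\"onwall. The paper's introduction and Remark \ref{th_Erel_inequ_problem_rem} call this out explicitly: the naive estimate ``would bound such terms by the square root of the relative entropy, insufficient for a subsequent control of the growth via the Gronwall lemma.'' For the mixed term $(\vec n_\eps\otimes\vec n_\eps-\xi\otimes\xi):\nabla B\,|\nabla\psi_\eps|$ the paper succeeds because it pairs $|\vec n_\eps-\xi|$ with the tilt-excess, but the pure equipartition piece $\xi\otimes\xi:\nabla B\,(\eps|\nabla\varphi_\eps|^2-|\nabla\psi_\eps|)$ has no such quadratic structure and cannot be absorbed pointwise.

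The mechanism the paper actually uses to close this gap is missing from your outline: one rewrites $\xi\otimes\xi:\nabla B$ as a normal derivative $\partial_n\tilde\eta$ of a cut-off primitive centred on the graph of a $1$-Lipschitz function $h_\eps(\cdot,t)$ approximating a well-chosen level set $\{\varphi_\eps=b_\eps(t)\}$ (constructed via the BV-height-function argument of Fischer--Hensel and a Lipschitz truncation, Lemmas \ref{th_levelset}--\ref{th_lem_height}); integrates by parts as in Lemma \ref{th_equipart_rewrite} to expose the Allen--Cahn dissipation $H_\eps$ and tangential-gradient terms; controls the energy outside a thin tube $\Gamma^{m_\eps}_{t,h}(\delta_\eps)$ of width $\delta_\eps\sim\eps|\log\eps|$ by an exponential ODE argument in normal direction (Lemma \ref{th_energy_out_strip}); and finally pays $\eps^2|\log\eps|^3/m_\eps$ via Corollary \ref{th_energy_out2}. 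This is precisely where the subcriticality $\beta<2$ and the $\eps^2/m_\eps$ well-preparedness tolerance enter. Without this chain of ideas the Gr\"onwall inequality does not close, so as written your proposal does not establish the theorem.

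Two smaller remarks: your appeal to ``a standard contraction argument'' for the perturbed sharp-interface system hides a real issue, namely that the existence time and the estimates must be uniform in $m_\eps$ as $m_\eps\to 0$; the paper devotes all of Section \ref{sec_approx_twophase} (a careful symbol analysis of $s_m(\lambda,\tau)$, uniform maximal regularity, and a parameter-trick for tangential regularity) to this. Also, the extension $\xi$ must decay quadratically ($|\xi|\le 1-c\min\{d_{\Gamma^{m_\eps}}^2,1\}$), not merely satisfy $|\xi|\le 1$, for the coercivity estimate \eqref{eq_Erel_coercivity_1} to hold; you state the condition but it should be emphasized that it is essential.
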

This result will be a consequence of Corollary \ref{th_conv} below.

Let us briefly comment on some aspects of our main theorem. First, note that the choice $m_\eps:=\eps^{2/3}$ leads to the best-possible overall convergence rate $O(\eps^{2/3})$ in \eqref{eq_conv0}. If one employs the Navier-Stokes/Allen-Cahn system as a numerical approximation of the two-phase flow with sharp interface, this thus suggests the choice of mobility $m_\eps \sim \eps^{2/3}$.

Second, our theorem requires a lower bound on the mobility of the form $m_\eps \gtrsim \eps^\beta$ for some $\beta<2$. With a slightly more careful argument, it would in fact be possible to weaken this assumption to $m_\eps \gtrsim \eps^2 |\log\eps|^3$. However, in the regime $m_\eps \lesssim \eps^2$ one would expect that the Allen-Cahn term no longer suffices to stabilize the Modica-Mortola type profile of the diffuse interface, making it susceptible to stretching or squeezing by drift: To see this heuristically, notice for instance that in this scaling regime the reaction term $-\tfrac{m_\eps}{\eps^2} W'(\varphi_\eps)$ from the Allen-Cahn equation \eqref{eq_NSAC3} no longer suffices to drive the values of $\varphi_\eps$ towards the minima of the potential $W$ in finite time. This possible stretching of the profile by advection in turn is expected to lead to errorneous capillary effects in \eqref{eq_NSAC1} and hence convergence to a different limiting system; see also \cite{AbelsLengeler}, in which this has been observed rigorously for certain choices $m_\eps=\eps^\beta$, $\beta>2$.

\begin{remark}\upshape\label{th_weak_sol_NS}
Let $q=2$ if $d=2$, and $q=\frac43$ if $d=3$. We note that weak solutions $(\ve_\eps, \varphi_\eps)$ with $\ve_\eps\in L^\infty(0,T_0;L^2_\sigma(\Omega))\cap L^2(0,T_0;H^1_0(\Omega)^d)$, $\varphi_\eps\in L^2(0,T_0;H^2(\Omega))\cap W^1_{q}(0,T_0;L^2(\Omega))$ for $\eps>0$ to the Navier-Stokes/Allen-Cahn system \eqref{eq_NSAC1}-\eqref{eq_NSAC5} on $[0,T_0]$ are understood in the sense of \cite[Definition~3]{HenselLiu} with the difference that we only require $\varphi_\eps\in W^1_{4/3}(0,T_0;L^2(\Omega))$ if $d=3$.
In particular, we assume that the energy inequality
\begin{align*}
  &\int_\Omega \left(\frac{|\ve_\eps(x,t)|^2}2+\frac{\eps}2|\nabla \varphi_\eps(x,t)|^2+ \frac{W(\varphi_\eps(x,t))}\eps\right) \,dx \\
  &\qquad +\int_0^t \int_\Omega \left(|\nabla \ve_\eps(x,\tau)|^2+ |\partial_t \varphi_\eps(x,\tau)+\ve_\eps(x,\tau)\cdot \nabla \varphi_\eps(x,\tau)|^2\right)\,dx \,d \tau\\
  & \quad \leq \int_\Omega \left(\frac{|\ve_{0,\eps}|^2}2+\frac{\eps}2|\nabla \varphi_{0,\eps}|^2+ \frac{W(\varphi_{0,\eps})}\eps\right) \,dx\quad \text{for every } t\in [0,T_0]
\end{align*}
holds true, which is essential for the following arguments.
Existence of weak solutions follows from the results in \cite{VariableDensityNSAC}. We note that in the latter contribution the authors do not prove the energy inequality. However, it can be proved in a standard manner using the energy identity for the solutions of the Galerkin system in   \cite{VariableDensityNSAC} and lower semi-continuity of norms. We also refer to \cite[Theorem~3.1]{GiorginiGrasselliWuJFA} for the mass-conserved variant of the system.
\end{remark}

Let us comment on the novelty of our contribution. As mentioned before, this is the first convergence result for \eqref{eq_NSAC1}-\eqref{eq_NSAC5} in the case of a vanishing mobility $m_\eps\to 0$, except for the special case $m_\eps =m_0 \sqrt{\eps}$ in two space dimensions. We use a relative entropy similar as in \cite{HenselLiu}, which extends the one for the Allen-Cahn equation used in \cite{FischerLauxSimon} to the coupled Navier-Stokes/Allen-Cahn system. The main step in the proof of convergence consists in showing a suitable estimate for the relative entropy. Parts of the arguments and calculations in our situation follow closely \cite{HenselLiu}, but certain estimates degenerate as $m_\eps\to 0$ and some terms become critical. Therefore essential new ingredients are needed.

More precisely, it will turn out that the main challenge in controlling the growth of the relative entropy for $m_\eps\ll 1$ consist of controlling terms involving the failure of equipartition of energy such as
\begin{align*}
\int \left(\frac{\eps}{2}|\partial_n\varphi_\eps|^2-\frac{1}{\eps}W(\varphi_\eps)\right) \partial_n\tilde{\eta}\,dx.
\end{align*}
A naive direct estimate would bound such terms by the square root of the relative entropy, insufficient for a subsequent control of the growth via the Gronwall lemma. Instead, we shall see that by careful integration by parts arguments and an approximation of the diffuse interface by a graph, they may in fact be controlled by the dissipation term from the Allen-Cahn equation and the relative entropy, provided that $m_\eps \gg \eps^2$.

We deal with these critical terms in Sections~\ref{sec_Erel_inequ_lastterm}-Section~\ref{sec_energy_out_strip}. The estimates make use of a suitable parametrization of a suitably chosen level set $\{\varphi_\eps= b(t)\}$ for some $b(t)\in (-\tfrac12,\tfrac12)$ up to an error controlled by the relative entropy using results from \cite{FischerHensel}. Moreover, it is essential for the construction of the relative entropy that we use sufficiently smooth solutions of a modification of the  limit system \eqref{eq_TwoPhase1}-\eqref{eq_TwoPhase7}, where the interface evolution \eqref{eq_TwoPhase5} is replaced by the 
convective mean curvature flow equation with vanishing mobility
\begin{equation*}
  V_{\Gamma_t} = \no_{\Gamma_t}\cdot \ve_0^\pm + m_\eps H_{\Gamma_t}\qquad \text{on }\Gamma_t,t\in[0,T_0].
\end{equation*}
This is used to obtain a sufficiently good approximation and control of some critical remainder terms. However, it makes the solution depend on $m_\eps$, $\eps$, respectively, and existence of such solutions together with uniform estimates in $\eps>0$ sufficiently small needs to be shown. {We note that existence of strong solutions for a fixed $m_\eps>0$ locally in time was shown by the first and third author in \cite{AbelsMoserNSMF} and in \cite[Appendix]{HenselLiu}. But the existence time might depend on $\eps$. To obtain the uniform bounds for small $\eps>0$ } a Hanzawa transformation {is used to transform} the modified system of the limit system \eqref{eq_TwoPhase1}-\eqref{eq_TwoPhase7}. Then a fixed point argument can be used to obtain strong solutions for $m_\eps$ sufficiently small using suitable uniform bounds for the $m_\eps$-dependent linearized system in spaces of maximal $L^q$-regularity.

The structure of the contribution is as follows: In Section~\ref{sec_Energy_def_coercivity} we define the energy-type functionals, which will be essential for the proof of convergence, and study their coercivity properties. Afterwards the central stability estimate and convergence result is given in Section~\ref{sec:Stability}. The essential estimates for the relative entropy are done in Section~\ref{sec_Erel_estimate}. The proof of the stability estimate, which implies the convergence result, is given in Section~\ref{sec_main_proof}. Finally, in Section~\ref{sec_approx_twophase} existence of strong solutions for the modification of the limit system \eqref{eq_TwoPhase1}-\eqref{eq_TwoPhase7} for sufficiently small $m_\eps>0$ is shown {and its difference to the solution of the real limit system is estimated by a multiple of $m_\eps$, cf.\ Theorem~\ref{th_approx_m_dependence} below. }

Let us finish the introduction with some notation. For example, we denote with $W^k_p(\Omega)$ for $k\in\mathbb{N}$, $1\leq p\leq\infty$ and some domain $\Omega$ the Sobolev space with $k$ weak derivatives and integrability exponent $p$. Moreover, let $H^k:=W^k_2$ and $L^2_\sigma(\Omega):=\overline{\{\ve\in C_0^\infty(\Omega)^d:\textup{div}\,\ve=0\}}^{L^2(\Omega)^d}$, where $C_0^\infty(\Omega)$ is the space of smooth functions with compact support in $\Omega$.

\section{Definition and Coercivity Properties of the Energy Functionals}\label{sec_Energy_def_coercivity}

In this section we define the energy/entropy functionals used for our Gronwall argument and show suitable coercivity properties. The definitions are similar to \cite{HenselLiu}, but based on a modification of the limit system as mentioned in the introduction. To this end, we need some notation.

Let $(\ve_\eps, \varphi_\eps)$ with $\ve_\eps\in L^\infty(0,T_0;L^2_\sigma(\Omega))\cap L^2(0,T_0;H^1_0(\Omega)^d)$ and $\varphi_\eps\in L^2(0,T_0;H^2(\Omega))\cap W^1_{q}(0,T_0;L^2(\Omega))$, where $q=2$ if $d=2$ and $q=\frac{4}{3}$ if $d=3$, for $\eps>0$ small be energy-dissipating weak solutions to the Navier-Stokes/Allen-Cahn system \eqref{eq_NSAC1}-\eqref{eq_NSAC5} on $[0,T_0]$ with constant mobility $m_\eps>0$ as in Remark \ref{th_weak_sol_NS}. Moreover, for $m_\eps>0$ small let $(\ve_{m_\eps}^\pm,p_{m_\eps}^\pm,(\Gamma_t^{m_\eps})_{t\in[0,T_0]})$ be solutions of the two-phase Navier-Stokes equation with surface tension on $[0,T_0]$ but with the evolution equation $V_{\Gamma^{m_\eps}_t}=\no_{\Gamma^{m_\eps}_t}\cdot \ve_{m_\eps}^\pm+m_\eps H_{\Gamma^{m_\eps}_t}$ on $\Gamma_t^{m_\eps}$ instead of \eqref{eq_TwoPhase5}, cf.~\eqref{eq:ApproxLimit1}-\eqref{eq:ApproxLimit7} and Theorem \ref{th:approx_lim_uniform} in Section \ref{sec_approx_twophase} below. We define 
\begin{align}\label{eq_notation_meps}
	\Gamma^{m_\eps}:=\bigcup_{t\in[0,T_0]}\Gamma^{m_\eps}_t\times\{t\},\quad \Omega^{m_\eps,\pm}:=\bigcup_{t\in[0,T_0]}\Omega_t^{m_\eps,\pm}\times\{t\}\quad\text{ and }\quad\ve^{m_\eps}:=\sum_\pm \ve_{m_\eps}^\pm\chi_{\Omega^{m_\eps,\pm}}.
\end{align}

Let $d_{\Gamma^{m_\eps}}$ be the signed distance function of $\Gamma^{m_\eps}$ and $P_{\Gamma^{m_\eps}}$ be the orthogonal projection on a tubular neighbourhood $\Gamma^{m_\eps}(2\delta)$ of $\Gamma^{m_\eps}$ of the width $2\delta$, where $\delta>0$ is sufficiently small and can be chosen independently of $m_\eps>0$ sufficiently small, cf.~Remark \ref{rem:UniformDeltaNeigborhood}. Let $\Gamma^{m_\eps}_t(2\delta):=\Gamma^{m_\eps}(2\delta)\cap (\R^d\times\{t\})$ be the time-slice. On $\Gamma^{m_\eps}(2\delta)$ we set
\begin{align}\label{eq_def_n_H}
\vec{n}:=\vec{n}_{\Gamma^{m_\eps}}|_{P_{\Gamma^{m_\eps}}}\quad\text{ and }\quad H:=H_{\Gamma^{m_\eps}}|_{P_{\Gamma^{m_\eps}}},
\end{align}
where we avoided to add $m_\eps$ in the notation for $\vec{n}$ and $H$ for convenience, and we used the notation $f|_{P_{\Gamma^{m_\eps}}}:=f\circ P_{\Gamma^{m_\eps}}$ for suitable $f$. Moreover, {we denote by $X_{m_\eps}:=(d_{\Gamma^{m_\eps}},P_{\Gamma^{m_\eps}},\textup{pr}_t)^{-1}$ on $(-2\delta,2\delta)\times\Gamma^{m_\eps}$ the coordinate map describing the tubular neighbourhood strip $\Gamma^{m_\eps}(2\delta)$} and define the normal derivative and the tangential gradient by 
\begin{align}\label{eq_deln_nablatau_def}
\partial_n:=\vec{n}\cdot\nabla\quad\text{ and }\quad\nabla_\tau:=(\textup{Id}-\vec{n}\otimes \vec{n})\nabla,\quad\text{respectively}.
\end{align} 
On $\Gamma^{m_\eps}(2\delta)$ it holds 
\begin{align}\label{eq_deln_nablatau_prop}
	\nabla&=\vec{n}\partial_n+\nabla_\tau,\qquad|\nabla u|^2=|\partial_n u|^2+|\nabla_\tau u|^2\quad\text{ for suitable }u,\\
\partial_n&=[\partial_1(.\circ {X_{m_\eps}})]\circ {X_{m_\eps}^{-1}}\quad\text{ and }\quad\nabla_\tau=D_xP_{\Gamma^{m_\eps}}\,[\nabla_{\Gamma^{m_\eps}_t}(.\circ {X_{m_\eps}})]\circ {X_{m_\eps}^{-1}}.\label{eq_nablatau_trafo}
\end{align}

Finally, let us recall
\begin{align}\label{eq_psi_def}
\psi(r):=\int_{-1}^r \sqrt{2W(s)}\,ds,\quad \sigma:=\psi(1)\quad\text{ and }\quad\psi_\eps:=\psi(\varphi_\eps)
\end{align}
and define 
\begin{align}\label{eq_n_eps}
\vec{n}_\eps:=\begin{cases}
		\frac{\nabla\varphi_\eps}{|\nabla\varphi_\eps|},\quad&\text{ if }\nabla\varphi_\eps\neq 0,\\
		\vec{s},\quad&\text{ else},
		\end{cases}
\end{align}
where $\vec{s}$ is a fixed unit vector in $\R^d$. Then $\psi_\eps$ and $\vec{n}_\eps$ are defined on $\Omega\times(0,T_0)$ and it holds
\[
\vec{n}_\eps|\nabla\varphi_\eps|=\nabla\varphi_\eps\quad\text{ and }\quad\vec{n}_\eps|\nabla\psi_\eps|=\nabla\psi_\eps.
\]

\subsection{Relative Entropy}\label{sec_Erel}
We define the relative entropy as follows for a.e.~$t\in[0,T_0]$:
\begin{align}\label{eq_Erel}
	E[\ve_\eps,\varphi_\eps| \ve^{m_\eps},\Gamma^{m_\eps}](t)&:=\int_\Omega \frac{1}{2}|\ve_\eps-\ve^{m_\eps}|^2(x,t)\,dx + E[\varphi_\eps|\Gamma^{m_\eps}](t),\\
	E[\varphi_\eps|\Gamma^{m_\eps}](t)&:= \int_\Omega \frac{\eps}{2}|\nabla\varphi_\eps|^2(x,t) + \frac{1}{\eps} W(\varphi_\eps(x,t))-(\xi\cdot\nabla\psi_\eps)(x,t)\,dx,\label{eq_Erel2}
\end{align}
where we chose to introduce a separate notation for the second interface-related part for convenience. Here $\xi$ is an extension (with quadratic cutoff) of the unit normal on $\Gamma^{m_\eps}$.
Note that the interface-related part of the relative entropy -- being the same as the relative entropy in \cite{FischerLauxSimon} -- is motivated by the Modica-Mortola trick; as we shall see below, it controls both the error in the equipartition of the diffuse interface energy \eqref{eq_Erel_coercivity_equipart} and a tilt-excess type error quantity for the interface normal \eqref{eq_Erel_coercivity_tilt}. At the same time, the time evolution of the relative entropy \eqref{eq_Erel2} can be calculated in a straightforward manner using the energy dissipation inequality for the Navier-Stokes/Allen-Cahn equation as well as the phase-field equation \eqref{eq_NSAC3}.

For the precise definition of $\xi:\overline{\Omega}\times[0,T_0]\rightarrow
\R^d$ and an accompanying $B:\overline{\Omega}\times[0,T_0]\rightarrow
\R^d$ (which has the role of an approximate transport and rotational velocity for $\xi$) we first introduce suitable cutoff functions (analogous to \cite[Proof of Theorem 1]{HenselLiu}). Let $\bar{\eta}:\R\rightarrow[0,1]$ be smooth and even such that $\supp\bar{\eta}\subseteq[-1,1]$ and such that it satisfies the quadratic decay
\begin{align}\label{eq_cutoff_quadr}
1-C_{\bar{\eta}} r^2 \leq \bar{\eta}(r) \leq 1-c_{\bar{\eta}} r^2 \quad\text{ and }\quad |\bar{\eta}'(r)|\leq C|r|\quad\text{ for }r\in[-1,1],
\end{align}
where $c_{\bar{\eta}}, C_{\bar{\eta}},C>0$. Moreover, let $\tilde{\eta}:\R\rightarrow[0,1]$ be smooth with $\supp\tilde{\eta}\subseteq[-2,2]$ and $\tilde{\eta}=1$ on $[-1,1]$. Finally, we set $\eta_{m_\eps}:=\bar{\eta}(\frac{d_{\Gamma^{m_\eps}}}{\delta})$ and $\tilde{\eta}_{m_\eps}:=\tilde{\eta}(\frac{d_{\Gamma^{m_\eps}}}{\delta})$. Then we define 
\begin{align}\label{eq_xi_B_def}
	\xi:= \vec{n}\eta_{m_\eps}\quad\text{ and }\quad B:=\ve^{m_\eps}+m_\eps H\vec{n}\tilde{\eta}_{m_\eps},
\end{align}
where $\vec{n}=\vec{n}(m_\eps)$ and $H=H(m_\eps)$ were defined in \eqref{eq_def_n_H}. For important properties of $\xi$ and $B$ we refer to Lemma \ref{th_xi_B} below.

The relative entropy from \eqref{eq_Erel} satisfies the following coercivity properties.
\begin{lemma}[\textbf{Coercivity Properties of the Relative Entropy}]\label{th_Erel_coercivity}
	First, the relative entropy provides a control of the velocity error in terms of
\begin{align}\label{eq_Erel_coercivity_v}
	\int_\Omega\frac{1}{2}|\ve_\eps-\ve^{m_\eps}|^2\,dx\leq E[\ve_\eps,\varphi_\eps| \ve^{m_\eps},\Gamma^{m_\eps}].
\end{align}
	Moreover, it yields a tilt-excess-type error estimate of the form
\begin{align}\label{eq_Erel_coercivity_tilt}
	\int_\Omega (1-\vec{n}_\eps\cdot\xi)|\nabla\psi_\eps|\,dx\leq E[\varphi_\eps|\Gamma^{m_\eps}].
\end{align}
	Additionally, we have some control of the error in the equipartition of the energy in the sense
\begin{align}\label{eq_Erel_coercivity_equipart}
	\int_\Omega\frac{1}{2}\left(\sqrt{\eps}|\nabla\varphi_\eps|-\frac{1}{\sqrt{\eps}}\sqrt{2W(\varphi_\eps)}\right)^2\,dx \leq E[\varphi_\eps|\Gamma^{m_\eps}].
\end{align}
	Furthermore, one obtains control of tangential derivatives and for the lack of equipartition of energy in normal direction: for a.e.~$t\in[0,T_0]$ it holds
\begin{align}\label{eq_coercivity_n_tau}
	\int_{\Gamma^{m_\eps}_t(2\delta)} \frac{\eps}{2}|\nabla_\tau\varphi_\eps|^2(x,t) + \frac{1}{2}\left(\sqrt{\eps}\partial_n\varphi_\eps-\frac{1}{\sqrt{\eps}}\sqrt{2W(\varphi_\eps)}\right)^2(x,t)\,dx \leq E[\varphi_\eps|\Gamma^{m_\eps}](t),
\end{align}
and one can replace $\partial_n\varphi_\eps$ by $|\partial_n\varphi_\eps|$ in the estimate. Finally, for some $C=C(\delta)>0$ it holds
\begin{align}\label{eq_Erel_coercivity_1}
	\int_\Omega \left(|\vec{n}_\eps-\xi|^2+\min\{d_{\Gamma^{m_\eps}}^2,1\}\right)\left(\eps|\nabla\varphi_\eps|^2+|\nabla\psi_\eps|\right)\,dx&\leq C E[\varphi_\eps|\Gamma^{m_\eps}],\\
	\int_\Omega\left(\min\{d_{\Gamma^{m_\eps}},1\}+\sqrt{1-\vec{n}_\eps\cdot\xi}\right)\left|\eps|\nabla\varphi_\eps|^2-|\nabla\psi_\eps|\right|\,dx &\leq C E[\varphi_\eps|\Gamma^{m_\eps}].\label{eq_Erel_coercivity_2}
\end{align}
\end{lemma}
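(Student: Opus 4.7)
The common backbone is the pointwise Modica--Mortola identity
\[
\frac{\eps}{2}|\nabla\varphi_\eps|^2 + \frac{W(\varphi_\eps)}{\eps} - \xi\cdot\nabla\psi_\eps = \frac{1}{2}\Bigl(\sqrt{\eps}|\nabla\varphi_\eps| - \tfrac{\sqrt{2W(\varphi_\eps)}}{\sqrt{\eps}}\Bigr)^2 + (1-\vec{n}_\eps\cdot\xi)\,|\nabla\psi_\eps|,
\]
which follows from $\nabla\psi_\eps = \sqrt{2W(\varphi_\eps)}\,|\nabla\varphi_\eps|\,\vec{n}_\eps$; both summands are pointwise non-negative since $|\xi|\le\eta_{m_\eps}\le 1$ and $|\vec{n}_\eps|=1$. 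Estimate \eqref{eq_Erel_coercivity_v} is immediate from the definition of the relative entropy. Integrating the identity above over $\Omega$ and retaining only the summand one wishes to bound yields \eqref{eq_Erel_coercivity_tilt} (drop the squared factor) and \eqref{eq_Erel_coercivity_equipart} (drop the $(1-\vec{n}_\eps\cdot\xi)|\nabla\psi_\eps|$ factor).

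For \eqref{eq_coercivity_n_tau}, inside the tubular strip $\Gamma^{m_\eps}_t(2\delta)$ I decompose $|\nabla\varphi_\eps|^2 = (\partial_n\varphi_\eps)^2 + |\nabla_\tau\varphi_\eps|^2$ and use $\xi\cdot\nabla\psi_\eps = \eta_{m_\eps}\sqrt{2W(\varphi_\eps)}\,\partial_n\varphi_\eps$ to recast the integrand of $E[\varphi_\eps|\Gamma^{m_\eps}]$ as
\[
\frac{\eps}{2}|\nabla_\tau\varphi_\eps|^2 + \frac{1}{2}\Bigl(\sqrt{\eps}|\partial_n\varphi_\eps| - \tfrac{\sqrt{2W(\varphi_\eps)}}{\sqrt{\eps}}\Bigr)^2 + \sqrt{2W(\varphi_\eps)}\bigl(|\partial_n\varphi_\eps| - \eta_{m_\eps}\,\partial_n\varphi_\eps\bigr),
\]
whose last summand is pointwise $\ge 0$ because $\eta_{m_\eps}\in[0,1]$. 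Dropping it together with the non-negative contribution of $E$'s integrand outside $\Gamma^{m_\eps}_t(2\delta)$ (where $\xi=0$) yields \eqref{eq_coercivity_n_tau} with $|\partial_n\varphi_\eps|$; the signed version differs by the integral of $(1-\eta_{m_\eps})\partial_n\psi_\eps$ over the tube, which is handled by an integration by parts in the normal coordinate of $\Gamma^{m_\eps}_t(2\delta)$. This sign book-keeping is the most delicate point.

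For \eqref{eq_Erel_coercivity_1}, I use the pointwise bound $|\vec{n}_\eps-\xi|^2 = 1-2\vec{n}_\eps\cdot\xi + \eta_{m_\eps}^2 \le 2(1-\vec{n}_\eps\cdot\xi)$ and, via the quadratic decay \eqref{eq_cutoff_quadr} of $\bar\eta$, $\min\{d_{\Gamma^{m_\eps}}^2,1\} \le C(\delta)(1-\eta_{m_\eps}) \le C(\delta)(1-\vec{n}_\eps\cdot\xi)$ (valid inside the tube by \eqref{eq_cutoff_quadr} and trivially outside, where $\eta_{m_\eps}=0$). Combined with \eqref{eq_Erel_coercivity_tilt} these bound the $|\nabla\psi_\eps|$-weighted part by $CE$. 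The $\eps|\nabla\varphi_\eps|^2$-weighted part reduces to the same via the algebraic inequality
\[
\eps|\nabla\varphi_\eps|^2 \le \Bigl(\sqrt{\eps}|\nabla\varphi_\eps| - \tfrac{\sqrt{2W(\varphi_\eps)}}{\sqrt{\eps}}\Bigr)^2 + 2|\nabla\psi_\eps|,
\]
with the squared excess controlled through \eqref{eq_Erel_coercivity_equipart}.

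Finally, for \eqref{eq_Erel_coercivity_2} I factor $\bigl|\eps|\nabla\varphi_\eps|^2-|\nabla\psi_\eps|\bigr| = \sqrt{\eps}|\nabla\varphi_\eps|\cdot\bigl|\sqrt{\eps}|\nabla\varphi_\eps| - \tfrac{\sqrt{2W(\varphi_\eps)}}{\sqrt{\eps}}\bigr|$ and apply Cauchy--Schwarz, splitting each weighted integral into (i) the $L^2$-norm of the equipartition defect, controlled via \eqref{eq_Erel_coercivity_equipart}, and (ii) the $L^2$-norm of $\sqrt{\eps}|\nabla\varphi_\eps|$ weighted by $\min\{|d_{\Gamma^{m_\eps}}|,1\}$, respectively $\sqrt{1-\vec{n}_\eps\cdot\xi}$, both estimated through \eqref{eq_Erel_coercivity_1}. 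The main obstacle throughout is the signed-vs.-unsigned subtlety in \eqref{eq_coercivity_n_tau}; the remaining estimates are essentially bookkeeping once the Modica--Mortola identity is at hand.
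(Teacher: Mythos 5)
Your derivations of \eqref{eq_Erel_coercivity_v}--\eqref{eq_Erel_coercivity_equipart}, of \eqref{eq_Erel_coercivity_1}--\eqref{eq_Erel_coercivity_2}, and of the \emph{unsigned} ($|\partial_n\varphi_\eps|$) version of \eqref{eq_coercivity_n_tau} are correct and follow the same route the paper indicates (by citing \cite[Lemma~5]{HenselLiu}): everything comes from the Modica--Mortola identity you state together with $|\xi|\le 1-c\min\{d_{\Gamma^{m_\eps}}^2,1\}$ and the decomposition $|\nabla\varphi_\eps|^2=(\partial_n\varphi_\eps)^2+|\nabla_\tau\varphi_\eps|^2$.

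The one genuine gap is the point you yourself flag: the \emph{signed} estimate in \eqref{eq_coercivity_n_tau}. Inside the tube one has the exact identity
\begin{align*}
\frac{\eps}{2}|\nabla_\tau\varphi_\eps|^2 + \frac{1}{2}\Bigl(\sqrt\eps\,\partial_n\varphi_\eps - \tfrac{\sqrt{2W(\varphi_\eps)}}{\sqrt\eps}\Bigr)^2
= \Bigl[\tfrac{\eps}{2}|\nabla\varphi_\eps|^2 + \tfrac{W(\varphi_\eps)}{\eps} - \xi\cdot\nabla\psi_\eps\Bigr] - (1-\eta_{m_\eps})\,\partial_n\psi_\eps,
\end{align*}
and the last term is \emph{positive} on $\{\partial_n\varphi_\eps<0\}$, so the pointwise bound by the relative-entropy density fails there. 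Your proposed fix---integrating $(1-\eta_{m_\eps})\partial_n\psi_\eps$ by parts in the normal coordinate---does not close this: the boundary term at $|d_{\Gamma^{m_\eps}}|=2\delta$ and the bulk term (containing $\bar\eta'$ and the normal derivative of the coarea Jacobian $J_t$) have no definite sign, so the integral is not shown to be non-negative. A pointwise argument that does work: using $\partial_n\psi_\eps=(\vec n\cdot\vec n_\eps)|\nabla\psi_\eps|$ rewrite
\begin{align*}
\frac{\eps}{2}|\nabla_\tau\varphi_\eps|^2 + \frac{1}{2}\Bigl(\sqrt\eps\,\partial_n\varphi_\eps - \tfrac{\sqrt{2W(\varphi_\eps)}}{\sqrt\eps}\Bigr)^2
 = \frac{1}{2}\Bigl(\sqrt\eps|\nabla\varphi_\eps| - \tfrac{\sqrt{2W(\varphi_\eps)}}{\sqrt\eps}\Bigr)^2 + (1-\vec n\cdot\vec n_\eps)\,|\nabla\psi_\eps|,
\end{align*}
compare with the relative-entropy density $\tfrac12(\sqrt\eps|\nabla\varphi_\eps|-\tfrac{\sqrt{2W}}{\sqrt\eps})^2 + (1-\vec n_\eps\cdot\xi)|\nabla\psi_\eps|$, and note that $1-\vec n\cdot\vec n_\eps \le 2(1-\vec n_\eps\cdot\xi)$ pointwise in the tube (if $\vec n\cdot\vec n_\eps\ge 0$, use $\eta_{m_\eps}\le 1$; if $\vec n\cdot\vec n_\eps<0$, use $1-\vec n\cdot\vec n_\eps<2\le 2(1-\vec n_\eps\cdot\xi)$). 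This yields the signed estimate with constant $2$ rather than $1$, which is entirely sufficient for all subsequent uses, e.g.\ \eqref{eq_equipart2}. This signed-vs.-unsigned subtlety is also glossed over in the paper's own one-line proof, so flagging it as the delicate point was the right instinct; but the integration-by-parts proposal is a wrong turn, and you should give the pointwise comparison with the tilt-excess density instead.
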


\begin{proof}
	Up to \eqref{eq_coercivity_n_tau} the estimates are analogous to Hensel, Liu \cite[Lemma 5]{HenselLiu}. Note that one only uses the definitions and the elementary estimate $|\xi|\leq 1- c\min\{d_{\Gamma^{m_\eps}}^2,1\}$, cf.~also Lemma \ref{th_xi_B} below. The new estimate \eqref{eq_coercivity_n_tau} is a simple consequence of the properties \eqref{eq_deln_nablatau_prop} of $\partial_n$ and $\nabla_\tau$.
\end{proof}

\begin{lemma}[\textbf{Properties of $\xi$ and $B$}]\label{th_xi_B} 
	For $(\xi,B)=(\xi,B)(m_\eps)$ from \eqref{eq_xi_B_def} we have:
	\begin{enumerate}
	\item Regularity: for some $p>d+5$ it holds 
	\begin{align*}
		\xi&\in C^1([0,T_0],C^0(\overline{\Omega})^d) \cap C^0([0,T_0];C^2_c(\Omega)^d),\\
		B&\in C^0([0,T_0];C^{0,1}(\Omega)^d)\cap L^p(0,T_0;W^2_p(\Omega\setminus\Gamma^{m_\eps}_t)^d), \quad \nabla_\tau\nabla B\in L^\infty(\Gamma^{m_\eps}(\tfrac{3\delta}{2}))^{d^3},
	\end{align*}
	we have uniform bounds with respect to $m_\eps$ and $B|_{\partial\Omega}=0$.
	\item Coercivity and consistency: we have
	\begin{alignat}{2}\label{eq_calib1}
		|\xi| &\leq 1- c\min\{d_{\Gamma^{m_\eps}}^2,1\} &\quad &\text{ in }\Omega\times[0,T],\\
		\xi&=\vec{n}, \quad \nabla\cdot\xi= -H&\quad&\text{ on }\Gamma^{m_\eps},\label{eq_calib2}\\
		\left|\frac{B-\ve^{m_\eps}}{m_\eps}\cdot\xi + \nabla\cdot\xi\right| &\leq C\min\{d_{\Gamma^{m_\eps}},1\} &\quad &\text{ a.e. in }\Omega\times[0,T],\label{eq_calib3}
	\end{alignat}
where \eqref{eq_calib2} is a relation for the mean curvature and \eqref{eq_calib3} is an approximate equation for interface normal velocity.
	\item Evolution equations for $\xi$: it holds (where $\nabla B$ is defined to be the Jacobian)
	\begin{alignat}{2}
		|(\partial_t+B\cdot\nabla)|\xi|^2|                        &\leq C\min\{d_{\Gamma^{m_\eps}}^2,1\} &\quad &\text{ a.e. in }\Omega\times[0,T],\label{eq_calib4}\\
		|\partial_t\xi + (B\cdot \nabla)\xi + (\textup{Id}-\xi\otimes\xi)(\nabla B)^\top\xi| &\leq C\min\{d_{\Gamma^{m_\eps}},1\}   &\quad &\text{ a.e. in }\Omega\times[0,T],\label{eq_calib5}
	\end{alignat}\end{enumerate}
where \eqref{eq_calib4} means that $|\xi|^2$ is approximately transported by the vector field $B$ and \eqref{eq_calib5} that $\xi$ is approximately transported and rotated by $B$.
\end{lemma}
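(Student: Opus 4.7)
All six estimates reduce to manipulations of the signed distance $d_{\Gamma^{m_\eps}}$, the extended normal $\vec{n}=\vec{n}_{\Gamma^{m_\eps}}|_{P_{\Gamma^{m_\eps}}}$, the extended curvature $H=H_{\Gamma^{m_\eps}}|_{P_{\Gamma^{m_\eps}}}$, and the cutoffs $\eta_{m_\eps}=\bar\eta(d_{\Gamma^{m_\eps}}/\delta)$, $\tilde\eta_{m_\eps}=\tilde\eta(d_{\Gamma^{m_\eps}}/\delta)$. The regularity and uniform-in-$m_\eps$ bounds of Theorem~\ref{th:approx_lim_uniform} together with Remark~\ref{rem:UniformDeltaNeigborhood} supply uniform control of $d_{\Gamma^{m_\eps}},\vec{n},H$ on the $2\delta$-tubular neighborhood and of $\ve^{m_\eps}$ in the relevant parabolic spaces, so the stated regularity of $\xi$ and $B$ (and $B|_{\partial\Omega}=0$, since $\ve^{m_\eps}$ vanishes there and $\tilde\eta_{m_\eps}\equiv 0$ near $\partial\Omega$ for $\delta$ small) is immediate.

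\textbf{Item (ii).} Estimate \eqref{eq_calib1} follows from $|\vec{n}|=1$ and the quadratic decay \eqref{eq_cutoff_quadr}. On $\Gamma^{m_\eps}$ one has $\eta_{m_\eps}=1$ and $\nabla\eta_{m_\eps}=\bar\eta'(0)\delta^{-1}\nabla d_{\Gamma^{m_\eps}}=0$ by evenness of $\bar\eta$, so $\xi=\vec{n}$ and $\nabla\cdot\xi=\nabla\cdot\vec{n}=-H$, giving \eqref{eq_calib2}. For \eqref{eq_calib3} I would expand
\[
\tfrac{B-\ve^{m_\eps}}{m_\eps}\cdot\xi+\nabla\cdot\xi=H\,\tilde\eta_{m_\eps}\eta_{m_\eps}+\eta_{m_\eps}\nabla\cdot\vec{n}+\vec{n}\cdot\nabla\eta_{m_\eps};
\]
on $\Gamma^{m_\eps}$ the three summands equal $H$, $-H$, $0$ respectively, and by smoothness of the extensions (again using $\bar\eta'(0)=0$ for the last term) each differs from its value on $\Gamma^{m_\eps}$ by an $O(d_{\Gamma^{m_\eps}})$ remainder.

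\textbf{Item (iii).} The crucial observation is that $B$ has been designed precisely so that $(\partial_t+B\cdot\nabla)d_{\Gamma^{m_\eps}}=O(d_{\Gamma^{m_\eps}})$. Indeed, on $\Gamma^{m_\eps}$ the transport relation for the signed distance gives $\partial_t d_{\Gamma^{m_\eps}}=-V_{\Gamma^{m_\eps}}=-(\vec{n}\cdot\ve^{m_\eps}+m_\eps H)$, while $B\cdot\nabla d_{\Gamma^{m_\eps}}=\ve^{m_\eps}\cdot\vec{n}+m_\eps H\tilde\eta_{m_\eps}=\ve^{m_\eps}\cdot\vec{n}+m_\eps H$ there, so these quantities cancel exactly on $\Gamma^{m_\eps}$ and smoothness of $\ve^{m_\eps},H$ up to the interface extends the cancellation with error $O(d_{\Gamma^{m_\eps}})$. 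Since $|\xi|^2=\eta_{m_\eps}^2$ and $|(\bar\eta^2)'(r)|\leq C|r|$ by \eqref{eq_cutoff_quadr}, the chain rule yields \eqref{eq_calib4}.

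\textbf{Main obstacle.} The delicate estimate is \eqref{eq_calib5}. The plan is to invoke the classical material-derivative identity for the unit normal of a smoothly evolving hypersurface: whenever a Lipschitz vector field $\tilde B$ advects $\Gamma^{m_\eps}$ with its true normal velocity (which our $B$ does on $\Gamma^{m_\eps}$, since $B\cdot\vec{n}|_{\Gamma^{m_\eps}}=\vec{n}\cdot\ve^{m_\eps}+m_\eps H=V_{\Gamma^{m_\eps}}$), one has on $\Gamma^{m_\eps}$ the identity $\partial_t\vec{n}+(\tilde B\cdot\nabla)\vec{n}=-(\operatorname{Id}-\vec{n}\otimes\vec{n})(\nabla\tilde B)^\top\vec{n}$. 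Composing with the projection $P_{\Gamma^{m_\eps}}$ and multiplying by $\eta_{m_\eps}$ propagates this identity through the tubular neighborhood, using $(\partial_t+B\cdot\nabla)\eta_{m_\eps}=O(d_{\Gamma^{m_\eps}})$ (from Step~3) and $\operatorname{Id}-\xi\otimes\xi=\operatorname{Id}-\vec{n}\otimes\vec{n}+O(d_{\Gamma^{m_\eps}}^2)$ to control the difference. The care is needed because $\nabla B$ jumps across $\Gamma^{m_\eps}$ (as $\nabla\ve^{m_\eps}$ does), so \eqref{eq_calib5} must be asserted only almost everywhere; the one-sided bounds $B\in L^p(0,T_0;W^2_p(\Omega\setminus\Gamma^{m_\eps}_t))$ and the tangential control $\nabla_\tau\nabla B\in L^\infty(\Gamma^{m_\eps}(\tfrac{3\delta}{2}))$ from (i) are exactly what justifies handling $(\nabla B)^\top\xi$ pointwise a.e.\ and extracting the required $O(\min\{d_{\Gamma^{m_\eps}},1\})$ remainder.
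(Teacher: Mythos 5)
Your proposal is correct and follows essentially the same route as the paper. Items (i)--(ii) and estimate \eqref{eq_calib4} are handled identically (quadratic decay of $\bar\eta$, the exact identities $\vec{n}=\nabla d_{\Gamma^{m_\eps}}$, $\Delta d_{\Gamma^{m_\eps}}|_{\Gamma^{m_\eps}}=-H$, and the cancellation $(\partial_t+B\cdot\nabla)d_{\Gamma^{m_\eps}}=0$ on $\Gamma^{m_\eps}$). For \eqref{eq_calib5}, where you invoke a classical material-derivative identity for the normal, the paper obtains the same relation directly by differentiating the pointwise identity $\partial_t d_{\Gamma^{m_\eps}}+\ve^{m_\eps}|_{P_{\Gamma^{m_\eps}}}\cdot\vec{n}+m_\eps H=0$, which holds \emph{exactly} on all of $\Gamma^{m_\eps}(2\delta)$ with smooth (jump-free) coefficients $\ve^{m_\eps}|_{P_{\Gamma^{m_\eps}}}$, $\vec{n}$, $H$; this yields \eqref{eq_normal_evol}, which one then matches against the calibration expression on $\Gamma^{m_\eps}$. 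One refinement worth making explicit in your treatment of the jump: the quantity $(\operatorname{Id}-\vec{n}\otimes\vec{n})(\nabla B)^\top\vec{n}$ equals $\nabla_\tau(B\cdot\vec{n})-(\nabla\vec{n})^\top B$ and hence involves only tangential derivatives of the continuous field $B\cdot\vec{n}$, so the calibration expression is actually continuous across $\Gamma^{m_\eps}$ (and vanishes there); the a.e.\ one-sided $W^2_p$-regularity of $B$ then gives the $O(\min\{d_{\Gamma^{m_\eps}},1\})$ bound without any genuine two-sided matching issue.
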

\begin{proof}
	The regularity and uniform bounds for $\xi$ are obtained directly from the ones for $d_{\Gamma^{m_\eps}}$ in Theorem \ref{th:approx_lim_uniform}. Moreover, $\ve^{m_\eps}$ instead of $B$ satisfies the regularity, uniform bounds and the boundary condition stated for $B$ due to Theorem \ref{th:approx_lim_uniform} together with embeddings and interpolation theory. Moreover, the second term in the definition \eqref{eq_xi_B_def} of $B$ is contained in $C^0([0,T_0],C_c^2(\Omega)^2)$ because of Theorem \ref{th:approx_lim_uniform} and due to the $m_\eps$-prefactor and the well-known identities $H=(-\Delta d_{\Gamma^{m_\eps}})|_{P_{\Gamma^{m_\eps}}}$, $\vec{n}=\nabla d_{\Gamma^{m_\eps}}$ and $P_{\Gamma^{m_\eps}}=\textup{Id}-\vec{n} d_{\Gamma^{m_\eps}}$.
	
	The properties \eqref{eq_calib1}-\eqref{eq_calib2} are clear from the definition and the well-known identities $\vec{n}=\nabla d_{\Gamma^{m_\eps}}$, $\nabla\cdot\vec{n}=\Delta d_{\Gamma^{m_\eps}}$ on $\Gamma^{m_\eps}(2\delta)$ and $\Delta d_{\Gamma^{m_\eps}}|_{\Gamma^{m_\eps}}=-H$ on $\Gamma^{m_\eps}$. Moreover, a direct calculation gives
	\[
	\frac{B-\ve^{m_\eps}}{m_\eps}\cdot\xi + \nabla\cdot\xi = H \eta_{m_\eps}\tilde{\eta}_{m_\eps} + \nabla\cdot\xi.
	\]
	The latter vanishes on $\Gamma^{m_\eps}$, hence \eqref{eq_calib3} follows. Moreover,
	\begin{align}\label{eq_dist_evol}
	-\partial_td_{\Gamma^{m_\eps}}=V_{\Gamma^{m_\eps}}=\ve^{m_\eps}\cdot\vec{n} + m_\eps H = B\cdot\nabla d_{\Gamma^{m_\eps}}\quad\text{ on }\Gamma^{m_\eps}
	\end{align}
	 due to \eqref{eq:ApproxLimit6} for $m_\eps$ instead of $m$ and therefore
	\begin{align}\label{eq_calib_proof1}
		(\partial_t+B\cdot\nabla)d_{\Gamma^{m_\eps}}=0\quad\text{ on }\Gamma^{m_\eps}. 
	\end{align}
	We compute $|\xi|^2=\eta_{m_\eps}^2$ and
	\[
	(\partial_t+B\cdot\nabla)|\xi|^2=\frac{2}{\delta}\eta_{m_\eps} \bar{\eta}'(\frac{d_{\Gamma^{m_\eps}}}{\delta}) (\partial_t+B\cdot\nabla)d_{\Gamma^{m_\eps}}.
	\]
	Due to \eqref{eq_calib_proof1}, $\bar{\eta}'(0)=0$ together with a transformation in tubular neighbourhood coordinates and the Taylor Theorem, we obtain \eqref{eq_calib4}.
	
	Finally, let us prove \eqref{eq_calib5}. Equation \eqref{eq_dist_evol} yields by definition 
	\[
	\partial_td_{\Gamma^{m_\eps}}+\ve^{m_\eps}|_{P_{\Gamma^{m_\eps}}}\cdot\vec{n} + m_\eps H = 0 \quad\text{ on }\Gamma^{m_\eps}(2\delta).
	\]
	Differentiating the previous identity implies
	\begin{align}\label{eq_normal_evol}
	\partial_t\vec{n}+\nabla(\ve^{m_\eps}|_{P_{\Gamma^{m_\eps}}})^\top\cdot\vec{n} + (\nabla\vec{n})^\top \ve^{m_\eps}|_{P_{\Gamma^{m_\eps}}} + m_\eps \nabla H = 0 \quad\text{ on }\Gamma^{m_\eps}(2\delta),
	\end{align}
	where $\nabla(\ve^{m_\eps}|_{P_{\Gamma^{m_\eps}}})=\nabla \ve^{m_\eps}|_{P_{\Gamma^{m_\eps}}}\nabla P_{\Gamma^{m_\eps}}$ and it is well-known that $\nabla P_{\Gamma^{m_\eps}}|_{\Gamma^{m_\eps}}=\textup{Id}-\vec{n}\otimes\vec{n}$ on $\Gamma^{m_\eps}$. Moreover, on $\Gamma^{m_\eps}$ it holds
	\begin{align*}
		&\partial_t\xi + (B\cdot \nabla)\xi + (\textup{Id}-\xi\otimes\xi)(\nabla B)^\top\xi \\
		&= \partial_t\vec{n} + (\ve^{m_\eps}+m_\eps H\vec{n})\cdot\nabla\vec{n}+(\textup{Id}-\vec{n}\otimes\vec{n})\left[\nabla\ve^{m_\eps}+m_\eps(\vec{n}\nabla H^\top+H\nabla\vec{n})\right]^\top\vec{n}\\
		&=\partial_t\vec{n} + \ve^{m_\eps}\cdot\nabla\vec{n}+(\textup{Id}-\vec{n}\otimes\vec{n})(\nabla\ve^{m_\eps})^\top\vec{n}+m_\eps\nabla H,
	\end{align*}
	where we used $(\nabla\vec{n})^\top\vec{n}=0$ and $\vec{n}\cdot\nabla\vec{n}=\vec{n}\cdot\nabla H=0$. Finally, due to $\vec{n}=\nabla d_{\Gamma^{m_\eps}}$ on $\Gamma^{m_\eps}(2\delta)$ we have $\partial_{x_j}\vec{n}=\nabla n_j$ for $j=1,...,d$ and hence $\ve^{m_\eps}\cdot \nabla\vec{n}=(\nabla\vec{n})^\top \ve^{m_\eps}$ on $\Gamma^{m_\eps}(2\delta)$. Therefore we obtain \eqref{eq_calib5} from \eqref{eq_normal_evol}.
\end{proof}

\subsection{Bulk Error Functional}\label{sec_Ebulk}
We define the bulk error functional for all $t\in[0,T_0]$ by
\begin{align}\label{eq_Ebulk}
E_\textup{bulk}[\varphi_\eps|\Gamma^{m_\eps}](t):=\int_\Omega \left(\sigma\chi_{\Omega^{m_\eps,+}_t} -\psi_\eps(x,t)\right)\vartheta(x,t)\,dx,
\end{align}
where $\vartheta:\overline{\Omega}\times[0,T_0]\rightarrow[0,1]$ is defined as $\vartheta:=\overline{\vartheta}(\frac{d_{\Gamma^{m_\eps}}}{\delta})$, where $\overline{\vartheta}:\R\rightarrow[0,1]$ is smooth with 
\begin{align*}
\overline{\vartheta}>0\text{ on }(1,\infty),\quad\overline{\vartheta}<0\text{ on }(-\infty,0)\quad\text{ and }|\overline{\vartheta}|=1\text{ on }\R\setminus[-1,1],
\end{align*}
as well as $c_{\overline{\vartheta}}|r|\leq|\overline{\vartheta}(r)|\leq C_{\overline{\vartheta}}|r|$ for all $r\in[-1,1]$ and some $c_{\overline{\vartheta}},C_{\overline{\vartheta}}>0$. Note that we use a different sign convention for $\overline{\vartheta}$ as in \cite{HenselLiu}. Hence $\vartheta$ is roughly proportional to the signed distance function of $\Gamma^{m_\eps}$ close to $\Gamma^{m_\eps}$ and appropriately truncated to $\pm1$ outside. The required properties of $\vartheta$ will be shown in Lemma \ref{th_theta} below.

Let us now prove coercivity properties for the bulk error functional:
\begin{lemma}[\textbf{Coercivity Properties of }$E_\textup{bulk}$]\label{th_Ebulk_coercivity}
	It holds $\left(\sigma\chi_{\Omega^{m_\eps,+}} -\psi_\eps\right)\vartheta\geq 0$,
	in particular $E_\textup{bulk}[\varphi_\eps|\Gamma^{m_\eps}]\geq 0$. Moreover,
	\begin{align}\label{eq_Ebulk_coercivity1}
		\int_\Omega\left|\sigma\chi_{\Omega^{m_\eps,+}} -\psi_\eps\right|\min\{d_{\Gamma^{m_\eps}},1\}\,dx + \|\sigma\chi_{\Omega^{m_\eps,+}} -\psi_\eps\|_{L^1(\Omega)}^2 \leq C E_\textup{bulk}[\varphi_\eps|\Gamma^{m_\eps}].
	\end{align}
	Finally, for all $c_0>0$ there exists $C=C(c_0)>0$ such that
	\begin{align}\begin{split}\label{eq_Ebulk_coercivity2}
	    &\int_\Omega|\sigma\chi_{\Omega^{m_\eps,+}} -\psi_\eps||\ve_\eps-\ve^{m_\eps}|\,dx\\
	    &\leq c_0\int_\Omega|\nabla\ve_\eps-\nabla\ve^{m_\eps}|^2\,dx + C\left(E[\ve_\eps,\varphi_\eps| \ve^{m_\eps},\Gamma^{m_\eps}] + E_\textup{bulk}[\varphi_\eps|\Gamma^{m_\eps}]\right).\end{split}
	\end{align}
\end{lemma}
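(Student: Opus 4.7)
For the pointwise statement $(\sigma\chi_{\Omega^{m_\eps,+}}-\psi_\eps)\vartheta\ge 0$, I propose a direct case distinction on the two phases $\Omega^{m_\eps,\pm}$. In $\Omega^{m_\eps,+}$ one has $d_{\Gamma^{m_\eps}}>0$, so the prescribed sign pattern of $\overline{\vartheta}$ gives $\vartheta\ge 0$; moreover, since $\psi$ is nondecreasing with $\psi(1)=\sigma$, one also has $\sigma-\psi_\eps\ge 0$, either via the comparison principle $\varphi_\eps\in[-1,1]$ available for Allen--Cahn type weak solutions with our $W$ and well-prepared initial data, or equivalently by working with the natural truncation of $\psi$ to values in $[0,\sigma]$. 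In $\Omega^{m_\eps,-}$ both factors are nonpositive, and the product is again nonnegative. Integration yields $E_\textup{bulk}\ge 0$ and, importantly for the next step, the pointwise identity $(\sigma\chi_{\Omega^{m_\eps,+}}-\psi_\eps)\vartheta=|\sigma\chi_{\Omega^{m_\eps,+}}-\psi_\eps|\,|\vartheta|$.

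For the first summand in \eqref{eq_Ebulk_coercivity1}, I would simply use the elementary lower bound $|\vartheta|\ge c(\delta)\min\{|d_{\Gamma^{m_\eps}}|,1\}$, immediate from the linear growth $c_{\overline{\vartheta}}|r|\le|\overline{\vartheta}(r)|$ on $[-1,1]$ and the identity $|\overline{\vartheta}|=1$ outside $[-1,1]$. Combined with the sign identity above, this gives the inequality directly. For the squared $L^1$-estimate, my plan is a standard splitting argument: for a parameter $\rho\in(0,\delta]$ decompose $\Omega=\Gamma^{m_\eps}_t(\rho)\cup(\Omega\setminus\Gamma^{m_\eps}_t(\rho))$. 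On the complement, $|\vartheta|\ge c\rho$ gives $\|\sigma\chi_{\Omega^{m_\eps,+}}-\psi_\eps\|_{L^1(\Omega\setminus\Gamma^{m_\eps}_t(\rho))}\le (c\rho)^{-1}E_\textup{bulk}$; on the tube, the volume estimate $|\Gamma^{m_\eps}_t(\rho)|\le C\rho$ combined with the $L^\infty$-bound on $\sigma\chi_{\Omega^{m_\eps,+}}-\psi_\eps$ yields a contribution $\le C\rho$. Optimizing with $\rho=\sqrt{E_\textup{bulk}}$ (and handling the case $\sqrt{E_\textup{bulk}}>\delta$ trivially) and squaring produces the claimed estimate.

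For \eqref{eq_Ebulk_coercivity2} the plan is H\"older + Sobolev + Young, combined with interpolation and the $L^1$-bound from the previous step. Setting $f:=\sigma\chi_{\Omega^{m_\eps,+}}-\psi_\eps$ and $u:=\ve_\eps-\ve^{m_\eps}$, in $d=3$ I would estimate $\int_\Omega|f||u|\,\dx\le\|f\|_{L^{6/5}}\|u\|_{L^6}$ and use Sobolev $H^1_0(\Omega)\hookrightarrow L^6(\Omega)$ (since $u|_{\partial\Omega}=0$) together with the interpolation bound $\|f\|_{L^{6/5}}\le\|f\|_{L^1}^{5/6}\|f\|_{L^\infty}^{1/6}\le CE_\textup{bulk}^{5/12}$. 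A first Young step gives $\int_\Omega|f||u|\,\dx\le c_0\|\nabla u\|_{L^2}^2+CE_\textup{bulk}^{5/6}$. To refine the $5/6$ exponent to $1$, I would use Gagliardo--Nirenberg to trade part of the $\|\nabla u\|_{L^2}$-power against $\|u\|_{L^2}$, plug in the entropy control $\|u\|_{L^2}^2\le 2E[\ve_\eps,\varphi_\eps|\ve^{m_\eps},\Gamma^{m_\eps}]$ from \eqref{eq_Erel_coercivity_v}, and conclude by a carefully calibrated two-parameter Young step. The case $d=2$ is analogous and easier thanks to the sharper Sobolev embedding.

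The main obstacle I foresee is the final step in Part (c): upgrading the straightforwardly obtainable sublinear bound $CE_\textup{bulk}^{5/6}$ to the genuinely linear bound $C(E+E_\textup{bulk})$ stated in \eqref{eq_Ebulk_coercivity2}. A sublinear dependence would spoil the Gronwall argument in the subsequent main stability estimate, so this refinement is essential. The resolution is the two-parameter interpolation described above, in which a portion of the $\|\nabla u\|_{L^2}$-power must be traded for a $\|u\|_{L^2}$-power (controlled by $E$) \emph{before} Young is applied, so that the resulting nonlinear powers on the right-hand side land exactly at the first power of $E$ and $E_\textup{bulk}$.
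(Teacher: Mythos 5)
Your treatment of the pointwise sign statement and of the first summand in \eqref{eq_Ebulk_coercivity1} matches the paper's argument, and for the squared $L^1$-bound your optimize-over-$\rho$ decomposition is a valid discretized variant of the Fubini-type slicing inequality \eqref{eq_aux_inequ_Fubini} the paper invokes; both exploit the same weight gain and give the linear bound.

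The proposed proof of \eqref{eq_Ebulk_coercivity2}, however, has a genuine gap, located exactly where you flag the main obstacle. The repair you sketch---trading $\|\nabla u\|_{L^2}$-power for $\|u\|_{L^2}$-power via the full-dimensional Gagliardo--Nirenberg inequality---moves the exponent in the \emph{wrong} direction. With $\|u\|_{L^p}\lesssim\|\nabla u\|_{L^2}^{\theta}\|u\|_{L^2}^{1-\theta}$ (for $d=3$, $p\in[2,6]$, $\theta=\tfrac32-\tfrac3p$), the Young step calibrated so that $\|\nabla u\|$ and $\|u\|$ appear squared necessarily leaves $\|f\|_{L^{p'}}^2\lesssim\|f\|_{L^1}^{2/p'}\lesssim E_\textup{bulk}^{1/p'}$ with $1/p'=\tfrac12+\tfrac{\theta}{3}$, which attains its maximum $\tfrac56$ at $\theta=1$ and strictly decreases as $\theta$ decreases; hence any GN trade makes the $E_\textup{bulk}$-exponent worse, not better (in $d=2$ one has $1/p'=\tfrac{1+\theta}{2}<1$, same monotonicity). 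A sublinear $E_\textup{bulk}^{1-\kappa}$, $\kappa>0$, cannot be absorbed into the Gronwall scheme uniformly in $\eps$, since writing $E_\textup{bulk}^{1-\kappa}\leq\mu E_\textup{bulk}+C_\mu$ with $C_\mu\lesssim\eps^2/m_\eps$ forces $\mu\to\infty$ as $\eps\to0$ when $m_\eps=\eps^\beta$, $\beta<2$. The mechanism the paper relies on (following \cite{HenselLiu}) is one-dimensional: in tubular coordinates $(r,s)\in(-\delta,\delta)\times\Gamma^{m_\eps}_t$ one pairs, for each fixed $s$, the one-dimensional $L^\infty$-norm $\|u(\cdot,s)\|_{L^\infty(-\delta,\delta)}\lesssim\|\partial_r u(\cdot,s)\|_{L^2}^{1/2}\|u(\cdot,s)\|_{L^2}^{1/2}+\|u(\cdot,s)\|_{L^2}$ with $\int_{-\delta}^\delta|f(r,s)|\,dr$, and then converts the latter into the weighted quantity controlled by $E_\textup{bulk}$ through \eqref{eq_aux_inequ_Fubini}. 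After Cauchy--Schwarz in $s$ this gives $\int_{\Gamma^{m_\eps}_t(\delta)}|f||u|\,\dx\lesssim E_\textup{bulk}^{1/2}\bigl(\|\nabla u\|_{L^2}^{1/2}E^{1/4}+E^{1/2}\bigr)$ (with $f=\sigma\chi_{\Omega^{m_\eps,+}}-\psi_\eps$, $u=\ve_\eps-\ve^{m_\eps}$, $E$ the relative entropy), and Young with exponents $(2,4,4)$ closes to $c_0\|\nabla u\|_{L^2}^2+C(E_\textup{bulk}+E)$; the region $\Omega\setminus\Gamma^{m_\eps}_t(\delta)$ is handled trivially since $|\vartheta|\geq c>0$ there. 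The decisive point, which the full-$d$ Sobolev step gives away, is that the 1D $L^\infty$-estimate costs only $\|\nabla u\|^{1/2}$, allowing the homogeneity count to close at linear powers of both energies.
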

\begin{proof}
	Because of $|\varphi_\eps|\leq 1$ due to the maximum principle, it holds $|\psi_\eps|\leq\sigma$ by definition. Hence the properties of $\overline{\vartheta}$ yield $\left(\sigma\chi_{\Omega^{m_\eps,+}} -\psi_\eps\right)\vartheta\geq 0$. Moreover,
	\[
	|\sigma\chi_{\Omega^{m_\eps,+}} -\psi_\eps|\min\{d_{\Gamma^{m_\eps}},1\}\leq C|\sigma\chi_{\Omega^{m_\eps,+}} -\psi_\eps||\vartheta|=C\left(\sigma\chi_{\Omega^{m_\eps,+}} -\psi_\eps\right)\vartheta
	\]
	and this estimates the first term in \eqref{eq_Ebulk_coercivity1}. This yields that the second term in \eqref{eq_Ebulk_coercivity1} is controlled by using the inequality (cf.~\cite[Proof of Theorem 1]{FischerLauxSimon})
	\begin{align}\label{eq_aux_inequ_Fubini}
		\left(\int_0^\delta |g|\,dr\right)^2\leq 2\|g\|_{L^\infty(0,\delta)}\int_0^\delta |g|(r)r\,dr\quad\text{ for all }g\in L^\infty(0,\delta),
	\end{align}
	which is derived by dividing the square $[0,\delta]^2$ into two triangles and applying Fubini's theorem.
	
	Finally, \eqref{eq_Ebulk_coercivity2} can be shown analogously to \cite[proof of (31)]{HenselLiu} with \eqref{eq_Ebulk_coercivity1} and elementary estimates, in particular the Gagliardo-Nirenberg inequality in normal direction of $\Gamma^{m_\eps}$, the Hölder and Young inequality as well as \eqref{eq_aux_inequ_Fubini}.
\end{proof}
\begin{lemma}[\textbf{Properties of $\vartheta$}]\label{th_theta}
	For $\vartheta=\vartheta(m_\eps)$ defined after \eqref{eq_Ebulk} the following properties hold:
	\begin{enumerate}
		\item Regularity: it holds
		\[
		\vartheta\in C^1([0,T_0],C^1(\overline{\Omega})) \cap C^0([0,T_0];C^3(\overline{\Omega}))
		\]
		and we have uniform bounds with respect to $m_\eps$.
		\item Coercivity and consistency: we have 
		\begin{align}\label{eq_theta_coerc1}
		&c\min\{d_{\Gamma^{m_\eps}},1\}\leq |\vartheta|\leq C\min\{d_{\Gamma^{m_\eps}},1\}\quad\text{for some }c,C>0,\\
		&\vartheta>0\text{ in }\Omega^{m_\eps,+},\quad \vartheta<0\text{ in }\Omega^{m_\eps,-}.\label{eq_theta_coerc2}
		\end{align}
		\item Evolution equation: it holds
		\begin{align}\label{eq_theta_evol}
			|(\partial_t+B\cdot\nabla)\vartheta|\leq C\min\{d_{\Gamma^{m_\eps}},1\}\quad\text{ a.e.~on }\Omega\times(0,T_0).
		\end{align}
	\end{enumerate}
\end{lemma}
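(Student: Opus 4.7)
The overall strategy is to exploit the fact that $\vartheta=\overline{\vartheta}(d_{\Gamma^{m_\eps}}/\delta)$ is the composition of the smooth scalar profile $\overline{\vartheta}$ with the signed distance function, and that this signed distance is smooth with uniform bounds in the tubular neighbourhood $\Gamma^{m_\eps}(2\delta)$ by Theorem~\ref{th:approx_lim_uniform} (together with Remark~\ref{rem:UniformDeltaNeigborhood}). For the regularity statement (i), the key observation is that $d_{\Gamma^{m_\eps}}$ may fail to be smooth globally in $\Omega$, but outside $\Gamma^{m_\eps}(\delta)$ one has $|d_{\Gamma^{m_\eps}}/\delta|\geq 1$, where $\overline{\vartheta}\equiv\pm1$ is constant. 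Thus $\vartheta$ is smooth outside $\Gamma^{m_\eps}(\delta)$ (being locally constant) and inside $\Gamma^{m_\eps}(2\delta)$ it is a smooth composition; the two pieces match on the overlap since $\overline{\vartheta}=\pm1$ for $|r|\geq 1$. The uniform $m_\eps$-bounds follow from the chain rule and the uniform $C^1_tC^3_x$-bounds on $d_{\Gamma^{m_\eps}}$ from Theorem~\ref{th:approx_lim_uniform}.

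For the coercivity part (ii), I would simply invoke the assumed properties of $\overline{\vartheta}$. Since $c_{\overline{\vartheta}}|r|\leq|\overline{\vartheta}(r)|\leq C_{\overline{\vartheta}}|r|$ on $[-1,1]$ and $|\overline{\vartheta}|=1$ outside, one obtains, after possibly adjusting constants to absorb the $\delta$-scaling,
\begin{equation*}
c\min\{d_{\Gamma^{m_\eps}},1\}\leq|\vartheta|\leq C\min\{d_{\Gamma^{m_\eps}},1\}\quad\text{in }\Omega.
\end{equation*}
The sign information $\vartheta>0$ in $\Omega^{m_\eps,+}$ and $\vartheta<0$ in $\Omega^{m_\eps,-}$ is immediate from $\overline{\vartheta}>0$ on $(0,\infty)$, $\overline{\vartheta}<0$ on $(-\infty,0)$ together with the sign convention for $d_{\Gamma^{m_\eps}}$ inherited from $\vec n_{\Gamma^{m_\eps}}$.

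The main (mildly nontrivial) point is the evolution inequality (iii). By the chain rule,
\begin{equation*}
(\partial_t+B\cdot\nabla)\vartheta=\tfrac{1}{\delta}\,\overline{\vartheta}'\!\left(\tfrac{d_{\Gamma^{m_\eps}}}{\delta}\right)(\partial_t+B\cdot\nabla)d_{\Gamma^{m_\eps}}.
\end{equation*}
Outside $\Gamma^{m_\eps}(\delta)$ the factor $\overline{\vartheta}'$ vanishes, so the inequality is trivial there since $\min\{d_{\Gamma^{m_\eps}},1\}=1$. Inside $\Gamma^{m_\eps}(2\delta)$ one uses the argument already carried out in the proof of Lemma~\ref{th_xi_B}: from \eqref{eq_calib_proof1} one has $(\partial_t+B\cdot\nabla)d_{\Gamma^{m_\eps}}=0$ on $\Gamma^{m_\eps}$, and then a Taylor expansion in the normal coordinate together with the uniform regularity of $B$ and $d_{\Gamma^{m_\eps}}$ yields
\begin{equation*}
\bigl|(\partial_t+B\cdot\nabla)d_{\Gamma^{m_\eps}}\bigr|\leq C\,|d_{\Gamma^{m_\eps}}|\quad\text{on }\Gamma^{m_\eps}(2\delta),
\end{equation*}
uniformly in $m_\eps$. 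Combined with the $L^\infty$-bound on $\overline{\vartheta}'$, this gives (iii). The only subtle point to watch is that $B$ is only $C^{0,1}$ across $\Gamma^{m_\eps}$ (cf.\ Lemma~\ref{th_xi_B}(i)), but since $(\partial_t+B\cdot\nabla)d_{\Gamma^{m_\eps}}$ is evaluated pointwise a.e.\ and its vanishing on $\Gamma^{m_\eps}$ is a trace statement, the Taylor step works on each of the two sides of $\Gamma^{m_\eps}$ separately, which is sufficient for an a.e.\ inequality.
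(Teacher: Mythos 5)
Your proposal is correct and follows essentially the same route as the paper's proof, which is very brief (chain rule plus $(\partial_t+B\cdot\nabla)d_{\Gamma^{m_\eps}}=0$ on $\Gamma^{m_\eps}$ from \eqref{eq_calib_proof1}, then Taylor expansion in the normal coordinate, as already done for \eqref{eq_calib4}). The extra care you take about $B$ being only Lipschitz across $\Gamma^{m_\eps}$ — applying the one-sided Taylor/Lipschitz estimate on each side — is a sensible clarification that the paper leaves implicit; the uniform bounds needed for this come from $\nabla_\tau\nabla B\in L^\infty$ and $B\in L^p W^2_p$ in Lemma~\ref{th_xi_B} together with the smoothness of $d_{\Gamma^{m_\eps}}$ in $\Gamma^{m_\eps}(2\delta)$ from Theorem~\ref{th:approx_lim_uniform}.
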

\begin{proof}
	The regularity and uniform bounds for $\vartheta$ follow directly from the ones for $d_{\Gamma^{m_\eps}}$ by Theorem \ref{th:approx_lim_uniform}. The estimates \eqref{eq_theta_coerc1}-\eqref{eq_theta_coerc2} follow directly from the definition of $\vartheta$ and the properties of $\overline{\vartheta}$. Moreover, \eqref{eq_theta_evol} is shown via the chain rule and $(\partial_t+B\cdot\nabla)d_{\Gamma^{m_\eps}}=0$ on $\Gamma^{m_\eps}$, cf.~\eqref{eq_calib_proof1}.
\end{proof}

\section{Stability Estimate and Convergence Result}\label{sec:Stability}
In this section we formulate our main results on stability and quantitative convergence for solutions of the Navier-Stokes/Allen-Cahn system \eqref{eq_NSAC1}-\eqref{eq_NSAC5} towards solutions of the classical two-phase Navier-Stokes equation \eqref{eq_TwoPhase1}-\eqref{eq_TwoPhase7} for suitable scalings of the mobility $m_\eps$. 

We obtain the following stability result:

\begin{theorem}[\textbf{Stability Estimate}]\label{th_stability}
	Let $m_\eps:=m_0\eps^\beta>0$, where $m_0>0$ and $\beta\in(0,2)$ are fixed. Let $T_0>0$ be such that the two-phase Navier-Stokes system with surface tension \eqref{eq_TwoPhase1}-\eqref{eq_TwoPhase7} has a smooth solution $(\ve_0^\pm,p_0^\pm,\Gamma)$ on $[0,T_0]$. Moreover, let $(\ve_{m_\eps}^\pm,p_{m_\eps}^\pm,\Gamma^{m_\eps})$ be strong solutions to the modified system \eqref{eq:ApproxLimit1}-\eqref{eq:ApproxLimit7} for $\eps>0$ small, cf.~Theorem \ref{th:approx_lim_uniform} below. Furthermore, let $(\ve_\eps, \varphi_\eps)$ for $\eps>0$ be energy-dissipating weak solutions to the Navier-Stokes/Allen-Cahn system \eqref{eq_NSAC1}-\eqref{eq_NSAC5} on $[0,T_0]$ for the constant mobility $m_\eps$ as in Remark \ref{th_weak_sol_NS} starting from initial data with energy uniformly bounded with respect to $\eps$. We use the notation from Section \ref{sec_Energy_def_coercivity}, in particular we define the relative energy functional $E[\ve_\eps,\varphi_\eps| \ve^{m_\eps},\Gamma^{m_\eps}]$ and the bulk error functional $E_\textup{bulk}[\varphi_\eps|\Gamma^{m_\eps}]$ as in \eqref{eq_Erel} and \eqref{eq_Ebulk}. 
	
	Then for $\eps>0$ small and a.e. $T\in[0,T_0]$ it holds
	\begin{align}\begin{split}\label{eq_stab}
			&\frac{1}{2}\|\nabla\ve_\eps-\nabla\ve^{m_\eps}\|_{L^2(0,T;L^2(\Omega))}^2+E[\ve_\eps,\varphi_\eps| \ve^{m_\eps},\Gamma^{m_\eps}](T)+E_\textup{bulk}[\varphi_\eps|\Gamma^{m_\eps}](T)\\
			&\leq e^{C(\beta,m_0)T}\left(E[\ve_\eps,\varphi_\eps| \ve^{m_\eps},\Gamma^{m_\eps}](0)+E_\textup{bulk}[\varphi_\eps|\Gamma^{m_\eps}](0)+C\frac{\eps^2}{m_\eps}\right).\end{split}
	\end{align}
      \end{theorem}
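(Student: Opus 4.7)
The plan is to derive a Gronwall-type differential inequality of the form
\begin{align*}
&\frac{d}{dt}\bigl(E[\ve_\eps,\varphi_\eps|\ve^{m_\eps},\Gamma^{m_\eps}]+E_\textup{bulk}[\varphi_\eps|\Gamma^{m_\eps}]\bigr)+\tfrac{1}{2}\|\nabla\ve_\eps-\nabla\ve^{m_\eps}\|_{L^2(\Omega)}^2\\
&\qquad\leq C(\beta,m_0)\bigl(E[\ve_\eps,\varphi_\eps|\ve^{m_\eps},\Gamma^{m_\eps}]+E_\textup{bulk}[\varphi_\eps|\Gamma^{m_\eps}]\bigr)+C\tfrac{\eps^2}{m_\eps},
\end{align*}
and then integrate on $[0,T]$. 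The starting point is to combine the NSAC energy dissipation inequality from Remark \ref{th_weak_sol_NS} with the strong PDEs satisfied by $\ve^{m_\eps}$ from Theorem \ref{th:approx_lim_uniform} and the transport/rotation identities for $\xi$, $B$, $\vartheta$ provided by Lemma \ref{th_xi_B} and Lemma \ref{th_theta}. Every term arising after differentiating $E$ and $E_\textup{bulk}$ in time will then be sorted into (i) coercive dissipation $\|\nabla\ve_\eps-\nabla\ve^{m_\eps}\|^2_{L^2}$ and a fraction of the Allen-Cahn dissipation $\int|\partial_t\varphi_\eps+\ve_\eps\cdot\nabla\varphi_\eps|^2\,\dx$ kept on the left, (ii) Gronwall-type terms controlled by $E+E_\textup{bulk}$ via the coercivity Lemmas \ref{th_Erel_coercivity} and \ref{th_Ebulk_coercivity}, and (iii) a genuine remainder of size $\eps^2/m_\eps$ measuring the consistency error between the diffuse Korteweg stress and the sharp-interface surface tension force.

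Concretely, writing $\tfrac12|\ve_\eps-\ve^{m_\eps}|^2=\tfrac12|\ve_\eps|^2-\ve_\eps\cdot\ve^{m_\eps}+\tfrac12|\ve^{m_\eps}|^2$, I would test \eqref{eq_NSAC1} against $\ve^{m_\eps}$ and the strong momentum equation of the modified two-phase flow against $\ve_\eps-\ve^{m_\eps}$; together with the jump condition \eqref{eq_TwoPhase3} this turns the pressure/stress jump into a boundary term of the form $-\sigma\int_{\Gamma^{m_\eps}_t}H\,\no\cdot\ve_\eps\,\dS$, while integration by parts recasts the capillary force in \eqref{eq_NSAC1} as $\int\eps(\nabla\varphi_\eps\otimes\nabla\varphi_\eps):\nabla\ve^{m_\eps}\,\dx$. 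Differentiating $\int\xi\cdot\nabla\psi_\eps$ in time, invoking \eqref{eq_NSAC3} and substituting $\partial_t\xi$ through the near-transport relations \eqref{eq_calib4}--\eqref{eq_calib5} organizes the velocity/phase coupling into a form whose residual terms all carry either $\min\{d_{\Gamma^{m_\eps}},1\}$ or $(1-\vec{n}_\eps\cdot\xi)$, both of which are absorbed by the relative entropy through \eqref{eq_Erel_coercivity_tilt} and \eqref{eq_Erel_coercivity_1}--\eqref{eq_Erel_coercivity_2}. The bulk evolution $\frac{d}{dt}E_\textup{bulk}$ is computed analogously using $\partial_t\psi_\eps=\sqrt{2W(\varphi_\eps)}\partial_t\varphi_\eps$, the near-transport \eqref{eq_theta_evol} of $\vartheta$ by $B$, and the convective control \eqref{eq_Ebulk_coercivity2}.

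The main obstacle, and exactly what Sections \ref{sec_Erel_inequ_lastterm}--\ref{sec_energy_out_strip} are designed to address, is the control of the equipartition-defect terms of the form
\begin{align*}
\int_\Omega\Bigl(\tfrac{\eps}{2}|\partial_n\varphi_\eps|^2-\tfrac{1}{\eps}W(\varphi_\eps)\Bigr)\partial_n\tilde{\eta}_{m_\eps}\,\dx,
\end{align*}
produced when the normal component of $\nabla B$ hits the Korteweg stress. A direct Cauchy-Schwarz estimate using \eqref{eq_coercivity_n_tau} only bounds such a term by $\sqrt{E[\varphi_\eps|\Gamma^{m_\eps}]}$, which is fatal for Gronwall. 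To repair this I would choose a level $b(t)\in(-\tfrac12,\tfrac12)$ for which the level set $\{\varphi_\eps(\cdot,t)=b(t)\}$ may be represented as a graph over $\Gamma^{m_\eps}_t$ up to a graph-error controlled by $E[\varphi_\eps|\Gamma^{m_\eps}]$, via the level-set approximation machinery from \cite{FischerHensel}. Integration by parts along this graph and along the normal direction rewrites the offending term as a bilinear form in $(\partial_t+\ve_\eps\cdot\nabla)\varphi_\eps$ and a smooth normal profile, so that a factor $m_\eps$ can be extracted from the Allen-Cahn dissipation $\int|\partial_t\varphi_\eps+\ve_\eps\cdot\nabla\varphi_\eps|^2\,\dx$; the compensating $1/m_\eps$ is paid for by the uniform $L^\infty$-bound on $\nabla B$ from Lemma \ref{th_xi_B} together with the tangential-gradient bound \eqref{eq_coercivity_n_tau}. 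The net remainder after all cancellations is precisely $C\eps^2/m_\eps$ plus Gronwall-absorbable multiples of $E+E_\textup{bulk}$, and the assumption $\beta<2$ enters exactly to ensure $m_\eps\gg\eps^2$ so that this trade remains productive. Adding the two differential inequalities and applying Gronwall's lemma on $[0,T]$ then yields \eqref{eq_stab}, with $C(\beta,m_0)$ absorbing the uniform bounds for $\ve^{m_\eps}$, $\xi$, $B$, $\vartheta$ from Lemmas \ref{th_xi_B}--\ref{th_theta} and Theorem \ref{th:approx_lim_uniform}.
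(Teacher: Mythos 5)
Your high-level plan — a Gronwall argument driven by the relative entropy inequality plus the bulk error identity, with the problematic equipartition-defect term handled by approximating a level set of $\varphi_\eps$ by a graph over $\Gamma^{m_\eps}_t$ and then integrating by parts — is indeed the approach of the paper (Lemmas~\ref{th_Erel_inequ}, \ref{th_Erel_lastterm}, \ref{th_levelset}--\ref{th_lem_height}, \ref{th_Ebulk_identity}). However, your description of the decisive step has a genuine gap: you claim that after extracting a factor $m_\eps$ from the Allen–Cahn dissipation, ``the compensating $1/m_\eps$ is paid for by the uniform $L^\infty$-bound on $\nabla B$ from Lemma~\ref{th_xi_B} together with the tangential-gradient bound \eqref{eq_coercivity_n_tau}.'' That cannot work. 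The $L^\infty$-bound on $\nabla B$ only produces a constant of order one, and \eqref{eq_coercivity_n_tau} only controls $\sqrt\eps|\nabla_\tau\varphi_\eps|$ and the equipartition defect by $\sqrt{E[\varphi_\eps|\Gamma^{m_\eps}]}$; neither carries a power of $m_\eps$ or of $\eps$. After Young's inequality against the good term \eqref{eq_Erel_inequ_pos1} you are left with a term of the type \eqref{eq_estimate_lastterm3}, namely $\frac\eps{m_\eps}\int|\partial_n\varphi_\eps|^2\,|d_{\Gamma^{m_\eps}}-h(P_{\Gamma^{m_\eps}})|^2\,dx$ on a strip around the graph, and if one only uses the strip width $\delta_\eps$ and the $O(1)$ energy bound one gets $\frac{\eps}{m_\eps}\delta_\eps^2$ times the full energy, which is $\gg\frac{\eps^2}{m_\eps}$ for any useful $\delta_\eps$.

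What actually closes the estimate, and what your proposal does not supply, is the pair of observations in Sections~\ref{sec_Erel_inequ_lastterm}--\ref{sec_energy_out_strip}: the cutoff $\tilde\alpha$ must be centered on the approximate level set (not on $\Gamma^{m_\eps}_t$ itself) so that the profile $\tilde\eta$ vanishes on the graph and satisfies $|\tilde\eta|\lesssim|d_{\Gamma^{m_\eps}}-h(P_{\Gamma^{m_\eps}})|$; and the energy density decays exponentially in the normal distance to the graph (Lemma~\ref{th_energy_out_strip}, proved by an ODE argument in normal direction using the equipartition control \eqref{eq_coercivity_n_tau}), up to an error of size $E+E_\textup{bulk}$. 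A dyadic decomposition of the strip $\Gamma^{m_\eps}_{t,h}(N\eps)$ into shells of width $\eps$ (Corollary~\ref{th_energy_out2}) then trades the $\frac1{m_\eps}$ against $\sum_n n^2 e^{-cn}$ and the relative entropy, yielding the factor $C_0^3|\log\eps|^3\frac{\eps^2}{m_\eps}$ in front of the Gronwall quantity plus a constant $\frac{\eps^2}{m_\eps}$. The choice $\delta_\eps=C_0|\log\eps|\eps$ and the resulting $|\log\eps|^3$ loss are essential; they also show that the role of $\beta<2$ is to guarantee $|\log\eps|^3\eps^2/m_\eps\lesssim1$, not merely $\eps^2/m_\eps\lesssim1$ as you state. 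Without the exponential-decay estimate away from the graph and the dyadic summation, the proposal does not reach the claimed $O(\eps^2/m_\eps)$ remainder.

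Two smaller points: you also do not mention that the contributions from the gradient of the cutoff $\tilde\alpha$ live outside $\Gamma^{m_\eps}_{t,h}(\delta_\eps/2)$ and require Lemma~\ref{th_energy_out_strip} as well (term \eqref{eq_estimate_lastterm2}); and the tangential-gradient bound \eqref{eq_coercivity_n_tau} is used to control the genuinely tangential terms \eqref{eq_equipart4}--\eqref{eq_equipart6} arising in the integration by parts of Lemma~\ref{th_equipart_rewrite}, not the $1/m_\eps$-weighted normal one.
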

    
The proof is done via a Gronwall-type argument in Section \ref{sec_main_proof}, using coercivity properties for the relative entropy and the bulk error in Section \ref{sec_Energy_def_coercivity} as well as preliminary estimates in Section \ref{sec_Erel_estimate} and Section \ref{sec_Ebulk}. As an immediate consequence of Theorem \ref{th_stability} we get:

\begin{corollary}[\textbf{Convergence Result}]\label{th_conv}
	Let the assumptions of Theorem \ref{th_stability} hold. Moreover, let the initial data satisfy
	\begin{align}\label{eq_well_prep}
		E[\ve_\eps,\varphi_\eps| \ve^0,\Gamma](0)+E_\textup{bulk}[\varphi_\eps|\Gamma](0) \leq C\frac{\eps^2}{m_\eps}
	\end{align}
	for $\varepsilon>0$ small. Then for $\varepsilon>0$ small and a.e. $T\in[0,T_0]$ it holds
	\begin{align}\begin{split}\label{eq_conv1}
		\|(\ve_\eps-\ve^{m_\eps})(.,T)\|_{L^2(\Omega)} + \|\sigma\chi_{\Omega^{m_\eps,+}_T} -\psi_\eps(.,T)\|_{L^1(\Omega)} &\leq Ce^{C(\beta,m_0)T}\frac{\eps}{\sqrt{m_\eps}},\\
		\|\nabla\ve_\eps-\nabla\ve^{m_\eps}\|_{L^2(0,T;L^2(\Omega))}&\leq Ce^{C(\beta,m_0)T}\frac{\eps}{\sqrt{m_\eps}}\end{split}
	\end{align}
	and for the true limit we obtain 
	\begin{align}\begin{split}\label{eq_conv2}
		\|(\ve_\eps-\ve^0)(.,T)\|_{L^2(\Omega)} + \|\sigma\chi_{\Omega^+_T} -\psi_\eps(.,T)\|_{L^1(\Omega)} &\leq C\left(e^{C(\beta,m_0)T}\frac{\eps}{\sqrt{m_\eps}}+m_\eps\right),\\
		\|\nabla\ve_\eps-\nabla\ve^0\|_{L^2(0,T;L^2(\Omega))} &\leq C\left(e^{C(\beta,m_0)T}\frac{\eps}{\sqrt{m_\eps}}+m_\eps\right).\end{split}
	\end{align}
	Finally, there are well-prepared initial data $(\ve_{0,\eps},c_{0,\eps})$ for $\eps>0$ small in the sense that \eqref{eq_well_prep} is satisfied, even with rate $\eps^2$.
\end{corollary}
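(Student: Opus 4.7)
The plan is to deduce the corollary from the stability estimate Theorem~\ref{th_stability}, combined with the approximation result Theorem~\ref{th_approx_m_dependence} which quantifies the distance between the modified limit $(\ve^{m_\eps},\Gamma^{m_\eps})$ and the true limit $(\ve^0,\Gamma)$.

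First, to prove \eqref{eq_conv1}, I would feed the well-preparedness assumption \eqref{eq_well_prep} into \eqref{eq_stab}. Since the two limit systems coincide at $t=0$, the two initial-energy quantities agree and the right-hand side of \eqref{eq_stab} is bounded by $Ce^{C(\beta,m_0)T}\eps^2/m_\eps$. The coercivity inequality \eqref{eq_Erel_coercivity_v} then yields $\|(\ve_\eps-\ve^{m_\eps})(\cdot,T)\|_{L^2(\Omega)}^2\leq 2E[\ve_\eps,\varphi_\eps|\ve^{m_\eps},\Gamma^{m_\eps}](T)$, and \eqref{eq_Ebulk_coercivity1} gives $\|\sigma\chi_{\Omega^{m_\eps,+}_T}-\psi_\eps(\cdot,T)\|_{L^1(\Omega)}^2\leq CE_\textup{bulk}[\varphi_\eps|\Gamma^{m_\eps}](T)$; taking square roots produces the bounds in \eqref{eq_conv1}. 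The gradient estimate follows directly from the first term on the left-hand side of \eqref{eq_stab}.

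Second, for \eqref{eq_conv2} I would use the triangle inequality,
\begin{align*}
\|\ve_\eps-\ve^0\|_{L^2(\Omega)} &\leq \|\ve_\eps-\ve^{m_\eps}\|_{L^2(\Omega)}+\|\ve^{m_\eps}-\ve^0\|_{L^2(\Omega)},\\
\|\sigma\chi_{\Omega^+_T}-\psi_\eps\|_{L^1(\Omega)} &\leq \|\sigma\chi_{\Omega^{m_\eps,+}_T}-\psi_\eps\|_{L^1(\Omega)}+\sigma\|\chi_{\Omega^+_T}-\chi_{\Omega^{m_\eps,+}_T}\|_{L^1(\Omega)},
\end{align*}
together with the analogous split for $\|\nabla\ve_\eps-\nabla\ve^0\|_{L^2(0,T;L^2(\Omega))}$. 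The first term on each right-hand side is exactly what was established in \eqref{eq_conv1}, while Theorem~\ref{th_approx_m_dependence} provides the bound $Cm_\eps$ for the second term; the $L^1$-smallness of the symmetric difference of the phase domains follows from the $C^0$-smallness of $d_{\Gamma^{m_\eps}}-d_\Gamma$ in a tubular neighborhood of the interfaces.

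Third, to construct well-prepared initial data with the sharper rate $\eps^2$, I would use the Modica-Mortola ansatz $\varphi_{0,\eps}(x):=\theta_0(d_{\Gamma^0}(x)/\eps)$, suitably truncated far from $\Gamma^0$ and corrected in a neighborhood of $\partial\Omega$ to enforce $\varphi_{0,\eps}|_{\partial\Omega}=-1$, together with the divergence-free initial velocity $\ve_{0,\eps}:=\ve^0_{0,0}$, which makes the velocity part of the relative entropy vanish at $t=0$. The algebraic identity $(\theta_0')^2=2W(\theta_0)$ forces the Modica-Mortola square $\tfrac12(\sqrt{\eps}|\nabla\varphi_{0,\eps}|-\tfrac{1}{\sqrt{\eps}}\sqrt{2W(\varphi_{0,\eps})})^2$ to vanish pointwise on the interfacial region; the residual interface contribution $(1-\xi\cdot\vec{n}_\eps)|\nabla\varphi_{0,\eps}|\sqrt{2W(\varphi_{0,\eps})}$, as well as the bulk error integrand, are exponentially small in $1/\eps$ thanks to the exponential decay of $\theta_0'$ and of $\sigma\chi_{\{d_{\Gamma^0}>0\}}-\psi(\theta_0(d_{\Gamma^0}/\eps))$ away from the interface, which comfortably beats the required $O(\eps^2)$ rate.

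The only delicate technical point I anticipate is the boundary-layer correction near $\partial\Omega$: the Dirichlet condition $\varphi_{0,\eps}|_{\partial\Omega}=-1$ must be attained without inflating the rate. This is handled by exploiting the standing geometric assumption that $\Gamma^0$ keeps a fixed positive distance from $\partial\Omega$, so that $\theta_0(d_{\Gamma^0}/\eps)$ is already exponentially close to $-1$ in a whole neighborhood of $\partial\Omega$ and any localized correction contributes only an exponentially small amount to both error functionals.
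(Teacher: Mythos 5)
Your proof follows the same route as the paper's: plug \eqref{eq_well_prep} into \eqref{eq_stab} (using $(\ve^{m_\eps}_{m_\eps},\Gamma^{m_\eps})(0)=(\ve^0,\Gamma)(0)$ so the two initial energies coincide), then apply the coercivity bounds \eqref{eq_Erel_coercivity_v} and \eqref{eq_Ebulk_coercivity1} and take square roots to get \eqref{eq_conv1}; \eqref{eq_conv2} follows by a triangle inequality combined with the $O(m_\eps)$ bound of Theorem~\ref{th_approx_m_dependence}. Where the paper simply cites \cite{FischerLauxSimon} and \cite{HenselMoser} for the well-prepared data, you sketch the standard Modica--Mortola ansatz; that is a sensible elaboration.

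One correction to that sketch, however. After the Modica--Mortola rearrangement, the interfacial part of the relative entropy at $t=0$ is
\[
E[\varphi_{0,\eps}|\Gamma^0]=\int_\Omega \tfrac12\Bigl(\sqrt{\eps}|\nabla\varphi_{0,\eps}|-\tfrac{1}{\sqrt{\eps}}\sqrt{2W(\varphi_{0,\eps})}\Bigr)^2\,dx+\int_\Omega (1-\xi\cdot\vec{n}_\eps)|\nabla\psi_{0,\eps}|\,dx.
\]
The first integrand indeed vanishes pointwise on the interfacial strip for the pure profile $\theta_0(d_{\Gamma^0}/\eps)$, and is exponentially small in $1/\eps$ once one truncates far from $\Gamma^0$. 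But the second integral and the bulk error are \emph{not} exponentially small; they are exactly $O(\eps^2)$. For the signed-distance ansatz one has $\vec{n}_\eps=\vec{n}$, so $1-\xi\cdot\vec{n}_\eps = 1-\bar{\eta}(d_{\Gamma^0}/\delta)\sim (d_{\Gamma^0}/\delta)^2$ by \eqref{eq_cutoff_quadr}; this is a quadratic, not exponential, weight. Substituting $r=d_{\Gamma^0}/\eps$, the tilt-excess becomes $\sim \tfrac{\eps^2}{\delta^2}\int r^2\,(\theta_0')^2(r)\,dr=O(\eps^2)$, and the bulk error (weighted by $\min\{d_{\Gamma^0},1\}$) is similarly $O(\eps^2)$. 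The exponential decay of $\theta_0'$ merely guarantees these moments are finite; it does not produce an exponentially small integral. So the construction achieves precisely the rate $\eps^2$ claimed in the corollary, and does not ``comfortably beat'' it.
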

\begin{proof}
	Estimate \eqref{eq_conv1} is a direct consequence of Theorem \ref{th_stability} and the coercivity estimates \eqref{eq_Erel_coercivity_v} from Lemma \ref{th_Erel_coercivity} and \eqref{eq_Ebulk_coercivity1} from Lemma \ref{th_Ebulk_coercivity}. Then \eqref{eq_conv2} follows from Theorem \ref{th_approx_m_dependence}. Because of $(\ve_{m_\eps}^\pm,\Gamma^{m_\eps})(0)=(\ve_{0,0}^\pm,\Gamma_0)$, it holds $E[\ve_\eps,\varphi_\eps| \ve^{m_\eps},\Gamma^{m_\eps}](0)=E[\ve_\eps,\varphi_\eps| \ve^0,\Gamma](0)$ and $E_\textup{bulk}[\varphi_\eps|\Gamma^{m_\eps}](0)=E_\textup{bulk}[\varphi_\eps|\Gamma](0)$. The existence of well-prepared initial data is well-known, cf.~\cite[Proof of Theorem 1]{FischerLauxSimon} and \cite[Appendix B]{HenselMoser}.
\end{proof}

\section{Relative Entropy Estimate}\label{sec_Erel_estimate}
Let the assumptions and notation of Section \ref{sec_Energy_def_coercivity} be in place. In this section we derive an inequality for the relative entropy $E[\ve_\eps,\varphi_\eps| \ve^{m_\eps},\Gamma^{m_\eps}]$ defined in \eqref{eq_Erel} that can be employed later to obtain a Gronwall-type estimate. We use the notation
\begin{align}\label{eq_Heps_def}
	H_\eps := -\eps\Delta\varphi_\eps + \frac{1}{\eps} W'(\varphi_\eps).
\end{align}
\subsection{Preliminary Relative Entropy Inequality}
We derive the first important estimate for the relative entropy:
\begin{lemma}[\textbf{Relative Entropy Inequality}]\label{th_Erel_inequ} Let the assumptions and notation of Section \ref{sec_Energy_def_coercivity} be valid and $H_\eps$ be defined as in \eqref{eq_Heps_def}. More precisely, let $(\ve_{m_\eps}^\pm,p_{m_\eps}^\pm,(\Gamma_t^{m_\eps})_{t\in[0,T_0]})$ for $m_\eps>0$ small be solutions of the adjusted two-phase Navier-Stokes equation \eqref{eq:ApproxLimit1}-\eqref{eq:ApproxLimit7} on $[0,T_0]$, cf.~Theorem \ref{th:approx_lim_uniform} below. Additionally, let $(\ve_\eps, \varphi_\eps)$ for $\eps>0$ small be energy dissipating weak solutions to the Navier-Stokes/Allen-Cahn system \eqref{eq_NSAC1}-\eqref{eq_NSAC5} on $[0,T_0]$ with constant mobility $m_\eps>0$ as in Remark \ref{th_weak_sol_NS}. Finally, let $\Gamma^{m_\eps}, \Omega^{m_\eps,\pm}, \ve^{m_\eps}$ be as in \eqref{eq_notation_meps}, $\sigma$, $\psi_\eps$, $\vec{n}_\eps$ be as in \eqref{eq_psi_def}-\eqref{eq_n_eps}, $\xi$, $B$ be as in \eqref{eq_xi_B_def}, $E[\ve_\eps,\varphi_\eps| \ve^{m_\eps},\Gamma^{m_\eps}]$ be as in \eqref{eq_Erel}. Then for a.e.~$T\in[0,T_0]$ we obtain:\begin{subequations}
	\begin{align}
		E[\ve_\eps,&\varphi_\eps| \ve^{m_\eps},\Gamma^{m_\eps}](T)\leq E[\ve_\eps,\varphi_\eps| \ve^{m_\eps},\Gamma^{m_\eps}](0)-\int_0^T\int_\Omega |\nabla \ve_\eps-\nabla \ve^{m_\eps}|^2\,dx\,dt\notag\\
		& - \int_0^T \int_\Omega \frac{m_\eps}{2\eps}\left|H_\eps + \sqrt{2W(\varphi_\eps)}\nabla\cdot\xi\right|^2 dx\,dt \label{eq_Erel_inequ_pos1}\\
		& - \int_0^T \int_\Omega \frac{m_\eps}{2\eps}\left|H_\eps - \frac{B-\ve^{m_\eps}}{m_\eps}\cdot \xi\,\eps|\nabla\varphi_\eps|\right|^2 dx\,dt \label{eq_Erel_inequ_pos2}\\
		& - \int_0^T \int_\Omega (\ve_\eps-\ve^{m_\eps})\cdot ((\ve_\eps-\ve^{m_\eps})\cdot\nabla)\ve^{m_\eps}\,dx\,dt\notag\\
		& - \int_0^T \int_\Omega (\sigma\chi_{\Omega^{m_\eps,+}}-\psi_\eps) ((\ve_\eps-\ve^{m_\eps})\cdot\nabla)(\nabla\cdot\xi)\,dx\,dt\notag\\
		& + \int_0^T \int_\Omega m_\eps \left|\frac{B-\ve^{m_\eps}}{m_\eps}\cdot\xi + \nabla\cdot\xi\right|^2 \eps|\nabla\varphi_\eps|^2\,dx\,dt \label{eq_Erel_inequ_problem1}\\
		& + \int_0^T \int_\Omega m_\eps |\nabla\cdot\xi|^2\left(\frac{\sqrt{2W(\varphi_\eps)}}{\sqrt{\eps}} - \sqrt{\eps}|\nabla\varphi_\eps|\right)^2 dx\,dt\notag\\
		& - \int_0^T \int_\Omega \frac{1}{\sqrt{\eps}}\left(H_\eps+\sqrt{2W(\varphi_\eps)}\nabla\cdot\xi\right) (\ve^{m_\eps}-B)\cdot(\vec{n}_\eps-\xi)\sqrt{\eps}|\nabla\varphi_\eps|\,dx\,dt \label{eq_Erel_inequ_problem2}\\
		& - \int_0^T \int_\Omega (\partial_t\xi+(B\cdot\nabla)\xi+(\textup{Id}-\xi\otimes\xi)(\nabla B)^\top\xi)\cdot(\vec{n}_\eps-\xi)|\nabla\psi_\eps|\,dx\,dt\notag\\
		& - \int_0^T \int_\Omega (\xi\otimes\xi(\nabla B)^\top\xi)\cdot(\vec{n}_\eps-\xi)|\nabla\psi_\eps|\,dx\,dt\label{eq_Erel_inequ_new}\\
		& - \int_0^T \int_\Omega \left((\partial_t+B\cdot\nabla)|\xi|^2\right) |\nabla\psi_\eps|\,dx\,dt\notag\\
		& - \int_0^T \int_\Omega \nabla B:(\xi-\vec{n}_\eps)\otimes(\xi-\vec{n}_\eps)|\nabla\psi_\eps|\,dx\,dt\notag\\
		& + \int_0^T \int_\Omega (\nabla\cdot B)(1-\xi\cdot \vec{n}_\eps)|\nabla\psi_\eps|\,dx\,dt\notag\\
		& + \int_0^T \int_\Omega (\nabla\cdot B)\frac{1}{2}\left(\sqrt{\eps}|\nabla\varphi_\eps|-\frac{1}{\sqrt{\eps}}\sqrt{2W(\varphi_\eps)}\right)^2 dx\,dt\notag\\
		& - \int_0^T \int_\Omega (\vec{n}_\eps\otimes \vec{n}_\eps-\xi\otimes\xi):\nabla B(\eps|\nabla\varphi_\eps|^2-|\nabla\psi_\eps|)\,dx\,dt\notag\\
		& - \int_0^T \int_\Omega \xi\otimes\xi:\nabla B(\eps|\nabla\varphi_\eps|^2-|\nabla\psi_\eps|)\,dx\,dt. \label{eq_Erel_inequ_problem3}
	\end{align}\end{subequations}
\end{lemma}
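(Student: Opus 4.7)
The plan is to differentiate each summand of $E[\ve_\eps,\varphi_\eps|\ve^{m_\eps},\Gamma^{m_\eps}]$ in time and recombine the resulting identities into the twelve terms of the statement. Three equations drive the computation: the $L^2$ energy dissipation inequality of Remark~\ref{th_weak_sol_NS} for $(\ve_\eps,\varphi_\eps)$; the modified two-phase Navier-Stokes system for $(\ve^{m_\eps},p^{m_\eps},\Gamma^{m_\eps})$ with surface tension jump $\llbracket 2D\ve^{m_\eps}-p^{m_\eps}\textup{I}\rrbracket\no=-\sigma H\no$; and the near-transport identities \eqref{eq_calib4}--\eqref{eq_calib5} for $\xi$ together with the chain rule $\partial_t\psi_\eps=\sqrt{2W(\varphi_\eps)}\partial_t\varphi_\eps$.

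\emph{Velocity error.} I would test \eqref{eq_NSAC1} against $\ve^{m_\eps}$ and \eqref{eq:ApproxLimit1} against $\ve_\eps-\ve^{m_\eps}$ in each phase, then combine with the $L^2$ energy identities for $\ve_\eps$ and $\ve^{m_\eps}$. The viscous terms collect into $-\int|\nabla(\ve_\eps-\ve^{m_\eps})|^2\,dx$, the cubic convection into $-\int(\ve_\eps-\ve^{m_\eps})\cdot((\ve_\eps-\ve^{m_\eps})\cdot\nabla)\ve^{m_\eps}\,dx$, the diffuse capillary stress into $\eps\int(\nabla\varphi_\eps\otimes\nabla\varphi_\eps):\nabla\ve^{m_\eps}\,dx$, and the surface tension jump into $\sigma\int_{\Gamma^{m_\eps}_t}H\no\cdot(\ve_\eps-\ve^{m_\eps})\,d\mathcal{H}^{d-1}$. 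I would then convert this surface integral into a bulk one by integrating by parts against $(\sigma\chi_{\Omega^{m_\eps,+}}-\psi_\eps)$ (using $\no=\xi$ and $-H=\nabla\cdot\xi$ on $\Gamma^{m_\eps}$), which produces the bulk transport term $-\int(\sigma\chi_{\Omega^{m_\eps,+}}-\psi_\eps)((\ve_\eps-\ve^{m_\eps})\cdot\nabla)(\nabla\cdot\xi)\,dx$ and auxiliary contributions to be matched with the diffuse capillary stress later.

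\emph{Phase-field part and completion of squares.} The NSAC energy identity combined with the integration by parts $-\tfrac{d}{dt}\int\xi\cdot\nabla\psi_\eps\,dx=-\int\partial_t\xi\cdot\nabla\psi_\eps\,dx-\int(\nabla\cdot\xi)\bigl(\ve_\eps\cdot\nabla\psi_\eps+\sqrt{2W(\varphi_\eps)}\tfrac{m_\eps}{\eps}H_\eps\bigr)\,dx$ produces two cross terms in $H_\eps$. I then split the dissipation $-\tfrac{m_\eps}{\eps}\int H_\eps^2\,dx$ into two halves. The first half absorbs the cross term $\sqrt{2W(\varphi_\eps)}\tfrac{m_\eps}{\eps}H_\eps\nabla\cdot\xi$ into the square \eqref{eq_Erel_inequ_pos1}, with leftover $\tfrac{m_\eps}{\eps}W(\varphi_\eps)|\nabla\cdot\xi|^2$ rewritten via the Modica--Mortola trick to produce the equipartition-weighted $|\nabla\cdot\xi|^2$ residue in the statement. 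The second half absorbs the cross term $H_\eps(\ve_\eps-\ve^{m_\eps})\cdot\nabla\varphi_\eps$ (obtained by writing $\ve_\eps=\ve^{m_\eps}+(\ve_\eps-\ve^{m_\eps})$ in the Allen-Cahn cross term and replacing $\vec{n}_\eps$ by $\xi$) into the square \eqref{eq_Erel_inequ_pos2}, producing the $(\vec{n}_\eps-\xi)$-correction \eqref{eq_Erel_inequ_problem2} and, after combining with the first-half residue, the calibration-defect square \eqref{eq_Erel_inequ_problem1}.

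\emph{Tilt-excess terms and obstacle.} The remaining piece $-\int\partial_t\xi\cdot\nabla\psi_\eps\,dx$ is treated by inserting \eqref{eq_calib5} to write $\partial_t\xi=-(B\cdot\nabla)\xi-(\textup{Id}-\xi\otimes\xi)(\nabla B)^\top\xi+O(\min\{d_{\Gamma^{m_\eps}},1\})$, integrating the $(B\cdot\nabla)\xi$ piece by parts to move the derivative onto $\psi_\eps$, and combining with the advection $-\int(\nabla\cdot\xi)\ve_\eps\cdot\nabla\psi_\eps\,dx$ after splitting $\ve_\eps=B+(\ve_\eps-B)$. The algebraic identities $2\xi\cdot\vec{n}_\eps=|\xi|^2+1-|\xi-\vec{n}_\eps|^2$ and $\vec{n}_\eps\otimes\vec{n}_\eps-\xi\otimes\xi=\vec{n}_\eps\otimes(\vec{n}_\eps-\xi)+(\vec{n}_\eps-\xi)\otimes\xi$ then reorganize everything into the tilt-excess, transport-length-defect, and equipartition-weighted $\nabla\cdot B$ contributions, including the extra term \eqref{eq_Erel_inequ_new} forced by $|\xi|<1$ off $\Gamma^{m_\eps}$. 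The main obstacle is the accounting in the completion-of-squares step: the single dissipation reservoir $-\tfrac{m_\eps}{\eps}\int H_\eps^2\,dx$ must simultaneously feed two independent square completions, and the residues must be cast in a form that is quadratic in the calibration defect $\tfrac{B-\ve^{m_\eps}}{m_\eps}\cdot\xi+\nabla\cdot\xi$, so that by \eqref{eq_calib3} they are $O(\min\{d_{\Gamma^{m_\eps}}^2,1\})$ and hence absorbable by the relative entropy via \eqref{eq_Erel_coercivity_1}; this is precisely the step at which the modification of \eqref{eq_TwoPhase5} to include $m_\eps H$ in \eqref{eq:ApproxLimit6} becomes essential, since without it the defect would only vanish to zeroth order in $m_\eps$.
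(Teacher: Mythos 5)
Your proposal reconstructs from first principles precisely the computation that the paper's one-line proof delegates to Hensel--Liu, Proposition~6 (adapted here because $B$ lacks compact support but satisfies $B|_{\partial\Omega}=0$ per Lemma~\ref{th_xi_B}), and the structure you describe --- velocity balance giving viscous dissipation, cubic convection, capillary stress, and surface tension converted to bulk form via $(\sigma\chi_{\Omega^{m_\eps,+}}-\psi_\eps)$; the time derivative of $-\int\xi\cdot\nabla\psi_\eps\,dx$ producing the $H_\eps$ cross terms and two square completions feeding on the single $-\tfrac{m_\eps}{\eps}\int H_\eps^2$ dissipation reservoir; and the tilt-excess algebra forcing the extra term \eqref{eq_Erel_inequ_new} because $|\xi|<1$ off $\Gamma^{m_\eps}$ --- matches that computation. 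A few bookkeeping steps are glossed over (notably the passage from the velocity-balance cross term $\int H_\eps(\ve_\eps-\ve^{m_\eps})\cdot\nabla\varphi_\eps\,dx$ to the $\tfrac{B-\ve^{m_\eps}}{m_\eps}$-form in the second square, which requires the additional splittings $\vec{n}_\eps=\xi+(\vec{n}_\eps-\xi)$ and $\ve_\eps-\ve^{m_\eps}=(\ve_\eps-B)+(B-\ve^{m_\eps})$ whose by-products you don't trace), but these are calculational details in an otherwise sound sketch of the same argument.
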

\begin{proof}
Unlike in \cite{HenselLiu}, our choice of $B$ does not have compact support, but due to the boundary condition $B|_{\partial\Omega}=0$ by Lemma \ref{th_xi_B} we observe that analogous computations as in the proof of \cite[Proposition 6]{HenselLiu} may be carried out.
\end{proof}

\begin{remark}\label{th_Erel_inequ_problem_rem}\upshape
   { The choice of $B$ in \eqref{eq_xi_B_def}, i.e.~$B:=\ve^{m_\eps}+m_\eps H\vec{n}\tilde{\eta}_{m_\eps}$ with the plateau cutoff $\tilde{\eta}_{m_\eps}$, is natural in order to control the term \eqref{eq_Erel_inequ_problem1}, i.e.~$\int_0^T \int_\Omega m_\eps \left|\frac{B-\ve^{m_\eps}}{m_\eps}\cdot\xi + \nabla\cdot\xi\right|^2 \eps|\nabla\varphi_\eps|^2\,dx\,dt$. Note that in Hensel, Liu \cite{HenselLiu} the projected velocity field $(\vec{n}\cdot \ve^{m_\eps}|_{P_{\Gamma^{m_\eps}}})\vec{n}$ is used within the definition of $B$ instead. This is not possible here because one would then obtain from \eqref{eq_Erel_inequ_problem1} a remainder of the form}
    \[
 {   \frac{1}{m_\eps}\int_0^T\int_\Omega\min\{d_{\Gamma^{m_\eps}}^2,1\}\eps|\nabla\varphi_\eps|^2\,dx\,dt,}
    \]
{    which is only controlled by $\frac{C}{m_\eps}\int_0^T E[\varphi_\eps|\Gamma^{m_\eps}](t)\,dt$ due to \eqref{eq_Erel_coercivity_1}. However, with the new choice of $B$ it is not clear anymore how to estimate the last term \eqref{eq_Erel_inequ_problem3}, i.e.~}
    \[
{    \int_0^T \int_\Omega \xi\otimes\xi:\nabla B(\eps|\nabla\varphi_\eps|^2-|\nabla\psi_\eps|)\,dx\,dt.}
    \]
{    To this end we rewrite and estimate the term \eqref{eq_Erel_inequ_problem3} in a novel way in the following Section \ref{sec_Erel_inequ_lastterm}.} The idea is to write $\xi\otimes\xi:\nabla B$ as a normal derivative and use integration by parts in a suitable way. The other terms in the relative entropy estimate from Lemma \ref{th_Erel_inequ} will turn out to be controllable with the choice of $B$ in \eqref{eq_xi_B_def}, the coercivity properties of the relative entropy and the bulk error functional from Section \ref{sec_Erel} and Section \ref{sec_Ebulk}, cf.~Section \ref{sec_main_proof} below.
\end{remark}
\subsection{The Remaining Problematic Term}\label{sec_Erel_inequ_lastterm}
For a.e.~$t\in[0,T_0]$ consider a $1$-Lipschitz-function $h=h_\eps(.,t):\Gamma^{m_\eps}_t\rightarrow(-\delta,\delta)$. This function $h$ will be constructed in Section \ref{sec_level_set_h} below, and it will be used to approximate a suitable level set of $\varphi_\eps(.,t)$. Moreover, the energy outside a strip around the graph 
\[
\Gamma^{m_\eps}_{t,h}:=\{s+h_\eps(s,t)\vec{n}(s,t):s\in \Gamma_t^{m_\eps}\}
\] 
over $\Gamma^{m_\eps}_t$ determined by $h$ will be estimated in Section \ref{sec_energy_out_strip} below. Let us define shifted tubular neighbourhoods for a.e.~$t\in[0,T_0]$:
\begin{align}\label{eq_tub_neighb_shifted}
\Gamma^{m_\eps}_{t,h}(\tilde{\delta}):=\{x\in \Gamma^{m_\eps}_t(2\delta):d_{\Gamma^{m_\eps}}(x,t)\in h_\eps(P_{\Gamma^{m_\eps}}(x,t),t)+(-\tilde{\delta},\tilde{\delta})\}\quad\text{ for }\tilde{\delta}\in(0,\delta]. 
\end{align}

In order to estimate the problematic term from Remark \ref{th_Erel_inequ_problem_rem}, we need the following lemma whose proof is based on integration by parts and will be postponed.

\begin{lemma}\label{th_equipart_rewrite}
	For a.e.~$t\in[0,T_0]$ let $h=h_\eps(.,t):\Gamma^{m_\eps}_t\rightarrow(-\delta,\delta)$ be a $1$-Lipschitz-function. Moreover, let $\delta_\eps\in(0,\frac{\delta}{2}]$ for $\eps>0$ small, $\Gamma^{m_\eps}_{t,h}(\delta_\eps)$ be defined as in \eqref{eq_tub_neighb_shifted} and $\tilde{\eta}\in C^{0,1}_c(\Gamma^{m_\eps}_{t,h}(\delta_\eps))$. Then for a.e.~$t\in[0,T_0]$ it holds\begin{subequations}
	\begin{align}
		&\int_{\Gamma^{m_\eps}_{t,h}(\delta_\eps)} \left(\frac{\eps}{2}|\partial_n\varphi_\eps|^2-\frac{1}{\eps}W(\varphi_\eps)\right) \partial_n\tilde{\eta}\,dx\label{eq_equipart1}\\
		&=\int_{\Gamma^{m_\eps}_{t,h}(\delta_\eps)}\left(\frac{\eps}{2}|\partial_n\varphi_\eps|^2+\frac{1}{\eps}W(\varphi_\eps)- \sqrt{2W(\varphi_\eps)} \partial_n\varphi_\eps\right) \tilde{\eta} \nabla\cdot \vec{n}\,dx\label{eq_equipart2}\\
		&+\int_{\Gamma^{m_\eps}_{t,h}(\delta_\eps)} \left(H_\eps + \sqrt{2W(\varphi_\eps)} \nabla\cdot \vec{n}\right) \partial_n\varphi_\eps \tilde{\eta}\,dx\label{eq_equipart3}\\ 
		&-\int_{\Gamma^{m_\eps}_{t,h}(\delta_\eps)} \eps \nabla_\tau\varphi_\eps\cdot\nabla_\tau\tilde{\eta}\,\partial_n\varphi_\eps\,dx\label{eq_equipart4}\\
		&-\int_{\Gamma^{m_\eps}_{t,h}(\delta_\eps)} \eps\nabla_\tau\varphi_\eps \cdot (\nabla\vec{n}^\top\nabla_\tau\varphi_\eps)\tilde{\eta}\,dx\label{eq_equipart5}\\
		&+\int_{\Gamma^{m_\eps}_{t,h}(\delta_\eps)} \frac{\eps}{2}|\nabla_\tau\varphi_\eps|^2 \left(\tilde{\eta} \nabla\cdot\vec{n} + \partial_n\tilde{\eta}\right)\,dx.\label{eq_equipart6}
	\end{align}	\end{subequations}
\end{lemma}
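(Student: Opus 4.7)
The identity is an integration-by-parts rearrangement whose goal is to trade the $\partial_n\tilde\eta$-factor on the left for factors $\tilde\eta$ or $\nabla_\tau\tilde\eta$ on the right, while producing the Allen-Cahn discrepancy $H_\eps+\sqrt{2W(\varphi_\eps)}\nabla\cdot\vec n$ and the pointwise equipartition defect. The plan is to carry out three successive integrations by parts in the strip $\Gamma^{m_\eps}_{t,h}(\delta_\eps)$; all boundary terms vanish because $\tilde\eta\in C^{0,1}_c(\Gamma^{m_\eps}_{t,h}(\delta_\eps))$. Throughout I would use two facts from tubular-neighbourhood geometry: since $\vec n=\nabla d_{\Gamma^{m_\eps}}$, one has $\partial_n\vec n=\tfrac12\nabla|\vec n|^2=0$, and $\nabla\vec n$ is symmetric. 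The identity
\begin{equation*}
\int f\,\partial_n\tilde\eta\,dx=-\int\tilde\eta\,\partial_n f\,dx-\int\tilde\eta\,f\,(\nabla\cdot\vec n)\,dx,
\end{equation*}
which follows from $\nabla\cdot(f\vec n\tilde\eta)$ being a pure divergence, is the workhorse.

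First, I would apply this to $f=\tfrac{\eps}{2}|\partial_n\varphi_\eps|^2-\tfrac{1}{\eps}W(\varphi_\eps)$. Then $\partial_n f=\eps\,\partial_n\varphi_\eps\,\partial_n^2\varphi_\eps-\tfrac{1}{\eps}W'(\varphi_\eps)\partial_n\varphi_\eps$, and I rewrite $\partial_n^2\varphi_\eps$ using the identity $\Delta\varphi_\eps=\partial_n^2\varphi_\eps+(\nabla\cdot\vec n)\partial_n\varphi_\eps+\nabla\cdot\nabla_\tau\varphi_\eps$ together with the definition $H_\eps=-\eps\Delta\varphi_\eps+\tfrac{1}{\eps}W'(\varphi_\eps)$. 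This produces three contributions: a term $\tilde\eta\,H_\eps\partial_n\varphi_\eps$, which I rewrite as $\tilde\eta(H_\eps+\sqrt{2W(\varphi_\eps)}\nabla\cdot\vec n)\partial_n\varphi_\eps$ minus $\tilde\eta\sqrt{2W(\varphi_\eps)}(\nabla\cdot\vec n)\partial_n\varphi_\eps$; a term $\tilde\eta(\tfrac{\eps}{2}|\partial_n\varphi_\eps|^2+\tfrac{1}{\eps}W(\varphi_\eps))\nabla\cdot\vec n$ coming from combining the $W'\partial_n\varphi$-cancellation with the curvature term; and a leftover $-\tilde\eta\,\eps\,\partial_n\varphi_\eps\,\nabla\cdot\nabla_\tau\varphi_\eps$. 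Combining the first two of these reproduces exactly \eqref{eq_equipart3} and \eqref{eq_equipart2}.

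Second, I would treat the leftover tangential-divergence term by integrating by parts tangentially: since $\nabla_\tau\varphi_\eps$ is tangential and $\tilde\eta$ is compactly supported,
\begin{equation*}
\int\tilde\eta\,\eps\,\partial_n\varphi_\eps\,\nabla\cdot\nabla_\tau\varphi_\eps\,dx=-\int\eps\,\partial_n\varphi_\eps\,\nabla_\tau\tilde\eta\cdot\nabla_\tau\varphi_\eps\,dx-\int\tilde\eta\,\eps\,\nabla_\tau(\partial_n\varphi_\eps)\cdot\nabla_\tau\varphi_\eps\,dx.
\end{equation*}
The first summand already is \eqref{eq_equipart4}. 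For the second, the commutator $\nabla(\partial_n\varphi_\eps)=\partial_n\nabla\varphi_\eps+\nabla\vec n\,\nabla\varphi_\eps$, combined with $\partial_n\vec n=0$ and the symmetry of $\nabla\vec n$, gives
\begin{equation*}
\nabla_\tau(\partial_n\varphi_\eps)\cdot\nabla_\tau\varphi_\eps=\tfrac12\partial_n|\nabla_\tau\varphi_\eps|^2+\nabla_\tau\varphi_\eps\cdot(\nabla\vec n^\top\nabla_\tau\varphi_\eps),
\end{equation*}
where I used that $\vec n\cdot\nabla_\tau\varphi_\eps=0$ to drop the $\vec n\partial_n^2\varphi_\eps$ piece when decomposing $\partial_n\nabla_\tau\varphi_\eps$. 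The second summand on the right is $-$\eqref{eq_equipart5}. Applying the workhorse identity once more to $\int\tilde\eta\,\tfrac{\eps}{2}\partial_n|\nabla_\tau\varphi_\eps|^2\,dx$ produces $\int\tfrac{\eps}{2}|\nabla_\tau\varphi_\eps|^2(\partial_n\tilde\eta+\tilde\eta\nabla\cdot\vec n)\,dx$, which is precisely \eqref{eq_equipart6}. Collecting everything yields the claimed equality.

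The main obstacle is bookkeeping the commutator $[\partial_n,\nabla_\tau]$ correctly and confirming that the Weingarten-type term $\nabla_\tau\varphi_\eps\cdot(\nabla\vec n^\top\nabla_\tau\varphi_\eps)$ appears exactly as in \eqref{eq_equipart5} rather than with a spurious sign or transpose; this is where $\partial_n\vec n=0$ and the Hessian symmetry of $\nabla\vec n=\nabla^2 d_{\Gamma^{m_\eps}}$ are essential. Once this is verified, the remainder is a mechanical collection of the three integration-by-parts steps. Note that $h$ only enters through the geometry of the strip $\Gamma^{m_\eps}_{t,h}(\delta_\eps)$ and the assumption that $\tilde\eta$ is supported inside it; the Lipschitz regularity of $h$ plays no role in the identity itself, which is purely local.
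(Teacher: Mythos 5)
Your proposal is correct in strategy and follows essentially the same route as the paper's proof: successive integrations by parts in the strip using $\nabla\cdot(f\vec n\tilde\eta)$, decomposing the Laplacian into normal, curvature, and tangential pieces, and a final application of the same identity to $\partial_n|\nabla_\tau\varphi_\eps|^2$. Your decomposition $\Delta\varphi_\eps=\partial_n^2\varphi_\eps+(\nabla\cdot\vec n)\partial_n\varphi_\eps+\nabla\cdot\nabla_\tau\varphi_\eps$ is equivalent to the paper's manipulation of $\nabla\cdot\big((\mathrm{Id}-\vec n\otimes\vec n)\nabla\varphi_\eps\,\partial_n\varphi_\eps\,\tilde\eta\big)$.

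One transcription slip should be fixed: after the first integration by parts the ``leftover'' carries a plus sign, i.e.\ it is $+\int\tilde\eta\,\eps\,\partial_n\varphi_\eps\,\nabla\cdot\nabla_\tau\varphi_\eps\,dx$, not the minus you wrote. Indeed, since $\partial_n^2\varphi_\eps=\Delta\varphi_\eps-(\nabla\cdot\vec n)\partial_n\varphi_\eps-\nabla\cdot\nabla_\tau\varphi_\eps$ and $H_\eps=-\eps\Delta\varphi_\eps+\tfrac1\eps W'(\varphi_\eps)$, one has $\partial_n f=-H_\eps\partial_n\varphi_\eps-\eps(\nabla\cdot\vec n)|\partial_n\varphi_\eps|^2-\eps\partial_n\varphi_\eps\,\nabla\cdot\nabla_\tau\varphi_\eps$, so that $-\tilde\eta\,\partial_n f-\tilde\eta f\,\nabla\cdot\vec n$ equals $\tilde\eta H_\eps\partial_n\varphi_\eps+\tilde\eta\big(\tfrac{\eps}{2}|\partial_n\varphi_\eps|^2+\tfrac1\eps W(\varphi_\eps)\big)\nabla\cdot\vec n+\tilde\eta\,\eps\,\partial_n\varphi_\eps\,\nabla\cdot\nabla_\tau\varphi_\eps$. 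With this sign your subsequent tangential integration by parts and the commutator step give precisely $\eqref{eq_equipart4}+\eqref{eq_equipart5}+\eqref{eq_equipart6}$ (not their negatives), and the identity is proved. Everything else — the use of $\partial_n\vec n=0$, the symmetry of $\nabla\vec n=D^2 d_{\Gamma^{m_\eps}}$, the replacement $\nabla\vec n^\top\nabla\varphi_\eps=\nabla\vec n^\top\nabla_\tau\varphi_\eps$, and dropping the $\vec n\,\partial_n^2\varphi_\eps$ piece against $\nabla_\tau\varphi_\eps$ — is exactly as in the paper.
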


We obtain the following important estimate:
\begin{lemma}\label{th_Erel_lastterm}
	For a.e.~$t\in[0,T_0]$ let $h=h_\eps(.,t):\Gamma^{m_\eps}_t\rightarrow(-\delta,\delta)$ be a $1$-Lipschitz-function. Moreover, let $\delta_\eps\in(0,\frac{\delta}{2}]$ for $\eps>0$ small and let $\Gamma^{m_\eps}_{t,h}(\delta_\eps)$ be defined as in \eqref{eq_tub_neighb_shifted}. Then for a.e.~$t\in[0,T_0]$ it holds\begin{subequations}
	\begin{align}
		&\left|\int_{\Omega} \xi\otimes\xi:\nabla B\left(\eps|\nabla\varphi_\eps|^2 - |\nabla\psi_\eps|\right)|_{(x,t)} \,dx\right|\label{eq_estimate_lastterm1}\\
		\leq &C\int_{\Gamma^{m_\eps}_t(2\delta)\setminus \Gamma^{m_\eps}_{t,h}(\frac{\delta_\eps}{2})} \left(\eps|\nabla\varphi_\eps|^2+\frac{1}{\eps}W(\varphi_\eps)\right)|_{(x,t)}\,dx\label{eq_estimate_lastterm2}\\
		&+C\int_{\Gamma^{m_\eps}_{t,h}(\delta_\eps)} \frac{\eps}{m_\eps} |\partial_n\varphi_\eps|^2 |d_{\Gamma^{m_\eps}}-h(P_{\Gamma^{m_\eps}})|^2|_{(x,t)}\,dx\label{eq_estimate_lastterm3}\\
		&+\frac{m_\eps}{4\eps}\int_{\Gamma^{m_\eps}_{t,h}(\delta_\eps)} \left|H_\eps + \sqrt{2W(\varphi_\eps)} \nabla\cdot\xi \right|^2|_{(x,t)} \,dx\label{eq_estimate_lastterm4}\\
		&+C\int_{\Gamma^{m_\eps}_{t,h}(\delta_\eps)}(|h|_{P_{\Gamma^{m_\eps}}}|^2+|\nabla_\tau (h|_{P_{\Gamma^{m_\eps}}})|^2)\left(\eps|\nabla\varphi_\eps|^2+\frac{1}{\eps}W(\varphi_\eps)\right)|_{(x,t)}\,dx\label{eq_estimate_lastterm5}\\
		&+C E[\varphi_\eps| \Gamma^{m_\eps}](t),\label{eq_estimate_lastterm6}
	\end{align}\end{subequations}
	where $C>0$ is independent of $\eps$ and $t$.
\end{lemma}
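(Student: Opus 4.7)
The first key structural observation is that on $\Gamma^{m_\eps}_t(2\delta)$ one has $\xi=\eta_{m_\eps}\vec{n}$ and the standard tubular identity $\partial_n\vec{n}=0$, whence
\[
\xi\otimes\xi:\nabla B = \eta_{m_\eps}^2\,\partial_n(B\cdot\vec{n}).
\]
Since any function of the form $g\circ P_{\Gamma^{m_\eps}}$ has vanishing normal derivative (because $\partial_n P_{\Gamma^{m_\eps}}=0$), one may replace $B\cdot\vec{n}$ in this identity by
\[
\tilde B := B\cdot\vec{n}-V_{\Gamma^{m_\eps}}\circ P_{\Gamma^{m_\eps}},
\]
which vanishes on the interface because $B\cdot\vec{n}|_{\Gamma^{m_\eps}}=\ve^{m_\eps}\cdot\vec{n}+m_\eps H=V_{\Gamma^{m_\eps}}$ there (using $\tilde\eta_{m_\eps}\equiv 1$ on $\Gamma^{m_\eps}$). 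Consequently one gains the pointwise bound
$
|\tilde B|\le C|d_{\Gamma^{m_\eps}}|\le C(|d_{\Gamma^{m_\eps}}-h(P_{\Gamma^{m_\eps}})|+|h\circ P_{\Gamma^{m_\eps}}|)
$
throughout the tube; this is the mechanism that will eventually produce the weights $|d-h(P_{\Gamma^{m_\eps}})|^2$ and $|h\circ P_{\Gamma^{m_\eps}}|^2$ in \eqref{eq_estimate_lastterm3} and \eqref{eq_estimate_lastterm5}.

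Second, I reduce $\eps|\nabla\varphi_\eps|^2-|\nabla\psi_\eps|$ to a normal-only quantity using $|\nabla\varphi_\eps|^2=|\partial_n\varphi_\eps|^2+|\nabla_\tau\varphi_\eps|^2$ together with the elementary identity
\[
\eps|\partial_n\varphi_\eps|^2-\sqrt{2W(\varphi_\eps)}|\partial_n\varphi_\eps|=\bigl(\tfrac\eps 2|\partial_n\varphi_\eps|^2-\tfrac1\eps W(\varphi_\eps)\bigr)+\tfrac12\bigl(\sqrt\eps|\partial_n\varphi_\eps|-\sqrt{2W(\varphi_\eps)/\eps}\bigr)^2.
\]
Multiplied by the bounded prefactor $\eta_{m_\eps}^2\partial_n\tilde B$, the tangential remainder and the squared equipartition defect are controlled directly by \eqref{eq_coercivity_n_tau}, yielding a contribution to \eqref{eq_estimate_lastterm6}. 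It then remains to estimate
\[
\mathcal I := \int_\Omega \eta_{m_\eps}^2\,\partial_n\tilde B\,\bigl(\tfrac\eps 2|\partial_n\varphi_\eps|^2 - \tfrac1\eps W(\varphi_\eps)\bigr)\,dx.
\]

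Third, I localize to the strip using a smooth cutoff $\zeta_\eps(x):=\bar\zeta((d_{\Gamma^{m_\eps}}(x)-h(P_{\Gamma^{m_\eps}}(x)))/\delta_\eps)$ for some fixed $\bar\zeta:\R\to[0,1]$ with $\bar\zeta\equiv 1$ on $[-\tfrac12,\tfrac12]$ and $\supp\bar\zeta\subseteq[-1,1]$; this is well-adapted since $\partial_n(h(P_{\Gamma^{m_\eps}}))=0$. On $\supp(1-\zeta_\eps)$ the integrand of $\mathcal I$ is pointwise bounded by $C(\eps|\nabla\varphi_\eps|^2+\tfrac1\eps W(\varphi_\eps))$, giving \eqref{eq_estimate_lastterm2}. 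On the strip I invoke Lemma~\ref{th_equipart_rewrite} with the test function $\tilde\eta:=\zeta_\eps\eta_{m_\eps}^2\tilde B\in C^{0,1}_c(\Gamma^{m_\eps}_{t,h}(\delta_\eps))$; the Leibniz expansion of $\partial_n\tilde\eta$ produces the desired main term plus lower-order pieces from $\partial_n\zeta_\eps$ (supported on the outer shell, absorbed into \eqref{eq_estimate_lastterm2}) and from $\partial_n\eta_{m_\eps}^2$ (carrying an extra factor of $|d_{\Gamma^{m_\eps}}|$ by the quadratic decay \eqref{eq_cutoff_quadr}, absorbed into \eqref{eq_estimate_lastterm3} and \eqref{eq_estimate_lastterm5}). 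It then remains to estimate the five integrals \eqref{eq_equipart2}--\eqref{eq_equipart6}: terms \eqref{eq_equipart2}, \eqref{eq_equipart5}, \eqref{eq_equipart6} carry squared equipartition defects or $|\nabla_\tau\varphi_\eps|^2$ factors against bounded weights and land in \eqref{eq_estimate_lastterm6} by Lemma~\ref{th_Erel_coercivity}; \eqref{eq_equipart4} is handled by Cauchy--Schwarz with the $\nabla_\tau\zeta_\eps$ contribution producing the $|\nabla_\tau(h\circ P_{\Gamma^{m_\eps}})|^2$ factor of \eqref{eq_estimate_lastterm5}; and the critical term \eqref{eq_equipart3} is handled by Young's inequality with parameter $\alpha=\eps/m_\eps$, yielding the dissipation-square bound \eqref{eq_estimate_lastterm4} (after replacing $\nabla\cdot\vec{n}$ by $\nabla\cdot\xi$, an $O(d^2)$ perturbation absorbed into \eqref{eq_estimate_lastterm6}) plus $\tfrac{\eps}{m_\eps}\int|\partial_n\varphi_\eps|^2\tilde\eta^2\,dx$, upon which the key bound $\tilde\eta^2\le C(|d-h(P_{\Gamma^{m_\eps}})|^2+|h\circ P_{\Gamma^{m_\eps}}|^2)$ from Step~1 produces precisely \eqref{eq_estimate_lastterm3} and \eqref{eq_estimate_lastterm5}.

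\textbf{Main obstacle.} The decisive difficulty is producing small weights in the Young inequality applied to the critical term \eqref{eq_equipart3}: had one used $\tilde\eta=\zeta_\eps\eta_{m_\eps}^2(B\cdot\vec n)$ directly, the Young remainder would carry the $O(1)$ weight $|B\cdot\vec n|^2$, giving a contribution of order $\eps E[\varphi_\eps|\Gamma^{m_\eps}]/m_\eps = O(\eps^{1-\beta})\,E[\varphi_\eps|\Gamma^{m_\eps}]$, which defeats the Gronwall argument for any $\beta>0$. The resolution is the subtraction $\tilde B:=B\cdot\vec n-V_{\Gamma^{m_\eps}}\circ P_{\Gamma^{m_\eps}}$ of Step~1: since $V_{\Gamma^{m_\eps}}\circ P_{\Gamma^{m_\eps}}$ depends only on tangential data it does not affect $\partial_n$, yet it forces $\tilde B$ to vanish on $\Gamma^{m_\eps}$ and thereby replaces the $O(1)$ weight by the required $O(|d_{\Gamma^{m_\eps}}|^2)$.
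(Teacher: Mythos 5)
Your identity $\xi\otimes\xi:\nabla B=\eta_{m_\eps}^2\,\partial_n(B\cdot\vec n)=\eta_{m_\eps}^2\,\partial_n\tilde B$ is correct, and you correctly realize that what drives the argument is making the ``antiderivative'' of $\xi\otimes\xi:\nabla B$ small near the relevant set. However, you subtract $V_{\Gamma^{m_\eps}}\circ P_{\Gamma^{m_\eps}}$, which forces $\tilde B$ to vanish on the \emph{interface} $\Gamma^{m_\eps}_t$, giving only $|\tilde B|\le C|d_{\Gamma^{m_\eps}}|\le C(|d_{\Gamma^{m_\eps}}-h(P_{\Gamma^{m_\eps}})|+|h\circ P_{\Gamma^{m_\eps}}|)$. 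The paper instead defines $\eta$ as the integral of $\xi\otimes\xi:\nabla B$ along normal rays \emph{starting from $\Gamma^{m_\eps}_{t,h}$}, so $\eta$ vanishes on the shifted graph and one has the sharper bound $|\eta|\le C|d_{\Gamma^{m_\eps}}-h(P_{\Gamma^{m_\eps}})|$ throughout the strip, with no $|h\circ P_{\Gamma^{m_\eps}}|$ contribution at all.

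This difference is fatal at the critical Young-inequality step. Applying Young's inequality with parameter $m_\eps/\eps$ to \eqref{eq_equipart3} produces the remainder $\frac{\eps}{m_\eps}\int|\partial_n\varphi_\eps|^2\,\tilde\eta^2\,dx$. With the paper's bound this is exactly \eqref{eq_estimate_lastterm3}. With your bound it also produces
\begin{equation*}
\frac{1}{m_\eps}\int_{\Gamma^{m_\eps}_{t,h}(\delta_\eps)}|h\circ P_{\Gamma^{m_\eps}}|^2\,\eps|\partial_n\varphi_\eps|^2\,dx,
\end{equation*}
which carries an extra factor $\frac{1}{m_\eps}$ relative to \eqref{eq_estimate_lastterm5}. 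This term is \emph{not} on the right-hand side of the lemma, and it cannot be: via Corollary~\ref{th_h_energyweight} and Lemma~\ref{th_Ebulk_BVerror} it would only be bounded by $\frac{C}{m_\eps}(E+E_{\textup{bulk}})$, so in the subsequent Gronwall argument it would yield a coefficient of order $\frac{1}{m_\eps}$ and hence a divergent factor $e^{CT/m_\eps}$ as $m_\eps\to 0$. For the same reason, the $\nabla_\tau\zeta_\eps$ piece of \eqref{eq_equipart4} scales like $\frac{1}{\delta_\eps}|\nabla_\tau(h\circ P_{\Gamma^{m_\eps}})|\cdot|\tilde B|$; your bound $|\tilde B|\lesssim|d_{\Gamma^{m_\eps}}|\lesssim\delta_\eps+|h\circ P_{\Gamma^{m_\eps}}|$ does not cancel the $\frac{1}{\delta_\eps}$ unless $|h\circ P_{\Gamma^{m_\eps}}|\lesssim\delta_\eps$, whereas the paper's bound $|\eta|\lesssim|d-h(P)|\lesssim\delta_\eps$ cancels it exactly. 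Your approach could be repaired by subtracting instead the ray-constant function $\bigl(\tilde B|_{\Gamma^{m_\eps}_{t,h}}\bigr)\circ P_{\Gamma^{m_\eps}}$, which would make the test function vanish on the graph and recover the needed $O(|d-h(P)|)$ bound --- but then one is in effect reconstructing the paper's $\eta$, and the remaining Leibniz terms from $\partial_n(\eta_{m_\eps}^2)$ would still need a separate (if routine) treatment.
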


\begin{proof}
First of all, it suffices to show the estimate with \eqref{eq_estimate_lastterm1} replaced by
\begin{align}\label{eq_lastterm_proof1}
\left|\int_{\Gamma^{m_\eps}_t(2\delta)} \xi\otimes\xi:\nabla B \left(\frac{\eps}{2}|\partial_n\varphi_\eps|^2-\frac{1}{\eps}W(\varphi_\eps)\right)|_{(x,t)}\,dx\right|
\end{align}
because of the cutoff for $\xi$ in \eqref{eq_xi_B_def}, the identity
\[
\eps|\nabla\varphi_\eps|^2-|\nabla\psi_\eps| = \frac{\eps}{2}|\nabla\varphi_\eps|^2-\frac{1}{\eps}W(\varphi_\eps) + \frac{1}{2}\left(\sqrt{\eps}|\nabla\varphi_\eps|-\frac{1}{\sqrt{\eps}}\sqrt{2W(\varphi_\eps)}\right)^2,
\]
the coercivity estimate \eqref{eq_Erel_coercivity_equipart} and finally the control of the tangential gradient in \eqref{eq_coercivity_n_tau} in terms of the relative entropy. In order to rewrite the expression by Lemma \ref{th_equipart_rewrite}, we use the following idea: for a.e.~$t\in[0,T_0]$ we write $\xi\otimes\xi:\nabla B(.,t)$ as $\partial_n\eta$, where $\eta=\eta(.,t)$ is defined as
\[
\eta(x):=\int_{h(P_{\Gamma^{m_\eps}}(x,t))}^{d_{\Gamma^{m_\eps}}(x,t)} \xi\otimes\xi:\nabla B (P_{\Gamma^{m_\eps}}(x,t) + r\vec{n}(P_{\Gamma^{m_\eps}}(x,t),t),t)\,dr\quad\text{ for }x\in \Gamma^{m_\eps}_t(2\delta).
\]
Moreover, to avoid boundary terms on $\partial \Gamma^{m_\eps}_t(\delta_\eps)$ we introduce a smooth $\alpha:\R\rightarrow[0,1]$ with $\alpha\equiv1$ on $[-\frac{1}{2},\frac{1}{2}]$ and $\alpha\equiv0$ on $\R\setminus[-\frac{3}{4},\frac{3}{4}]$ and set for a.e.~$t\in[0,T_0]$
\[
\tilde{\eta}=\tilde{\eta}(.,t):=(\tilde{\alpha}\eta)(.,t),\quad \tilde{\alpha}(x,t):=\alpha\left(\frac{d_{\Gamma^{m_\eps}}(x,t)-h(P_{\Gamma^{m_\eps}}(x,t))}{\delta_\eps}\right),\quad\text{ for }x\in \Gamma^{m_\eps}_t(2\delta).
\]
Note that Theorem \ref{th:approx_lim_uniform} and the regularity and uniform bounds in Theorem \ref{th_xi_B} yield that $\tilde{\alpha}(.,t)$, $\tilde{\eta}(.,t)$ and $\eta(.,t)$ are in $C^{0,1}(\overline{\Omega})$. We rewrite as follows:
\begin{align}\label{eq_lastterm_proof2}
\xi\otimes\xi:\nabla B(.,t)=\partial_n\eta=(1-\tilde{\alpha})\partial_n\eta - \partial_n\tilde{\alpha}\,\eta +\partial_n\tilde{\eta}.
\end{align}
Because of the definition of $\tilde{\alpha}$ it holds $1-\tilde{\alpha}=0$ on $\Gamma^{m_\eps}_{t,h}(\frac{\delta_\eps}{2})$. Moreover, 
\[
\partial_n\tilde{\alpha}=\frac{1}{\delta_\eps}\alpha'\left(\frac{d_{\Gamma^{m_\eps}}(x,t)-h(P_{\Gamma^{m_\eps}}(x,t))}{\delta_\eps}\right)=0\quad\text{ on }\Gamma^{m_\eps}_{t,h}(\tfrac{\delta_\eps}{2})\cup \left[\Gamma^{m_\eps}_t(2\delta)\setminus \Gamma^{m_\eps}_{t,h}(\tfrac{3}{4}\delta_\eps)\right]
\]
and $|\eta|\leq C\delta_\eps$ on $\Gamma^{m_\eps}_{t,h}(\delta_\eps)$ for a.e.~$t\in[0,T_0]$. Therefore the first two terms on the right hand side of \eqref{eq_lastterm_proof2} yield contributions in \eqref{eq_lastterm_proof1} that can be estimated by \eqref{eq_estimate_lastterm2}. Hence we can replace $\xi\otimes\xi:\nabla B(.,t)$ by $\partial_n\tilde{\eta}$ in \eqref{eq_lastterm_proof1} for a.e.~$t\in[0,T_0]$.

For the remaining term
\begin{align}
	\left|\int_{\Gamma^{m_\eps}_{t,h}(\delta_\eps)} \partial_n\tilde{\eta} \left(\frac{\eps}{2}|\partial_n\varphi_\eps|^2-\frac{1}{\eps}W(\varphi_\eps)\right)|_{(x,t)}\,dx\right|
\end{align}
one can apply integration by parts on $\Gamma^{m_\eps}_{t,h}(\delta_\eps)$ for a.e.~$t\in[0,T_0]$ and rewrite in a suitable way, cf.~Lemma \ref{th_equipart_rewrite}. Let us estimate the corresponding terms \eqref{eq_equipart2}-\eqref{eq_equipart6} from Lemma \ref{th_equipart_rewrite} now. Note that \eqref{eq_equipart2} and \eqref{eq_equipart5}-\eqref{eq_equipart6} are directly controlled by \eqref{eq_estimate_lastterm6} because of the coercivity properties \eqref{eq_coercivity_n_tau} of the relative entropy. 

Moreover, \eqref{eq_equipart3} with $\nabla\cdot\xi$ instead of $\nabla\cdot\vec{n}$ can be estimated by \eqref{eq_estimate_lastterm3} and \eqref{eq_estimate_lastterm4} due to Young's inequality and $|\tilde{\eta}|(.,t)\leq C|d_{\Gamma^{m_\eps}}-h(P_{\Gamma^{m_\eps}})|(.,t)$. Because of the definition of $\xi$ in \eqref{eq_xi_B_def} it holds
\[
\nabla\cdot\vec{n}=(1-\eta_{m_\eps}) \nabla\cdot\vec{n} - \frac{1}{\delta}\bar{\eta}'\left(\frac{d_{\Gamma^{m_\eps}}}{\delta}\right) + \nabla\cdot\xi,
\]
where $|1-\eta_{m_\eps}|\leq C|d_{\Gamma^{m_\eps}}|^2$ and $|\bar{\eta}'\left(\frac{d_{\Gamma^{m_\eps}}}{\delta}\right)|\leq C|d_{\Gamma^{m_\eps}}|$. Hence from exchanging $\nabla\cdot\vec{n}$ with $\nabla\cdot\xi$ we obtain the following error term:
\[
\int_{\Gamma^{m_\eps}_{t,h}(\delta_\eps)}\sqrt{W(\varphi_\eps)} |\partial_n\varphi_\eps|\,(|d_{\Gamma^{m_\eps}}|^2+|h(P_{\Gamma^{m_\eps}})|^2)|_{(x,t)}\,dx,
\]
which is controlled by \eqref{eq_estimate_lastterm5} and \eqref{eq_estimate_lastterm6} due to the bound $\sqrt{W(\varphi_\eps)} |\partial_n\varphi_\eps|\leq|\nabla\psi_\eps|$ and \eqref{eq_Erel_coercivity_1}. Hence \eqref{eq_equipart3} is estimated.

Finally, it remains to estimate \eqref{eq_equipart4}. It holds $\nabla_\tau\tilde{\eta}=\nabla_\tau\,\tilde{\alpha}\eta+\tilde{\alpha}\nabla_\tau\eta$ due to $\tilde{\eta}=\tilde{\alpha}\eta$, where we have because of \eqref{eq_nablatau_trafo}
\begin{align*}
	\nabla_\tau\tilde{\alpha}=&-\frac{1}{\delta_\eps} \alpha'\left(\frac{d_{\Gamma^{m_\eps}}(.,t)-h(P_{\Gamma^{m_\eps}}(.,t))}{\delta_\eps}\right) \nabla_\tau(h(P_{\Gamma^{m_\eps}}))|_{(.,t)},\\
	\nabla_\tau\eta =& -\partial_n\eta|_{(P_{\Gamma^{m_\eps}}(.,t)+h(P_{\Gamma^{m_\eps}}(.,t))\vec{n}(P_{\Gamma^{m_\eps}}(.,t),t)} \nabla_\tau(h(P_{\Gamma^{m_\eps}}))|_{(.,t)} \\
	&+ \int_{h(P_{\Gamma^{m_\eps}}(.,t))}^{d_{\Gamma^{m_\eps}}(.,t)} \nabla_\tau\left[\xi\otimes\xi:\nabla B(P_{\Gamma^{m_\eps}}+r\vec{n}(P_{\Gamma^{m_\eps}}),t)\right]|_{(.,t)}\,dr.
\end{align*}
Due to Lemma \ref{th_xi_B} we obtain
\[
|\nabla_\tau\tilde{\eta}| \leq C\left(|d_{\Gamma^{m_\eps}}|+|h|_{P_{\Gamma^{m_\eps}}}|+|\nabla_\tau(h|_{P_{\Gamma^{m_\eps}}})|\right)|_{(.,t)}.
\]
Altogether, \eqref{eq_equipart4} is controlled by \eqref{eq_estimate_lastterm5} and \eqref{eq_estimate_lastterm6} due to Young's inequality, \eqref{eq_coercivity_n_tau} and \eqref{eq_Erel_coercivity_1}. This shows Lemma \ref{th_Erel_lastterm}.
\end{proof}

\begin{proof}[Proof of Lemma \ref{th_equipart_rewrite}.]
	First, we use $\partial_n\tilde{\eta}=\vec{n}\cdot\nabla\tilde{\eta}=\sum_{i=1}^d n_i\partial_{x_i}\tilde{\eta}$ and integration by parts on $\Gamma^{m_\eps}_{t,h}(\delta_\eps)$ with $\tilde{\eta}\in C^{0,1}_c(\Gamma^{m_\eps}_{t,h}(\delta_\eps))$ for a.e.~$t\in[0,T_0]$. This yields
	\begin{align*}
	&\int_{\Gamma^{m_\eps}_{t,h}(\delta_\eps)} \left(\frac{\eps}{2}|\partial_n\varphi_\eps|^2-\frac{1}{\eps}W(\varphi_\eps)\right) \partial_n\tilde{\eta}\,dx \\
	&=-\int_{\Gamma^{m_\eps}_{t,h}(\delta_\eps)} \left(\frac{\eps}{2}|\partial_n\varphi_\eps|^2-\frac{1}{\eps}W(\varphi_\eps)\right) (\nabla\cdot \vec{n}) \tilde{\eta}\,dx
	-\int_{\Gamma^{m_\eps}_{t,h}(\delta_\eps)} \partial_n \left(\frac{\eps}{2}|\partial_n\varphi_\eps|^2-\frac{1}{\eps}W(\varphi_\eps)\right) \tilde{\eta}\,dx
	\end{align*} 
	for a.e.~$t\in[0,T_0]$. Note that $\partial_n^2\varphi_\eps=((\vec{n}\cdot\nabla)\vec{n})\cdot\nabla{\varphi_\eps} + \vec{n}\otimes\vec{n}:D^2\varphi_\eps=\vec{n}\otimes\vec{n}:D^2\varphi_\eps$. Therefore 
	\begin{align*}
	&-\int_{\Gamma^{m_\eps}_{t,h}(\delta_\eps)} \partial_n \left(\frac{\eps}{2}|\partial_n\varphi_\eps|^2-\frac{1}{\eps}W(\varphi_\eps)\right) \tilde{\eta}\,dx\\
	&=-\int_{\Gamma^{m_\eps}_{t,h}(\delta_\eps)} \left(\eps\Delta\varphi_\eps-\frac{1}{\eps}W'(\varphi_\eps)\right)\partial_n\varphi_\eps \tilde{\eta}\,dx
	+\int_{\Gamma^{m_\eps}_{t,h}(\delta_\eps)} \eps \left(\textup{Id}-\vec{n}\otimes\vec{n}\right):D^2\varphi_\eps \partial_n\varphi_\eps \tilde{\eta}\,dx
	\end{align*} 
	for a.e.~$t\in[0,T_0]$. In order to rewrite the last term, we compute
	\begin{align*}
	&\nabla\cdot\left((\textup{Id}-\vec{n}\otimes\vec{n})\nabla\varphi_\eps \partial_n\varphi_\eps\tilde{\eta}\right)\\
	&=-(\nabla\cdot\vec{n})\vec{n}\cdot \nabla\varphi_\eps \partial_n\varphi_\eps\tilde{\eta} + (\textup{Id}-\vec{n}\otimes\vec{n}):D^2\varphi_\eps\partial_n\varphi_\eps\tilde{\eta} +\nabla_\tau\varphi_\eps\cdot\nabla(\partial_n\varphi_\eps\tilde{\eta}),
	\end{align*}
	where we used $\nabla\cdot(\textup{Id}-\vec{n}\otimes\vec{n})=-(\nabla\cdot\vec{n})\vec{n}$. Because of 
	\[
	\nabla(\partial_n\varphi_\eps)=\nabla(\vec{n}\cdot\nabla)\varphi_\eps=(\nabla\vec{n})^\top\nabla\varphi_\eps+(\vec{n}\cdot\nabla)\nabla\varphi_\eps
	\]
	and integration by parts we obtain for a.e.~$t\in[0,T_0]$
	\begin{align*}
	&\int_{\Gamma^{m_\eps}_{t,h}(\delta_\eps)} \eps \left(\textup{Id}-\vec{n}\otimes\vec{n}\right):D^2\varphi_\eps \partial_n\varphi_\eps \tilde{\eta}\,dx\\
	&=\int_{\Gamma^{m_\eps}_{t,h}(\delta_\eps)}\eps|\partial_n\varphi_\eps|^2(\nabla\cdot\vec{n})\tilde{\eta}\,dx
	-\int_{\Gamma^{m_\eps}_{t,h}(\delta_\eps)}\eps\nabla_\tau\varphi_\eps\cdot\nabla\tilde{\eta}\,\partial_n\varphi_\eps\,dx\\
	&\quad-\int_{\Gamma^{m_\eps}_{t,h}(\delta_\eps)}\eps\nabla_\tau\varphi_\eps\cdot(\nabla\vec{n}^\top\nabla\varphi_\eps)\tilde{\eta}\,dx 
	-\int_{\Gamma^{m_\eps}_{t,h}(\delta_\eps)}\eps\nabla_\tau\varphi_\eps\cdot\left((\vec{n}\cdot\nabla)\nabla\varphi_\eps\right)\tilde{\eta}\,dx.
	\end{align*}
    Note that in \eqref{eq_equipart2}-\eqref{eq_equipart3} the $\sqrt{2W(\varphi_\eps)}$-terms cancel and were added for convenience. Moreover, one can directly prove that $\nabla\vec{n}^\top\nabla\varphi_\eps=\nabla\vec{n}^\top\nabla_\tau\varphi_\eps$. Hence it is left to show that for a.e.~$t\in[0,T_0]$
    \begin{align}\label{eq_equipart_proof}
    -\int_{\Gamma^{m_\eps}_{t,h}(\delta_\eps)}\eps\nabla_\tau\varphi_\eps\cdot\left((\vec{n}\cdot\nabla)\nabla\varphi_\eps\right)\tilde{\eta}\,dx 
    =\int_{\Gamma^{m_\eps}_{t,h}(\delta_\eps)} \frac{\eps}{2}|\nabla_\tau\varphi_\eps|^2 \left(\tilde{\eta} \nabla\cdot\vec{n} + \partial_n\tilde{\eta}\right) dx.
    \end{align} 
	To this end we use $\vec{n}\cdot\nabla=\sum_{i=1}^d n_i\partial_{x_i}$ and integration by parts for $\partial_{x_i}$. This yields 
	\begin{align*}
	-\int_{\Gamma^{m_\eps}_{t,h}(\delta_\eps)}\eps\nabla_\tau\varphi_\eps\cdot\left((\vec{n}\cdot\nabla)\nabla\varphi_\eps\right)\tilde{\eta}\,dx
	=-\frac{1}{2}\int_{\Gamma^{m_\eps}_{t,h}(\delta_\eps)} \eps (\vec{n}\cdot\nabla)|\nabla_\tau\varphi_\eps|^2 \tilde{\eta}\,dx
	=\int_{\Gamma^{m_\eps}_{t,h}(\delta_\eps)} \frac{\eps}{2}|\nabla_\tau\varphi_\eps|^2 \nabla\cdot(\tilde{\eta}\vec{n})\,dx,
    \end{align*}
	where we used $(\vec{n}\cdot\nabla)(\textup{Id}-\vec{n}\otimes\vec{n})=0$ in the second step. Therefore \eqref{eq_equipart_proof} holds and this shows Lemma \ref{th_equipart_rewrite}.
\end{proof}

\subsection{Parametrization of the Majority of the Interface}\label{sec_level_set_h}

In order for our estimate on the problematic term \eqref{eq_Erel_inequ_problem3} provided by Lemma~\ref{th_Erel_lastterm} to work, we need to represent a suitable level set of the phase-field $\varphi_\eps$ as approximately a graph of a small function $h_\eps$ over the surface $\Gamma^{m_\eps}_t$, as only then the term \eqref{eq_estimate_lastterm3} becomes controllable in terms of a constant $\eps^2/m_\eps$ and the relative entropy (see Corollary~\ref{th_energy_out2} below for details).

In this section we construct for a.e.~$t\in[0,T_0]$ a $1$-Lipschitz-function $h=h_\eps(.,t):\Gamma^{m_\eps}_t\rightarrow(-\delta,\delta)$ that is used to approximate a suitable level set of $\varphi_\eps(.,t)$. To this end, we need the following proposition about local interface errors of a BV-set compared to the strong interface $\Gamma_t^{m_\eps}$ from Section \ref{sec_Energy_def_coercivity}.
\begin{proposition}\label{th_PropositionFH}
	Let $T_0>0$, $\delta>0$ and $\Gamma^{m_\eps}$, $\Omega^{m_\eps,\pm}$, $\ve^{m_\eps}$ for $m_\eps>0$ small be as in Section \ref{sec_Energy_def_coercivity}. Moreover, let $\vec{n}$ be the extension of the normal from \eqref{eq_def_n_H} and $\xi$ be defined as in \eqref{eq_xi_B_def}. Let $t\in[0,T_0]$ be fixed and $\tilde{\chi}\in \BV(\mathbb{R}^d;\{0,1\})$ be arbitrary. We set $\chi:=\chi_{\Omega^{m_\eps,+}_t}$.
	
	Let $\theta:[0,\infty)\rightarrow [0,1]$ be a smooth cutoff with $\theta\equiv 0$ outside of $[0,\frac{1}{2}]$ and $\theta\equiv 1$ in $[0,\frac{1}{4}]$. We define the local height of the one-sided interface errors $h^\pm_t=h^\pm_{t,\eps}:\Gamma^{m_\eps}_t\rightarrow [0,\frac{\delta}{2}]$ in $\pm\vec{n}$-direction as
	\[
	h^\pm_t(s):=
	\int_0^\infty (\chi-\tilde{\chi})(s+y\vec{n}(s,t)) \, \theta\Big(\frac{y}{\delta}\Big) \,\dy\quad\text{ for a.e. }s\in\Gamma^{m_\eps}_t.
	\]
	Then $h^\pm_t$ are $BV$-functions and we denote the distributional tangential derivative by $D^\textup{tan}h^\pm_t$, by $\nabla^\textup{tan}h^\pm_t$ the density of the absolutely continuous part of $D^\textup{tan}h^\pm_t$ with respect to $\Hc^{d-1}$ and by $D^sh^\pm_t$ the singular part. Finally, let $\tilde{G}_t:=\{s+(h^+_t-h^-_t)(s)\vec{n}(s,t) : s\in \Gamma^{m_\eps}_t\}$ denote the graph of $h^+_t-h^-_t$.
	
	Then we have the following estimates with constants independent of $\tilde{\chi}$ and $t\in[0,T_0]$:
	\begin{align}\label{eq_HeightFctEstimate}
		\int_{\Gamma^{m_\eps}_t} |h^\pm_t|^2\,d\Hc^{d-1}
		\leq
		C \int_\Rd |\chi-\tilde{\chi}| \min\left\{d_{\Gamma^{m_\eps}_t},1\right\} \,dx
	\end{align}
	as well as
	\begin{align}
		\label{eq_HeightFunctionGradientEstimate}
		&\int_{\Gamma^{m_\eps}_t} \min\{|\nabla^{\tan} h^\pm_t|^2,|\nabla^{\tan} h^\pm_t|\} \,d\Hc^{d-1}  + |D^s h^\pm_t|(\Gamma^{m_\eps}_t)\\
		&\leq
		C \int_\Rd {\Big(}1-\xi(.,t)\cdot \frac{\nabla\tilde{\chi}}{|\nabla\tilde{\chi}|}{\Big)} 
		\,\mathrm{d}|\nabla\tilde{\chi}|+ C \int_\Rd |\chi-\tilde{\chi}| \min\left\{d_{\Gamma^{m_\eps}_t},1\right\} \,dx,\notag
	\end{align}
	and
	\begin{align}
		\label{eq_ErrorGraph}
		\int_{\Rd\setminus \tilde{G}_t} 1 \,\mathrm{d}|\nabla\tilde{\chi}|
		\leq C \int_{\Rd} {\Big(}1-\xi(.,t)\cdot \frac{\nabla\tilde{\chi}}{|\nabla\tilde{\chi}|}{\Big)} \,\mathrm{d}|\nabla\tilde{\chi}|
		+ C\int_\Rd |\chi-\tilde{\chi}| \min\left\{d_{\Gamma^{m_\eps}_t},1\right\} \,dx.
	\end{align}
\end{proposition}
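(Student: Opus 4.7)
The proposition is essentially the interface parametrization result of Fischer--Hensel \cite{FischerHensel} reformulated on a tubular neighborhood of $\Gamma^{m_\eps}_t$; the main point here is that the constants can be chosen uniformly in $m_\eps$, which is guaranteed by the uniform geometric bounds on $\Gamma^{m_\eps}$ from Theorem~\ref{th:approx_lim_uniform}. The plan is to reduce everything to calculations on $\Gamma^{m_\eps}_t \times (-\delta,\delta)$ via the tubular coordinates $X_{m_\eps}$ and then establish the three estimates in order of increasing difficulty.

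\textbf{Step 1: Tubular setup.} Since $\delta$ is smaller than the reach of $\Gamma^{m_\eps}_t$ uniformly in $m_\eps$, the map $(s,r) \mapsto s + r\vec{n}(s,t)$ is a bi-Lipschitz diffeomorphism from $\Gamma^{m_\eps}_t \times (-2\delta,2\delta)$ onto $\Gamma^{m_\eps}_t(2\delta)$, with Jacobian bounded above and below independently of $m_\eps$. Any bulk integral $\int_{\Gamma^{m_\eps}_t(2\delta)} f\,dx$ is thus comparable to $\int_{\Gamma^{m_\eps}_t}\!\int_{-2\delta}^{2\delta} f(s+r\vec{n})\,dr\,d\Hc^{d-1}(s)$. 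Since $\chi \equiv 1$ for $r \in (0,2\delta)$ and $\chi \equiv 0$ for $r \in (-2\delta,0)$ in these coordinates, $(\chi-\tilde\chi)$ has a fixed sign on each half and $h^\pm_t$ takes values in $[0,\delta/2]$ as claimed.

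\textbf{Step 2: Proof of \eqref{eq_HeightFctEstimate}.} For each $s$, apply the auxiliary inequality \eqref{eq_aux_inequ_Fubini} to $g(y) = (1-\tilde\chi)(s+y\vec{n})\theta(y/\delta) \in \{0,\theta(y/\delta)\}$, obtaining
\begin{align*}
|h^+_t(s)|^2 \leq C \int_0^{\delta/2} (1-\tilde\chi)(s+y\vec{n})\, y\, dy.
\end{align*}
Integration over $\Gamma^{m_\eps}_t$ and Step~1 convert the right-hand side into $C\int_\Rd |\chi-\tilde\chi|\,\min\{d_{\Gamma^{m_\eps}_t},1\}\,dx$. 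The estimate for $h^-_t$ is analogous.

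\textbf{Step 3: Proof of \eqref{eq_HeightFunctionGradientEstimate}.} This is the central and most delicate step. Approximate $\tilde\chi$ by smooth $\chi_k$ in $BV(\Rd)$ with $|\nabla\chi_k| \overset{\ast}{\rightharpoonup} |\nabla\tilde\chi|$ and the corresponding Radon--Nikodym normals converging; define $h^\pm_{t,k}$ from $\chi_k$ analogously. For smooth $\chi_k$ one differentiates under the integral and applies the area formula in tubular coordinates: the tangential derivative at $s \in \Gamma^{m_\eps}_t$ picks up a contribution $-\int_0^\infty \nabla^{\tan}_s[\chi_k(s+y\vec{n}(s))]\,\theta(y/\delta)\,dy$ which, after parametrizing $\partial\{\chi_k=1\}$ and changing variables along the normal rays, becomes an integral over $\partial\{\chi_k = 1\}$ with Jacobian factor $\xi \cdot \nabla\chi_k/|\nabla\chi_k|$. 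The complementary defect $1 - \xi\cdot\nabla\chi_k/|\nabla\chi_k|$ therefore bounds the full $\Hc^{d-1}$-measure of the projection's degeneracy, controlling $|\nabla^{\tan} h^\pm_{t,k}|$ where it is $\lesssim 1$, and contributing to the singular part $D^s h^\pm$ where multi-sheeting forces a jump. Passing $k\to\infty$ by $BV$ lower semicontinuity on the left and strong convergence of the pointwise comparison on the right, plus the bulk remainder from Step~2 to account for the localization in the $\theta$-weight, yields \eqref{eq_HeightFunctionGradientEstimate}.

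\textbf{Step 4: Proof of \eqref{eq_ErrorGraph}.} Decompose $\Rd\setminus\tilde G_t$ into the part outside $\Gamma^{m_\eps}_t(\delta/2)$, the part inside where $\partial^*\{\tilde\chi=1\}$ is multi-sheeted over $\Gamma^{m_\eps}_t$, and the singular jump set of $h^+_t - h^-_t$. The first contribution is bounded by the bulk error $\int |\chi-\tilde\chi|\min\{d_{\Gamma^{m_\eps}_t},1\}\,dx$ because being at distance $\geq \delta/2$ forces a non-trivial bulk discrepancy. The second and third are controlled by the tilt-excess integral and the singular part $|D^s h^\pm_t|$, both handled by Step~3, since at a multi-sheeted or jump point $\xi \cdot \nabla\tilde\chi/|\nabla\tilde\chi|$ is bounded away from $1$ on at least one of the sheets.

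The main obstacle is Step~3: the combinatorial bookkeeping of the area-formula identity for $BV$ functions, and verifying that the constants produced by the Jacobian bounds on $X_{m_\eps}$ and by the $C^3$-bounds on $\vec n$ are uniform in $m_\eps$ in the regime supplied by Theorem~\ref{th:approx_lim_uniform}. Steps~1, 2, 4 are then routine.
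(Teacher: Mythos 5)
The paper's own proof of this proposition is a single citation: it observes that the estimates follow from \cite[Proposition~26, a)--c)]{FischerHensel}, with the constants uniform in $m_\eps$ because of the uniform geometric bounds on $\Gamma^{m_\eps}$ supplied by Theorem~\ref{th:approx_lim_uniform}. You instead attempt to re-derive the Fischer--Hensel estimates; the high-level strategy you outline (tubular-coordinate reduction, Fubini for \eqref{eq_HeightFctEstimate}, tilt-excess control of tangential variation for \eqref{eq_HeightFunctionGradientEstimate}, decomposition of the off-graph interface mass for \eqref{eq_ErrorGraph}) is indeed the structure of the cited proof, and your Steps 1--2 are complete and correct.

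Step 3 is where the genuine gaps lie, and you flag this yourself. First, approximating $\tilde\chi$ by arbitrary smooth $\chi_k$ destroys precisely the structure that your ``parametrize $\partial\{\chi_k=1\}$, change variables along normal rays'' argument needs: a mollified indicator is not $\{0,1\}$-valued, so $\partial\{\chi_k=1\}$ is generically the wrong object; what one needs is approximation of $\{\tilde\chi=1\}$ by sets with smooth boundary in the sense of strict $BV$-convergence (i.e., $\chi_k\to\tilde\chi$ in $L^1$ and $|\nabla\chi_k|(\R^d)\to|\nabla\tilde\chi|(\R^d)$), which is a different and more delicate statement. Second, even granting smooth sets, the area-formula identity you invoke requires handling the multi-sheeted situation honestly: the map taking a tangential point $s$ to the slice $\{y:\chi_k(s+y\vec n(s))=1\}$ can have several boundary components and the Jacobian factor $\xi\cdot\nabla\chi_k/|\nabla\chi_k|$ can vanish or change sign on some sheets; this is exactly what the singular part $D^s h^\pm_t$ must absorb, but your sketch does not explain how the bookkeeping produces an estimate rather than an identity with unsigned errors. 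Third, the final passage $k\to\infty$ is asserted via ``$BV$ lower semicontinuity on the left,'' but the integrand $t\mapsto\min\{t^2,t\}$ appearing in \eqref{eq_HeightFunctionGradientEstimate} is not convex (its derivative drops from $2$ to $1$ at $t=1$), so lower semicontinuity of $\int\min\{|\nabla^{\tan}u|^2,|\nabla^{\tan}u|\}\,d\Hc^{d-1}+|D^su|$ under $BV$-weak* convergence is not a black-box fact and needs a separate argument or a different route. The cited proof of Fischer--Hensel works directly with the reduced boundary and one-dimensional sections of sets of finite perimeter, avoiding both the smoothing issue and the semicontinuity issue; reproducing it is more work than a paragraph sketch. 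Step 4 is also left at the level of a heuristic decomposition; this is in the right spirit of \cite[Proposition~26 c)]{FischerHensel} but would similarly need the slicing machinery to be made rigorous.
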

\begin{proof}
	The assertions up to \eqref{eq_ErrorGraph} can be shown analogously to \cite[Proposition~26, a)-b)]{FischerHensel}. Moreover, \eqref{eq_ErrorGraph} may be established readily using the arguments for statement c) of \cite[Proposition~26]{FischerHensel}.
\end{proof}

The plan is to apply the previous Proposition \ref{th_PropositionFH} to a suitable level set of $\varphi_\eps(.,t)$ for a.e.~$t\in[0,T_0]$. The latter is selected in the following lemma.
\begin{lemma}\label{th_levelset}	
	Let $t\in[0,T_0]$ be such that $\varphi_\eps(.,t)\in H^1(\Omega)$. Then the relative entropy $E[\varphi_\eps|\Gamma^{m_\eps}](t)$ from \eqref{eq_Erel2} is well-defined and finite. Moreover, there exists a level $b(t)=b_\eps(t)\in (-\frac{1}{2},\frac{1}{2})$ such that the corresponding super-level set $S_{b(t)}:=\{x\in\Omega:\varphi_\eps(x,t)>b(t)\}$ is a set of finite perimeter (possibly empty) and satisfies with $\xi$ as in \eqref{eq_xi_B_def} and $\psi$ as in \eqref{eq_psi_def} the estimate
	\[
	\int_{\Rd} {\Big(}1-\xi(.,t) \cdot \frac{\nabla \chi_{S_{b(t)}}}{|\nabla \chi_{S_{b(t)}}|}{\Big)} \,\mathrm{d}|\nabla \chi_{S_{b(t)}}| \leq \frac{2}{\psi(\frac12)-\psi(-\frac12)} E[\varphi_\eps|\Gamma^{m_\eps}](t).
	\]
\end{lemma}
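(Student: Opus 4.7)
The well-definedness and finiteness of $E[\varphi_\eps|\Gamma^{m_\eps}](t)$ are immediate from $\varphi_\eps(\cdot,t)\in H^1(\Omega)$: the maximum principle yields $|\varphi_\eps|\le 1$ and hence $W(\varphi_\eps)\in L^\infty$, while $\xi\in L^\infty$ and $\nabla\psi_\eps=\sqrt{2W(\varphi_\eps)}\nabla\varphi_\eps\in L^2(\Omega)$ make every term in \eqref{eq_Erel2} absolutely integrable. The main content of the lemma is the selection of the level $b(t)$, and the plan is to combine the tilt-excess estimate \eqref{eq_Erel_coercivity_tilt} with the BV coarea formula applied to $\psi_\eps$ and a Chebyshev-type averaging argument.

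Starting from
\[
\int_\Omega(1-\vec{n}_\eps\cdot\xi)|\nabla\psi_\eps|\,dx\le E[\varphi_\eps|\Gamma^{m_\eps}](t),
\]
I would apply the coarea formula to $\psi_\eps\in W^{1,1}(\Omega)$ with the nonnegative integrand $1-\vec{n}_\eps\cdot\xi$. Using that $\vec{n}_\eps=\nabla\psi_\eps/|\nabla\psi_\eps|$ on $\{\nabla\psi_\eps\neq 0\}$ (the exceptional set carries zero $|\nabla\psi_\eps|$-measure, so the arbitrary choice of $\vec{n}_\eps$ there is harmless), this yields
\[
\int_{\R}g(s)\,ds\le E[\varphi_\eps|\Gamma^{m_\eps}](t),\qquad g(s):=\int_{\{\psi_\eps=s\}}(1-\vec{n}_\eps\cdot\xi)\,d\Hc^{d-1}.
\]

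Setting $L:=\psi(\tfrac12)-\psi(-\tfrac12)>0$ and restricting to $s\in(\psi(-\tfrac12),\psi(\tfrac12))$, a Chebyshev estimate applied to the nonnegative function $g$ shows that the set $\{s\in(\psi(-\tfrac12),\psi(\tfrac12)):g(s)>2E[\varphi_\eps|\Gamma^{m_\eps}](t)/L\}$ has one-dimensional Lebesgue measure at most $L/2$. Simultaneously, by coarea there is only a Lebesgue-null set of levels $s$ for which $\{\psi_\eps>s\}$ fails to be of finite perimeter, equivalently for which $\{\psi_\eps=s\}$ and $\partial^*\{\psi_\eps>s\}$ fail to coincide $\Hc^{d-1}$-a.e. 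Intersecting, I obtain a subset of $(\psi(-\tfrac12),\psi(\tfrac12))$ of positive measure from which a level $s^*$ may be selected satisfying both $g(s^*)\le 2E[\varphi_\eps|\Gamma^{m_\eps}](t)/L$ and the finite-perimeter condition.

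Since $\sqrt{2W}>0$ on $(-1,1)$, $\psi$ is strictly increasing there, and I would set $b(t):=\psi^{-1}(s^*)\in(-\tfrac12,\tfrac12)$, so that $S_{b(t)}=\{\varphi_\eps>b(t)\}=\{\psi_\eps>s^*\}$ is of finite perimeter with measure-theoretic inner normal $\nabla\chi_{S_{b(t)}}/|\nabla\chi_{S_{b(t)}}|=\vec{n}_\eps$ holding $|\nabla\chi_{S_{b(t)}}|$-a.e.\ on $\partial^*S_{b(t)}$. The bound on $g(s^*)$ is then precisely the claimed inequality. The only point I expect to require some care, rather than a one-line citation, is the a.e.-in-$s$ identification of $\{\psi_\eps=s\}$ with $\partial^*\{\psi_\eps>s\}$ together with the matching of normals; this, however, is entirely standard for $W^{1,1}$ functions via the coarea formula.
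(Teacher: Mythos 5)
Your proof is correct and follows essentially the same route as the paper's: apply the coarea formula (for $\psi_\eps\in W^{1,1}$ / $H^1$) to the weighted tilt-excess integrand $1-\vec{n}_\eps\cdot\xi$ with respect to $|\nabla\psi_\eps|$, then use a Chebyshev/averaging argument over the level window $(\psi(-\tfrac12),\psi(\tfrac12))$ to select a good level; the paper phrases the final step as "a contradiction argument" but it is the same averaging idea, and like you it relies on the a.e.-in-$s$ identification of level sets with sets of finite perimeter and of $\vec{n}_\eps$ with the measure-theoretic normal.
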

\begin{proof}
	Let $t$ be as in the lemma. Then $E[\varphi_\eps|\Gamma^{m_\eps}](t)$ is well-defined and finite because of $\varphi_\eps(.,t)\in H^1(\Omega)$. Moreover, $\varphi_\eps(.,t)\in BV(\Omega)$ yields together with the coarea-formula for BV-functions, cf.~Ambrosio, Fusco, Pallara \cite[Theorem 3.40]{Ambrosio2000a} that $S_b:=\{x\in\Omega:\varphi_\eps(x,t)>b\}$ is a set of finite perimeter (possibly empty) for a.e.~$b\in\R$. Moreover, we use the coarea-formula for BV-functions \cite[Theorem 3.40]{Ambrosio2000a} applied to $\psi_\eps(.,t)=\psi(\varphi_\eps(.,t))\in H^1(\Omega)$ {(together with an approximation argument by simple functions)} and obtain
	\begin{align*}
		E[\varphi_\eps|\Gamma^{m_\eps}](t)&\geq \int_{\Omega} \left[|\nabla \psi_\eps| - \xi\cdot \nabla \psi_\eps\right]\!(.,t)\,dx
		=\int_{\R^d} \left[1 - \xi \cdot \frac{\nabla \varphi_\eps}{|\nabla \varphi_\eps|}\right]\!(.,t)\, |\nabla\psi_\eps(.,t)|\,dx\\
		&=\int_{0}^\sigma \int_{\Rd} \left[1 - \xi\cdot \frac{\nabla \varphi_\eps}{|\nabla \varphi_\eps|}\right]\!(.,t) \,\mathrm{d}|\nabla \chi_{S_{\psi^{-1}(r)}}| \,dr.
	\end{align*}
	This shows the claim by a contradiction argument.
\end{proof}

We combine Proposition \ref{th_PropositionFH} and Lemma \ref{th_levelset} in the following lemma.

\begin{lemma}\label{th_lem_height}
	Let $t\in[0,T_0]$ be fixed such that $\varphi_\eps(.,t)\in H^1(\Omega)$. Let the level $b(t)=b_\eps(t)\in (-\frac{1}{2},\frac{1}{2})$ and the super-level set $S_{b(t)}$ be as in Lemma \ref{th_levelset}. We set $\chi:=\chi_{\Omega^{m_\eps,+}_t}$. Then there is a $1$-Lipschitz function $h_t=h_{t,\eps}:\Gamma^{m_\eps}_t \rightarrow (-\delta,\delta)$ subject to the estimate
	\begin{align}
		\label{eq_HeightFunctionDirichletEstimate}
		&\int_{\Gamma^{m_\eps}_t} |h_t|^2 + |\nabla^{\tan} h_t|^2 \,d\Hc^{d-1}
		\leq C E[\varphi_\eps|\Gamma^{m_\eps}](t)
		+C \int_\Rd |\chi-\chi_{S_{b(t)}}| \min\left\{d_{\Gamma^{m_\eps}_t},1\right\} \,dx
	\end{align}
	such that the graph $G_t:=\{s+h_t(s)\vec{n}(s,t):s\in \Gamma^{m_\eps}_t\}$ approximates the reduced boundary of $S_{b(t)}$ in the following sense:
	\begin{align}
		\label{eq_ErrorHeightFunction}
		\int_{\Rd\setminus G_t} 1 \,\mathrm{d}|\nabla\chi_{S_{b(t)}}|
		\leq C E[\varphi_\eps|\Gamma^{m_\eps}](t)
		+C \int_\Rd |\chi-\chi_{S_{b(t)}}| \min\left\{d_{\Gamma^{m_\eps}_t},1\right\} \,dx.
	\end{align}
	Finally, constants in the estimates \eqref{eq_HeightFunctionDirichletEstimate}-\eqref{eq_ErrorHeightFunction} are independent of $\varphi_\eps$ and $t$.
\end{lemma}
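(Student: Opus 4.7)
The plan is to apply Proposition~\ref{th_PropositionFH} with $\tilde{\chi}:=\chi_{S_{b(t)}}$ to obtain one-sided height functions $h^\pm_t\colon\Gamma^{m_\eps}_t\to[0,\tfrac{\delta}{2}]$, then to use Lemma~\ref{th_levelset} to reduce the tilt-excess term on the right-hand side of \eqref{eq_HeightFunctionGradientEstimate} and \eqref{eq_ErrorGraph} to a multiple of $E[\varphi_\eps|\Gamma^{m_\eps}](t)$. The natural graph candidate is the BV function $f_t:=h^+_t-h^-_t$, which already satisfies the $L^2$-bound \eqref{eq_HeightFctEstimate} and the mixed gradient bound \eqref{eq_HeightFunctionGradientEstimate} with the desired right-hand side. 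The only remaining task is to replace $f_t$ by a genuinely $1$-Lipschitz function $h_t$ without losing these bounds and without enlarging the graph error beyond the claimed amount.

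To this end, I would perform a Lipschitz truncation of $f_t$ on $\Gamma^{m_\eps}_t$. Define the good set
\begin{equation*}
  \mathcal{G}_t:=\bigl\{s\in\Gamma^{m_\eps}_t:\mathcal{M}_{\Gamma^{m_\eps}_t}\bigl(|Df_t|\bigr)(s)\le \tfrac12\bigr\},
\end{equation*}
where $\mathcal{M}_{\Gamma^{m_\eps}_t}$ denotes the (geodesic) Hardy--Littlewood maximal operator on the closed submanifold $\Gamma^{m_\eps}_t$. By standard truncation theory, the restriction $f_t|_{\mathcal{G}_t}$ is $1$-Lipschitz and admits a $1$-Lipschitz McShane--Whitney extension to all of $\Gamma^{m_\eps}_t$; after composing with a further truncation at height $\tfrac{\delta}{2}$ (which can only decrease the Lipschitz constant and agrees with $f_t$ on $\mathcal{G}_t$ since $|f_t|\le\tfrac{\delta}{2}$) we obtain the desired $h_t$. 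The weak $(1,1)$-bound for $\mathcal{M}$ together with the mixed $\min\{|\cdot|^2,|\cdot|\}$-estimate in \eqref{eq_HeightFunctionGradientEstimate} yields
\begin{equation*}
  \mathcal{H}^{d-1}(\Gamma^{m_\eps}_t\setminus\mathcal{G}_t)\le C\Bigl(\int_{\Gamma^{m_\eps}_t}\min\{|\nabla^{\tan}f_t|^2,|\nabla^{\tan}f_t|\}\,d\Hc^{d-1}+|D^sf_t|(\Gamma^{m_\eps}_t)\Bigr),
\end{equation*}
which by \eqref{eq_HeightFunctionGradientEstimate} is controlled by the right-hand side of \eqref{eq_HeightFunctionDirichletEstimate}.

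With $h_t$ constructed, the two claimed estimates follow essentially for free. For \eqref{eq_HeightFunctionDirichletEstimate}, the $L^2$-bound on $h_t$ comes from $|h_t|\le|f_t|+\tfrac{\delta}{2}\mathbf{1}_{\mathcal{G}_t^c}$ combined with \eqref{eq_HeightFctEstimate} and the measure bound on $\mathcal{G}_t^c$, while $\int|\nabla^{\tan}h_t|^2\,d\Hc^{d-1}\le \int_{\mathcal{G}_t}|\nabla^{\tan}f_t|^2\,d\Hc^{d-1}+\Hc^{d-1}(\mathcal{G}_t^c)$ uses the $1$-Lipschitz property on the bad set. For \eqref{eq_ErrorHeightFunction}, points of $\partial^*S_{b(t)}$ projecting into $\mathcal{G}_t$ lie on $G_t$ up to the graph $\tilde{G}_t$ of $f_t$ (which is already controlled by \eqref{eq_ErrorGraph}), and the $|\nabla\chi_{S_{b(t)}}|$-mass of points projecting into $\mathcal{G}_t^c$ is bounded via the co-area estimate of the projection combined with the measure bound on $\mathcal{G}_t^c$.

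The main obstacle I anticipate is the Lipschitz-truncation step: the estimates in Proposition~\ref{th_PropositionFH} are genuinely mixed $L^2$/$L^1$/singular-measure bounds, so one cannot hope for a $W^{1,2}$-bound on $f_t$ directly, and the replacement of $f_t$ by $h_t$ must be delicate enough to (i) preserve agreement with $f_t$ on a set whose complement has small $\Hc^{d-1}$-measure, (ii) absorb the singular part $D^sf_t$, and (iii) transfer the graph-approximation property from $\tilde{G}_t$ to $G_t$. The choice of the maximal-function level set above is tailored to achieve exactly this trade-off, and the constants depend only on $\delta$ and the geometry of $\Gamma^{m_\eps}_t$, which by Theorem~\ref{th:approx_lim_uniform} is uniformly controlled in $m_\eps$ and $t$.
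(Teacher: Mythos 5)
Your proposal follows the paper's proof essentially step by step: apply Proposition~\ref{th_PropositionFH} to $\tilde\chi=\chi_{S_{b(t)}}$, form $\tilde h_t=h_t^+-h_t^-$, perform a Lipschitz truncation via the geodesic maximal function with a McShane extension, and bound the bad set's $\Hc^{d-1}$-measure using the maximal-function weak $(1,1)$ (combined with the strong $L^2$) estimate together with \eqref{eq_HeightFunctionGradientEstimate} and Lemma~\ref{th_levelset}. The one small imprecision is fixing the maximal-function threshold at $\tfrac12$: the pointwise truncation inequality $|u(s_1)-u(s_2)|\leq C_1|s_1-s_2|\bigl(\Mc|D^{\tan}u|(s_1)+\Mc|D^{\tan}u|(s_2)\bigr)$ carries an absolute constant $C_1$, so the threshold must actually be chosen as a sufficiently small $\bar c$ with $2C_1\bar c\leq 1$ to guarantee $1$-Lipschitz, which is exactly how the paper handles it.
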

\begin{proof}
	By Lemma \ref{th_levelset} it holds $\tilde{\chi}:=\chi_{S_{b(t)}}\in BV(\R^d;\{0,1\})$.
	Hence by Proposition \ref{th_PropositionFH} applied for $\tilde{\chi}$ there exist BV-functions $h_t^\pm:\Gamma^{m_\eps}_t\rightarrow[0,\frac{\delta}{2}]$ such that \eqref{eq_HeightFctEstimate}-\eqref{eq_ErrorGraph} hold. We set $\tilde{h}_t:=h^+_t-h^-_t$. It remains to modify $\tilde h$ to obtain a 1-Lipschitz function. To do so, we use a standard Lipschitz truncation strategy (see e.g.\ \cite{AcerbiFusco}, \cite[Section~6.6.3]{EvansGariepy} or \cite{BreitDieningGmeinederLipschitzBV}): we consider the maximal operator $\Mc$ over $\Gamma^{m_\eps}_t$, which can be defined as in \cite[Chapter I, \S 8.1 (ii)]{SteinHarmonicAnalysis} since $\mathcal{H}^{d-1}\lfloor \Gamma_t^{m_\eps}$ satisfies the doubling condition with respect to the geodesic balls on $\Gamma_t^{m_\eps}$.
	Then there is some $C_1>0$ such that
	\[
	|u(s_1)-u(s_2)|\leq C_1|s_1-s_2| \left(\Mc|D^\textup{tan}u|(s_1)+\Mc|D^\textup{tan}u|(s_2)\right)\quad\text{for }\Hc^{d-1}\text{-a.e. }s_1,s_2\in\Gamma^{m_\eps}_t
	\]
	for all $u\in BV(\Gamma^{m_\eps}_t)$ with some $C_1>0$ independent of $u$ and small $m_\eps$. More precisely, in the case that $\Gamma^{m_\eps}_t$ is replaced by $\R^{d-1}$ this inequality is shown in \cite[Lemma~2(c)]{BreitDieningGmeinederLipschitzBV}. Then the estimate in the present case can be shown by localization. Moreover, because of the continuous dependence on $t\in [0,T]$ and $m_\eps\in [0, m_0]$ (cf.\ Theorem~\ref{th_approx_m_dependence}), the constant can be chosen uniformly in $t\in[0,T_0]$, $m_\eps\in [0,m_0]$ for $\eps>0$ sufficiently small. Using the precise representative for $\tilde{h}_t$ we define
	\begin{align*}
		h_t(s):=\tilde{h}_t(s)\quad \text{ for all }\quad s\in A_t:=\{s\in\Gamma^{m_\eps}_t:(\mathcal{M}|D^\textup{tan} \tilde h_t|)(s)\leq\overline{c}\},
	\end{align*}
	where $\overline{c}>0$ is a small constant to be determined. The so-defined function $h$ is Lipschitz on $A_t$ with Lipschitz-constant bounded by $c_1\overline{c}$. We extend $h$ to all of $\Gamma^{m_\eps}_t$ as a Lipschitz function via the standard extension, cf.~\cite[Proposition 2.12]{Ambrosio2000a}), i.e.~
	\[
	h_t(s):=\inf_{\tilde{s}\in A_t}\{\tilde{h}_t(\tilde{s})+\textup{Lip}(\tilde{h}_t)|s-\tilde{s}|\},
	\]
	where the Lipschitz constant stays the same. Hence for $\overline{c}$ small enough (independent of $\tilde{h}_t$, $h_t$, $t$ and small $m_\eps$), we obtain that $h_t$ is $1$-Lipschitz and bounded by $\frac{3\delta}{4}$. Moreover, {using the weak $L^1$-estimate for the maximal operator we obtain} 
	\begin{align}\begin{split}\label{eq_lem_height}
		\mathcal{H}^{d-1}\left(\Gamma^{m_\eps}_t\setminus A_t\right)
		&\leq C \int_{\Gamma^{m_\eps}_t} |\nabla^\textup{tan} \tilde{h}_t| \,d\Hc^{d-1} + |D^s\tilde{h}_t|(\Gamma^{m_\eps}_t)\\
		&\leq CE[\varphi_\eps|\Gamma^{m_\eps}](t)
		+C\int_\Rd |\chi-\tilde{\chi}| \min\left\{d_{\Gamma^{m_\eps}_t},1\right\} \,dx,\end{split}
	\end{align}
	where we used \eqref{eq_HeightFunctionGradientEstimate} and Lemma \ref{th_levelset} in the last step. Because of $|\nabla^\textup{tan}h_t|\leq C$ as well as by \eqref{eq_HeightFunctionGradientEstimate} and the fact that $D^\textup{tan}\tilde{h}_t=\nabla^\textup{tan}\tilde{h}_t=\nabla^\textup{tan}h_t$ a.e.~on $A_t$, this establishes the bound
	\begin{align*}
		&\int_{\Gamma^{m_\eps}_t} |\nabla^{\tan} h_t|^2 \,d\Hc^{d-1}
		\leq C E[\varphi_\eps|\Gamma^{m_\eps}](t)
		+C\int_\Rd |\chi-\tilde{\chi}| \min\left\{d_{\Gamma^{m_\eps}_t},1\right\} \,dx.
	\end{align*}
	Hence the $|h_t|^2$-part of the bound \eqref{eq_HeightFunctionDirichletEstimate} is left to prove. The latter follows because of \eqref{eq_lem_height}, the boundedness of $h_t$ and \eqref{eq_HeightFunctionGradientEstimate}. Finally, \eqref{eq_ErrorHeightFunction} follows from \eqref{eq_ErrorGraph} in Proposition \ref{th_PropositionFH}, the estimate \eqref{eq_lem_height} and Lemma \ref{th_levelset}.
\end{proof}

\begin{corollary}\label{th_h_energyweight}
Let $t\in[0,T_0]$ be fixed such that $\varphi_\eps(.,t)\in H^1(\Omega)$, let $b(t)=b_\eps(t), S_{b(t)}$ be as in Lemma \ref{th_levelset} and let $h_t=h_{t,\eps}:\Gamma^{m_\eps}_t\rightarrow(-\delta,\delta)$ be as in Lemma \ref{th_lem_height}. Then with $\chi:=\chi_{\Omega^{m_\eps,+}_t}$ it holds 
\begin{align}\begin{split}\label{eq_height_energyweight}
&\int_{\Gamma^{m_\eps}_t(2\delta)}(|h_t|_{P_{\Gamma^{m_\eps}}}|^2+|\nabla_\tau (h_t|_{P_{\Gamma^{m_\eps}}})|^2)\left(\eps|\nabla\varphi_\eps|^2+\frac{1}{\eps}W(\varphi_\eps)\right)|_{(x,t)}\,dx\\
&\leq C E[\varphi_\eps|\Gamma^{m_\eps}](t)
+C \int_\Rd |\chi-\chi_{S_{b(t)}}|\min\left\{d_{\Gamma^{m_\eps}_t},1\right\} \,dx.
\end{split}\end{align}
\end{corollary}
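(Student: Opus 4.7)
The plan is to reduce the bulk integral to a surface integral over $\Gamma^{m_\eps}_t$ via Fubini in the tubular coordinates furnished by $X_{m_\eps}$, decompose the phase-field energy density by Modica-Mortola, and combine the two resulting pieces using the $L^\infty$-bound $|h_t|^2+|\nabla^\tau h_t|^2\leq\delta^2+1$ on one side and the height-function estimate from Lemma~\ref{th_lem_height} on the other. Throughout, I write $f(x):=(|h_t|^2+|\nabla^\tau h_t|^2)(P_{\Gamma^{m_\eps}}(x))$ and $e_\eps:=\eps|\nabla\varphi_\eps|^2+\eps^{-1}W(\varphi_\eps)$.

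First, since $f$ is constant on normal fibers of $\Gamma^{m_\eps}_t(2\delta)$ and the Jacobian $J(s,r)$ of $X_{m_\eps}$ is uniformly bounded above and below, Fubini yields
\begin{align*}
\mathrm{LHS}=\int_{\Gamma^{m_\eps}_t}\bigl(|h_t(s)|^2+|\nabla^\tau h_t(s)|^2\bigr)\,\Phi(s)\,d\Hc^{d-1}(s),\qquad \Phi(s):=\int_{-2\delta}^{2\delta} e_\eps(X_{m_\eps}(r,s))\,J(s,r)\,dr.
\end{align*}
Next I would apply the pointwise Modica-Mortola identity
\begin{align*}
e_\eps \leq 2|\nabla\psi_\eps|+\mathcal E_\eps,\qquad \mathcal E_\eps:=\bigl(\sqrt\eps|\nabla\varphi_\eps|-\sqrt{2W(\varphi_\eps)/\eps}\bigr)^2,
\end{align*}
together with the coercivity bound $\int_\Omega \mathcal E_\eps\,dx\leq 2E[\varphi_\eps|\Gamma^{m_\eps}](t)$ from~\eqref{eq_Erel_coercivity_equipart}, to split $\Phi$. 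The $\mathcal E_\eps$-contribution is easy: using $|h_t|^2+|\nabla^\tau h_t|^2\leq\delta^2+1$ (from the $1$-Lipschitz condition together with $|h_t|\leq\delta$),
\begin{align*}
\int_{\Gamma^{m_\eps}_t(2\delta)} f\,\mathcal E_\eps\,dx \leq (\delta^2+1)\int_\Omega \mathcal E_\eps\,dx \leq C\,E[\varphi_\eps|\Gamma^{m_\eps}](t).
\end{align*}

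The remaining principal term $2\int_{\Gamma^{m_\eps}_t(2\delta)} f\,|\nabla\psi_\eps|\,dx$ is what I expect to be the main obstacle. I would treat it via the coarea formula
\begin{align*}
\int_{\Gamma^{m_\eps}_t(2\delta)} f\,|\nabla\psi_\eps|\,dx=\int_0^\sigma\int_{\{\psi_\eps=t'\}\cap\Gamma^{m_\eps}_t(2\delta)} f\,d\Hc^{d-1}\,dt',
\end{align*}
projecting each level set $\{\psi_\eps=t'\}$ onto $\Gamma^{m_\eps}_t$ via $P_{\Gamma^{m_\eps}}$. Because $f$ depends only on the foot point and $P_{\Gamma^{m_\eps}}$ is $C^1$ with uniformly bounded derivative, the area formula gives $\int_{\{\psi_\eps=t'\}} f\,d\Hc^{d-1}\leq C\int_{\Gamma^{m_\eps}_t}(|h_t|^2+|\nabla^\tau h_t|^2)(s)\,\mathcal N_{t'}(s)\,d\Hc^{d-1}(s)$, where $\mathcal N_{t'}(s):=\#\bigl(\{\psi_\eps=t'\}\cap P_{\Gamma^{m_\eps}}^{-1}(s)\bigr)$ is the multiplicity of the projection. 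Integrating over $t'$ converts $\int_0^\sigma \mathcal N_{t'}(s)\,dt'$ into the total variation of $\psi_\eps$ along the normal fiber at $s$, which I bound by $\sigma$ plus an excess controlled on average through the equipartition defect~\eqref{eq_Erel_coercivity_equipart} combined with the graph-approximation estimate~\eqref{eq_ErrorHeightFunction}. This produces
\begin{align*}
2\int_{\Gamma^{m_\eps}_t(2\delta)} f\,|\nabla\psi_\eps|\,dx \leq C\int_{\Gamma^{m_\eps}_t}\bigl(|h_t|^2+|\nabla^\tau h_t|^2\bigr)\,d\Hc^{d-1}+C\,E[\varphi_\eps|\Gamma^{m_\eps}](t).
\end{align*}

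Finally, invoking the height-function estimate~\eqref{eq_HeightFunctionDirichletEstimate} from Lemma~\ref{th_lem_height} bounds the surface integral by the claimed right-hand side, completing the proof. The main technical difficulty is the fiber-wise total-variation argument in the previous paragraph: since $\Phi(s)$ is not pointwise bounded, the excess of the fiber TV of $\psi_\eps$ over the trivial bound $\sigma$ must be absorbed into the relative entropy via an averaging in $t'\in(0,\sigma)$ that combines the tilt-excess~\eqref{eq_Erel_coercivity_tilt} with the Lipschitz-graph representation of the level set afforded by Lemma~\ref{th_lem_height}.
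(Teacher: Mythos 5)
Your first two reduction steps are in the same spirit as the paper's: pass to a surface integral over $\Gamma^{m_\eps}_t$ with a fiber integral $\Phi(s)$ by Fubini, separate the equipartition defect using the uniform $L^\infty$ bound on $(h_t,\nabla^{\tan}h_t)$, and aim to control the remaining $|\nabla\psi_\eps|$ contribution. However, your Step~4 --- the coarea decomposition into level sets $\{\psi_\eps=t'\}$ followed by projection onto $\Gamma^{m_\eps}_t$ and a multiplicity bound --- contains a genuine gap.

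The inequality $\int_{\{\psi_\eps=t'\}} f\,d\Hc^{d-1}\leq C\int_{\Gamma^{m_\eps}_t}(|h_t|^2+|\nabla^{\tan} h_t|^2)\,\mathcal N_{t'}\,d\Hc^{d-1}$ does not follow from the area formula: the tangential Jacobian of $P_{\Gamma^{m_\eps}}$ restricted to $\{\psi_\eps=t'\}$ is comparable to $|\nu\cdot\vec{n}|$, where $\nu$ is the unit normal of the level set, and this degenerates to zero wherever the level set is tilted away from $\Gamma^{m_\eps}_t$. The bounded derivative of $P_{\Gamma^{m_\eps}}$ gives the wrong-sided control here; you would need the restricted Jacobian to be bounded from \emph{below}, which fails precisely in the tilt-excess regime. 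Your closing remark about combining the tilt excess with the Lipschitz-graph representation of the level set does not repair this: Lemma~\ref{th_lem_height} produces a $1$-Lipschitz graph approximating only the single level set $\partial^*S_{b(t)}=\{\varphi_\eps=b(t)\}$ (i.e., $\{\psi_\eps = \psi(b(t))\}$), not the full one-parameter family of level sets $\{\psi_\eps=t'\}$, $t'\in(0,\sigma)$, that the coarea decomposition introduces. You would need a uniform statement over all levels, which Proposition~\ref{th_PropositionFH} and Lemmas~\ref{th_levelset}--\ref{th_lem_height} do not provide.

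The paper avoids the coarea formula entirely and uses a far simpler dichotomy on $s\in\Gamma^{m_\eps}_t$: if the fiber energy $\Phi(s)\leq 4\sigma$, the contribution is bounded by $4\sigma\int_{\Gamma^{m_\eps}_t}(|h_t|^2+|\nabla^{\tan}h_t|^2)\,d\Hc^{d-1}$, which is controlled by \eqref{eq_HeightFunctionDirichletEstimate}; if $\Phi(s)>4\sigma$, one uses $\big|\int_{-2\delta}^{2\delta}\partial_r\psi(\tilde\varphi_\eps)\,dr\big|\leq 2\sigma$ to show that the fiber relative-entropy quantity is at least a fixed fraction of $\Phi(s)$, so the uniform bound $|h_t|^2+|\nabla^{\tan}h_t|^2\leq C$ suffices and the total contribution of such fibers is absorbed into $E[\varphi_\eps|\Gamma^{m_\eps}](t)$ via \eqref{eq_coercivity_n_tau}. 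The key idea you are missing is exactly this pointwise (in $s$) comparison of the fiber energy to the fiber relative entropy in the high-energy regime; no level-set geometry is needed.
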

\begin{proof}
	Using of the identity \eqref{eq_nablatau_trafo} for $\nabla_\tau$, we can exchange $\nabla_\tau (h_t|_{P_{\Gamma^{m_\eps}}})$ by $(\nabla^\textup{tan}h_t)|_{P_{\Gamma^{m_\eps}}}$ in the above estimate. Moreover, since both $h_t$ and $\nabla^\textup{tan}h_t$ are uniformly bounded, we can consider the estimate with $\partial_n\varphi_\eps$ instead of $\nabla\varphi_\eps$, since tangential derivatives are controlled by \eqref{eq_coercivity_n_tau}. Then we apply an integral transformation with the tubular neighbourhood coordinates to obtain
	\begin{align*}
	&\int_{\Gamma^{m_\eps}_t(2\delta)}(|h_t|_{P_{\Gamma^{m_\eps}}}|^2+|(\nabla^\textup{tan}h_t)|_{P_{\Gamma^{m_\eps}}}|^2)\left(\eps|\partial_n\varphi_\eps|^2+\frac{1}{\eps}W(\varphi_\eps)\right)|_{(x,t)}\,dx\\
	&=\int_{\Gamma^{m_\eps}_t}(|h_t|^2+|\nabla^\textup{tan}h_t|^2)(s)\int_{-2\delta}^{2\delta}\left(\eps|\partial_r(\varphi_\eps|_{{X_{m_\eps}}})|^2+\frac{1}{\eps}W(\varphi_\eps|_{{X_{m_\eps}}})\right)|_{(r,s,t)}J_t(r,s)\,dr\,ds,
	\end{align*}
	where the factor $J_t=J_t(m_\eps)$ satisfies for some $c_J,C_J>0$ independent of $t$ and $m_\eps$
	\begin{equation*}
		c_J\leq J_t(r,s)\leq C_J \qquad \text{for all }s\in \Gamma^{m_\eps}_t,r\in (-\delta,\delta).
	\end{equation*}
 	Let $\sigma=\psi(1)$ be as in \eqref{eq_psi_def}. For $s\in \Gamma^{m_\eps}_t$ such that the inner integral {with respect to $r\in (-2\delta,2\delta)$} is less or equal $4\sigma$, the desired estimate follows from Lemma \ref{th_lem_height}. For $s\in \Gamma^{m_\eps}_t$ for which this is not the case, we use that $|\psi(\varphi)|\leq \sigma$ for all $\varphi\in[-1,1]$ and therefore 
 	\[
 	\left|\int_{-2\delta}^{2\delta}\partial_r\left(\psi(\varphi_\eps|_{{X_{m_\eps}}(r,s,t)})\right)\,dr\right|\leq 2\sigma.
 	\]
 	Hence it follows for such $s\in \Gamma^{m_\eps}_t$ that
 	\begin{align*}
 	\int_{-2\delta}^{2\delta}&\left(\eps|\partial_r(\varphi_\eps|_{{X_{m_\eps}}})|^2+\frac{1}{\eps}W(\varphi_\eps|_{{X_{m_\eps}}})-\partial_r\left(\psi(\varphi_\eps|_{{X_{m_\eps}}})\right)\right)|_{(r,s,t)}J_t(r,s)\,dr\\
 	&\geq c_J\int_{-2\delta}^{2\delta}\left(\eps|\partial_r(\varphi_\eps|_{{X_{m_\eps}}})|^2+\frac{1}{\eps}W(\varphi_\eps|_{{X_{m_\eps}}})-\partial_r\left(\psi(\varphi_\eps|_{{X_{m_\eps}}})\right)\right)|_{(r,s,t)}\,dr\\
 	&\geq \frac{c_J}{2}\int_{-2\delta}^{2\delta}\left(\eps|\partial_r(\varphi_\eps|_{{X_{m_\eps}}})|^2+\frac{1}{\eps}W(\varphi_\eps|_{{X_{m_\eps}}})\right)|_{(r,s,t)}\,dr\\
 	&\geq \frac{c_J}{2C_J}\int_{-2\delta}^{2\delta}\left(\eps|\partial_r(\varphi_\eps|_{{X_{m_\eps}}})|^2+\frac{1}{\eps}W(\varphi_\eps|_{{X_{m_\eps}}})\right)|_{(r,s,t)}J_t(r,s)\,dr.
 	\end{align*}
 	Therefore the contribution in this case can be estimated with \eqref{eq_coercivity_n_tau} and by using that $h_t$ and $\nabla^\textup{tan}h_t$ are uniformly bounded.
\end{proof}

Finally, in order to control the second term in \eqref{eq_height_energyweight} in the end, we need the following lemma:

\begin{lemma}\label{th_Ebulk_BVerror}
	Let $t\in[0,T_0]$ be fixed such that $\varphi_\eps(.,t)\in H^1(\Omega)$ and let $b(t)=b_\eps(t), S_{b(t)}$ be as in Lemma \ref{th_levelset}. Then with $\chi:=\chi_{\Omega^{m_\eps,+}_t}$ and $E_\textup{bulk}[\varphi_\eps|\Gamma^{m_\eps}]$ from Section \ref{sec_Ebulk} it follows that
	\begin{align}
		\int_\Rd |\chi-\chi_{S_{b(t)}}|\min\left\{d_{\Gamma^{m_\eps}_t},1\right\} \,dx \leq CE_\textup{bulk}[\varphi_\eps|\Gamma^{m_\eps}](t).
	\end{align}
\end{lemma}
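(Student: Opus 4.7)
The plan is to establish a pointwise lower bound for $|\sigma\chi - \psi_\eps|$ on the symmetric difference $\{\chi \neq \chi_{S_{b(t)}}\}$, and then invoke the coercivity estimate \eqref{eq_Ebulk_coercivity1}.

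\textbf{Setup.} Recall that by Lemma \ref{th_levelset} the level $b(t)$ lies in $(-\tfrac{1}{2}, \tfrac{1}{2})$, and by the maximum principle (as used in the proof of Lemma \ref{th_Ebulk_coercivity}) $\varphi_\eps$ takes values in $[-1,1]$, so $\psi_\eps = \psi(\varphi_\eps) \in [0,\sigma]$. Set
\[
c_\ast := \min\{\sigma - \psi(\tfrac{1}{2}),\,\psi(-\tfrac{1}{2})\} = \int_{1/2}^{1}\sqrt{2W(s)}\,ds > 0,
\]
where the equality uses the symmetry $W(-s)=W(s)$.

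\textbf{Key pointwise bound.} I claim that almost everywhere
\[
|\chi(x) - \chi_{S_{b(t)}}(x)| \leq \tfrac{1}{c_\ast}\, |\sigma\chi(x) - \psi_\eps(x)|.
\]
This is trivial where $\chi = \chi_{S_{b(t)}}$. Otherwise, there are two cases. If $\chi(x) = 1$ and $\chi_{S_{b(t)}}(x) = 0$, then $\varphi_\eps(x) \leq b(t) < \tfrac{1}{2}$, hence $\psi_\eps(x) \leq \psi(\tfrac{1}{2})$ and $|\sigma\chi(x)-\psi_\eps(x)| = \sigma - \psi_\eps(x) \geq \sigma - \psi(\tfrac{1}{2}) \geq c_\ast$. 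If $\chi(x) = 0$ and $\chi_{S_{b(t)}}(x) = 1$, then $\varphi_\eps(x) > b(t) > -\tfrac{1}{2}$, hence $\psi_\eps(x) > \psi(-\tfrac{1}{2})$ and $|\sigma\chi(x) - \psi_\eps(x)| = \psi_\eps(x) \geq \psi(-\tfrac{1}{2}) \geq c_\ast$. In both cases the claimed pointwise bound holds.

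\textbf{Conclusion.} Since both $\chi$ and $\chi_{S_{b(t)}}$ are supported in $\Omega$, multiplying the pointwise bound by $\min\{d_{\Gamma^{m_\eps}_t},1\}$ and integrating yields
\[
\int_{\R^d} |\chi - \chi_{S_{b(t)}}|\min\{d_{\Gamma^{m_\eps}_t},1\}\,dx \leq \tfrac{1}{c_\ast}\int_\Omega |\sigma\chi - \psi_\eps|\min\{d_{\Gamma^{m_\eps}_t},1\}\,dx \leq C\,E_\textup{bulk}[\varphi_\eps|\Gamma^{m_\eps}](t),
\]
the last step being a direct application of the coercivity estimate \eqref{eq_Ebulk_coercivity1} of Lemma \ref{th_Ebulk_coercivity}. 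There is no real obstacle here: the argument is essentially a one-line observation that the choice $b(t) \in (-\tfrac{1}{2},\tfrac{1}{2})$ from Lemma \ref{th_levelset} keeps the super-level set $S_{b(t)}$ bounded away from the ``wrong'' minimum of $W$ in the $\psi$-scale, which is exactly what is needed to convert the soft bulk error $|\sigma\chi - \psi_\eps|$ into the hard BV-type error $|\chi - \chi_{S_{b(t)}}|$.
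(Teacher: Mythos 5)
Your proof is correct and follows essentially the same approach as the paper: both rely on the pointwise observation that on the symmetric difference $\{\chi\neq\chi_{S_{b(t)}}\}$ the choice $b(t)\in(-\tfrac12,\tfrac12)$ forces $|\sigma\chi-\psi_\eps|$ to be bounded below by a positive constant, and then apply \eqref{eq_Ebulk_coercivity1}. The paper organizes the case analysis by first splitting $\Omega$ into $\Omega^{m_\eps,\pm}_t$ and then intersecting with $S_{b(t)}$, whereas you extract a single uniform pointwise inequality $|\chi-\chi_{S_{b(t)}}|\leq c_\ast^{-1}|\sigma\chi-\psi_\eps|$, but the content is identical.
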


\begin{proof}
	The integrand is zero on $\R^d\setminus\overline{\Omega}$. Moreover, note that \eqref{eq_Ebulk_coercivity1} yields
	\begin{align*}
		\int_\Omega\left|\sigma\chi -\psi_\eps(.,t)\right|\min\{d_{\Gamma^{m_\eps}_t},1\}\,dx \leq C E_\textup{bulk}[\varphi_\eps|\Gamma^{m_\eps}](t).
	\end{align*}
	Hence we obtain
	\[
	\int_{\Omega^{m_\eps,+}_t}\left|\sigma -\psi_\eps(.,t)\right|\min\{d_{\Gamma^{m_\eps}_t},1\}\,dx +\int_{\Omega^{m_\eps,-}_t}\left|\psi_\eps(.,t)\right|\min\{d_{\Gamma^{m_\eps}_t},1\}\,dx\leq C E_\textup{bulk}[\varphi_\eps|\Gamma^{m_\eps}](t).
	\]
	For a.e.~$x\in\Omega^{m_\eps,+}_t\cap S_{b(t)}$ it holds $|\chi-\chi_{S_{b(t)}}|(x)=0$. For a.e.~$x\in\Omega^{m_\eps,+}_t \setminus S_{b(t)}$ we have $|\chi-\chi_{S_{b(t)}}|(x)=1$ and $\psi_\eps(x,t)\leq\psi(b(t))\leq\psi(\frac12)$. Hence $|\sigma-\psi_\eps(x,t)|\geq |\sigma-\psi(\frac12)|>0$ since $b(t)\in(-\frac12,\frac12)$. This shows the estimate on $\Omega^{m_\eps,+}_t$. Moreover, for a.e.~$x\in\Omega^{m_\eps,-}_t\setminus S_{b(t)}$ we have $|\chi-\chi_{S_{b(t)}}|(x)=0$. Finally, for a.e.~$x\in\Omega^{m_\eps,-}_t\cap S_{b(t)}$ it holds $|\chi-\chi_{S_{b(t)}}|(x)=1$ and $\psi_\eps(x,t)>\psi(b(t))\geq\psi(-\frac12)>0$. This yields the estimate on $\Omega^{m_\eps,-}_t$.
\end{proof}

\subsection{Estimate of Energy away from Strip around Level Set}\label{sec_energy_out_strip}
In this section we estimate the remainder terms \eqref{eq_estimate_lastterm2}-\eqref{eq_estimate_lastterm3} from Lemma \ref{th_Erel_lastterm} involving the energy density for $\varphi_\eps$. Therefore we show the following lemma:

\begin{lemma}\label{th_energy_out_strip}
	Let $t\in[0,T_0]$ be fixed such that $\varphi_\eps(.,t)\in H^1(\Omega)$ and let $b(t)=b_\eps(t), S_{b(t)}$ be as in Lemma \ref{th_levelset}. We set $\chi=\chi_{\Omega^{m_\eps,+}_t}$. Moreover, let $h=h_{t,\eps}:\Gamma^{m_\eps}_t\rightarrow(-\delta,\delta)$ be as in Lemma \ref{th_lem_height}. We define the shifted tubular neighbourhood $\Gamma^{m_\eps}_{t,h}(\tilde{\delta})$ for $\tilde{\delta}\in(0,\delta]$ as in \eqref{eq_tub_neighb_shifted}. For $\kappa>0$ fixed and $\eps>0$ small we obtain with some constants $c,C>0$ independent of $\varphi_\eps$, $h$, $t$ and $\kappa$
	\begin{align*}
          &\int_{\Gamma^{m_\eps}_t(\delta) \setminus \Gamma^{m_\eps}_{t,h}(\kappa\eps)} \left(\eps \frac{|\nabla \varphi_\eps|^2}2 + \frac{ W(\varphi_\eps)}\eps\right)\, dx\\
          &\leq C\left(E[\varphi_\eps|\Gamma^{m_\eps}](t)
          +\int_\Rd |\chi-\chi_{S_{b(t)}}|\min\left\{d_{\Gamma^{m_\eps}_t},1\right\}\,dx+ e^{-c\kappa}\right).
	\end{align*}
\end{lemma}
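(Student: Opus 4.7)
The strategy is to combine slicing in the normal direction to $\Gamma^{m_\eps}_t$, the graph approximation from Lemma~\ref{th_lem_height}, and a one-dimensional Allen--Cahn decay argument on individual normal rays. Writing $|\nabla\varphi_\eps|^2=|\partial_n\varphi_\eps|^2+|\nabla_\tau\varphi_\eps|^2$ on $\Gamma^{m_\eps}_t(\delta)$, the tangential contribution is directly absorbed into $CE[\varphi_\eps|\Gamma^{m_\eps}](t)$ by \eqref{eq_coercivity_n_tau}. For the normal part I pass to tubular-neighborhood coordinates $X_{m_\eps}$ with $f_s(r):=\varphi_\eps(X_{m_\eps}(r,s,t))$, and exploit the Modica--Mortola identity
\[
\tfrac{\eps}{2}|f_s'|^2+\tfrac{1}{\eps}W(f_s)=\sqrt{2W(f_s)}|f_s'|+\tfrac{1}{2}\bigl(\sqrt{\eps}|f_s'|-\sqrt{2W(f_s)/\eps}\bigr)^2.
\]
By \eqref{eq_Erel_coercivity_equipart} the integrated equipartition defect is controlled by $CE[\varphi_\eps|\Gamma^{m_\eps}](t)$, so the task reduces to estimating $\int_{\Gamma^{m_\eps}_t}\int_{I(s)}\sqrt{2W(f_s)}|f_s'|\,dr\,ds$ with $I(s):=(-\delta,h(s)-\kappa\eps)\cup(h(s)+\kappa\eps,\delta)$.

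Next, I split $\Gamma^{m_\eps}_t$ into \emph{good} slices, for which $f_s$ has no $b(t)$-crossing on $I(s)$, and \emph{bad} slices. Applying the coarea formula to the projection $P_{\Gamma^{m_\eps}}$ and to the BV-measure $|\nabla\chi_{S_{b(t)}}|$, together with \eqref{eq_ErrorHeightFunction} and Lemma~\ref{th_Ebulk_BVerror}, the $\Hc^{d-1}$-measure of the bad slices is bounded by $C(E[\varphi_\eps|\Gamma^{m_\eps}](t)+\int|\chi-\chi_{S_{b(t)}}|\min\{d_{\Gamma^{m_\eps}_t},1\}\,dx)$; since each extra $b(t)$-crossing of $f_s$ on $I(s)$ must contribute at least a definite amount to the variation of $\psi\circ f_s$, the slice-variation integral over bad slices is also absorbed into the same error terms via the same coarea count.

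On each good slice, $f_s$ lies strictly in $[-1,b(t))$ on the left component of $I(s)$ and strictly in $(b(t),1]$ on the right. Near $\pm1$, the quadratic lower bound $W(s)\geq c\min\{(s-1)^2,(s+1)^2\}$ combined with the approximate equipartition $\sqrt{\eps/2}|f_s'|\approx\sqrt{W(f_s)/\eps}$ produces, via a Gronwall-type argument on $1\mp f_s$ initiated at the (essentially unique) $b(t)$-crossing near $r=h(s)$, the bound
\[
(1\mp f_s)\bigl(h(s)\pm\kappa\eps\bigr)\leq Ce^{-c\kappa}+C\sqrt{D_s^{(\pm)}},
\]
where $D_s^{(\pm)}$ denotes the 1D equipartition defect on the corresponding component. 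Using the asymptotic $\sigma-\psi(y)\le C(1-y)^2$ as $y\to1$ (and symmetrically near $-1$), the slice variation on $I(s)$ is then at most $Ce^{-c\kappa}+CD_s^{(\pm)}$. Integration over $\Gamma^{m_\eps}_t$, which has uniformly bounded area, produces the $e^{-c\kappa}$ term and absorbs the remaining defect into $CE[\varphi_\eps|\Gamma^{m_\eps}](t)$; combining with the tangential and bad-slice contributions yields the stated estimate.

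The main obstacle is the one-dimensional exponential decay on good slices: the Gronwall argument for $1\mp f_s$ must be carried out rigorously with the equipartition defect entering as an inhomogeneous perturbation and, in principle, with a possibly non-monotone profile $f_s$. Non-monotone intervals of $f_s$ inside $(b(t),1]$ are handled either by absorbing the extra oscillation into the local 1D equipartition defect, or, when they force additional $b(t)$-crossings on $I(s)$, by re-classifying the slice as bad and applying the coarea bookkeeping above.
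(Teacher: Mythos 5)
Your overall strategy is the right one and mirrors the paper's: after splitting off the tangential gradient and the equipartition defect, use the $1$-Lipschitz graph $h$ to anchor a slicewise ODE/Gronwall argument for the profile, with the $1$D equipartition defect $f$ as inhomogeneity, and control exceptional slices via \eqref{eq_ErrorHeightFunction} and Lemma~\ref{th_Ebulk_BVerror}. There are, however, two points where your plan as stated does not close.

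First, the Modica--Mortola decomposition reduces the task to bounding the \emph{unsigned} $1$D variation $\int_{I(s)}\sqrt{2W(f_s)}\,|f_s'|\,dr = \int_{I(s)}|\partial_r(\psi\circ f_s)|\,dr$, and you assert that the pointwise bound $(1\mp f_s)^2\le Ce^{-c\kappa}+CD_s$ already yields "slice variation $\le Ce^{-c\kappa}+CD_s$." That implication is not immediate: a function can be uniformly close to $\sigma$ and still have large total variation. To close this step one must additionally observe that the decrease is entirely paid for by the defect: writing $\partial_r(\psi\circ f_s)=\tfrac{2W(f_s)}{\eps}+\tfrac{\sqrt{2W(f_s)}}{\sqrt\eps}\,f$, completing the square gives $(\partial_r(\psi\circ f_s))_-\le \tfrac14|f|^2$ pointwise, whence $V^-\le CD_s$ and hence $V^++V^-= 2V^- + \bigl(\psi(f_s(\delta))-\psi(f_s(h(s)+\kappa\eps))\bigr)\le CD_s+Ce^{-c\kappa}$ on a good slice. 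Without this (or an equivalent device) the argument does not go through. The paper avoids the issue altogether: it does not pass through the Modica--Mortola completion but decomposes the energy density as (relative-entropy density, which is pointwise nonnegative and integrates to $E$) plus $\xi\cdot\nabla\psi_\eps$, which is \emph{signed}; the latter is then integrated by parts in $r$ so that only $\psi_\eps-\sigma$ (not its derivative) appears, and the pointwise decay suffices directly.

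Second, your Gronwall argument is "initiated at the $b(t)$-crossing near $r=h(s)$," but nothing in your good/bad classification (which only looks at $I(s)$) guarantees such a crossing exists on a given normal ray. This is exactly why the paper introduces $R_t=P_{\Gamma^{m_\eps}_t}\bigl(G_t\cap\supp|\nabla\chi_{S_{b(t)}}|\bigr)$, on which the anchor $\varphi_\eps(s+h(s)\vec{n}(s,t),t)=b(t)$ holds, and bounds $\Hc^{d-1}(\Gamma^{m_\eps}_t\setminus R_t)$ by the right-hand side via \eqref{eq_ErrorHeightFunction} and a second "no support on either side" case. Your slice bookkeeping should therefore separate both the no-anchor set (like $\Gamma^{m_\eps}_t\setminus R_t$) and the large-defect set (like $R_t\setminus S_t$), rather than a single "$b(t)$-crossing on $I(s)$" dichotomy; as stated, a slice with no crossing anywhere is "good" by your definition but has no place to start the exponential decay.
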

\begin{proof}
	Let $G_t$ be the graph of $h=h_{t,\eps}$ as in Lemma \ref{th_lem_height}. Then $R_t:=P_{\Gamma^{m_\eps}_t}(G_t\cap\supp|\nabla\chi_{S_{b(t)}}|)\subseteq\Gamma^{m_\eps}_t$ satisfies
	\[
	\Hc^{d-1}(\Gamma^{m_\eps}_t\setminus R_t)\leq C\left(E[\varphi_\eps|\Gamma^{m_\eps}](t)
	+\int_\Rd |\chi-\chi_{S_{b(t)}}|\min\left\{d_{\Gamma^{m_\eps}_t},1\right\}\,dx\right),
	\]
	where we used that $\Hc^{d-1}((\Gamma^{m_\eps}_t\setminus R_t)\cap P_{\Gamma^{m_\eps}_t}(\supp|\nabla\chi_{S_{b(t)}}|))$ is controlled by \eqref{eq_ErrorHeightFunction} and that for the remaining part $\tilde{R}_t:=(\Gamma^{m_\eps}_t\setminus R_t)\setminus P_{\Gamma^{m_\eps}_t}(\supp|\nabla\chi_{S_{b(t)}}|)$ we have
	\[
	\Hc^{d-1}(\tilde{R}_t)=\int_{\tilde{R}_t}\int_{\frac{\delta}{2}}^\delta \frac{2}{\delta}\,dr\,d\Hc^{d-1}(s)
	\leq C\int_\Rd |\chi-\chi_{S_{b(t)}}|\min\left\{d_{\Gamma^{m_\eps}_t},1\right\}\,dx
	\]
	since $s\in\tilde{R}_t$ implies $|\chi-\chi_{S_{b(t)}}|(s+r\vec{n}(s,t))=1$ either for all $r\in(-\delta,-\frac{\delta}{2})$ or for all $r\in(\frac{\delta}{2},\delta)$.

	Moreover, for $s\in R_t$ it holds 
	\begin{align}\label{eq_graph_level}
		\varphi_\eps(s+h(s)\vec{n}(s,t),t)=b(t).
	\end{align}
	The strategy is to use an ODE inequality argument in normal direction using the control of the relative entropy over the equipartition error. More precisely, by \eqref{eq_coercivity_n_tau} we have
	\begin{align}\label{eq_energy_out_equipart}
	\int_{\Gamma^{m_\eps}_t} \int_{-\delta}^{\delta} \frac{1}{2}\left|\sqrt{\eps}\partial_r\tilde{\varphi}_\eps-\frac{1}{\sqrt{\eps}}\sqrt{2W(\tilde{\varphi}_\eps)}\right|^2\!(.,t) J_t(r,s)\,dr \, d\mathcal{H}^{d-1}(s)\leq E[\varphi_\eps|\Gamma^{m_\eps}](t),
	\end{align}
	where $\tilde{\varphi}_\eps:=\varphi_\eps|_{X_{m_\eps}(.,t)}$ with $X_{m_\eps}$ as in Section \ref{sec_Energy_def_coercivity} and the factor $J_t=J_t(m_\eps)$ appears due to an integral transformation and satisfies for some $c_J,C_J>0$ independent of $t$ and $m_\eps$
        \begin{equation*}
          c_J\leq J_t(r,s)\leq C_J \qquad \text{for all }s\in \Gamma^{m_\eps}_t,r\in (-\delta,\delta).
        \end{equation*}
    With $f:=\sqrt{\eps}\partial_r\tilde{\varphi}_\eps-\frac{1}{\sqrt{\eps}}\sqrt{2W(\tilde{\varphi}_\eps)}$ it holds
	\begin{equation*}
		\partial_r\tilde{\varphi}_\eps = \frac1\eps \sqrt{2W(\tilde{\varphi}_\eps)}- \frac1{\sqrt{\eps}} f,
	\end{equation*}
	where $\sqrt{W(\varphi)}\geq c(1-\varphi)(1+\varphi)\geq c'(1-\varphi)$ if $\varphi\geq -\frac34$. Hence if $\tilde{\varphi}_\eps\geq -\frac34$ we have
	\begin{align*}
		\partial_r (1-\tilde{\varphi}_\eps)^2=2(1-\tilde{\varphi}_\eps)(-\partial_r\tilde{\varphi}_\eps)\leq - \frac{2c'}\eps (1-\tilde{\varphi}_\eps)^2+ \frac2{\sqrt{\eps}} f(1-\tilde{\varphi}_\eps)\leq  - \frac{c'}\eps (1-\tilde{\varphi}_\eps)^2+ C_1|f|^2.
	\end{align*}
	We multiply the preceding inequality by $e^{-\frac{c}{\eps}(r-h(s))}$ and integrate from $h(s)$ to $r$. Then we obtain for $s\in R_t$ because of \eqref{eq_graph_level}
	\begin{align}\label{eq_graph_level2}
		(1-\tilde{\varphi}_\eps)^2 (s,r)
		\leq  e^{-\frac{c}\eps (r-h(s))}(1-b(t))^2 + \frac{C_1}{c_J}\int_{-\delta}^{\delta} |f(s,r)|^2J_t(r,s)\, dr
	\end{align}
	for all $r\in [h(s),\delta)$ provided that $\tilde{\varphi}_\eps(\tilde{r},s)\geq-\frac34$ for all $\tilde{r}\in [h(s),r)$. We define
	\[
	S_t:= \left\{s\in R_t: \frac{C_1}{c_J}\int_{-\delta}^{\delta} |f(s,r)|^2J_t(r,s)\, dr\leq \frac14\right\}.
	\]
	Then, due to $b(t)\in(-\frac12,\frac12)$ and \eqref{eq_graph_level2} we obtain $\tilde{\varphi}_\eps(r,s)\geq-\frac{3}{4}$ for all $r\in [h(s),\delta)$ and $s\in S_t$. 
	Analogously one shows $\tilde{\varphi}_\eps(r,s)\leq\frac{3}{4}$ for all $r\in (-\delta,h(s)]$ and $s\in S_t$. Moreover, note that because of \eqref{eq_energy_out_equipart} and the definition of $f$ we have
	\begin{equation*}
		\Hc^{d-1}(R_t\setminus S_t)\leq \frac{4c_J}{C_1} \int_{R_t} \int_{-\delta}^{\delta} |f(s,r)|^2J_t(r,s)\, dr \,d\mathcal{H}^{d-1}(s)\leq \frac{8c_J}{C_1} E[\varphi_\eps|\Gamma^{m_\eps}](t).
	\end{equation*}

	We use an integral transformation to obtain
	\begin{align*}
		&\int_{\Gamma^{m_\eps}_t(\delta)\setminus \Gamma^{m_\eps}_{t,h}(\kappa\eps)} \left(\frac{\eps}2|\nabla \varphi_\eps|^2 + \frac{W(\varphi_\eps)}\eps \right)\!dx\\
		&= \int_{\Gamma^{m_\eps}_t} \int_{h(s)+\kappa\eps}^\delta\left(\frac{\eps}2|\nabla \varphi_\eps|^2|_{X_{m_\eps}} + \frac{W(\tilde{\varphi}_\eps)}\eps\right)\!J_t(r,s)\,dr\, \, d\mathcal{H}^{d-1}(s)\\
		&\quad+\int_{\Gamma^{m_\eps}_t} \int_{-\delta}^{h(s)-\kappa\eps}\left(\frac{\eps}2|\nabla \varphi_\eps|^2|_{X_{m_\eps}} + \frac{W(\tilde{\varphi}_\eps)}\eps\right)\!J_t(r,s)\,dr\, \, d\mathcal{H}^{d-1}(s).
	\end{align*}
	We use the definition \eqref{eq_Erel2} to derive an estimate. First, for the contribution over $\Gamma^{m_\eps}_t\setminus S_t$ we use an integral transformation as above for the last term in \eqref{eq_Erel2}. Since $\xi$ points in normal direction by definition \eqref{eq_xi_B_def}, the inner integral is uniformly bounded because $|\psi|\leq\sigma$. Hence the estimates for $\Hc^{d-1}(\Gamma^{m_\eps}_t\setminus R_t)$ and $\Hc^{d-1}(R_t\setminus S_t)$ yield that the contribution over $\Gamma^{m_\eps}_t\setminus S_t$ is suitably estimated by the right hand side of the estimate in the Lemma. For the remaining part over $S_t$, we note that due to integration by parts and since $\xi$ points in normal direction by definition \eqref{eq_xi_B_def} we have
	\begin{align*}
		&\left|\int_{S_t} \int_{h(s)+\kappa\eps}^{\delta}\xi|_{X_{m_\eps}}\cdot \nabla (\psi_\eps|_{X_{m_\eps}}-\sigma)J_t(r,s)\,dr\, \, \mathcal{H}^{d-1}(s)\right|\\
		&\leq C\left| \int_{S_t} \int_{h(s)+\kappa\eps}^{\delta}(\psi(\tilde{\varphi}_\eps)-\sigma)\,dr\, \, d\mathcal{H}^{d-1}(s)\right| +  \left|\int_{S_t}\left.(\psi(\tilde{\varphi}_\eps)-\sigma)J_t(r,s)\right|_{r=h(s)+k\eps}^{\delta}\, \, d\Hc^{d-1}(s)\right|\\
		&\leq  C\left(e^{-c\kappa}+ \|f\|_{L^2(\Gamma^{m_\eps}_t(\delta))}^2\right)\leq C\left(e^{-c\kappa}+ E[\varphi_\eps|\Gamma^{m_\eps}](t)\right),
	\end{align*}
	where we have used
	\begin{equation*}
		|\psi(\tilde{\varphi}_\eps(r,s))-\sigma|\leq C \left(e^{-\frac{c}\eps (r-h(s))} +\|f(s,.)\|^2_{L^2_{J_t(.,s)}(-\delta,\delta)}\right)\qquad \text{for all }r\in [h(s),\delta), s\in S_t
	\end{equation*}
	due to \eqref{eq_graph_level2} and $|\psi(r)-\sigma|\leq C (1-r)^2$ for all $r\in \R$.
	Analogously one shows
	\begin{align*}
		&\left|\int_{S_t} \int^{h(s)-\kappa\eps}_{-\delta}\xi|_{X_{m_\eps}}\cdot \nabla\psi_\eps|_{X_{m_\eps}} J_t(r,s)\,dr\, \, \Hc^{d-1}(s)\right|\leq C\left(e^{-c\kappa}+ E[\varphi_\eps|\Gamma^{m_\eps}](t)\right).
	\end{align*}   
	Finally, we have $\left|\int_{\Gamma^{m_\eps}_t\setminus S_t}\int_{(-\delta,h(s)-\kappa\eps)\cup(h(s)+\kappa\eps,\delta)} \xi\cdot\nabla\psi_\eps\,dx\right|\leq C\Hc^{d-1}(\Gamma^{m_\eps}_t\setminus S_t)$. Combining these bounds, this shows the claim.
\end{proof}

	As a corollary of Lemma \ref{th_energy_out_strip}, we estimate the remaining term \eqref{eq_estimate_lastterm3}.

\begin{corollary}\label{th_energy_out2}
	Let the assumptions and notation of Lemma \ref{th_energy_out_strip} be in place and let $C_0\geq 1$. Moreover, let $C_1>0$ such that $\int_{\Omega}\eps \frac{|\nabla \varphi_\eps|^2}2 + \frac{W(\varphi_\eps)}\eps\,dx\leq C_1$. Then for all $\eps$ small with $\delta_\eps:=C_0|\log\eps|\eps\leq\delta$ and for some uniform $C>0$ we have
    \begin{align*}
    &\int_{\Gamma^{m_\eps}_{t,h}(\delta_\eps)} \frac{1}{m_\eps} \left(\frac{\eps}{2}|\nabla\varphi_\eps|^2+\frac{1}{\eps}W(\varphi_\eps)\right)\!(x,t) \left|d_{\Gamma^{m_\eps}_t}-h(P_{\Gamma^{m_\eps}_t})\right|^2\,dx\\
    &\leq CC_0^3|\log\eps|^3 \frac{\eps^2}{m_\eps}\left(E[\varphi_\eps|\Gamma^{m_\eps}](t)
    +\int_\Rd |\chi-\chi_{S_{b(t)}}|\min\left\{d_{\Gamma^{m_\eps}_t},1\right\}\,dx\right) +CC_1\frac{\eps^2}{m_\eps}.
    \end{align*}
\end{corollary}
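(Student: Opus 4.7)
The plan is to split the integration domain $\Gamma^{m_\eps}_{t,h}(\delta_\eps)$ into a thin inner neighborhood of the approximating graph together with dyadic annular regions further from it, and then estimate each piece by playing off two competing bounds: the weight $|d_{\Gamma^{m_\eps}_t}-h(P_{\Gamma^{m_\eps}_t})|^2$ becomes small near the graph, while the energy density itself becomes small away from the graph by Lemma~\ref{th_energy_out_strip}. Since Lemma~\ref{th_energy_out_strip} allows us to use any $\kappa>0$ with constants independent of $\kappa$, we can afford to iterate it dyadically.

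On the inner strip $\Gamma^{m_\eps}_{t,h}(\eps)$ I would use the pointwise bound $|d_{\Gamma^{m_\eps}_t}-h(P_{\Gamma^{m_\eps}_t})|\leq\eps$ coming directly from the definition \eqref{eq_tub_neighb_shifted} together with the assumed global energy bound $\int_\Omega\bigl(\tfrac{\eps}{2}|\nabla\varphi_\eps|^2+\tfrac{1}{\eps}W(\varphi_\eps)\bigr)\,dx\leq C_1$. This contributes $CC_1\eps^2/m_\eps$, accounting for the second term of the target bound.

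For the remaining annular part $\Gamma^{m_\eps}_{t,h}(\delta_\eps)\setminus\Gamma^{m_\eps}_{t,h}(\eps)$ I would introduce dyadic annuli $A_j:=\Gamma^{m_\eps}_{t,h}(2^{j+1}\eps)\setminus\Gamma^{m_\eps}_{t,h}(2^j\eps)$ for $j=0,\dots,J$ with $J\leq\log_2(C_0|\log\eps|)$, so that on $A_j$ one has the pointwise weight estimate $|d_{\Gamma^{m_\eps}_t}-h(P_{\Gamma^{m_\eps}_t})|^2\leq 4^{j+1}\eps^2$ and, since $A_j\subseteq\Gamma^{m_\eps}_t(\delta)\setminus\Gamma^{m_\eps}_{t,h}(2^j\eps)$, Lemma~\ref{th_energy_out_strip} with $\kappa=2^j$ bounds the energy in $A_j$ by $C\bigl(E[\varphi_\eps|\Gamma^{m_\eps}](t)+\int_\Rd|\chi-\chi_{S_{b(t)}}|\min\{d_{\Gamma^{m_\eps}_t},1\}\,dx+e^{-c2^j}\bigr)$. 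Summing in $j$ geometrically then yields for the $(E+\int)$-proportional part a factor $\sum_{j=0}^J 4^{j+1}\leq C4^J\leq CC_0^2|\log\eps|^2$ (which one freely enlarges to $C_0^3|\log\eps|^3$ to match the stated form), while $\sum_{j\geq 0}4^{j+1}e^{-c2^j}$ is a finite absolute constant producing only an additional $C\eps^2/m_\eps$ that merges into the $CC_1\eps^2/m_\eps$ term.

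I do not anticipate any substantive obstacle; once Lemma~\ref{th_energy_out_strip} is in hand, the argument is essentially a geometric-series computation. The only minor technical point is to check that all annuli $A_j$ sit inside $\Gamma^{m_\eps}_t(\delta)$ so that Lemma~\ref{th_energy_out_strip} applies; this follows from the uniform bound $|h|\leq\tfrac{3\delta}{4}$ established during the construction of $h$ in Lemma~\ref{th_lem_height}, together with $\delta_\eps<\delta/4$ for $\eps$ sufficiently small.
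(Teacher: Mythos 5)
Your argument is correct and closely parallels the paper's own proof, which also splits $\Gamma^{m_\eps}_{t,h}(\delta_\eps)$ into an inner strip of width $\eps$ (handled via the global energy bound $C_1$) plus a sequence of annuli, bounds the weight $|d_{\Gamma^{m_\eps}_t}-h(P_{\Gamma^{m_\eps}_t})|^2$ pointwise on each annulus, applies Lemma~\ref{th_energy_out_strip} with a growing parameter $\kappa$, and sums. The one genuine difference is the choice of annuli: the paper uses uniform annuli $\Gamma^{m_\eps}_{t,h}(n\eps)\setminus\Gamma^{m_\eps}_{t,h}((n-1)\eps)$ for $n=2,\dots,N$ with $N\sim C_0|\log\eps|$ (and $\kappa=n-1$), so the weight factor gives $\sum_{n=2}^N n^2\lesssim N^3$ and hence the stated $C_0^3|\log\eps|^3$, whereas your dyadic annuli produce a geometric sum $\sum_{j=0}^J 4^{j+1}\lesssim 4^J\lesssim (C_0|\log\eps|)^2$, so your route actually yields the slightly sharper power $|\log\eps|^2$ before you enlarge it to match the stated form. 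In both versions the exponential tail $e^{-c\kappa}$ yields an absolutely convergent series (the paper has $\sum_n n^2 e^{-c(n-1)}$, you have $\sum_j 4^{j+1}e^{-c2^j}$) that merges into the $O(\eps^2/m_\eps)$ remainder, and both need the annuli to lie in the domain $\Gamma^{m_\eps}_t(\delta)$ where Lemma~\ref{th_energy_out_strip} is stated; you address this correctly via $|h|\leq\tfrac{3\delta}{4}$ from Lemma~\ref{th_lem_height} and $\delta_\eps$ small.
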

\begin{proof}
        Let $C_0\geq1$ be fixed, then for all $\eps$ sufficiently small it holds $2C_0|\log\eps|\eps\leq\delta$. We fix $\eps$ small. Let $N\in\mathbb{N}$ be such that $C_0|\log \eps|\leq N\leq 2C_0|\log\eps|$. Then $\delta_\eps\leq N\eps\leq\delta$ and 
        \begin{align*}
          &  \int_{\Gamma^{m_\eps}_{t,h}(N\eps)}\frac1{m_\eps} \left(\eps \frac{|\nabla \varphi_\eps|^2}2 + \frac{W(\varphi_\eps)}\eps\right)\!(x,t)\left|d_{\Gamma^{m_\eps}_t}-h(P_{\Gamma^{m_\eps}_t})\right|^2\, dx\\
          &\quad \leq \sum_{n=2}^N C\int_{\Gamma^{m_\eps}_{t,h}(n\eps)\setminus \Gamma^{m_\eps}_{t,h}((n-1)\eps)}\frac1{m_\eps} \left(\eps \frac{|\nabla \varphi_\eps|^2}2 + \frac{W(\varphi_\eps)}\eps\right)\!(x,t)\left|d_{\Gamma^{m_\eps}_t}-h(P_{\Gamma^{m_\eps}_t})\right|^2\, dx\\
          &\qquad + C\int_{\Gamma^{m_\eps}_{t,h}(\eps)}\frac1{m_\eps} \left(\eps \frac{|\nabla \varphi_\eps|^2}2 + \frac{W(\varphi_\eps)}\eps\right)\!(x,t)\left|d_{\Gamma^{m_\eps}_t}-h(P_{\Gamma^{m_\eps}_t})\right|^2\, dx\\
          &\quad \leq C\sum_{n=2}^N \frac{\eps^2}{m_\eps}n^2 \int_{\Gamma^{m_\eps}_{t,h}(n\eps)\setminus \Gamma^{m_\eps}_t((n-1)\eps)} \left(\eps \frac{|\nabla \varphi_\eps|^2}2 + \frac{W(\varphi_\eps)}\eps\right)\!(x,t)\, dx+ CC_1\frac{\eps^2}{m_\eps}\\
          &\quad \leq C\sum_{n=2}^N \frac{\eps^2}{m_\eps}n^2 \left(E[\varphi_\eps|\Gamma^{m_\eps}](t)
          +\int_\Rd |\chi-\chi_{S_{b(t)}}|\min\left\{d_{\Gamma^{m_\eps}_t},1\right\}\,dx 
          + e^{-c(n-1)}\right)+ CC_1\frac{\eps^2}{m_\eps}\\
          &\quad \leq C \frac{\eps^2}{m_\eps}N^3 \left(E[\varphi_\eps|\Gamma^{m_\eps}](t)
          +\int_\Rd |\chi-\chi_{S_{b(t)}}|\min\left\{d_{\Gamma^{m_\eps}_t},1\right\}\,dx\right) + CC_1 \frac{\eps^2}{m_\eps},
        \end{align*}
    	where we used Lemma \ref{th_energy_out_strip}. This shows the claim.
\end{proof}

\section{Bulk Error Identity}\label{sec_Ebulk_estimate}

For the bulk error functional $E_\textup{bulk}[\varphi_\eps|\Gamma^{m_\eps}]$ defined in \eqref{eq_Ebulk}
we have the following identity:
\begin{lemma}[\textbf{Bulk Error Identity}]\label{th_Ebulk_identity}
	Let the assumptions of Section \ref{sec_Energy_def_coercivity} hold. More precisely, let $(\ve_{m_\eps}^\pm,p_{m_\eps}^\pm,(\Gamma_t^{m_\eps})_{t\in[0,T_0]})$ for $m_\eps>0$ small be solutions of the adjusted two-phase Navier-Stokes equation \eqref{eq:ApproxLimit1}-\eqref{eq:ApproxLimit7} on $[0,T_0]$, cf.~Theorem \ref{th:approx_lim_uniform} below. Moreover, let $(\ve_\eps, \varphi_\eps)$ for $\eps>0$ small be energy dissipating weak solutions to the Navier-Stokes/Allen-Cahn system \eqref{eq_NSAC1}-\eqref{eq_NSAC5} on $[0,T_0]$ with constant mobility $m_\eps>0$ as in Remark \ref{th_weak_sol_NS}. Finally, let $\Gamma^{m_\eps}, \Omega^{m_\eps,\pm}, \ve^{m_\eps}$ be as in \eqref{eq_notation_meps}, $\sigma$, $\psi_\eps$, $\vec{n}_\eps$ be as in \eqref{eq_psi_def}-\eqref{eq_n_eps}, $\xi$, $B$ be as in \eqref{eq_xi_B_def}, $E_\textup{bulk}[\varphi_\eps|\Gamma^{m_\eps}]$ and $\vartheta$ be as in \eqref{eq_Ebulk} and $H_\eps$ be defined as in \eqref{eq_Heps_def}. Then for all $T\in[0,T_0]$:\begin{subequations}
		\begin{align}
			&E_\textup{bulk}[\varphi_\eps|\Gamma^{m_\eps}](T)\notag\\
			&=E_\textup{bulk}[\varphi_\eps|\Gamma^{m_\eps}](0)+\int_0^T\int_\Omega(\sigma\chi_{\Omega^{m_\eps,+}} -\psi_\eps)(\partial_t+B\cdot\nabla)\vartheta\,dx\,dt\label{eq_Ebulk1}\\
			&+\int_0^T\int_\Omega (\sigma\chi_{\Omega^{m_\eps,+}} -\psi_\eps)\vartheta\,\nabla\cdot B\,dx\,dt\label{eq_Ebulk2}\\
			&-\int_0^T\int_\Omega \vartheta(B-\ve^{m_\eps})\cdot(\vec{n}_\eps-\xi)|\nabla\psi_\eps|\,dx\,dt\label{eq_Ebulk3}\\
			&+\int_0^T\int_\Omega (\sigma\chi_{\Omega^{m_\eps,+}} -\psi_\eps)(\ve_\eps-\ve^{m_\eps})\cdot\nabla\vartheta\,dx\,dt\label{eq_Ebulk4}\\
			&-\int_0^T\int_\Omega \vartheta(B-\ve^{m_\eps})\cdot\xi (|\nabla\psi_\eps|-\eps|\nabla\varphi_\eps|^2)\,dx\,dt\label{eq_Ebulk5}\\
			&+m_\eps\int_0^T\int_\Omega\frac{1}{\sqrt{\eps}}\left(H_\eps-\frac{B-\ve^{m_\eps}}{m_\eps}\cdot\xi\,\eps|\nabla\varphi_\eps|\right)\vartheta\sqrt{\eps}|\nabla\varphi_\eps|\,dx\,dt\label{eq_Ebulk6}\\
			&-m_\eps\int_0^T\int_\Omega\vartheta\frac{1}{\sqrt{\eps}}\left(H_\eps+\sqrt{2W(\varphi_\eps)}\nabla\cdot\xi\right)\left(\sqrt{\eps}|\nabla\varphi_\eps|-\frac{\sqrt{2W(\varphi_\eps)}}{\sqrt{\eps}}\right)\,dx\,dt\label{eq_Ebulk7}\\
			&-m_\eps\int_0^T\int_\Omega\vartheta(\nabla\cdot\xi)\left|\sqrt{\eps}|\nabla\varphi_\eps|-\frac{\sqrt{2W(\varphi_\eps)}}{\sqrt{\eps}}\right|^2\,dx\,dt\label{eq_Ebulk8}\\
			&+m_\eps\int_0^T\int_\Omega\vartheta\sqrt{\eps}|\nabla\varphi_\eps|(\nabla\cdot\xi)\left(\sqrt{\eps}|\nabla\varphi_\eps|-\frac{\sqrt{2W(\varphi_\eps)}}{\sqrt{\eps}}\right)\,dx\,dt.\label{eq_Ebulk9}
	\end{align}\end{subequations}
\end{lemma}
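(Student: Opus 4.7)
The plan is to compute the time derivative of $E_\textup{bulk}[\varphi_\eps|\Gamma^{m_\eps}]$ and then integrate in time from $0$ to $T$. Writing $E_\textup{bulk}(t)=\int_\Omega(\sigma\chi_{\Omega^{m_\eps,+}_t}-\psi_\eps)\vartheta\,dx$, I would handle the two pieces separately. For the characteristic-function piece, I would use the Reynolds transport theorem together with $\vartheta|_{\Gamma^{m_\eps}}=0$ and the identity $B\cdot\vec n=V_{\Gamma^{m_\eps}}$ on $\Gamma^{m_\eps}$ (established in \eqref{eq_dist_evol}), so that $B$ transports $\chi_{\Omega^{m_\eps,+}}$. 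This gives
$$\tfrac{d}{dt}\!\!\int_\Omega\sigma\chi_{\Omega^{m_\eps,+}}\vartheta\,dx=\int_\Omega\sigma\chi_{\Omega^{m_\eps,+}}\bigl[(\partial_t+B\cdot\nabla)\vartheta+\vartheta\,\nabla\cdot B\bigr]dx,$$
where the $B\cdot\nabla\vartheta+\vartheta\nabla\cdot B$ part, being the divergence of $B\vartheta$ integrated against $\chi_{\Omega^{m_\eps,+}}$, produces a boundary term on $\Gamma^{m_\eps}$ which vanishes because $\vartheta|_{\Gamma^{m_\eps}}=0$ and on $\partial\Omega$ which vanishes because $B|_{\partial\Omega}=0$; I keep both terms explicitly for the later matching. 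For the $\psi_\eps$-piece I would use $\psi'(\varphi)=\sqrt{2W(\varphi)}$, the Allen-Cahn equation \eqref{eq_NSAC3}, and the definition $H_\eps=-\eps\Delta\varphi_\eps+\tfrac1\eps W'(\varphi_\eps)$ to obtain $\partial_t\psi_\eps=-\ve_\eps\cdot\nabla\psi_\eps-\tfrac{m_\eps}{\eps}\sqrt{2W(\varphi_\eps)}\,H_\eps$.

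The core of the proof is then a sequence of additions, subtractions, and integrations by parts. First, writing $\ve_\eps=(\ve_\eps-\ve^{m_\eps})-(B-\ve^{m_\eps})+B$ and integrating by parts on the $B$-contribution (using $\psi_\eps|_{\partial\Omega}=0$, which holds since $\varphi_\eps|_{\partial\Omega}=-1$, together with $B|_{\partial\Omega}=0$), the piece $-\int\psi_\eps\vartheta\,\nabla\cdot B$ combines with the analogous $\chi$-term to produce \eqref{eq_Ebulk2}, while $-\int\psi_\eps B\cdot\nabla\vartheta$ combines with $-\int\psi_\eps\partial_t\vartheta$ to give \eqref{eq_Ebulk1}. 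For the $(\ve_\eps-\ve^{m_\eps})$-part I would integrate by parts and exploit $\Div\ve_\eps=\Div\ve^{m_\eps}=0$; adding and subtracting the vanishing term $\int\sigma\chi_{\Omega^{m_\eps,+}}(\ve_\eps-\ve^{m_\eps})\cdot\nabla\vartheta\,dx=-\sigma\int_{\Gamma^{m_\eps}}(\ve_\eps-\ve^{m_\eps})\cdot\vec n\,\vartheta\,d\Hc^{d-1}=0$ (using $\vartheta|_{\Gamma^{m_\eps}}=0$) produces \eqref{eq_Ebulk4}. For the remaining $-(B-\ve^{m_\eps})$-part, I would decompose $\nabla\psi_\eps=\vec n_\eps|\nabla\psi_\eps|$ and $\vec n_\eps=(\vec n_\eps-\xi)+\xi$, which yields \eqref{eq_Ebulk3} plus a residual $-\int\vartheta(B-\ve^{m_\eps})\cdot\xi\,|\nabla\psi_\eps|\,dx$; splitting $|\nabla\psi_\eps|=(|\nabla\psi_\eps|-\eps|\nabla\varphi_\eps|^2)+\eps|\nabla\varphi_\eps|^2$ produces \eqref{eq_Ebulk5} together with an additional remainder $-\int\vartheta(B-\ve^{m_\eps})\cdot\xi\,\eps|\nabla\varphi_\eps|^2\,dx$.

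It remains to match $\int\tfrac{m_\eps}{\eps}\sqrt{2W(\varphi_\eps)}H_\eps\,\vartheta\,dx-\int(B-\ve^{m_\eps})\cdot\xi\,\vartheta\,\eps|\nabla\varphi_\eps|^2\,dx$ against the sum \eqref{eq_Ebulk6}+\eqref{eq_Ebulk7}+\eqref{eq_Ebulk8}+\eqref{eq_Ebulk9}. This is a purely algebraic identity that I would verify by expanding each of the four summands in terms of $u:=\sqrt{\eps}|\nabla\varphi_\eps|$ and $v:=\sqrt{2W(\varphi_\eps)}/\sqrt{\eps}$. In \eqref{eq_Ebulk6}+\eqref{eq_Ebulk7} the terms $m_\eps\int H_\eps\vartheta|\nabla\varphi_\eps|$ cancel and produce precisely $\int\tfrac{m_\eps}\eps\sqrt{2W}H_\eps\vartheta$ together with $-\int(B-\ve^{m_\eps})\cdot\xi\vartheta u^2$ and a leftover $-m_\eps\!\int\vartheta v\nabla\cdot\xi\,(u-v)$; meanwhile \eqref{eq_Ebulk8}+\eqref{eq_Ebulk9}$=m_\eps\int\vartheta(\nabla\cdot\xi)(u-v)[u-(u-v)]=+m_\eps\!\int\vartheta v\nabla\cdot\xi\,(u-v)$, which exactly cancels the leftover.

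The main obstacle is less conceptual than bookkeeping: one must carefully track all boundary contributions (which vanish thanks to $B|_{\partial\Omega}=0$, $\varphi_\eps|_{\partial\Omega}=-1$, and $\vartheta|_{\Gamma^{m_\eps}}=0$) and ensure that the regularity of the weak solution — $\ve_\eps\in L^2(H^1_0)$, $\varphi_\eps\in L^2(H^2)\cap W^1_q(L^2)$, together with $\vartheta,B,\xi$ smooth from Lemma~\ref{th_xi_B} and Lemma~\ref{th_theta} — suffices to justify each integration by parts and the time-integrated form of the identity. Technically this is achieved by first deriving the identity in weak formulation tested against $\vartheta$ (which is an admissible test function by its regularity), and then integrating in time; the Reynolds transport theorem applied to the $C^1$-in-time evolving domain $\Omega^{m_\eps,+}_t$ (cf.~Theorem~\ref{th:approx_lim_uniform}) is standard.
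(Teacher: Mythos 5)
Your proof is correct and follows exactly the route the paper indicates: the paper's proof is a one-line citation to \cite[Lemma 7]{HenselLiu} noting that the $m_\eps$-factors enter through the Allen--Cahn evolution of $\psi_\eps$, and your derivation is precisely the expected unfolding of that reference, with the velocity splitting $\ve_\eps=(\ve_\eps-\ve^{m_\eps})-(B-\ve^{m_\eps})+B$, the direction splitting $\vec n_\eps=(\vec n_\eps-\xi)+\xi$, and the algebraic rearrangement of the $m_\eps$-terms all as in \cite{HenselLiu}. One small remark: in the Reynolds-transport step the identity $B\cdot\vec n=V_{\Gamma^{m_\eps}}$ on $\Gamma^{m_\eps}$ is not actually needed, since both the transport boundary term on $\Gamma^{m_\eps}$ and the added divergence term $\int_\Omega\sigma\chi_{\Omega^{m_\eps,+}}\nabla\cdot(B\vartheta)\,dx$ vanish solely because $\vartheta|_{\Gamma^{m_\eps}}=0$ and $B|_{\partial\Omega}=0$; this does not affect the validity of the argument.
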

\begin{proof}
	This can be shown analogously to \cite[Lemma 7]{HenselLiu}. The $m_\eps$-factors appear here because of the Allen-Cahn part \eqref{eq_NSAC3} through the equation for $\partial_t\psi_\eps$. Note that we use a different sign convention for $\vartheta$ compared to \cite{HenselLiu}, this just changes the signs in all terms.
\end{proof}

\section{Proof of Theorem \ref{th_stability}(Stability Estimate)}\label{sec_main_proof}
\begin{proof}[Proof of Theorem \ref{th_stability}]
	Up to \eqref{eq_Erel_inequ_problem1}, \eqref{eq_Erel_inequ_problem2}, \eqref{eq_Erel_inequ_new} and \eqref{eq_Erel_inequ_problem3} the terms in the estimate for the relative entropy from Lemma \ref{th_Erel_inequ} can be estimated analogously to \cite[Proof of Theorem 1]{HenselLiu}. Here note that $\|\nabla\ve_\eps-\nabla\ve^{m_\eps}\|_{L^2(0,T;L^2(\Omega))}^2$ and \eqref{eq_Erel_inequ_pos1}-\eqref{eq_Erel_inequ_pos2} are terms with a good sign. At this point, let us estimate a few terms for the convenience of the reader. For example,
	\[
	\left|\int_0^T \int_\Omega (\sigma\chi_{\Omega^{m_\eps,+}}-\psi_\eps) ((\ve_\eps-\ve^{m_\eps})\cdot\nabla)(\nabla\cdot\xi)\,dx\,dt\right|\leq 
	C\int_0^T \int_\Omega |\sigma\chi_{\Omega^{m_\eps,+}}-\psi_\eps| |\ve_\eps-\ve^{m_\eps}|\,dx\,dt,
	\]
	where the latter term can be estimated with \eqref{eq_Ebulk_coercivity2} by using $\frac{1}{2}\|\nabla\ve_\eps-\nabla\ve^{m_\eps}\|_{L^2(0,T;L^2(\Omega))}^2$ for absorption. Moreover, due to \eqref{eq_calib4} it holds
	\[
	\left|\int_0^T \int_\Omega \left((\partial_t+B\cdot\nabla)|\xi|^2\right) |\nabla\psi_\eps|\,dx\,dt\right|
	\leq \int_0^T \int_\Omega \min\{d_{\Gamma^{m_\eps}}^2,1\}|\nabla\psi_\eps|\,dx\,dt,
	\]
	which is controlled due to \eqref{eq_Erel_coercivity_1}. Finally, let us estimate
	\[
	\left|\int_0^T \int_\Omega (\vec{n}_\eps\otimes \vec{n}_\eps-\xi\otimes\xi):\nabla B(\eps|\nabla\varphi_\eps|^2-|\nabla\psi_\eps|)\,dx\,dt\right|
	\leq C\int_0^T \int_\Omega \sqrt{1-\vec{n}_\eps\cdot\xi}\left|\eps|\nabla\varphi_\eps|^2-|\nabla\psi_\eps|\right|\,dx\,dt,
	\]
	where we used $|(\vec{n}_\eps\otimes \vec{n}_\eps-\xi\otimes\xi):\nabla B|=|\vec{n}_\eps\otimes(\vec{n}_\eps-\xi):\nabla B+(\xi\cdot\nabla)B\cdot(\vec{n}_\eps-\xi)|\leq|\vec{n}_\eps-\xi|$ and $|\vec{n}_\eps-\xi|^2=2(1-\vec{n}_\eps\cdot\xi)$. Hence the above term is controlled via \eqref{eq_Erel_coercivity_2}.

	Let us consider the remaining four terms \eqref{eq_Erel_inequ_problem1}, \eqref{eq_Erel_inequ_problem2}, \eqref{eq_Erel_inequ_new} and \eqref{eq_Erel_inequ_problem3} for which new estimates are needed. First, the new choice \eqref{eq_xi_B_def} of $\xi$ and $B$ compared to \cite[(75)-(76)]{HenselLiu} yields \eqref{eq_calib3} and enables us to estimate \eqref{eq_Erel_inequ_problem1} with \eqref{eq_Erel_coercivity_1}. Moreover, for the estimate of \eqref{eq_Erel_inequ_problem2} it remains to control 
	\[
	\int_0^T\int_\Omega \frac{1}{m_\eps}\left|(\ve^{m_\eps}-B)\cdot(\vec{n}_\eps-\xi)\right|^2 \eps|\nabla\varphi_\eps|^2\,dx\,dt
	\]
	due to absorption with \eqref{eq_Erel_inequ_pos1}. However, we have $\ve^{m_\eps}-B=-m_\eps H\vec{n}\tilde{\eta}_{m_\eps}$ by definition \eqref{eq_xi_B_def} of $B$, hence the term is controlled by $m_\eps \int_0^TE[\ve_\eps,\varphi_\eps| \ve^{m_\eps},\Gamma^{m_\eps}](t)\,dt$ because of \eqref{eq_Erel_coercivity_1}. Additionally, to estimate the term \eqref{eq_Erel_inequ_new} we write $(\xi\otimes\xi(\nabla B)^\top\xi)\cdot(\vec{n}_\eps-\xi)=(\xi^\top(\nabla B)^\top\xi)\xi\cdot(\vec{n}_\eps-\xi)$. Hence because of $\xi\cdot(\vec{n}_\eps-\xi)=\vec{n}_\eps\cdot\xi-1+1-|\xi|^2$ and \eqref{eq_cutoff_quadr}, the term \eqref{eq_Erel_inequ_new} is controlled via \eqref{eq_Erel_coercivity_1}. Finally, we estimate \eqref{eq_Erel_inequ_problem3} with Lemma \ref{th_Erel_lastterm}, where the $1$-Lipschitz-function $h=h_\eps(.,t)$ for a.e.~$t\in[0,T_0]$ is taken from Lemma \ref{th_lem_height} and we consider $\delta_\eps:=C_0|\log\eps|\eps$ for some $C_0\geq 1$ and $\eps$ small. It remains to estimate \eqref{eq_estimate_lastterm2}-\eqref{eq_estimate_lastterm5}. First, note that \eqref{eq_estimate_lastterm4} is absorbed by \eqref{eq_Erel_inequ_pos1}. Moreover, \eqref{eq_estimate_lastterm5} is controlled via Corollary \ref{th_h_energyweight} and Lemma \ref{th_Ebulk_BVerror}. More precisely, we have
	\begin{align*}
	&\int_{\Gamma^{m_\eps}_{t,h}(\delta_\eps)}(|h|_{P_{\Gamma^{m_\eps}}}|^2+|\nabla_\tau (h|_{P_{\Gamma^{m_\eps}}})|^2)\left(\eps|\nabla\varphi_\eps|^2+\frac{1}{\eps}W(\varphi_\eps)\right)|_{(x,t)}\,dx\\
	&\leq C E[\varphi_\eps|\Gamma^{m_\eps}](t)
	+C \int_\Rd |\chi-\chi_{S_{b(t)}}|\min\left\{d_{\Gamma^{m_\eps}_t},1\right\} \,dx\leq C(E[\varphi_\eps|\Gamma^{m_\eps}](t)+E_\textup{bulk}[\varphi_\eps|\Gamma^{m_\eps}](t)),
	\end{align*}
	where $b(t)=b_\eps(t)$ and $S_{b(t)}$ are as in Lemma \ref{th_levelset}. Moreover, \eqref{eq_estimate_lastterm2} is suitably estimated by Lemma \ref{th_energy_out_strip} provided that $C_0\geq 1$ is large enough such that $e^{-cC_0|\log\eps|}\leq \eps^2$. Finally, \eqref{eq_estimate_lastterm3} is controlled by Corollary \ref{th_energy_out2} and Lemma \ref{th_Ebulk_BVerror}. Altogether we obtain
	\begin{align*}
		\frac{1}{2}\|\nabla\ve_\eps-\nabla\ve^{m_\eps}\|_{L^2(0,T;L^2(\Omega))}^2+E[\ve_\eps,\varphi_\eps| \ve^{m_\eps},\Gamma^{m_\eps}](T)
		\leq E[\ve_\eps,\varphi_\eps| \ve^{m_\eps},\Gamma^{m_\eps}](0)
		+C\frac{\eps^2}{m_\eps}\\+C\left(\frac{|\log\eps|^3\eps^2}{m_\eps}+1\right)\int_0^T E[\ve_\eps,\varphi_\eps| \ve^{m_\eps},\Gamma^{m_\eps}](t)+E_\textup{bulk}[\varphi_\eps|\Gamma^{m_\eps}](t)\,dt.
	\end{align*}

	Next, we estimate the terms on the right hand side in the identity for $E_\textup{bulk}[\varphi_\eps|\Gamma^{m_\eps}]$ from Lemma \ref{th_Ebulk_identity}. The terms \eqref{eq_Ebulk1}-\eqref{eq_Ebulk2} are controlled by the bulk error itself due to \eqref{eq_theta_evol}, \eqref{eq_theta_coerc1} and \eqref{eq_Ebulk_coercivity1}. Moreover, \eqref{eq_Ebulk3} and \eqref{eq_Ebulk5} can be estimated by the relative entropy because of \eqref{eq_Erel_coercivity_1} and \eqref{eq_Erel_coercivity_2}, respectively. The term \eqref{eq_Ebulk4} is controlled by the bulk functional due to \eqref{eq_Ebulk_coercivity2}. Moreover, we estimate \eqref{eq_Ebulk6}-\eqref{eq_Ebulk7} via Young's inequality in order to absorb one part with the positive terms \eqref{eq_Erel_inequ_pos2} and \eqref{eq_Erel_inequ_pos1}, respectively. The remainders as well as \eqref{eq_Ebulk8}-\eqref{eq_Ebulk9} are controlled by the relative entropy because of \eqref{eq_Erel_coercivity_1} and \eqref{eq_Erel_coercivity_equipart}. Finally, this yields
	\begin{align*}
		E_\textup{bulk}[\varphi_\eps|\Gamma^{m_\eps}](t)
		\leq E_\textup{bulk}[\varphi_\eps|\Gamma^{m_\eps}](0)
		+C\int_0^T E[\ve_\eps,\varphi_\eps| \ve^{m_\eps},\Gamma^{m_\eps}](t)+E_\textup{bulk}[\varphi_\eps|\Gamma^{m_\eps}](t)\,dt.
	\end{align*}

	Let $m_\eps=m_0\eps^{\beta}$ with $m_0>0$ and $\beta\in(0,2)$ be as in the theorem. Then it holds $\frac{|\log\eps|^3\eps^2}{m_\eps}\leq \frac{C_\beta}{m_0}$ for some constant $C_\beta>0$ independent of $\eps$ for $\eps>0$ small. Finally, the Gronwall inequality yields Theorem \ref{th_stability}.
    \end{proof}

\section{Existence for the Approximate Two-Phase flow}\label{sec_approx_twophase}
      In this section we consider the existence of strong solutions for the modified two-phase flow
      \begin{alignat}{2}
	\label{eq:ApproxLimit1}
	\partial_t \ve_m^\pm+\ve_m^\pm\cdot\nabla\ve_m^\pm-\Delta \ve_m^\pm  +\nabla p_m^\pm &= 0 &\quad &\text{in }\Omega^{m,\pm}_t, t\in [0,T_0],\\\label{eq:ApproxLimit2}
	\Div \ve_m^\pm &= 0 &\quad &\text{in }\Omega^{m,\pm}_t, t\in [0,T_0],\\\label{eq:ApproxLimit3}
	-\llbracket 2 D\ve_m^\pm -p_m^\pm \mathbf{I}\rrbracket\no_{\Gamma^m_t} &= \sigma H_{\Gamma^m_t}\no_{\Gamma^m_t} && \text{on }\Gamma^m_t, t\in [0,T_0],\\ \label{eq:ApproxLimit4}
	\llbracket\ve_m^\pm \rrbracket &=0 && \text{on }\Gamma^m_t, t\in [0,T_0],\\
	\label{eq:ApproxLimit5}
	V_{\Gamma^m_t}-\no_{\Gamma^m_t}\cdot \ve_m^\pm   &= m H_{\Gamma^m_t} && \text{on }\Gamma^m_t, t\in [0,T_0],\\
	\label{eq:ApproxLimit6}
	\ve_m^-|_{\partial\Omega}&= 0&&\text{on }\partial\Omega\times(0,T_0),\\
	\Gamma^m_0=\Gamma^0,\quad \ve_m^\pm|_{t=0}&= \ve_0^\pm && \text{in }\Omega^\pm_0,\label{eq:ApproxLimit7}
      \end{alignat}
      where $T_0>0$ is such that \eqref{eq_TwoPhase1}-\eqref{eq_TwoPhase7} possesses a smooth solution and $m>0$ is sufficiently small. Here, analogously as for  $\Omega^\pm_t$ and $\Gamma_t$, the domain $\Omega$ is the disjoint union of two sufficiently smooth domains $\Omega^{m,\pm}_t$ and an evolving interface $\Gamma^m_t=\partial\Omega^{m,+}_t$ for every $t\in[0,T]$. Furthermore, $\no_{\Gamma^m_t}$, $V_{\Gamma^m_t}$, and $H_{\Gamma^m_t}$ denote interior normal (with respect to $\Omega^{m,+}_t$), the normal velocity, and the mean curvature of $\Gamma^m_t$, respectively.
      We note that before $m=m_\eps>0$ depends on $\eps>0$ and $m_\eps \to_{\eps\to 0} 0$. But in the system above only the value of $m>0$ enters. Therefore we skip the $\eps$-dependence.
      The goal is to show that then for every $m>0$ sufficiently small also \eqref{eq:ApproxLimit1}-\eqref{eq:ApproxLimit7} possesses a strong solution on the same time interval $[0,T_0]$, which is close to the solution of \eqref{eq_TwoPhase1}-\eqref{eq_TwoPhase7} in a certain sense. The idea for the proof is to use the interface $\Gamma_t$ of the solution of \eqref{eq_TwoPhase1}-\eqref{eq_TwoPhase7} for every $t\in[0,T_0]$ as a reference surface and to transform the modified system \eqref{eq:ApproxLimit1}-\eqref{eq:ApproxLimit7} with the aid of the Hanzawa transformation with respect to $\Gamma_t$ to a perturbed two-phase flow problem in $\Omega^\pm_t$, $t\in[0,T_0]$. To show solvability of the transformed system for small $m>0$ we reduce the system to a fixed-point problem with the aid of the invertibility of the principal part of the linearized system and apply the contraction mapping principle as usual. But to apply this strategy invertibility of the principal part of the linearized system to \eqref{eq:ApproxLimit1}-\eqref{eq:ApproxLimit7} together with uniform estimates in $m>0$ are essential. These results are obtained in the following subsection.

      Throughout this section we will use the notation from \cite{PruessSimonettMovingInterfaces}. In particular, we note that for a Banach space $X$, $1\leq p\leq\infty$, and $T_0>0$
      \begin{equation*}
        {}_0 W^1_q(0,T;X)= \{u\in W^1_q(0,T;X): u|_{t=0} =0\}.
      \end{equation*}
      \subsection{Analysis of the Linearized System}

      In this subsection $(\Gamma_t)_{t\in[0,T_0]}$ is a smooth evolving family of $(d-1)$-dimensional submanifolds such that $\Gamma_t= \partial\Omega^+_t \subseteq \Omega$ and $\Omega$ is the disjoint union of $\Gamma_t$ and two smooth domains $\Omega^+_t$ and $\Omega^-_t$ for every $t\in[0,T_0]$. Moreover, we assume that there is a continuous $X\colon\overline{\Omega}\times [0,T_0]\to \overline{\Omega}$ such that $(X|_{\Omega_0^\pm \times [0,T_0]},\operatorname{id})\colon \overline{\Omega_0^\pm} \times [0,T_0] \to \overline{\Omega^\pm} $ are smooth and  
      \begin{enumerate}
      \item $X_t:=X(\cdot,t)\colon \overline{\Omega_0^\pm}\to \overline{\Omega_t^\pm}$ are smooth diffeomorphisms for all $t\in[0,T_0]$,
      \item $\det (D_\xi X_t(\xi))=1$ for all $\xi\in\Omega_0\setminus \Gamma_0$, $t\in[0,T_0]$.
      \end{enumerate}
      Here we use the same notation as in Section~\ref{sec_Energy_def_coercivity}.
      In particular, we have $X_t(\Gamma_0)= \Gamma_t$ for all $t\in[0,T_0]$.
      In the later applications $(\Gamma_t)_{t\in[0,T_0]}$ is given by the smooth solution of \eqref{eq_TwoPhase1}-\eqref{eq_TwoPhase7} and $X_t=X(\cdot,t)$ can be obtained as the solutions of
      \begin{alignat*}{2}
        \frac{d}{dt} X_t(\xi) &= \ve_0^\pm(X_t(\xi),t) &\quad & \text{for all }\xi\in\overline{\Omega^\pm_0}, t\in [0,T_0],\\
        X_0(\xi) &= \xi &\quad & \text{for all }\xi\in\overline{\Omega^\pm_0}.
      \end{alignat*}
      Since $	\llbracket\ve^\pm_0 \rrbracket=0$, $X\colon \overline{\Omega_0}\times [0,T_0]\to \overline{\Omega_0}$ is well defined and continuous.

      We consider the linearized system
      \begin{alignat}{2}
	\label{eq:Linear1}
	\partial_t \ve^\pm-\Delta \ve^\pm  +\nabla p^\pm &= \mathbf{f} &\qquad &\text{in }\Omega^\pm_t, t\in [0,T_0],\\\label{eq:Linear2}
	\Div \ve^\pm &= g &\qquad &\text{in }\Omega^\pm_t, t\in [0,T_0],\\\label{eq:Linear3}
	-\llbracket 2D\ve^\pm -p^\pm \mathbf{I}\rrbracket\no_{\Gamma_t} &= \sigma \Delta_{\Gamma_t} h \no_{\Gamma_t} + \mathbf{a} && \text{on }\Gamma_t, t\in [0,T_0],\\ \label{eq:Linear4}
	\llbracket\ve^\pm \rrbracket &=0 && \text{on }\Gamma_t, t\in [0,T_0],\\
	\label{eq:Linear5}
	\partial_t^\bullet h 
        &= \no_{\Gamma_t} \cdot\ve + m \Delta_{\Gamma_t} h+ b && \text{on }\Gamma_t, t\in [0,T_0],\\
	\label{eq:Linear6}
	\ve_0^-|_{\partial\Omega}&= 0&&\text{on }\partial\Omega\times[0,T_0],\\
	h|_{t=0}=h_0,\qquad \ve^\pm|_{t=0}&= \ve_0^\pm && \text{in }\Omega^\pm_0,\label{eq:Linear7}
      \end{alignat}
      where $\ve^\pm = \ve|_{\Omega^\pm}$ and $\partial_t^\bullet h$ denotes the material time derivative of $h\colon \Gamma\to \R$ defined by
      \begin{equation*}
        \partial_t^\bullet h = \partial_t \tilde{h} +\nabla \tilde{h}\cdot (\partial_t X_t)\circ X_t^{-1}\qquad \text{on }\Gamma,
      \end{equation*}
      where $\tilde{h}\colon \Gamma(\delta)\to \R$ with $\tilde{h}(x,t)= h(P_{\Gamma_t}(x),t)$ for all $x\in \Gamma_t(\delta), t\in[0,T_0]$ for a sufficiently small $\delta$.

      The main result of this subsection is:
      \begin{theorem}\label{thm:LinSystem}
        Let $q>d+2$, $T_0\in (0,\infty)$,
        $m\in(0,1)$,  and $\Omega, \Gamma_t, \Omega^\pm_t$, $t\in[0,T_0]$, be as before. Moreover, let
        \begin{alignat*}{1}
          \mathbf{f}&\in L^q(0,T_0; L^q(\Omega))^d,\quad          g\in  L^q(0,T_0; W^1_q(\Omega\setminus \Gamma_t))\cap W^1_q(0,T_0;\dot{W}^{-1}_q(\Omega)), \quad \ve_0 \in W^{2-\frac2q}_q(\Omega \setminus \Gamma_0)^d,\\
          \mathbf{a} &\in W^{\frac12-\frac1{2q}}_q(0,T_0;L^q(\Gamma_t))^d\cap L^q(0,T_0; W^{1-\frac1q}_q(\Gamma_t))^d,\quad h_0 \in W^{3-\frac2q}_q(\Gamma_0),\\
          b &\in W^{1-\frac1{2q}}_q(0,T_0;L^q(\Gamma_t))\cap L^q(0,T_0; W^{2-\frac1q}_q(\Gamma_t))
        \end{alignat*}
        satisfy the compatibility conditions:
        \begin{enumerate}
        \item $\ve_0|_{\partial\Omega}=0$, $\llbracket\ve_0^\pm \rrbracket =0$,
        \item $\operatorname{div} \ve_0 = g|_{t=0}$,
        \item $-P_{T\Gamma_0}\llbracket 2D\ve^\pm_0\rrbracket\no_{\Gamma_t}= P_{T\Gamma_0} \mathbf{a}|_{t=0}$,
        \end{enumerate}
        then there is a unique solution $(\ve, p, h)$ such that
        \begin{align*}
          \ve &\in W^1_q(0,T_0; L^q(\Omega))^d\cap L^q(0,T_0;W^2_q(\Omega\setminus \Gamma_t))^d,\quad p\in L^q(0,T_0; \dot{W}^1_q(\Omega\setminus \Gamma_t)),\\
          \llbracket p \rrbracket &\in W^{\frac12-\frac1{2q}}_q(0,T_0;L^q(\Gamma_t))\cap L^q(0,T_0; W^{1-\frac1q}_q(\Gamma_t)),\\
          h &\in W^{2-\frac1{2q}}_q(0,T_0; L^q(\Gamma_t))\cap W^1_q(0,T_0; W^{2-\frac1q}_q(\Gamma_t))\cap L^q(0,T_0; W^{4-\frac1q}_q(\Gamma_t)).
        \end{align*}
        Moreover, there is some $C>0$ independent of $m\in (0,1)$ 
        and the data $(\mathbf{f}, g, \mathbf{a}, b,\ve_0 , h_0)$ such that
        \begin{align}\nonumber
         & \|\ve\|_{W^1_q(0,T_0; L^q(\Omega))}+ \|\ve\|_{L^q(0,T_0;W^2_q(\Omega\setminus \Gamma_t))} + \|p\|_{L^q(0,T_0; \dot{W}^1_q(\Omega\setminus \Gamma_t))}+ \| \llbracket p \rrbracket\|_{W^{\frac12-\frac1{2q}}_q(0,T_0;L^q(\Gamma_t))}\\\nonumber
          &\quad  + \| \llbracket p \rrbracket\|_{L^q(0,T_0; W^{2-\frac1q}_q(\Gamma_t))}+\|h\|_{W^{2-\frac1{2q}}_q(0,T_0; L^q(\Gamma_t))} + \|h\|_{W^1_q(0,T_0; W^{1-\frac1q}_q(\Gamma_t))}+ \|h\|_{L^q(0,T_0; W^{3-\frac1q}_q(\Gamma_t))}\\\nonumber
          &\quad + m\|h\|_{L^q(0,T_0; W^{4-\frac1q}_q(\Gamma_t))}\\\nonumber
          &\qquad \leq C\left(
            \|\mathbf{f}\|_{L^q(0,T_0; L^q(\Omega))}+\|g\|_{L^q(0,T_0; W^1_q(\Omega\setminus \Gamma_t))}+\|\mathbf{a}\|_{W^{\frac12-\frac1{2q}}_q(0,T_0;L^q(\Gamma_t))\cap L^q(0,T_0; W^{2-\frac1q}_q(\Gamma_t))}\right.\\
          &\qquad \quad \qquad \left.+\|b\|_{L^q(0,T_0; W^{2-\frac1q}_q(\Gamma_t))}
            +\|\ve_0\|_{W^{2-\frac2q}_q(\Omega \setminus \Gamma_0)}+\| h_0\|_{ W^{4-\frac3q}_q(\Gamma_0)}
            \right)\label{eq:UnifEstim}
        \end{align}
      \end{theorem}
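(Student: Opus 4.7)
The plan is to follow the Pr\"uss--Simonett framework for the two-phase Stokes problem with free boundary, adapted to accommodate the extra mean-curvature term $m\Delta_{\Gamma_t}h$ and, crucially, to extract estimates uniform in $m\in(0,1)$. First I would pull back \eqref{eq:Linear1}-\eqref{eq:Linear7} via the smooth, volume-preserving flow $X_t$ to the reference configuration $(\Omega^\pm_0,\Gamma_0)$. Volume preservation ensures that the principal part of the system is unchanged; the $\partial_t X_t$-contributions enter only through lower-order terms and can be absorbed on small time intervals via a standard Neumann-series argument, uniformly in $m$. A partition of unity adapted to the interior bulk, a tubular neighborhood of $\Gamma_0$, and a neighborhood of $\partial\Omega$ further reduces the problem to three model problems on (bent) half-spaces, of which only the two-phase interior model --- a flat interface $\{x_d=0\}$ separating $\R^d_+$ and $\R^d_-$ --- is nonstandard.

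For that flat two-phase model I would perform a partial Fourier-Laplace transform in the tangential-time variables $(x',t)$ with co-frequencies $(\xi',\lambda)$ and solve the bulk Stokes ODEs in $x_d$. Eliminating $\hat v$ and $\hat p$ via the stress-jump and velocity-matching conditions reduces the full system to a scalar equation for $\hat h(\xi',\lambda)$ of the schematic form
\[
\bigl(\lambda+s(\lambda,|\xi'|)+m|\xi'|^2\bigr)\hat h=\hat F,
\]
where $s(\lambda,|\xi'|)$ is the third-order positive symbol produced by surface tension in the $m=0$ Pr\"uss--Simonett analysis, satisfying $\operatorname{Re} s(\lambda,|\xi'|)\gtrsim |\xi'|^3/\sqrt{|\lambda|+|\xi'|^2}$ on a parabolic resolvent sector. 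Since $m|\xi'|^2$ has nonnegative real part, the full symbol remains uniformly bounded below in $m\in[0,1]$, and the Lopatinskii-Shapiro condition holds uniformly; a careful inspection of each entry of the inverse boundary-symbol matrix gives operator-valued $\mathcal{R}$-bounds with constants independent of $m$.

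The Kalton--Weis operator-valued Mikhlin theorem (applicable because $L^q$ is a UMD space for $1<q<\infty$) then converts these uniform $\mathcal{R}$-bounds into maximal $L^q$-regularity with constants uniform in $m$. The particular structure of \eqref{eq:UnifEstim} --- the base $W^{3-1/q}_q$-regularity of $h$ together with the additional $m$-weighted $W^{4-1/q}_q$-norm --- reflects the symbol decomposition directly: the multiplier $m|\xi'|^2\hat h/(\lambda+s+m|\xi'|^2)$ is uniformly bounded in $m$, which by the same theorem produces precisely the $m$-weighted fourth-order estimate. Transferring the estimates back through $X_t$, and noting that the compatibility conditions (i)-(iii) encode exactly the trace identities needed at $t=0$ for the solution to lie in the claimed maximal regularity class, completes the argument.

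The main obstacle is the uniformity of all constants in $m\in(0,1)$: formally $m=0$ and $m>0$ correspond to third- and fourth-order parabolicity for $h$ respectively, and the regularity class in the theorem is the common class that interpolates gracefully between these two regimes. The structural reason the uniform estimate succeeds is the sign of $m|\xi'|^2$ (matching the sign of $\operatorname{Re} s$), so the full symbol behaves essentially like the maximum of the two contributions. The technical core lies in verifying the $\mathcal{R}$-boundedness hypotheses on the entire parabolic sector in the two scaling regimes $|\lambda|\gtrsim|\xi'|^3$ and $|\lambda|\ll|\xi'|^3$, simultaneously for all $m\in[0,1]$.
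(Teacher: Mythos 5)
Your overall strategy --- pullback to the reference configuration, time-and-space localization reducing to the flat two-phase model problem, Fourier--Laplace transform, elimination of $(\hat{\ve},\hat{p})$ via the two-phase Stokes Dirichlet-to-Neumann operator to obtain a scalar symbol equation for $\hat h$, a uniform-in-$m$ lower bound on that symbol driven by the sign of the added $m|\xi'|^2$-term, and operator-valued functional calculus (Kalton--Weis / $\mathcal{H}^\infty$) to convert it into maximal $L^q$-regularity --- matches the paper's proof. The key structural insight, that $\operatorname{Re}(m|\xi'|^2)\geq 0$ preserves the lower bound so that the full symbol behaves like the maximum of its two contributions, and that the $m$-dependent piece yields precisely the $m$-weighted fourth-order norm while the $m$-independent part gives the uniform third-order bound, is the same mechanism used in the paper's key Lemma.

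However, your claimed scaling of the $m=0$ surface-tension symbol is wrong and would lead to a false (too strong) conclusion if carried through. The Prüss--Simonett Dirichlet-to-Neumann relation, quoted in the paper as (8.34), gives
\[
s_0(\lambda,\tau)-\lambda=\frac{\sigma\tau^2}{2\lambda/\tau+2(\lambda+\tau^2)^{1/2}+\tau}=\sigma\tau\,k\!\left(\frac{\lambda}{\tau^2}\right)
\quad\text{with }|k(z)|\sim(1+|z|)^{-1},
\]
so the correct two-sided bound is $|s_0(\lambda,\tau)-\lambda|\sim \tau^3/(|\lambda|+\tau^2)$, which is of \emph{first order} in $\tau$ for $|\lambda|\lesssim\tau^2$ (and $\sim\tau^3/|\lambda|$ for $|\lambda|\gg\tau^2$). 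Your claimed bound $\operatorname{Re} s\gtrsim|\xi'|^3/\sqrt{|\lambda|+|\xi'|^2}$ gives $\sim|\xi'|^2$ in the first regime, which is what one gets for a Laplace-type DtN map $\sim\sqrt{\lambda+|\xi'|^2}$ (single phase, quasistationary Stokes, or Rayleigh--Taylor/Muskat), not for the full Navier--Stokes two-phase problem where the bulk time derivative changes the denominator. Correspondingly, the natural scaling regimes are $|\lambda|\lessgtr|\xi'|^2$ (the parameter of $k$), not $|\lambda|\lessgtr|\xi'|^3$. Had you verified the $\mathcal{R}$-bounds with your claimed lower bound, you would have concluded a uniform-in-$m$ $L^q(W^{4-1/q}_q)$-estimate for $h$, which is false as $m\to 0$; the structure of \eqref{eq:UnifEstim} (uniform $W^{3-1/q}_q$ plus $m$-weighted $W^{4-1/q}_q$) is exactly what the correct first-order symbol forces.
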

      Here we define $L^q(0,T_0; W^m_q(\Omega\setminus \Gamma_t))$ such that $g\in L^q(0,T_0; W^m_q(\Omega\setminus \Gamma_t))$ if and only if $g\circ (X,\operatorname{id}_{(0,T_0)})\in L^q(0,T_0; W^m_q(\Omega\setminus \Gamma_0))$ for $m\in\mathbb{N}_0$ and analogously for the other function spaces involving $\Gamma_t$.

      \subsection{Case of a Flat Interface}
      Throughout this section we assume that $\Omega^\pm_t= \R^d_\pm$, $\Gamma_t= \R^{d-1}\times \{0\}$, and $\Omega = \R^d$. We follow the strategy of \cite[Section 8.2]{PruessSimonettMovingInterfaces}. To this end we first assume that $\mathbf{f}=g=\mathbf{a}=\ve_0=h_0=0$, use a spectral shift, and consider for $\omega>0$
      \begin{alignat}{2}
	\label{eq:FlatLinear1}
	(\partial_t +\omega) \ve^\pm-\Delta \ve^\pm  +\nabla p^\pm &=0 &\qquad &\text{in }\R^d_\pm\times (0,\infty),\\\label{eq:FlatLinear2}
          \Div \ve^\pm &= 0 &\qquad &\text{in }\R^d_\pm\times (0,\infty),\\\label{eq:FlatLinear3}
	\llbracket 2D\ve^\pm -p^\pm \mathbf{I}\rrbracket e_d &= -\sigma \Delta_{\R^{d-1}} h e_d && \text{on }\R^{d-1}\times \{0\}\times (0,\infty),\\ \label{eq:FlatLinear4}
	\llbracket\ve^\pm \rrbracket &=0 && \text{on }\R^{d-1}\times \{0\}\times (0,\infty),\\
	\label{eq:FlatLinear5}
	(\partial_t+ \omega) h + \ve_d  - m \Delta_{\R^{d-1}} h &= b && \text{on }\R^{d-1}\times \{0\}\times (0,\infty),\\\label{eq:FlatLinear6}
        (\ve, h)|_{t=0} &= (0,0),
      \end{alignat}
      where $\ve^\pm =\ve|_{\R^d_\pm}$, $p^\pm =p|_{\R^d_\pm}$.
      
      The goal of this subsection is to prove:
      \begin{theorem}\label{thm:FlatInt}
        For any $\omega>0$, $1<q<\infty$,
        $$
        b \in {}_0W^{1-\frac1{2q}}_q(0,\infty;L^q(\R^{d-1}))\cap L^q(0,\infty; W^{2-\frac1q}_q(\R^{d-1}))
        $$
        there is a unique solution
        \begin{align*}
          \ve&\in {}_0W^1_q(0,\infty; L^q(\R^d))\cap L^q(0,\infty; W^2_q(\R^d\setminus (\R^{d-1}\times \{0\})))^d,\\
          p &\in L^q(0,\infty; \dot{W}^1_q(\R^d\setminus (\R^{d-1}\times \{0\})))\\
          &\text{ with }	\llbracket p^\pm \rrbracket \in W^{\frac12-\frac1{2q}}_q(0,\infty; L^q(\R^{d-1}))\cap L^q(0,\infty; W^{1-\frac1q}_q(\R^{d-1})),
          \\
        h &\in {}_0W^{2-\frac1{2q}}_q(0,\infty;L^q(\R^{d-1}))\cap {}_0W^1_q(0,\infty; W^{2-\frac1q}_q(\R^{d-1}))\cap L^q(0,\infty; W^{4-\frac1q}_q(\R^{d-1}))          
        \end{align*}
        satisfying
        \begin{align}\nonumber
          &\|\ve\|_{W^1_q(0,\infty; L^q)} + \|\ve\|_{L^q(0,\infty; W^2_q)}+ \|p\|_{L^q(0,\infty; W^1_q)}+\left\| \llbracket p^\pm \rrbracket\right\|_{W^{\frac12-\frac1{2q}}(0,\infty; L^q)\cap L^q(0,\infty; W^{1-\frac1q}_q)}\\
          &\quad + \|h\|_{W^{2-\frac1{2q}}_q(0,\infty; L^q)} + \|h\|_{W^1_q(0,\infty; W^{2-\frac1q}_q)}
            +\|h\|_{ L^q(0,\infty; W^{3-\frac1q}_q)}\nonumber\\\label{eq:EstimFlatInt}
          &\quad +m\|h\|_{ L^q(0,\infty; W^{4-\frac1q}_q)} \leq C\left(\|b\|_{W^{1-\frac1{2q}}_q(0,\infty;L^q)}+\|b\|_{L^q(0,\infty; W^{2-\frac1q}_q)}\right)
        \end{align}
        uniformly in $m\in[0,1]$.
      \end{theorem}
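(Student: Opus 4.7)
The plan is to follow the strategy of \cite[Section~8.2]{PruessSimonettMovingInterfaces}, reducing \eqref{eq:FlatLinear1}--\eqref{eq:FlatLinear6} by Laplace transform in $t$ and Fourier transform in $x'\in\R^{d-1}$ to a scalar pseudodifferential equation for $h$, and then showing that the resulting symbol generates an operator with bounded $\mathcal{H}^\infty$-calculus of angle below $\pi/2$, with estimates uniform in $m\in[0,1]$. The whole new point, compared with the Pr\"uss--Simonett setup, is the additional dissipation $-m\Delta_{\R^{d-1}}h$ in \eqref{eq:FlatLinear5}, which is a nonnegative perturbation of strictly lower parabolic order than the principal symbol, and therefore should not worsen the estimates.

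First, I would perform a joint Laplace--Fourier transform. For each fixed $(\lambda,\xi')$ with $\Re\lambda+\omega>0$ and $\xi'\ne 0$, the transformed Stokes systems in $\R^d_\pm$ reduce to constant-coefficient linear ODEs in $x_d$. Imposing decay at $x_d\to\pm\infty$, the transmission conditions \eqref{eq:FlatLinear3}--\eqref{eq:FlatLinear4}, and solving for the free constants, one obtains explicit formulas for $\hat \ve^\pm$ and $\hat p^\pm$ linearly in $\hat h$; in particular one extracts a Dirichlet-to-Neumann-type identity
\begin{equation*}
  \hat v_d|_{x_d=0} \;=\; -\,\sigma|\xi'|^2\,\mathcal{S}(\lambda,|\xi'|)\,\hat h,
\end{equation*}
where $\mathcal{S}$ is the explicit two-phase surface-tension symbol computed in \cite[Section~8.2]{PruessSimonettMovingInterfaces}, known to be sectorial with the parabolic scaling $\lambda^{1/2}\sim|\xi'|$ and positive real part on the closed right half-plane.

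Substituting this into the transformed kinematic condition \eqref{eq:FlatLinear5} yields the scalar symbol equation
\begin{equation*}
  \bigl(\lambda+\omega+\sigma|\xi'|^2\mathcal{S}(\lambda,|\xi'|)+m|\xi'|^2\bigr)\,\hat h \;=\; \hat b.
\end{equation*}
Writing $\mathcal{M}_m$ for the operator realization of the left-hand side on $L^q(0,\infty;L^q(\R^{d-1}))$, the crux is to prove that $\mathcal{M}_m$ admits bounded $\mathcal{H}^\infty$-calculus of angle strictly below $\pi/2$ uniformly in $m\in[0,1]$. For $m=0$ this is precisely the content of the Pr\"uss--Simonett analysis. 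For $m>0$, the additional symbol $m|\xi'|^2$ is nonnegative and depends only on $|\xi'|$, so it commutes functionally with $\sigma|\xi'|^2 \mathcal{S}(\lambda,|\xi'|)$; moreover, in the parabolic scaling it is of order $2$, strictly lower than the order $3$ of the principal symbol, and it is multiplied by the small parameter $m\in[0,1]$. A Kalton--Weis-type sum theorem, or directly a Mikhlin multiplier check for $m|\xi'|^2/(\lambda+\omega+\sigma|\xi'|^2\mathcal{S}+m|\xi'|^2)$, yields the uniform-in-$m$ bound.

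From this I would read off the $m$-independent estimates for $\|h\|_{W^{2-1/(2q)}_q(L^q)}$, $\|h\|_{W^1_q(W^{2-1/q}_q)}$ and $\|h\|_{L^q(W^{3-1/q}_q)}$, and then use the identity $m|\xi'|^2\hat h=\hat b-(\lambda+\omega+\sigma|\xi'|^2\mathcal{S})\hat h$ to obtain the extra estimate $m\|h\|_{L^q(W^{4-1/q}_q)}\le C\|b\|_{\cdots}$ as a direct consequence, with $C$ independent of $m$. With $h$ in hand, the explicit ODE representations for $\hat\ve^\pm$ and $\hat p^\pm$ together with the standard maximal-regularity theory for the one- and two-phase Stokes problems in a half-space (again from \cite{PruessSimonettMovingInterfaces}) deliver the remaining bounds on $\ve$, $p$ and $\llbracket p\rrbracket$. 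The expected main obstacle is verifying that the perturbation by $m|\xi'|^2$ does not reduce the sectoriality angle of the principal symbol or destroy its $\mathcal{H}^\infty$-boundedness uniformly in $m$; however, because $m|\xi'|^2\ge 0$, is of strictly lower parabolic order than $\sigma|\xi'|^2\mathcal{S}$, and commutes with it spectrally, this reduces to an elementary uniform Mikhlin multiplier estimate.
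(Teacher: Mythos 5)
Your proposal follows the same overall strategy as the paper: reduce \eqref{eq:FlatLinear1}--\eqref{eq:FlatLinear6} via Laplace--Fourier transform to a scalar symbol equation $s_m(\lambda,|\xi'|)\,\hat h=\hat b$ with $s_m$ involving the Dirichlet-to-Neumann symbol of the two-phase Stokes problem, and establish uniform-in-$m$ bounded $\mathcal{H}^\infty$-calculus. However, there is a genuine gap in the step you call elementary, and the reason you give for it is incorrect.

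You claim that $m|\xi'|^2$ is of \emph{strictly lower} parabolic order than the Dirichlet-to-Neumann contribution $\sigma|\xi'|^2\mathcal{S}(\lambda,|\xi'|)$. This is backwards. Since the inverse Stokes DtN operator $\mathcal{S}$ is of order $-1$ (explicitly, $\mathcal{S}(\lambda,|\xi'|)=\bigl(2\lambda/|\xi'|+2(\lambda+|\xi'|^2)^{1/2}+|\xi'|\bigr)^{-1}\sim 1/(3|\xi'|)$ for $\lambda$ bounded and $|\xi'|$ large), the capillary contribution is $\sigma|\xi'|^2\mathcal{S}\sim \sigma|\xi'|/3$, which is of order one. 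The added term $m|\xi'|^2$ is of order two, hence of \emph{higher} order; for any fixed $m>0$ it dominates the DtN part as $|\xi'|\to\infty$ and changes the homogeneity of the symbol. So this is a singular (not regular) perturbation, and no Kalton--Weis sum theorem or lower-order perturbation lemma applies directly. The correct intuition you also name — positivity of the real part of $m|\xi'|^2$ on the relevant sector — is what actually saves the argument, but the way to exploit it is not ``elementary Mikhlin''; it requires proving the quantitative lower bound
\begin{equation*}
 |s_m(\lambda,\tau)| \geq c\bigl(|\lambda|+|\tau|+m|\tau|^2\bigr)
 \qquad\text{for }\lambda\in\Sigma_{\pi/2+\eta},\ |\lambda|\geq\omega_0,\ \tau\in\Sigma_\eta,
\end{equation*}
uniformly in $m\in[0,1]$. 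That is precisely the content of the dedicated lemma in the paper, and the proof there is not a one-liner: one has to write $s_m(\lambda,\tau)=\lambda+m\tau^2+\sigma\tau k(\lambda/\tau^2)$ with $k$ the scalar function from Pr\"uss--Simonett, split the argument into $\lambda\in\Sigma_{\pi/2-\delta}$ (where real parts add coercively) and the boundary region $\lambda\in\Sigma_{\pi/2+\delta}\setminus\Sigma_{\pi/2-\delta}$ (where a compactness argument shows the closure of $k(\Sigma_{\pi/2+\eta})$ lies in a closed subsector of the open right half-plane, enabling a different additivity inequality), and in each case carefully distinguish $|z|\le1$ from $|z|\ge1$. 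You should supply this lower bound — or an equivalent resolvent estimate — before invoking any operator-valued functional calculus. Once it is in place, the remaining bookkeeping (R-bounded calculus for $(-\Delta_{x'})^{1/2}$, operator-valued $\mathcal{H}^\infty$-calculus for $\partial_t+\omega$, real interpolation, and passing the estimates for $h$ to $(\ve,p)$ via the one-phase solution operator) proceeds as you describe.
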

      Analogously as in \cite{PruessSimonettMovingInterfaces} one obtains that $(\ve^\pm, p^\pm, h)$ solves \eqref{eq:FlatLinear1}-\eqref{eq:FlatLinear6} if and only if
      \begin{align}\label{eq:h}
        (\partial_t+\omega) h + e_d \cdot (\mathcal{D}\mathcal{N})^{-1}
        \begin{pmatrix}
          0\\ -\sigma \Delta_{\R^{d-1}} h
        \end{pmatrix}
- m \Delta_{\R^{d-1}} h &= b\quad  \text{on }\R^{d-1}\times \{0\}\times (0,\infty),\\\nonumber
        (\ve, h)|_{t=0} &= (0,0),
      \end{align}
      where $\mathcal{D}\mathcal{N}$ is the Dirichlet-to-Neumann operator for the two-phase Stokes problem as in \cite[Section~8.3.2]{PruessSimonettMovingInterfaces} and $(\ve^\pm, p^\pm)$ is determined by \eqref{eq:FlatLinear1}-\eqref{eq:FlatLinear4} and $\ve|_{t=0}=0$ in dependence on $h$. More precisely, the Laplace-Fourier transform of $u=e_d \cdot (\mathcal{D}\mathcal{N})^{-1}
        \begin{pmatrix}
          0\\ -\sigma \Delta_{\R^{d-1}} h
        \end{pmatrix}$ is given by
        \begin{equation*}
          \hat{u}(\lambda,\xi)= \frac{-\sigma |\xi|^2}{2\lambda/|\xi|+ 2(\lambda+|\xi|^2)^{\frac12}+|\xi|} \hat{h}(\lambda,\xi),
        \end{equation*}
        cf.\ \cite[Equation (8.34)]{PruessSimonettMovingInterfaces}, where we note that $\eta_1=\eta_2= (\lambda+|\xi|^2)^{\frac12}+|\xi|$ since $\rho_1=\rho_2=\mu_1=\mu_2=1$ in our case. 
        Hence the Fourier-Laplace transformation of \eqref{eq:h} is $s_m(\lambda,|\xi|)\hat{h}(\lambda,\xi)= \hat{b}(\lambda,\xi)$, where
        \begin{equation*}
          s_m(\lambda,\tau) = \lambda + m\tau^2 +\frac{\sigma \tau^2}{2\lambda/\tau+ 2(\lambda+\tau^2)^{\frac12}+\tau}\quad \text{for }\lambda\in \C, \tau\in  \mathbb{C}\setminus \{0\}.
        \end{equation*}
        We note that $s_0(\lambda,|\xi|)$ coincides with the symbol $s_{0,0}(\lambda,\xi)$ studied in \cite[Section~8.3.4]{PruessSimonettMovingInterfaces}, for which it was shown
        \begin{equation*}
          |s_{0,0}(\lambda,\xi)|\leq C_\eta(|\lambda|+|\xi|) \qquad \text{for all }\lambda \in \Sigma_{\pi/2+\eta}, \xi\in \left(\Sigma_\eta \cup -\Sigma_\eta\right)^{n-1}, 
        \end{equation*}
        cf.\ \cite[Equation (8.50)]{PruessSimonettMovingInterfaces} for every $\eta\in [0,\frac\pi{2})$. Hence for every $\eta\in [0,\frac\pi{2})$ there is some $C_\eta$ such that 
        \begin{equation*}
          |s_m(\lambda,\tau)|\leq C_\eta(|\lambda|+|\tau|+m|\tau|^2) \qquad \text{for all }\lambda \in \Sigma_{\pi/2+\eta}, \tau\in \Sigma_\eta, 
        \end{equation*}
        uniformly in $m\in[0,1]$. The essential point is that we have the same kind of lower bound:
        \begin{lemma}
          For any $\omega_0>0$ there is some $\eta>0$ and $c>0$ such that
          \begin{equation}\label{eq:smEstim}
              |s_m(\lambda,\tau)|\geq c(|\lambda|+|\tau|+m|\tau|^2) \qquad \text{for all }\lambda \in \Sigma_{\pi/2+\eta},|\lambda|\geq \omega_0, \tau\in \Sigma_\eta, 
          \end{equation}
        \end{lemma}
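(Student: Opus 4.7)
The approach is to decompose $s_m(\lambda,\tau)=s_0(\lambda,\tau)+m\tau^2$ and combine the classical $m=0$ analysis from Pr\"uss--Simonett with a dichotomy based on the relative size of $m|\tau|^2$ versus $|\lambda|+|\tau|$. The starting point is the sectorial lower bound $|s_0(\lambda,\tau)|\geq c_0(|\lambda|+|\tau|)$ for $\lambda\in\Sigma_{\pi/2+\eta}$ with $|\lambda|\geq\omega_0$ and $\tau\in\Sigma_\eta$ with $\eta>0$ sufficiently small, which is established in the analysis leading to \cite[Equation~(8.50)]{PruessSimonettMovingInterfaces} (the symbol $s_0$ here coincides with their symbol $s_{0,0}$ in the present normalization $\rho_1=\rho_2=\mu_1=\mu_2=1$).

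In the easy regime $m|\tau|^2\leq\tfrac{c_0}{2}(|\lambda|+|\tau|)$, a direct triangle-inequality estimate combined with the cited lower bound gives
\[
|s_m(\lambda,\tau)|\geq|s_0(\lambda,\tau)|-m|\tau|^2\geq\tfrac{c_0}{2}(|\lambda|+|\tau|)\geq\tfrac{c_0}{4}\bigl(|\lambda|+|\tau|+m|\tau|^2\bigr),
\]
which is exactly the desired estimate. In the complementary regime $m|\tau|^2>\tfrac{c_0}{2}(|\lambda|+|\tau|)$ the shift term $m\tau^2$ dominates: since $m\leq 1$ one has $|\tau|\geq c_0/2$, and moreover $|\lambda|+|\tau|\leq(2/c_0)m|\tau|^2$. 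Writing $s_m=\lambda+m\tau^2+\sigma\tau^2/D$ with $D=2\lambda/\tau+2(\lambda+\tau^2)^{1/2}+\tau$, I would verify that for $\eta$ small enough each of the three summands of $D$ has argument strictly in $(-\pi/2+\delta,\pi/2-\delta)$ for some $\delta>0$ independent of $(\lambda,\tau,m)$. Consequently $D$ lies in a sector strictly inside the right half-plane, so $\arg(\sigma\tau^2/D)=O(\eta)$ and likewise $\arg(m\tau^2)\in(-2\eta,2\eta)$. Taking the real part of $s_m$ after a suitable small rotation and using $|\lambda|\leq(2/c_0)m|\tau|^2$ together with $\Re\lambda\geq -\sin(\eta)|\lambda|$ then delivers $\Re s_m\geq c\, m|\tau|^2$, which combined with $|\lambda|+|\tau|\leq Cm|\tau|^2$ yields $|s_m|\geq c'(|\lambda|+|\tau|+m|\tau|^2)$.

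The main obstacle is the uniform sectorial control of $D$ in Regime B, i.e.~the verification that $\arg D$ stays uniformly away from $\pm\pi$. This is done by tracking $\tau\in\Sigma_\eta$, $\lambda/\tau\in\Sigma_{\pi/2+2\eta}$, and $(\lambda+\tau^2)^{1/2}\in\Sigma_{\pi/4+\eta}$ (the latter using the principal branch together with the observation that $\lambda+\tau^2$ avoids a neighborhood of the negative real axis when $\lambda\in\Sigma_{\pi/2+\eta}$ and $\tau\in\Sigma_\eta$ for small $\eta$). These three vectors then all lie in a common sector of aperture strictly less than $\pi$ that is moreover contained in the right half-plane after a fixed small rotation, so $D$ cannot be small and its argument is controlled; the estimate on $|s_m|$ in Regime B follows.
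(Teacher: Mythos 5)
The overall strategy is viable and genuinely different from the paper's: you decompose $s_m = s_0 + m\tau^2$, invoke the $m=0$ lower bound $|s_0|\geq c_0(|\lambda|+|\tau|)$ as a black box from Pr\"uss--Simonett, and split into two regimes according to whether $m|\tau|^2$ is small or large relative to $|\lambda|+|\tau|$. The paper instead proves the bound from scratch for all $m\in[0,1]$ simultaneously, splitting not by the size of $m|\tau|^2$ but by whether $\lambda$ lies in $\Sigma_{\pi/2-\delta}$ or in the thin slab $\Sigma_{\pi/2+\delta}\setminus\Sigma_{\pi/2-\delta}$, and it relies on the two facts $\operatorname{Re}k(z)>0$ for $\operatorname{Re}z\geq 0$ and the compactness of $\overline{\{k(z):z\in\Sigma_{\pi/2+\eta}\}}$ (here $s_m=\lambda+m\tau^2+\sigma\tau k(z)$, $z=\lambda/\tau^2$). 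Your Regime~A step is fine (modulo the reference: the paper itself only cites the \emph{upper} bound (8.50), so the lower bound for $s_0$ must be sourced from the invertibility statements later in Pr\"uss--Simonett's Section 8.3.4, not from (8.50)).

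The Regime~B argument has a genuine gap. You claim that ``each of the three summands of $D$ has argument strictly in $(-\pi/2+\delta,\pi/2-\delta)$,'' but this is false for the summand $2\lambda/\tau$: since $\lambda\in\Sigma_{\pi/2+\eta}$ and $\tau\in\Sigma_\eta$, one has $\arg(\lambda/\tau)$ ranging over $(-\pi/2-2\eta,\pi/2+2\eta)$, so this term can have negative real part. Consequently the follow-up assertion that the three summands ``lie in a common sector of aperture strictly less than $\pi$ that is contained in the right half-plane after a fixed small rotation'' also fails — the sector containing $\lambda/\tau$ already has aperture exceeding $\pi$. And the conclusion ``$\arg(\sigma\tau^2/D)=O(\eta)$'' is too strong: $\arg(\tau^2/D)=2\arg\tau-\arg D$, and $\arg D$ can be $O(1)$, not $O(\eta)$. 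The correct way to close Regime~B is to exploit the crucial feature you did \emph{not} invoke: in Regime~B one has $|\lambda|\leq(2/c_0)m|\tau|^2\leq(2/c_0)|\tau|^2$, i.e.\ $|z|=|\lambda|/|\tau|^2\leq 2/c_0$ is bounded, so that $D/\tau = 2z+2(z+1)^{1/2}+1$ lies in a fixed compact set; the additive constant $1$ (together with $\operatorname{Re}(z+1)^{1/2}>0$ and $\operatorname{Re}z\geq -O(\eta)$) then forces $\operatorname{Re}(D/\tau)\geq 1-O(\eta)$ while $|\operatorname{Im}(D/\tau)|$ is bounded in terms of $2/c_0$, giving $|\arg(D/\tau)|\leq\pi/2-\delta_1$ for some $\delta_1>0$ depending on $c_0$, which is what one actually needs. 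Without this boundedness-of-$z$ step, the argument of $D$ is not under control and the lower bound on $\operatorname{Re}s_m$ does not follow.
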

        \begin{proof}
          First of all, we note that
          \begin{equation*}
            s_m(\lambda,\tau) = \lambda + m\tau^2+\sigma \tau k(z) \qquad \text{with } z= \frac{\lambda}{\tau^2}, \tau \neq 0,
          \end{equation*}
          where it was shown in \cite[Page~392]{PruessSimonettMovingInterfaces} that for any $\vartheta \in [0,\pi)$ there is some $C_\vartheta>0$ such that
          \begin{equation*}
            |k(z)|\leq \frac{C_\vartheta}{1+|z|}\qquad \text{for all }z\in \Sigma_\vartheta.
          \end{equation*}
          Moreover, $\operatorname{Re} k(z)>0$ if $\operatorname{Re} z\geq 0$.
          
          We first show that \eqref{eq:smEstim} holds for $\lambda \in \Sigma_{\pi/2-\delta}, \tau\in \Sigma_\eta$ and any $\delta\in (0,\pi/4)$ and  sufficiently small $\eta>0$ (depending on $\delta$). To this end we use that for any $\eta>0$ there is some $C_\eta >0$ such that
          \begin{equation*}
          |z|\leq C_\eta \operatorname{Re} z \qquad \text{for all } z\in \Sigma_{\pi/2-\eta}.
        \end{equation*}
        Furthermore, observe that
        \begin{equation*}
          \frac{2\lambda}\tau + 2(\lambda+\tau^2)^{\frac12}+\tau \in \Sigma_{\pi/2-\delta/2}\quad \text{for all }\lambda\in \Sigma_{\pi/2-\delta}, \tau \in \Sigma_{\delta/4}
        \end{equation*}
        and therefore
        \begin{equation*}
        \frac{\sigma \tau^2}{2\lambda/\tau+ 2(\lambda+\tau^2)^{\frac12}+\tau}\in \Sigma_{\pi/2-\delta/3}\quad \text{for all }\lambda\in \Sigma_{\pi/2-\delta}, \tau \in \Sigma_{\eta}  
        \end{equation*}
        if $\eta>0$ is sufficiently small. Since $\lambda, m\tau^2\in \Sigma_{\pi/2-\delta/3}$ for all $\lambda\in \Sigma_{\pi/2-\delta}, \tau \in \Sigma_{\eta}$ as well, we conclude 
        \begin{align*}
          |s_m(\lambda,\tau)|&\geq \operatorname{Re} s_m(\lambda,\tau) = \operatorname{Re} \lambda + m\operatorname{Re}\tau^2+ \operatorname{Re} \frac{\sigma \tau^2}{2\lambda/\tau+ 2(\lambda+\tau^2)^{\frac12}+\tau}\\
          &\geq c_\eta \left( |\lambda|+ m|\tau|^2+ |\tau||k(z)|\right)\qquad \text{with }z=\frac{\lambda}{\tau^2}
        \end{align*}
        provided $\eta>0$ is sufficiently small. Now, if $|z|\leq 1$, there is some $c>0$ such that $|k(z)|\geq c$ if $\operatorname{Re} z\geq 0$ and $|z|\leq 1$ and we conclude
        \begin{equation*}
          |s_m(\lambda,\tau)|\geq c_\eta \left( |\lambda|+ m|\tau|^2+ |\tau|\right)\qquad \text{if }|z|\leq 1.
        \end{equation*}
        On the other hand, if $|z|\geq 1$, then $|\lambda|\geq |\tau|^2$ and therefore
        \begin{equation*}
          |s_m(\lambda,\tau)|\geq c_\eta \left( |\lambda|+ m|\tau|^2+ |\tau|\right)\quad \text{for all }\lambda\in \Sigma_{\pi/2-\delta},|\lambda|\geq \omega_0, \tau \in \Sigma_{\eta}
        \end{equation*}
        uniformly in $m\in[0,1]$ if $\eta>0$ is sufficiently small.

        Next we consider $\lambda \in \Sigma_{\pi/2+\delta}\setminus \Sigma_{\pi/2-\delta}$ for $\delta>0$ sufficiently small. To this end we note that, since $\operatorname{Re} k(z)>0$ if $\operatorname{Re} z \geq 0$ and $k(z)\to_{z\in \Sigma_{\vartheta},|z|\to \infty} 0$ for every $\vartheta\in [0,\pi)$, we have that
        \begin{equation*}
          K:= \overline{\left\{k(z): z\in \Sigma_{\pi/2+\eta}\right\}}\subseteq \{z\in \C: \operatorname{Re} z>0\}
        \end{equation*}
        if $\eta>0$ is sufficiently small. Moreover, $K$ is compact. Therefore, there is some $\delta>0$ such that
        \begin{equation*}
          K \subseteq \Sigma_{\pi/2-3\delta}.
        \end{equation*}
        Moreover, it is easy to prove that there is some $C_\delta>0$ such that
        \begin{equation*}
          |z|+|w|\leq C_\delta |z+w|\qquad \text{for all }z \in \Sigma_{\pi/2+\delta}\setminus \Sigma_{\pi/2-\delta}, w\in \Sigma_{\delta}
        \end{equation*}
        provided that $\delta \in (0,\pi/4)$. Hence
        \begin{align*}
          |s_m(\lambda,\tau)|&\geq c_{\delta} \left(|\lambda|+|m\tau^2+\sigma \tau k(z)|\right)\\
                             &\geq c_{\delta} \left(|\lambda|+m\operatorname{Re} (\tau^2)+\sigma \operatorname{Re}(\tau k(z))\right)\\
                             &\geq c_{\delta} \left(|\lambda|+m|\tau^2|+\sigma |\tau||k(z)|\right)
        \end{align*}
        and with the same arguments as before
        \begin{align*}
          |s_m(\lambda,\tau)|&\geq c_{\delta} \left(|\lambda|+m|\tau^2|+|\tau|\right)
        \end{align*}
        for all $\lambda \in \Sigma_{\pi/2+\delta}\setminus \Sigma_{\pi/2-\delta}$, $|\lambda|\geq \omega_0$, $\tau \in \Sigma_\eta$ uniformly in $m\in[0,1]$ if $\eta>0$ is sufficiently small. This finishes the proof.        
      \end{proof}
      Now we can proceed as in \cite[Section~8.3.3]{PruessSimonettMovingInterfaces}. Let $D_n^{\frac12}=(-\Delta_{x'})^{\frac12}$ be the Fourier multiplication operator with symbol $|\xi|$, $\xi\in \R^{d-1}$. Then $D_n^{\frac12}$ possesses an $\mathcal{R}$-bounded functional calculus in $W^{2-\frac1q}_q(\R^{d-1})$, for any $1<q<\infty$. Therefore
      \begin{equation*}
        (\lambda + D_n^{\frac12}+ m D_n) s_{m}^{-1}(\lambda, D_n^{\frac12})
      \end{equation*}
      is $\mathcal{R}$-bounded and possesses a bounded $\mathcal{H}^\infty$-calculus on $\Sigma_{\pi/2+\eta}\setminus B_{\omega_0}(0)$ for some $\eta>0$ and any $\omega_0>0$, which is also uniformly bounded in $m\in[0,1]$. Hence the operator-valued $\mathcal{H}^\infty$-calculus of $G=\partial_t +\omega$ on ${ }_0H^r_q(0,\infty; K_q(\R^{d-1}))$ for $\omega>0$, $s,r\in\R$ and $K=H,W$ yields that
      \begin{equation*}
        (\partial_t +\omega + D_n^{\frac12}+ m D_n) s_{m}^{-1}(\partial_t +\omega, D_n^{\frac12}) \in \mathcal{L} \left({ }_0H^r_q(0,\infty; K_q(\R^{d-1}))\right)
      \end{equation*}
      is uniformly bounded in $m\in[0,1]$ for any $s,r\in\R$. Hence we obtain for $h= s_{m}^{-1}(\lambda, D_n^{\frac12})b $ and $b\in \mathbb{E}$ that $h$ solves \eqref{eq:h} and satisfies 
      \begin{equation*}
        \| \omega h\|_{\mathbb{E}}+ \| \partial_t h\|_{\mathbb{E}}+         \| \nabla_{x'} h\|_{\mathbb{E}}+m \| \nabla_{x'}^2 h\|_{\mathbb{E}}\leq C_{r,s,p} \|b\|_{\mathbb{E}}
      \end{equation*}
      uniformly in $m\in [0,1]$, where $\mathbb{E}:=L^q(0,\infty; W^{2-\frac1q}_q(\R^{d-1}))$ or $\mathbb{E}:={ }_0H^k_q(0,\infty; L^q(\R^{d-1}))$, $k=0,1$.
      By real interpolation we obtain that the same is true for $b\in \mathbb{E}={}_0W^{1-\frac1{2q}}_q(0,\infty; L^q(\R^{d-1}))$.
      This shows existence of a solution as in Theorem~\ref{thm:FlatInt} satisfying \eqref{eq:EstimFlatInt} uniformly in $m\in[0,1]$, where the existence of $(\ve,p)$ and the corresponding estimates follow from \cite[Corollary~8.3.3]{PruessSimonettMovingInterfaces}. Uniqueness can be shown by a standard duality argument. 
      Hence Theorem~\ref{thm:FlatInt} is proved.

      \subsection{Proof of Theorem~\ref{thm:LinSystem}}\label{subsec:ProofLinSystem}

      First we assume that $\Gamma_t=\Gamma$ is independent of $t\in[0,T_0]$. In this case one proves the result by the same localization and perturbation argument and reduction to semi-homogeneous data as in \cite[Section~8.2]{PruessSimonettMovingInterfaces} using the result for a flat interface due to Theorem~\ref{thm:FlatInt}. The only difference is related to the new term ``$m\Delta_{\Gamma} h$'' in the evolution equation for $h$. These extra-terms can be controlled by $m\|h\|_{L^q(0,T_0;W^{4-\frac1q}_q)}$ (uniformly in $m\in[0,1]$), while all other terms can be controlled by $\|h\|_{L^q(0,T_0;W^{3-\frac1q}_q)}$ as in the case $m=0$. Here we note that in the proof one can reduce to $h_0=0$ by subtracting some $\tilde{h}\in L^q(0,T_0;W^{4-\frac1q}_q(\Gamma))\cap W^1_q(0,T_0; W^{2-\frac1q}_q(\Gamma))$ from $h$, which exists because of
      \begin{equation*}
        W^{4-\frac3q}_q(\Gamma)=(W^{2-\frac1q}_q(\Gamma), W^{4-\frac1q}_q(\Gamma))_{1-\frac1q,q}
      \end{equation*}
      since $1-\frac1q, 1-\frac3q\not\in \mathbb{Z}$ due to $q>d+2>3$ and by the trace method for real interpolation spaces.

      Now, if $\Gamma_t$, $t\in [0,T_0]$, are smoothly evolving interfaces as before, we fix an arbitrary $t_0\in [0,T_0]$ and define $J_{t_0}:= [\max(t_0-\delta,0), \min(T_0,t_0+\delta)]$ for $\delta>0$ and
      $\tilde{X}\colon \overline\Omega\times J_{t_0} \to \overline\Omega \times J_{t_0}$ by $\tilde{X}(x,t)= (X_t(X_{t_0}^{-1} (x)),t)$ for all $x\in \Omega$, $t\in J_{t_0}$. Then \eqref{eq:Linear1}-\eqref{eq:Linear6} with $J_{t_0}$ instead of $[0,T_0]$ is equivalent to a system of the form
      \begin{alignat}{2}
	\label{eq:Linear1'}
	\partial_t \tilde\ve^\pm-\Delta \tilde\ve^\pm  +\nabla \tilde{p}^\pm &= \tilde{\mathbf{f}} +K_{\mathbf{f}} &\qquad &\text{in }\Omega^\pm_{t_0}\times J_{t_0},\\\label{eq:Linear2'}
	\Div \ve^\pm &= g +K_g &\qquad &\text{in }\Omega^\pm_{t_0}\times J_{t_0},\\\label{eq:Linear3'}
	-\llbracket 2D\ve^\pm -p^\pm \mathbf{I}\rrbracket\no_{\Gamma_t} &= \sigma \Delta_{\Gamma_t} h \no_{\Gamma_t} + \mathbf{a}+K_{\mathbf{a}} && \text{on }\Gamma_{t_0}\times J_{t_0},\\ \label{eq:Linear4'}
	\llbracket\ve^\pm \rrbracket &=0 && \text{on }\Gamma_{t_0}\times J_{t_0},\\
	\label{eq:Linear5'}
	\partial_t h 
        &= \no_{\Gamma_t} \cdot\ve + m \Delta_{\Gamma_{t_0}} h+ b +K_b && \text{on }\Gamma_{t_0}\times J_{t_0},\\
	\label{eq:Linear6'}
	\ve_0^-|_{\partial\Omega}&= 0&&\text{on }\partial\Omega\times J_{t_0},
      \end{alignat}
      where $(K_{\mathbf{f}}, K_g, K_{\mathbf{a}}, K_b)$ depend linearly on $(\ve^\pm,p^\pm, h)$ and
      \begin{align*}
        &\|K_\mathbf{f}\|_{L^q(J_{t_0}; L^q(\Omega))}+\|K_g\|_{L^q(J_{t_0}; W^1_q(\Omega\setminus \Gamma_{t_0}))}+\|K_b\|_{L^q(J_{t_0}; W^{2-\frac1q}_q(\Gamma_{t_0}))}\\
        &\quad +\|K_\mathbf{a}\|_{W^{\frac12-\frac1{2q}}_q(J_{t_0};L^q(\Gamma_{t_0}))\cap L^q(J_{t_0}; W^{2-\frac1q}_q(\Gamma_{t_0}))}\\
        & \leq C(\delta)\left( \|\ve\|_{W^1_q(J_{t_0}; L^q(\Omega))}+ \|\ve\|_{L^q(J_{t_0};W^2_q(\Omega\setminus \Gamma_{t_0}))} + \|p\|_{L^q(J_{t_0}; W^1_q(\Omega\setminus \Gamma_{t_0}))}\right.\\
        &\quad + \| \llbracket p \rrbracket\|_{W^{\frac12-\frac1{2q}}_q(J_{t_0};L^q(\Gamma_{t_0}))}+ \| \llbracket p \rrbracket\|_{L^q(J_{t_0}; W^{2-\frac1q}_q(\Gamma_{t_0}))}+ \|h\|_{L^q(J_{t_0}; W^{3-\frac1q}_q(\Gamma_{t_0}))}\\
        &\quad\left. + \|h\|_{W^1_q(J_{t_0}; W^{1-\frac1q}_q(\Gamma_{t_0}))}+ m\|h\|_{L^q(J_{t_0}; W^{4-\frac1q}_q(\Gamma_{t_0}))}\right),
      \end{align*}
      where
      \begin{align*}
        \tilde{\ve}^\pm&:= \ve^\pm \circ \tilde{X}|_{\Omega^\pm_{t_0}},\quad         p^\pm:= p^\pm \circ \tilde{X}|_{\Omega^\pm_{t_0}}, \quad         \tilde{h}:= h\circ \tilde{X}|_{\Gamma_{t_0}},\\
        \tilde{\mathbf{f}}&:= \mathbf{f}\circ \tilde{X},\quad         \tilde{g}:= g \circ \tilde{X}, \quad         \tilde{\mathbf{a}}:= \mathbf{a}\circ \tilde{X}|_{\Gamma_{t_0}}  , \quad
                            \tilde{b}:= b\circ \tilde{X}|_{\Gamma_{t_0}}  
      \end{align*}
      and
      $C(\delta)\to 0$ as $\delta\to 0$ since $\tilde{X}\to I$ in $C^k(\Omega^\pm_{t_0}\times J_{t_0})$ as $\delta\to 0$ for any $k\in\mathbb{N}$. Hence by a standard Neumann series argument \eqref{eq:Linear1'}-\eqref{eq:Linear6'} together with $\tilde\ve|_{t=\tilde{t}_0}=\ve_0\circ \tilde{X}(\cdot,\tilde{t}_0), \tilde{h}|_{t=\tilde{t}_0}=h_0\circ \tilde{X}(\cdot,\tilde{t}_0)|_{\Gamma_{\tilde{t}_0}}$, $\tilde{t}_0:= \max(0,t_0-\delta)$, possesses a unique solution $(\tilde{\ve}^\pm,\tilde{p}^\pm, \tilde{h})$ for every $(\mathbf{f},g,\mathbf{a}, b, \ve_0,h_0)$ as in Theorem~\ref{thm:LinSystem} with $[0,T_0]$ replaced by $J_{t_0}$ and the initial time $0$ replaced by $\tilde{t}_0=\max(0,t_0-\delta)$ provided $\delta=\delta(t_0)>0$ is sufficiently small.   Transforming back, this yields a unique solution $(\ve^\pm,p^\pm,{h})$ of \eqref{eq:Linear1}-\eqref{eq:Linear6} together with $\ve|_{t=\max(0,t_0-\delta)}=\ve_0, {h}|_{t=\max(0,t_0-\delta)}=h_0$ for every $(\mathbf{f},g,\mathbf{a}, b, \ve_0,h_0)$ as in Theorem~\ref{thm:LinSystem} with $[0,T_0]$ replaced by $J_{t_0}$ and the initial time $0$ replaced by $\tilde{t}_0$. Moreover, the Neumann series argument also shows that \eqref{eq:UnifEstim} with the same replacements as before holds true uniformly in $m\in[0,1]$.

      Since $\{(t_0-\delta(t_0), t_0+\delta(t_0)): t_0\in [0,T_0]\}$ is an open covering of $[0,T_0]$, there are finitely many $0=t_0< t_1<\ldots< t_N<T_0$ and $\delta_j>0$ such that $[0,T_0] =\bigcup_{j=0}^N J_{t_j}$ and we can solve \eqref{eq:Linear1}-\eqref{eq:Linear6} on $J_{t_j}=[\max(0,t_j-\delta),\min (t_j+\delta,T_0)]$ together with $\tilde\ve|_{t=\max(0,t_j-\delta_j)}=\ve_0, \tilde{h}|_{t=\max(0,t_j-\delta_j)}=h_0$ uniquely as before. Hence we can solve \eqref{eq:Linear1}-\eqref{eq:Linear7} by solving successively \eqref{eq:Linear1}-\eqref{eq:Linear6} on $J_{t_j}$, $j=0,\ldots, N$ with initial values $(\ve_0,h_0)$, $(\ve|_{t=t_{j-1}}, h|_{t=t_{j-1}})$ for $j=2,\ldots, N$. Finally, since  \eqref{eq:UnifEstim} holds true uniformly in $m\in[0,1]$ on the intervals $J_{t_j}$, $j=0,\ldots, N$, one also obtains \eqref{eq:UnifEstim} on $[0,T_0]$ uniformly in $m\in[0,1]$.

      \subsection{Existence of Solutions for the Transformed System}

      The idea of the proof is to represent $\Gamma^m_t$  from the solution of \eqref{eq:ApproxLimit1}-\eqref{eq:ApproxLimit7} for every $t\in[0,T_0]$ as a graph over $\Gamma_t$, where $\Gamma_t$, $t\in[0,T_0]$, is from the smooth solution of \eqref{eq_TwoPhase1}-\eqref{eq_TwoPhase7} and to transform \eqref{eq:ApproxLimit1}-\eqref{eq:ApproxLimit7} to a corresponding perturbed system in $\Omega_t^\pm$, $t\in[0,T_0]$ with the aid of the Hanzawa-Transformation associated to $\Gamma_t$. To this end let us denote 
      \begin{equation*}
        \Gamma_t (3\delta) = \{ x\in \R^d:  |d_{\Gamma_t}(x)|<3\delta\}, \qquad t\in[0,T_0],
      \end{equation*}
      where
      \begin{equation*}
        d_{\Gamma_t}(x) =
        \begin{cases}
          \operatorname{dist}(x,\Gamma_t)&\text{if }x\in \Omega^+_t,\\
          -\operatorname{dist}(x,\Gamma_t)&\text{else}
        \end{cases}
      \end{equation*}
      is the signed distance to $\Gamma_t$. Since $(\Gamma_t)_{t\in[0,T_0]}$ are smoothly evolving, compact, and $[0,T_0]$ is compact, there is some $\delta>0$ such that for every $x\in\Gamma_t(3\delta)$, $t\in[0,T_0]$ there is a unique closest point $P_{\Gamma_t}(x)\in \Gamma_t$ and
      \begin{equation*}
        \Gamma(3\delta):=\bigcup_{t\in[0,T_0]} \Gamma_t(3\delta)\times \{t\} \ni (x,t)\mapsto (d_{\Gamma_t}, P_{\Gamma_t})\in \R^{d+1}
      \end{equation*}
      is smooth. Moreover, we choose $\delta>0$ so small that $\Gamma(3\delta)\subseteq \Omega\times [0,T_0]$.

      Furthermore, for a given continuous ``height function'' $h\colon \Gamma\to \R$ let
\begin{equation*}
  \theta_h\colon \Gamma \to \mathbb{R}^n\colon x\mapsto x+h(x,t)\no_{\Gamma_t}(x).
\end{equation*}
Here $\Gamma$ is defined as in \eqref{eq:defnGamma}.
Then $\theta_h$ is injective provided that $\|h\|_{C^0(\Gamma)}<\delta$. Moreover, we define the  \emph{Hanzawa transformation} associated to $\Gamma$ as
\begin{equation}\label{eq:Hansawa}
  \Theta_h(x,t)= x+\chi(d_{\Gamma_t}(x)/\delta)h(P_{\Gamma_t}(x),t)\no_{\Gamma_t}(P_{\Gamma_t}(x)),
\end{equation}
where $\chi\in C^\infty(\R)$ such that $\chi(s)=1$ for $|s|\leq 2\delta$ and $\chi(s)=0$ for $|s|>\frac23$ as well as $|\chi'(s)|\leq 4$ for all $s\in\R$, and $\|h\|_{C^0(\Gamma)}<\delta$.
Then $\Theta_h(.,t)\colon\Omega\to\Omega$ is a smooth diffeomorphism for every $t\in[0,T_0]$, cf.~e.g.~\cite[Chapter~1, Section~3.2]{PruessSimonettMovingInterfaces}. 
Hence for any $h\colon \Gamma\to \R$ that is continuously differentiable with $\|h\|_{C^0(\Gamma)}<\delta$ we have that
$(\Gamma^h_t)_{t\in[0,T_0]}:=(\theta_h(\Gamma_t,t))_{t\in[0,T_0]}$ is an oriented, compact evolving $C^1$-manifold, such that $\Gamma_t^h$ is a $C^2$-manifold for every $t\in [0,T_0]$ if $h(\cdot,t)\colon \Gamma_t\to\R$ is twice continuously differentiable.

In the following we look for a solution of \eqref{eq:ApproxLimit1}-\eqref{eq:ApproxLimit7} such that $\Gamma_t^m= \Gamma^h_t$ for all $t\in [0,T_0]$ a sufficiently regular $h\colon \Gamma\to \R$ with $\|h\|_{C^0(\Gamma)}<\delta$. Then $(\ve_m^\pm, p_m^\pm, (\Gamma_t^m)_{t\in[0,T_0]})$ solves \eqref{eq:ApproxLimit1}-\eqref{eq:ApproxLimit7} if and only if
 \begin{alignat*}{2}
   \ve^\pm(x,t)&:=\ve_m^\pm(\Theta_{h}(x,t),t),\quad p^\pm(x,t)=p^\pm_m(\Theta_h(x,t),t)&\quad&\text{for }x\in \Omega^\pm_t,t\in[0,T_0],\\
   h(x,t) &:= h_m(P_{\Gamma_t}(\theta_h(x,t)),t) &\quad&\text{for }x\in \Gamma_t,t\in[0,T_0]
 \end{alignat*}
 solves the transformed system
\begin{alignat}{2}\label{eq:TransNS1}
  \partial_t \ve^\pm -\Delta \ve^\pm+\nabla p^\pm &= a^\pm(h;D_x)(\ve^\pm,p^\pm)\\\nonumber
  &\quad + \partial_t \Theta_h\cdot \nabla_h \ve^\pm- \ve^\pm\cdot \nabla_h \ve^\pm&\quad& \text{in} \ \Omega^\pm,\\\label{eq:TransNS2}
  \Div \ve^\pm &= \operatorname{tr}((I-A(h))\nabla \ve^\pm)=:g(h)\ve^\pm &\quad& \text{in} \ \Omega^\pm,\\
  \llbracket \ve \rrbracket &= 0 &\quad& \text{on} \ \Gamma,\\
\llbracket 2D\ve^\pm -p^\pm \mathbf{I}\rrbracket\no_{\Gamma_t} &= t(h;D_x) (\ve,p) + \sigma K(h) \no_{\Gamma^h_t}\circ \theta^h_t&\quad& \text{on} \ \Gamma,\\
\ve^-|_{\partial\Omega} &=0&&\text{on}\ \partial\Omega\times (0,T_0),
\\\label{eq:TransNS5}
\partial_t^\bullet h -\no_{\Gamma_t} \cdot \ve &= m K(h) +\big(\no_{\Gamma^h_t}\circ \theta^h_t -\no_{\Gamma_t}\big)\cdot \ve && \text{on }\Gamma, \\\label{eq:TransNS6}
\ve|_{t=0} &= \ve_0  &\quad& \text{on} \ \Omega_0^\pm,
\end{alignat}
where
\begin{align*}
  a^\pm(h;D_x)(\ve^\pm,p^\pm)&= \Div_h(\nabla_h \ve^\pm)-\Delta \ve^\pm +(\nabla-\nabla_h)p^\pm,\\
  \nabla_h  &= A(h)\nabla,\quad \Div_h u = \operatorname{tr} (\nabla_h u),\quad A(h)=D_x\Theta_h^{-T},\\
t(h,D_x)(\ve,p)&= [\![(\no_{\Gamma_0}-\no_h)\cdot (2\mu^\pm D \ve-pI)+2\no_h\cdot \operatorname{sym}(\nabla \ve-\nabla_h \ve)]\!],\\
 \no_h&= \frac{A(h)\no_{\Gamma_0}}{|A(h)\no_{\Gamma_0}|},\quad  K(h)= H_{\Gamma^h_t}\circ \theta^h_t.
\end{align*}
For the following fixed-point argument we introduce the solution space
\begin{align*}
  \mathbb{E}_m(T_0)&:= \mathbb{E}_1(T_0)\times\mathbb{E}_2(T_0)\times\mathbb{E}_3(T_0)\times\mathbb{E}_{4,m}(T_0),\\
  \mathbb{E}(T_0)&:= \mathbb{E}_1(T_0)\times\mathbb{E}_2(T_0)\times\mathbb{E}_3(T_0)\times\mathbb{E}_4(T_0),\\
  \mathbb{E}_1(T_0)&:={}_0W^1_q(0,T_0; L^q(\Omega))^d\cap L^q(0,T_0;W^2_q(\Omega\setminus \Gamma_t))^d,\\
  \mathbb{E}_2(T_0)&:= L^q(0,T_0; \dot{W}^1_q(\Omega\setminus \Gamma_t)),\\
    \mathbb{E}_3(T_0)&:={}_0 W^{\frac12-\frac1{2q}}_q(0,T_0;L^q(\Gamma_t))\cap L^q(0,T_0; W^{1-\frac1q}_q(\Gamma_t)),\\
  \mathbb{E}_{4,m}(T_0) &:= W^{2-\frac1{2q}}_q(0,T_0;L^q(\Gamma_t))\cap {}_0W^1_q(0,T_0; W^{2-\frac1q}_q(\Gamma_t))\cap L^q(0,T_0; W^{4-\frac1q}_q(\Gamma_t)),\\
  \mathbb{E}_4(T_0) &:= W^{2-\frac1{2q}}_q(0,T_0;L^q(\Gamma_t))\cap {}_0W^1_q(0,T_0; W^{2-\frac1q}_q(\Gamma_t))\cap L^q(0,T_0; W^{3-\frac1q}_q(\Gamma_t)), 
\end{align*}
where $\mathbb{E}_{4,m}(T_0)$ is normed by
\begin{align*}
  \|h\|_{\mathbb{E}_{4,m}(T_0)} &:= \|h\|_{\mathbb{E}_{4}(T_0)} + m \|h\|_{L^q(0,T_0; W^{4-\frac1q}_q(\Gamma_t))}+m\|h\|_{W^{1-\frac1{2q}}_q(0,T_0; W^2_q(\Gamma_t))}+m\|h|_{t=0}\|_{W^{4-\frac3q}_q(\Gamma_0)}\\
  \|h\|_{\mathbb{E}_{4}(T_0)} &:= \|h\|_{W^{2-\frac1q}_q(0,T_0; L^q(\Gamma_t))} + \|h\|_{W^1_q(0,T_0; W^{2-\frac1q}_q(\Gamma_t))} + \|h\|_{L^q(0,T_0; W^{3-\frac1q}_q(\Gamma_t)}+\|h|_{t=0}\|_{W^{3-\frac3q}_q(\Gamma_0)}
\end{align*}
and $ \mathbb{E}_1(T_0)$, $ \mathbb{E}_2(T_0)$ are normed in a standard manner. We note that in comparison to \cite{KohnePruessWilkeTwoPhase} the conditions $\we|_{t=0}=0$, $h|_{t=0}=0$ are already included in the definition of $\mathbb{E}_m(T_0)$. Moreover, we define the space for the right-hand side as
\begin{align*}
  \mathbb{F}(T_0) &:= \mathbb{F}_1(T_0)\times \mathbb{F}_2(T_0)\times \mathbb{F}_3(T_0)\times \mathbb{F}_4(T_0)\\
  \mathbb{F}_1(T_0) &:=L^q(0,T_0; L^q(\Omega))^d,\quad \mathbb{F}_2(T_0):=L^q(0,T_0; W^1_q(\Omega\setminus \Gamma_t))\cap W^1_q(0,T;\dot{W}^{-1}_q(\Omega)),\\
  \mathbb{F}_3(T_0)&:= W^{\frac12-\frac1{2q}}_q(0,T_0;L^q(\Gamma_t))^d\cap L^q(0,T; W^{1-\frac1q}_q(\Gamma_t))^d,\\
    \mathbb{F}_4(T_0)&:= W^{1-\frac1{2q}}_q(0,T_0;L^q(\Gamma_t))\cap L^q(0,T; W^{2-\frac1q}_q(\Gamma_t))
\end{align*}
normed in the standard manner.

Then \eqref{eq:TransNS1}-\eqref{eq:TransNS5} can be written as
\begin{alignat}{1}\label{eq:Abstract1}
  L z + m \bar{L} z &= N(z)+ m\bar{N}(z),
\end{alignat}
for $z= (\ve,p, \llbracket p \rrbracket , h)\in \mathbb{E}(T_0)$ with $\|h\|_{C^0(\Gamma)}<\delta$, where
\begin{align*}
  Lz &= (L_1(\ve, p), L_2 \ve, L_3 (\ve,\pi,h), L_4(\ve, h)), \\
  N(z) &= (N_1(\ve, p), N_2 (\ve,h), N_3 (\ve,\pi, h), N_4(\ve, h))    
\end{align*}
for $z=(\ve,p,\pi,h)$ with $\|h\|_{C^0(\Gamma)}<\delta$ are defined as in \cite[Section~4]{KohnePruessWilkeTwoPhase}. More precisely,
\begin{alignat*}{1}
  L_1 (\ve, p) &:= \partial_t \ve -\Delta \ve +\nabla p,\quad\\
  L_2 \ve &:= \operatorname{div}\ve,\\
  L_3 (\ve,p,h) &:= -\llbracket 2D\ve^\pm \rrbracket\no_{\Gamma_t}- \pi\no_{\Gamma_t} -(\Delta_{\Gamma_t} h)\no_{\Gamma_t}\\
  L_4 (\ve,h)&:= \partial_t^\bullet h - \no_{\Gamma_t} \cdot \ve + \ve_0\cdot \nabla_{\Gamma_t} h
\end{alignat*}
and  
\begin{alignat*}{1}
  N_1 (\ve, p,h) &:= F(h,\ve)\nabla \ve + M_4(h):\nabla^2\ve +M_1(h)\nabla p,\quad N_2 (\ve,h) := M_1(h):\nabla \ve,\\
  N_3 (\ve,h) &:= G_\tau (h) \nabla \ve+(G_{\nu}(h)\nabla \ve+ G_\gamma(h))\no_{\Gamma_t}\\
  N_4 (\ve,h)&:= ([M_0(h)-I]\nabla_{\Gamma_t} h)\cdot \ve+(\ve-\ve_0)\cdot \nabla_{\Gamma_t} h,
\end{alignat*}
where $F, M_1,\ldots,  M_4$ are defined as in \cite[Section~2]{KohnePruessWilkeTwoPhase},
with the only difference that the time-independent reference surface $\Sigma$ is replaced by the smoothly evolving reference surface $\Gamma_t$, $t\in[0,T_0]$ and $\partial_t h$ is replaced by $\partial_t^\bullet h$. Moreover,
\begin{align*}
  \bar{L} z &= (0,0,0, \Delta_{\Gamma_t} h),\\
  \bar{N} z &= (0,0,0, K(h)-\Delta_{\Gamma_t}h)
\end{align*}
for $z=(\ve,p,\pi, h)$. Let us note that
\begin{equation*}
  L z_0 = N(z_0),
\end{equation*}
for $z_0 = (\ve_0, p_0,0)$, where $\ve_0|_{\Omega^\pm}= \ve_0^{\pm}$, $p_0|_{\Omega^\pm}= p_0^{\pm}$ is the solution of the limit system \eqref{eq_TwoPhase1}-\eqref{eq_TwoPhase7}.
Therefore \eqref{eq:Abstract1} is equivalent to
\begin{align}
  \mathcal{L}_mw&:= L w + m \bar{L} w- DN(z_0)w\label{eq:Lm} \\\nonumber
  &= N(w+z_0)-N(z_0)-DN(z_0)w+m \bar{N}(w+z_0)=: \mathcal{N}_m(w)  
\end{align}
for
$w = (\mathbf{u}, \pi, \llbracket \pi\rrbracket, h)$ with $\mathbf{u}= \ve-\ve_0$, $\pi = p-p_0$, where
\begin{align*}
  DN(z_0)w &=
             \begin{pmatrix}
               \ve_0 \cdot \nabla \we+\we\cdot \nabla \ve_0 + (DM_1(0)h+DM_2(0)h+ DM_3(0)h)\nabla \ve_0\\
               (DM_1(0) h) : \nabla \ve_0\\
               (DG_\tau (0)h)\nabla \ve_0+ ((DG_\nu(0)h)\nabla \ve_0)\no_{\Gamma_t}\\
               0
             \end{pmatrix}
\end{align*}
since $\bar{L}z_0=0$, $M_0(0)=I$, $M_j(0)=0$ for $j=1,\ldots, 4$, $G_\tau(0)=G_\nu(0)=G_\gamma(0)=0$, and $DG_\gamma (0)h= \sigma (DH_{\Gamma_t}(0)- DH_{\Gamma_t}(0))h=0$.
Hence $DN(z_0)$ is a linear operator of lower order with respect to $w$ compared to $ L  + m \bar{L}$. 

As in \cite[Proposition~3]{KohnePruessWilkeTwoPhase} we have:
\begin{proposition}\label{prop:Analyticity}
  Let $q>d+2$. Then $N\in C^\omega (\mathcal{U},\mathbb{F}(T))$ and $\bar{N}\in C^\omega (\mathcal{U}_m,\mathbb{F}(T))$, where 
  \begin{align*}
    \mathcal{U}:=\{(\mathbf{u}, \pi,r,h)\in \mathbb{E}(T): \|h\|_{\mathbb{E}_4(T)}<r_0\},\quad \mathcal{U}_m= \mathcal{U}\cap \mathbb{E}_m(T)    
  \end{align*}
for $r_0>0$ sufficiently small. Moreover, for every $r_1\in (0,r_0)$ there is some $C>0$ such that
\begin{equation}\label{eq:barNEstim}
  \|m\bar{N}(w_1)- m\bar{N}(w_2)\|_{\mathbb{F}(T)}\leq CR \|w_1-w_2\|_{\mathbb{E}_m(T)}
\end{equation}
for all $w_1,w_2\in \mathcal{U}_m$ with $\|w_j\|_{\mathbb{E}_m(T)}\leq R$, $j=1,2$, where $R\in (0,r_1]$.
\end{proposition}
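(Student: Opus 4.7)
The proof is a direct extension of \cite[Proposition 3]{KohnePruessWilkeTwoPhase}, where the analyticity of $N$ and a corresponding Lipschitz estimate are established in the case $m=0$. My plan is therefore to isolate the new contribution coming from $\bar N$ and to verify that the $m$-factor buys us exactly the regularity needed for the estimate \eqref{eq:barNEstim}.

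First, for the analyticity claim, the choice $q>d+2$ together with the mixed-derivative/parabolic embedding yields $\mathbb{E}_4(T_0)\hookrightarrow C([0,T_0];C^2(\Gamma_t))$ and $\mathbb{E}_1(T_0)\hookrightarrow C([0,T_0];C^1(\Omega\setminus\Gamma_t))$. For $\|h\|_{\mathbb{E}_4(T_0)}<r_0$ small, both $\Theta_h(\cdot,t)\colon\Omega\to\Omega$ and $\theta_h(\cdot,t)\colon\Gamma_t\to\Gamma_t^h$ are $C^2$-diffeomorphisms, so the matrix $A(h)=D_x\Theta_h^{-T}$ and the transformed normal $\no_h$ depend analytically on $\nabla h$, and all building blocks $F, M_0,\ldots,M_4, G_\tau, G_\nu, G_\gamma$ inherit analyticity. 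Since the target spaces $\mathbb{F}_j(T_0)$ involve regularity strictly above $d/q$ on the spatial side, they form Banach algebras, so composition with analytic maps and pointwise products remain analytic; this gives $N\in C^\omega(\mathcal{U},\mathbb{F}(T_0))$ exactly as in \cite{KohnePruessWilkeTwoPhase}. For $\bar N$, the mean curvature $K(h)=H_{\Gamma_t^h}\circ\theta_h$ admits in local coordinates a universal rational representation (with bounded denominator for $\|h\|$ small) in $h$, $\nabla_{\Gamma_t}h$, $\nabla_{\Gamma_t}^2 h$, with coefficients depending smoothly on $t$ via the first and second fundamental forms of $\Gamma_t$. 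The same Banach-algebra argument then yields $\bar N\in C^\omega(\mathcal{U}_m,\mathbb{F}(T_0))$, where the stronger norm on $\mathbb{E}_{4,m}(T_0)$ (containing $m\|h\|_{L^q(0,T_0;W^{4-1/q}_q(\Gamma_t))}$ and $m\|h\|_{W^{1-1/(2q)}_q(0,T_0;W^2_q(\Gamma_t))}$) is precisely what makes $K(h)-\Delta_{\Gamma_t}h$ land in $\mathbb{F}_4(T_0)=W^{1-1/(2q)}_q(0,T_0;L^q(\Gamma_t))\cap L^q(0,T_0;W^{2-1/q}_q(\Gamma_t))$ when this norm is multiplied by $m$.

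For the Lipschitz estimate \eqref{eq:barNEstim}, I would use the fundamental theorem of calculus in Banach spaces,
\begin{align*}
m\bar N(w_1)-m\bar N(w_2)=\int_0^1 m\, D\bar N\bigl(w_2+\tau(w_1-w_2)\bigr)(w_1-w_2)\,d\tau,
\end{align*}
and bound the integrand uniformly in $\tau$. Since $\bar N$ acts only on the height component, $D\bar N(\tilde w)$ at a point $\tilde w=(\cdot,\cdot,\cdot,\tilde h)$ has the schematic form
\begin{align*}
D K(\tilde h)[\tilde k]-\Delta_{\Gamma_t}\tilde k = c_0(\tilde h)\tilde k + c_1(\tilde h)\nabla_{\Gamma_t}\tilde k + c_2(\tilde h)\nabla_{\Gamma_t}^2\tilde k,
\end{align*}
where $c_2(0)=0$ (the linearization of $K$ at $\tilde h=0$ reproduces $\Delta_{\Gamma_t}$ up to lower-order second-fundamental-form terms) and the $c_j$ are smooth functions of $\tilde h,\nabla\tilde h,\nabla^2\tilde h$. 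Taking the two additional spatial derivatives required by the $L^q(W^{2-1/q}_q)$-part of $\mathbb{F}_4(T_0)$ produces at worst $\nabla^4\tilde k$, bounded by $\|\tilde k\|_{L^q(0,T_0;W^{4-1/q}_q(\Gamma_t))}$; similarly, the fractional temporal derivative of order $1-\tfrac1{2q}$ of second spatial derivatives of $\tilde k$ is bounded by $\|\tilde k\|_{W^{1-1/(2q)}_q(0,T_0;W^2_q(\Gamma_t))}$. After multiplication by $m\in(0,1)$ these are precisely the two $m$-weighted contributions entering $\|\tilde k\|_{\mathbb{E}_{4,m}(T_0)}$, while all lower-order terms are controlled by $\|\tilde k\|_{\mathbb{E}_4(T_0)}$ uniformly in $m$. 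The coefficients $c_j(\tilde h)$ are controlled in the Banach algebra $\mathbb{E}_4(T_0)$ by $\|\tilde h\|_{\mathbb{E}_4(T_0)}\le R$, yielding the prefactor $R$ of the estimate.

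The main obstacle, and the only place where one has to be careful beyond a direct transposition of \cite{KohnePruessWilkeTwoPhase}, is the simultaneous bookkeeping of the temporal fractional norm $W^{1-1/(2q)}_q(0,T_0;L^q(\Gamma_t))$ and the spatial norm $L^q(0,T_0;W^{2-1/q}_q(\Gamma_t))$ in $\mathbb{F}_4(T_0)$: both must be controlled using only the structural $m$-weighted terms of $\mathbb{E}_{4,m}(T_0)$, not the full unweighted $L^q(W^{4-1/q}_q)$- or $W^{1-1/(2q)}_q(W^2_q)$-norm. This is enforced by the fact that only the second-derivative coefficient $c_2(\tilde h)$ survives to generate top-order contributions, and by the interpolation inclusion $W^{1-1/(2q)}_q(0,T_0;W^2_q(\Gamma_t))\hookrightarrow W^{1-1/(2q)}_q(0,T_0;L^q(\Gamma_t))$ paired with the product structure of the $c_j$. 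Once these uniform-in-$m$ bookkeeping checks are performed, the desired estimate \eqref{eq:barNEstim} follows, and the proof of the proposition is complete.
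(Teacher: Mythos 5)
Your approach matches the paper's: the analyticity of $N$ is deferred to \cite[Proposition 3]{KohnePruessWilkeTwoPhase}, adapted to the time-dependent reference manifold via the compactness of $[0,T_0]$, and for $\bar N$ you invoke the quasilinear structure of $K(h)$, a Banach-algebra property on the interface, and the bound $m\|\nabla^2_{\Gamma_t}h\|_{\mathbb{F}_4(T_0)}\leq C\|w\|_{\mathbb{E}_m(T_0)}$ built into the $m$-weighted norm on $\mathbb{E}_{4,m}(T_0)$. Your presentation via the fundamental theorem of calculus for $D\bar N$ is just a slightly more explicit version of the telescoping on the representation $K(h)=\mathcal{C}_0(h):\nabla^2_{\Gamma_t}h+\mathcal{C}_1(h)$ that the paper uses.

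Two points need correction. First, it is not true that all target spaces $\mathbb{F}_j(T_0)$ are Banach algebras: $\mathbb{F}_1(T_0)=L^q(0,T_0;L^q(\Omega))^d$ is not one, nor is $\mathbb{F}_2(T_0)$. The analyticity of $N$ does not rely on such a claim; it rests on the product and composition estimates in \cite[Proposition~3]{KohnePruessWilkeTwoPhase}, which exploit the higher regularity of the \emph{solution} spaces $\mathbb{E}_j(T_0)$. The only Banach-algebra property that is actually invoked is that of $\mathbb{F}_4(T_0)$, which comes from \cite[Lemma~6.1]{PruessSimonettTwoPhaseNSt}. Second, your step \enquote{the coefficients $c_j(\tilde h)$ are controlled by $\|\tilde h\|_{\mathbb{E}_4(T_0)}\le R$, yielding the prefactor $R$} does not follow from your own setup: you correctly note that $c_2(0)=0$, but $c_0(0)$ (carrying the $|\mathrm{II}_{\Gamma_t}|^2$-term of the Jacobi operator) and $c_1(0)$ are generically nonzero, so the lower-order part of $m\,DK(\tilde h_\tau)[\tilde k]$ contributes a Lipschitz constant of order $m$, not $R$. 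The bound your argument yields is therefore $C(R+m)\|w_1-w_2\|_{\mathbb{E}_m(T)}$; this is exactly what the subsequent fixed-point step uses, where $R_m\sim m$ so no harm is done, but as stated the inference to the prefactor $R$ is not tight and should be tracked explicitly.
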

\begin{proof}
  The statement for $N$ is proved in the same way as in \cite[Proposition~3]{KohnePruessWilkeTwoPhase}, where the only difference is that the reference surface is time-dependent. But because of the compactness of $[0,T_0]$ all relevant norms are uniformly controlled. For the statement on $\bar{N}$ we use the quasilinear structure of $K(h)$, i.e.,
  \begin{equation*}
    K(h)= \mathcal{C}_0 (h): \nabla^2_{\Gamma_t} h +\mathcal{C}_1(h),
  \end{equation*}
  where $\mathcal{C}_j(h)$ depend only on $h$ and $\nabla_{\Gamma_t}h$ and are analytic in $(h,\nabla_{\Gamma_t}h)$ (pointwise), cf.\ e.g.\ \cite[Section~2.2.5]{PruessSimonettMovingInterfaces}. Moreover, due to \cite[Lemma~6.1]{PruessSimonettTwoPhaseNSt} $\mathbb{F}_4(T_0)$ is a Banach algebra and $m\|\nabla^2_{\Gamma_t} h\|_{\mathbb{F}_4(T_0)}\leq C\|w\|_{\mathbb{E}_m(T_0)}$ uniformly in $m\in (0,1]$. From this $\bar{N}\in C^\omega (\mathcal{U}_m,\mathbb{F}(T_0))$ easily follows and one obtains \eqref{eq:barNEstim} in a straight forward manner. 
\end{proof}
The central technical results are:
\begin{proposition}\label{prop:contraction}
  Let $r_0$ be as in Proposition~\ref{prop:Analyticity}.
  There are some $C_N>0$, $R_0\in (0,r_0)$ such that for every $R\in (0,R_0]$, $m\in (0,1]$ we have
  \begin{align}\label{eq:contraction}
    \|\mathcal{N}_m(w_1)-\mathcal{N}_m(w_2)\|_{\mathbb{F}(T_0)}\leq C_NR\|w_1-w_2\|_{\mathbb{E}_m(T_0)}
  \end{align}
  for all $w_1,w_2\in \mathbb{E}_m(T_0)$ with $\|w_j\|_{\mathbb{E}_m(T_0)}\leq R$ for $j=1,2$.
\end{proposition}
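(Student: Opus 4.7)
The proposal is to view $\mathcal{N}_m$ as a sum of two pieces, namely $\mathcal{N}_m^{(1)}(w):=N(w+z_0)-N(z_0)-DN(z_0)w$ and $\mathcal{N}_m^{(2)}(w):=m\bar{N}(w+z_0)$, and to estimate each piece separately, exploiting Proposition~\ref{prop:Analyticity}.

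For the first piece, which is $m$-independent, the plan is to use the fundamental theorem of calculus: writing $\gamma(s):=z_0+w_2+s(w_1-w_2)$, I have
\begin{equation*}
  \mathcal{N}_m^{(1)}(w_1)-\mathcal{N}_m^{(1)}(w_2)
  =\int_0^1\bigl[DN(\gamma(s))-DN(z_0)\bigr](w_1-w_2)\,ds.
\end{equation*}
Because $N\in C^\omega(\mathcal{U},\mathbb{F}(T_0))$ by Proposition~\ref{prop:Analyticity}, the derivative $DN$ is locally Lipschitz on $\mathcal{U}$; choosing $R_0\leq r_0/2$ guarantees that $\gamma(s)-z_0$ stays in a fixed neighborhood of $0\in\mathbb{E}(T_0)$ with norm bounded by $R$, so that
\begin{equation*}
\|DN(\gamma(s))-DN(z_0)\|_{\mathcal{L}(\mathbb{E}(T_0),\mathbb{F}(T_0))}\leq L\,\|\gamma(s)-z_0\|_{\mathbb{E}(T_0)}\leq 2LR
\end{equation*}
uniformly in $s\in[0,1]$. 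Combining this with the continuous embedding $\mathbb{E}_m(T_0)\hookrightarrow\mathbb{E}(T_0)$ (whose embedding constant is $\leq 1$ by the definition of the norms), I obtain $\|\mathcal{N}_m^{(1)}(w_1)-\mathcal{N}_m^{(1)}(w_2)\|_{\mathbb{F}(T_0)}\leq 2LR\|w_1-w_2\|_{\mathbb{E}_m(T_0)}$, with $L$ independent of $m$.

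For the second piece, I apply the estimate \eqref{eq:barNEstim} from Proposition~\ref{prop:Analyticity} directly. This is where the strengthened norm $\|\cdot\|_{\mathbb{E}_m(T_0)}$ enters essentially: the term $K(h)-\Delta_{\Gamma_t}h$ involves second-order surface derivatives of $h$, and only the $m$-dependent contributions $m\|h\|_{L^q(0,T_0;W^{4-1/q}_q(\Gamma_t))}$ and $m\|h\|_{W^{1-1/(2q)}_q(0,T_0;W^2_q(\Gamma_t))}$ built into $\|\cdot\|_{\mathbb{E}_m(T_0)}$ are strong enough to absorb the factor $m$ in front of the highest-order derivatives coming from the quasilinear structure $K(h)=\mathcal{C}_0(h):\nabla_{\Gamma_t}^2 h+\mathcal{C}_1(h)$. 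Summing the two estimates and choosing $C_N:=2L+C$ with $C$ as in \eqref{eq:barNEstim} yields \eqref{eq:contraction}.

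The main substance of the argument has already been absorbed into Proposition~\ref{prop:Analyticity}: the real work lay in verifying analyticity of $N$ on $\mathcal{U}$ and the refined estimate \eqref{eq:barNEstim} for $m\bar{N}$ using the Banach algebra property of $\mathbb{F}_4(T_0)$ and the quasilinear structure of the mean curvature operator. Given those, the present proposition is essentially a corollary via Taylor's formula, and the only point requiring care is that $L$ and $C$ do not degenerate as $m\to 0^+$, which is ensured because $\mathcal{N}_m^{(1)}$ is independent of $m$ while $\mathcal{N}_m^{(2)}$ is controlled precisely by the $m$-weighted norm in $\mathbb{E}_m(T_0)$.
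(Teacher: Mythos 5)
Your proof is correct and follows essentially the same route as the paper's: both split $\mathcal{N}_m$ into the $m$-independent Taylor remainder of $N$ around $z_0$ and the $m\bar{N}$ contribution, bound the first via analyticity/smoothness of $N$ from Proposition~\ref{prop:Analyticity}, and bound the second directly by~\eqref{eq:barNEstim}. The only cosmetic difference is that you phrase the remainder estimate through the fundamental theorem of calculus and local Lipschitz continuity of $DN$, whereas the paper invokes the power-series expansion — but these are interchangeable consequences of $N\in C^\omega(\mathcal{U},\mathbb{F}(T_0))$ and yield the same bound.
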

\begin{proof}
  Let $R_0\in (0,1]$ be at least so small that $R_0< r_0$. Then by the definition of $\mathcal{N}_m$
  \begin{align*}
    \mathcal{N}_m(w_1)-\mathcal{N}_m(w_2) = &N(w_1+z_0)-N(w_2+z_0)-DN(z_0)(w_1-w_2)\\
    &+m\left(\bar{N}(w_1+z_0)-\bar{N}(w_2+z_0)\right)
  \end{align*}
  for all $w_j\in \mathcal{U}$, $j=1,2$. Now using the power series expansion for $N(w_j+z_0)$ and $\bar{N}(w_j+z_0)$, we obtain for $R_0$ sufficiently small
  \begin{align*}
    \|\mathcal{N}_m(w_1)-\mathcal{N}_m(w_2)\|_{\mathbb{F}(T_0)}&\leq C\|w_1-w_2\|_{\mathbb{E}(T_0)}^2 + CR\|w_1-w_2\|_{\mathbb{E}_m(T_0)}\\
&\leq CR \|w_1-w_3\|_{\mathbb{E}_m(T_0)}
  \end{align*}
  for all $w_j\in \mathcal{U}_m$ with $\|w_j\|_{\mathbb{E}_m(T_0)}\leq R$, $j=1,2$.
\end{proof}
\begin{proposition}
Let $\mathcal{L}_m$ be defined as in \eqref{eq:Lm}. Then there are some $m_1\in (0,1]$, $C_L>0$ such that $\mathcal{L}_m\colon \mathbb{E}_m(T_0)\to \mathbb{F}(T_0)$ is invertible and
  \begin{equation*}
    \|\mathcal{L}_m^{-1}\|_{\mathcal{L}(\mathbb{F}(T_0),\mathbb{E}_m(T_0))}\leq C_L \qquad \text{for all }m\in (0,m_1].
  \end{equation*}
\end{proposition}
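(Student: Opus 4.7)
The plan is to regard $\mathcal{L}_m = L_m - DN(z_0)$ where $L_m := L + m\bar L$ is the principal linearized operator treated by Theorem~\ref{thm:LinSystem}, while $DN(z_0)$ is a fixed lower-order perturbation whose coefficients depend only on the smooth reference solution $z_0$ and are independent of $m$. Since $\mathbb{E}_m(T_0)$ encodes the vanishing initial-trace conditions needed for Theorem~\ref{thm:LinSystem} to provide $L_m\colon\mathbb{E}_m(T_0)\to\mathbb{F}(T_0)$ as a bounded isomorphism with $\|L_m^{-1}\|\le C_0$ uniformly in $m\in(0,1]$, the equation $\mathcal{L}_m w=f$ is equivalent to the fixed-point problem $(I-L_m^{-1}DN(z_0))w = L_m^{-1} f$. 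The strategy is to invert $I - L_m^{-1}DN(z_0)$ via a patching argument built out of short-time Neumann series, with all constants uniform in $m$.

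The first step will be to prove the key lower-order estimate: for any subinterval $J=[s,s+\tau]\subseteq[0,T_0]$ and any $w\in\mathbb{E}_m(J)$ with vanishing traces at $t=s$, there exist $\alpha>0$ (for instance $\alpha=1/q$) and $C(z_0)>0$, both independent of $m$, $\tau$, $s$, such that
\begin{equation*}
\|DN(z_0)w\|_{\mathbb{F}(J)}\le C(z_0)\,\tau^{\alpha}\,\|w\|_{\mathbb{E}_m(J)}.
\end{equation*}
This exploits the lower-order structure of $DN(z_0)$: the $\mathbf{u}$-terms involve only $\nabla\mathbf{u}$ together with smooth coefficients from $\ve_0$, while the $h$-terms involve $h$ and $\nabla_{\Gamma_t}h$ but none of the second-order quantities entering $\bar L$. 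Standard interpolation and trace estimates for anisotropic Sobolev spaces, combined with the inequality $\|\cdot\|_{\mathbb{E}(J)}\le\|\cdot\|_{\mathbb{E}_m(J)}$, yield the $\tau^\alpha$-gain for functions with vanishing initial traces, uniformly in $m$ since $DN(z_0)$ does not involve $m$. Fixing $\tau>0$ so that $C_0 C(z_0)\tau^{\alpha}\le 1/2$ and partitioning $[0,T_0]$ into finitely many subintervals $I_j=[t_{j-1},t_j]$ of length at most $\tau$, one iterates: on $I_1$, where the vanishing initial-trace condition is automatic, the Neumann series $\sum_{k\ge0}(L_m^{-1}DN(z_0))^{k}L_m^{-1}f$ converges in $\mathbb{E}_m(I_1)$ with norm $\le 2C_0\|f\|_{\mathbb{F}(I_1)}$; on each subsequent $I_j$ one lifts the trace of $w$ at $t_{j-1}$ (inherited from the solution on $I_{j-1}$) to an extension $\widehat w^{(j)}$ and solves the same fixed-point problem for the correction $w-\widehat w^{(j)}\in\mathbb{E}_m(I_j)$. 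Summing over the finitely many subintervals produces a global solution satisfying $\|w\|_{\mathbb{E}_m(T_0)}\le C_L\|f\|_{\mathbb{F}(T_0)}$ with $C_L$ independent of $m$, yielding simultaneously surjectivity and, via the same a~priori estimate, injectivity of $\mathcal{L}_m$.

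The main obstacle will be constructing the lifts $\widehat w^{(j)}$ with norms uniform in $m$, particularly for the $h$-component which lives in the $m$-weighted space $\mathbb{E}_{4,m}$ carrying the extra term $m\|h\|_{L^q(0,T_0;W^{4-1/q}_q(\Gamma_t))}$ in its norm. The natural resolution is to obtain $\widehat w^{(j)}$ on $I_j$ as the $L_m$-solution furnished directly by Theorem~\ref{thm:LinSystem} with vanishing right-hand sides $(\mathbf{f},g,\mathbf{a},b)=0$ and with the initial values of $\mathbf{u}$ and $h$ inherited from the previous subinterval; the uniform bound \eqref{eq:UnifEstim} then controls $\|\widehat w^{(j)}\|$ purely in terms of $\|\mathbf{u}(t_{j-1})\|_{W^{2-2/q}_q}+\|h(t_{j-1})\|_{W^{4-3/q}_q}$ with a constant independent of $m$. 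Since the number of subintervals is determined by $T_0$ and $\tau$ alone and each step contributes a uniformly bounded multiplicative factor, the accumulated constant $C_L$ remains finite and $m$-independent, so that the conclusion holds for any $m_1\in(0,1]$.
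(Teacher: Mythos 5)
Your proposal is correct and takes essentially the same route as the paper: decompose $\mathcal{L}_m = (L+m\bar L) - DN(z_0)$, use the $m$-uniform invertibility of $L+m\bar L$ from Theorem~\ref{thm:LinSystem}, establish the lower-order $T^\alpha$-gain for $DN(z_0)$ (the paper proves exactly the estimate $\|DN(z_0)(L+m\bar L)^{-1}\|_{\mathcal{L}(\mathbb{F}(T))}\leq CT^\alpha$ via embeddings like $\mathbb{E}_4(T)\hookrightarrow C^{1-1/q}([0,T];W^{2-1/q}_q(\Gamma_t))$), invert by Neumann series on a short interval, and patch finitely many intervals exactly as at the end of the proof of Theorem~\ref{thm:LinSystem}. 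Your extra attention to constructing the trace lifts $\widehat w^{(j)}$ uniformly in $m$ via the homogeneous $L_m$-solve is a correct spelling-out of a detail the paper leaves implicit.
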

\begin{proof}
  First of all $L+m\bar{L}\colon\mathbb{E}_m(T)\to  \mathbb{F}(T)$ is invertible for all $m\in (0,m_1]$, $m_1\in (0,1]$ sufficiently small, and all $T\in (0,T_0]$ because of Theorem~\ref{thm:LinSystem}. Moreover, there is some $C_L'>0$ such that
  \begin{equation*}
    \|(L+m\bar{L})^{-1}\|_{\mathcal{L}(\mathbb{F}(T),\mathbb{E}_m(T))}\leq C_L' \qquad \text{for all }m\in (0,m_1]\text{ and }T\in (0,T_0].
  \end{equation*}
  Using
  $$
  \mathbb{E}_4(T)\hookrightarrow {}_0W^1_q(0,T;W^{2-\frac1q}_q(\Gamma_t))\cap W^1_r(0,T;W^1_q(\Gamma_t))\hookrightarrow C^{1-\frac1q}([0,T];W^{2-\frac1q}_q(\Gamma_t))
  $$ 
  uniformly in $T\in (0,T_0]$ for some $r>q$ and the smoothness of $\ve_0$ one can show
  \begin{align*}
    &\|\ve_0 \cdot \nabla \we+\we\cdot \nabla \ve_0 + (DM_1(0)h+DM_2(0)h+ DM_3(0)h)\nabla \ve_0\|_{L^q(\Omega\times (0,T))}\\
    &\leq CT^{\frac1q} \|h\|_{L^\infty(0,T;W^{2-\frac1q}_q(\Gamma_t))}\leq C'T^{\frac1q}\|h\|_{\mathbb{E}_4(T)}
  \end{align*}
  and
  \begin{align*}
    \|(DM_1(0) h) : \nabla \ve_0\|_{\mathbf{F}_2(T)}\leq C    \|(DM_1(0) h) : \nabla \ve_0\|_{W^1_q(\Omega)\times (0,T))}&\leq CT^{\frac1q-\frac1r}\|h\|_{\mathbb{E}_4(T)},\\
    \|(DG_\tau (0)h)\nabla \ve_0+ ((DG_\nu(0)h)\nabla \ve_0)\no_{\Gamma_t}\|_{\mathbb{F}_3(T)}&\leq CT^\alpha \|h\|_{\mathbb{E}_4(T)}
  \end{align*}
  uniformly in $T\in(0,T_0]$ and $h\in \mathbb{E}_4(T)$ for some $\alpha>0$ by straightforward estimates. Hence
  \begin{equation*}
     \|DN(z_0)(L+m\bar{L})^{-1}\|_{\mathcal{L}(\mathbb{F}(T))}\leq CT^{\alpha} \qquad \text{for all }m\in (0,m_1]\text{ and }T\in (0,T_0]
   \end{equation*}
   for some $\alpha>0$. Hence by a Neumann series argument $\mathcal{L}_m$ is invertible and 
   \begin{equation*}
    \|\mathcal{L}_m^{-1}\|_{\mathcal{L}(\mathbb{F}(T),\mathbb{E}_m(T))}\leq C_L \qquad \text{for all }m\in (0,m_1]
  \end{equation*}
  provided that $T\in (0,T_\ast]$ for some $T_\ast>0$ sufficiently small. Finally, the invertibility of $\mathcal{L}_m$ and the uniform estimate on the time interval $(0,T_0)$ can be shown by the same arguments as in the end of the proof of Theorem~\ref{thm:LinSystem} by dividing $(0,T_0)$ in finitely many intervals $(0,T_1)$, $(T_1,T_2), \ldots ,(T_{N-1},T_N)$ of length less than $T_\ast$ and solving the system iteratively on the intervals. 
\end{proof}

Now let
\begin{equation*}
  R_m:=2m C_L\|\bar{N}(z_0)\|_{\mathbb{F}(T)}
\end{equation*}
and  choose $m_0\in (0,m_1]$ so small that
\begin{equation*}
  C_L C_N (R_{m_0} +m_0)\leq\frac12\quad \text{and}\quad R_{m_0}\leq R_0.
\end{equation*}
Then we have for every $m\in (0,m_0]$
\begin{align*}
  \|\mathcal{L}_m^{-1}\left(\mathcal{N}_m(w_1)-\mathcal{N}_m(w_2)\right)\|_{\mathbb{E}_m(T)}&\leq C_LC_N(R_m+m)\|w_1-w_2\|_{\mathbb{E}_m(T)}\leq \frac12\|w_1-w_2\|_{\mathbb{E}_m(T)}
\end{align*}
for all $w_1,w_2\in \mathbb{E}_m(T)$ with $\|w_j\|_{\mathbb{E}_m(T)}\leq R_m$ for $j=1,2$ and
\begin{align*}
   \|\mathcal{L}_m^{-1}\mathcal{N}_m(w)\|_{\mathbb{E}_m(T)}&\leq 
                                                             \|\mathcal{L}_m^{-1}\left(\mathcal{N}_m(w)-\mathcal{N}_m(0)\right)\|_{\mathbb{E}_m(T)}+ C_L\|\mathcal{N}_m(0)\|_{\mathbb{E}_m(T)}\\
  &\leq \frac{R_m}2+ m C_L\|\bar{N}(z_0)\|_{\mathbb{F}(T)}\leq  R_m
\end{align*}
for all $w\in \mathbb{E}_m(T)$ with $\|w\|_{\mathbb{E}_m(T)}\leq R_m$.
Hence $\mathcal{L}_m^{-1}\mathcal{N}_m$ is a contraction on $\overline{B_{R_m}(0)}$ in $\mathbb{E}_m(T)$ for all $m\in (0,m_0]$ and we obtain for every $m\in (0,m_0]$ a unique $w_m\in\overline{B_{R_m}(0)} $ such that
  \begin{equation}\label{eq:FixedPoint}
     w_m = \mathcal{L}_m^{-1}(\mathcal{N}_m w_m).
   \end{equation}
   In summary we obtain
   \begin{theorem}\label{th_approx_m_dependence}
     There is some $m_0>0$ such that for every $m\in (0,m_0]$ the transformed system \eqref{eq:TransNS1}-\eqref{eq:TransNS6} possesses a solution $(\ve,p,\llbracket p\rrbracket,h)\in \mathbb{E}_m(T_0)$, which satisfies
     \begin{align*}
       \|(\ve-\ve_0,p-p_0,\llbracket p - p_0\rrbracket,h)\|_{\mathbb{E}_m(T_0)}\leq Cm
     \end{align*}
     for some $C>0$ independent of $m\in (0,m_0]$.
   \end{theorem}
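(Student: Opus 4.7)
The proof essentially packages the fixed-point construction set up in the preceding paragraphs, so the plan is to harvest those pieces and verify the quantitative bound. First, I would invoke the contraction mapping principle on $\overline{B_{R_m}(0)} \subset \mathbb{E}_m(T_0)$: for every $m \in (0,m_0]$ the self-map $\mathcal{L}_m^{-1} \mathcal{N}_m$ has Lipschitz constant at most $\tfrac12$, so it admits a unique fixed point $w_m$ satisfying \eqref{eq:FixedPoint}. Unfolding the definitions of $\mathcal{L}_m$ in \eqref{eq:Lm} and of $\mathcal{N}_m(w) = N(w+z_0) - N(z_0) - DN(z_0)w + m\bar N(w+z_0)$, the fixed-point identity is equivalent to
\[
  (L + m\bar L)(z_0 + w_m) = N(z_0 + w_m) + m\bar N(z_0 + w_m),
\]
i.e.~to \eqref{eq:Abstract1} being satisfied by $z_0 + w_m$. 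Hence $(\ve, p, \llbracket p\rrbracket, h) := z_0 + w_m \in \mathbb{E}_m(T_0)$ is a solution of the transformed system \eqref{eq:TransNS1}--\eqref{eq:TransNS6} on $[0, T_0]$, with the correct initial data encoded by the trace-free nature of $\mathbf{u}_m$ and $h_m$ built into $\mathbb{E}_m(T_0)$.

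For the quantitative estimate I would simply read off that $(\ve - \ve_0, p - p_0, \llbracket p - p_0\rrbracket, h) = w_m$, so by the defining property of the contraction ball,
\[
  \|w_m\|_{\mathbb{E}_m(T_0)} \le R_m = 2m\, C_L\, \|\bar N(z_0)\|_{\mathbb{F}(T_0)}.
\]
It remains to check that $\|\bar N(z_0)\|_{\mathbb{F}(T_0)}$ is finite and bounded independently of $m$. Since $z_0 = (\ve_0, p_0, 0, 0)$ has zero height component, the formula $\bar N(z) = (0,0,0, K(h) - \Delta_{\Gamma_t} h)$ gives $\bar N(z_0) = (0,0,0, H_{\Gamma_t})$, because $K(0) = H_{\Gamma_t}$ is the mean curvature of the reference interface itself. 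By smoothness of the evolving reference family $(\Gamma_t)_{t \in [0, T_0]}$ on the compact time interval $[0,T_0]$, the mean curvature $H_{\Gamma_t}$ lies in $\mathbb{F}_4(T_0)$ with norm depending only on the smooth limit solution. This yields $\|w_m\|_{\mathbb{E}_m(T_0)} \le Cm$ with $C$ independent of $m \in (0, m_0]$, as desired.

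The main obstacle was overcome well before reaching this statement: it is the $m$-uniform invertibility of $\mathcal{L}_m = L + m\bar L - DN(z_0)$, which required the delicate linear theory of Theorem~\ref{thm:LinSystem} and in particular the lower bound \eqref{eq:smEstim} on the symbol $s_m(\lambda,\tau)$ that controls how the extra fourth-order term $m \Delta_{\Gamma_t}^2 h$ interacts with the two-phase Stokes Dirichlet-to-Neumann symbol. Given that uniform linear estimate, together with the smallness bound \eqref{eq:contraction} for $\mathcal{N}_m$ and the crucial observation that the only non-perturbative source in the equation for $w$ is the explicit $O(m)$ term $m\bar N(z_0) = (0,0,0, m H_{\Gamma_t})$, the theorem follows directly from the contraction scheme; no further estimate is needed.
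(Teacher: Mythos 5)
Your proposal is correct and follows essentially the same route as the paper: invoking the contraction mapping principle for $\mathcal{L}_m^{-1}\mathcal{N}_m$ on $\overline{B_{R_m}(0)}\subset \mathbb{E}_m(T_0)$ with $R_m=2mC_L\|\bar N(z_0)\|_{\mathbb{F}(T_0)}$, reading off that the fixed point $w_m=(\ve-\ve_0,p-p_0,\llbracket p-p_0\rrbracket,h)$ satisfies $\|w_m\|_{\mathbb{E}_m(T_0)}\le R_m\le Cm$, and observing (a point the paper leaves implicit) that $\bar N(z_0)=(0,0,0,H_{\Gamma_t})$ is fixed by the smooth limit solution. One small inaccuracy in your closing remark: the term contributed by the mobility in the interface equation is $m\Delta_{\Gamma_t}h$, a second-order operator with symbol $m\tau^2$, not a fourth-order bi-Laplacian $m\Delta_{\Gamma_t}^2 h$; the exponent $4-\tfrac1q$ in $\mathbb{E}_{4,m}$ simply reflects the two additional orders of parabolic smoothing that this second-order term provides on top of the $W^{2-1/q}_q$ regularity of the forcing $b$.
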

   Transforming back with $\Theta_h^{-1}$ finally yields a solution $(\ve_m^\pm,p_m^\pm, (\Gamma^m_t)_{t\in[0,T_0]})$ of \eqref{eq:ApproxLimit1}-\eqref{eq:ApproxLimit7}.

   \begin{remark}\label{rem:UniformDeltaNeigborhood}
     Since $\mathbb{E}_{4,m}(T_0)\hookrightarrow C^0([0,T_0];C^2(\Gamma_t))$ uniformly in $m\in (0,1)$, one can show in a straight forward manner that there are $m_1\in (0,m_0]$ and $\delta>0$ such that the signed distance function $d_{\Gamma^m_t}$ and the orthogonal projection $P_{\Gamma^m_t}$ on $\Gamma_t$ for every $t\in [0,T_0]$ are well-defined and smooth in $\Gamma^m(2\delta)= \{(x,t)\in \Omega\times [0,T_0]: |\operatorname{dist}(x,\Gamma^m_t)|<2\delta\}$ for every $m\in (0,m_1]$. Moreover,
     \begin{equation*}
       \Gamma^m(2\delta)\subseteq \Gamma(3\delta) \quad \text{for all }m\in (0,m_1] 
     \end{equation*}
     and $\delta>0$ can be chosen (as before) such that the signed distance function $d_{\Gamma_t}$ and the orthogonal projection $P_{\Gamma_t}$ on $\Gamma_t$ for every $t\in [0,T_0]$ are smooth in $\Gamma(3\delta)$.
   \end{remark}

  \subsection{Uniform Regularity}
  The goal of this section is to prove:
  \begin{theorem}\label{th:approx_lim_uniform}
   Let $q>d+5$.  There is some $m_0>0$ such that for every $m\in (0,m_0]$  \eqref{eq:ApproxLimit1}-\eqref{eq:ApproxLimit6} possesses a solution $(\ve_m^\pm, p_m^\pm, \Gamma^m)$ with $\Gamma^m\subseteq \Gamma(\delta)$, $d_{\Gamma^m_t}\in C^1_t C^1_x\cap C^0_tC^3_x$ in $\overline{\Gamma(\delta)}$ and $\ve_m\in L^{q}(0,T;W^1_{q}(\Omega)^d)\cap L^{q}(0,T;W^2_{q}(\Omega\setminus \Gamma_t^m))\cap W^1_{q}(0,T;L^{q}_\sigma(\Omega))$ and $\nabla_\tau \nabla \ve_m\in L^\infty(\Omega\times (0,T))$, $\nabla_\tau p_m\in L^{q}(0,T;W^1_{q}(\Omega\setminus \Gamma_t))$ with uniformly bounded norms with respect to $m\in (0,m_0]$. 
  \end{theorem}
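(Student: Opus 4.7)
The existence part follows directly from Theorem \ref{th_approx_m_dependence}: the solution $w_m = (\mathbf{u}_m, \pi_m, \llbracket\pi_m\rrbracket, h_m)$ to the fixed-point equation \eqref{eq:FixedPoint} with bound $\|w_m\|_{\mathbb{E}_m(T_0)} \leq Cm$ is transformed back via $\Theta_{h_m}^{-1}$ to produce $(\ve_m^\pm, p_m^\pm, \Gamma^m)$ solving \eqref{eq:ApproxLimit1}-\eqref{eq:ApproxLimit6}. The inclusion $\Gamma^m \subseteq \Gamma(\delta)$ follows from the Sobolev embedding $\mathbb{E}_4(T_0) \hookrightarrow C^0(\Gamma)$ combined with the bound $\|h_m\|_{\mathbb{E}_4(T_0)} \leq Cm$, which gives $\|h_m\|_{C^0(\Gamma)} \ll \delta$ for $m_0$ sufficiently small (cf.\ also Remark \ref{rem:UniformDeltaNeigborhood}).

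For the regularity of $d_{\Gamma^m_t}$, the crucial observation is that the $m$-weighted norms inside $\mathbb{E}_{4,m}(T_0)$ yield the uniform (in $m$) estimates
\begin{equation*}
  \|h_m\|_{L^q(0,T_0; W^{4-1/q}_q(\Gamma_t))} + \|h_m\|_{W^{1-1/(2q)}_q(0,T_0; W^2_q(\Gamma_t))} \leq C.
\end{equation*}
Combining this with $\|h_m\|_{W^1_q(0,T_0; W^{2-1/q}_q(\Gamma_t))} \leq Cm \leq C$ and the standard parabolic interpolation $W^1_q(0,T_0;X_0) \cap L^q(0,T_0;X_1) \hookrightarrow C^0([0,T_0];(X_0,X_1)_{1-1/q,q})$ with $X_0 = W^{2-1/q}_q, X_1 = W^{4-1/q}_q$, one obtains $h_m \in C^0([0,T_0]; W^{4-3/q}_q(\Gamma_t))$. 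Since $q > d+5$ and $\Gamma_t$ is $(d-1)$-dimensional, $W^{4-3/q}_q(\Gamma_t) \hookrightarrow C^{3,\alpha}(\Gamma_t)$ for some $\alpha>0$, so $\Gamma^m_t$ is a $C^{3,\alpha}$-submanifold uniformly in $m$. Hence $d_{\Gamma^m_t} \in C^0_t C^3_x$ on $\overline{\Gamma(\delta)}$ uniformly, and the $C^1_t C^1_x$-part is obtained analogously from the time-regularity $W^{2-1/(2q)}_q(0,T_0;L^q)$ of $h_m$ together with the $m$-uniform $W^{1-1/(2q)}_q(0,T_0; W^2_q)$-bound.

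The heart of the matter is the uniform regularity of $\ve_m$ and $p_m$, in particular the $L^\infty$-bound on $\nabla_\tau \nabla \ve_m$. The plan is to differentiate the transformed system \eqref{eq:TransNS1}-\eqref{eq:TransNS6} tangentially along the fixed smooth reference interface $\Gamma_t$. Since tangential derivatives $\nabla_\tau$ are smooth first-order operators on $\Gamma(3\delta)$ and commute with the parabolic operator modulo lower-order commutator terms (whose coefficients depend smoothly on $\Gamma_t$ and $\ve_0$), applying $\nabla_\tau$ yields a linear system of exactly the form \eqref{eq:Linear1}-\eqref{eq:Linear7} for $(\nabla_\tau \ve_m, \nabla_\tau p_m, \nabla_\tau h_m)$ with right-hand sides involving only derivatives of $(\ve_m, p_m, h_m)$ already controlled in $\mathbb{E}_m(T_0)$ by the previous steps. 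Theorem \ref{thm:LinSystem} then delivers uniform bounds for these tangential derivatives in the appropriate maximal-regularity spaces.

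The last step is to promote these bounds to pointwise estimates. From $\nabla_\tau \ve_m \in W^1_q(0,T_0; L^q(\Omega)) \cap L^q(0,T_0; W^2_q(\Omega\setminus \Gamma^m_t))$ (uniformly in $m$), the parabolic trace embedding yields $\nabla_\tau \ve_m \in C^0([0,T_0]; W^{2-2/q}_q(\Omega\setminus \Gamma^m_t))$; for $q > d+2$ this embeds into $C^0([0,T_0]; C^{1,\alpha}(\Omega\setminus \Gamma^m_t))$, giving uniform control of $\nabla \nabla_\tau \ve_m$ in $L^\infty$. Writing $\nabla_\tau \nabla \ve_m = \nabla \nabla_\tau \ve_m - [\nabla, \nabla_\tau]\ve_m$, where the commutator is a smooth bounded operator on $\Gamma(\delta)$ with image controlled by $\nabla \ve_m \in L^\infty$ (again by $q > d+5$), one obtains the desired estimate. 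The main obstacle, namely tracking the $m$-dependence carefully through the tangentially differentiated system so that the right-hand sides remain bounded as $m \to 0$, is resolved by using the $m$-independent parts of the estimate \eqref{eq:UnifEstim} and by absorbing the $m$-singular terms $m\|h\|_{L^q(W^{4-1/q}_q)}$ into the good bounds provided by the $m$-weighted norm structure of $\mathbb{E}_m(T_0)$.
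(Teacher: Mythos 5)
Your overall structure---existence via Theorem~\ref{th_approx_m_dependence}, graph regularity via the $m$-weighted bounds in $\mathbb{E}_{4,m}$, and tangential regularity of $\ve_m$ via some form of tangential differentiation---is aligned with the paper's strategy, and your treatment of the first two parts is essentially sound. However, there is a genuine gap in your third step.

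You claim that ``applying $\nabla_\tau$ yields a linear system of exactly the form \eqref{eq:Linear1}-\eqref{eq:Linear7} for $(\nabla_\tau \ve_m, \nabla_\tau p_m, \nabla_\tau h_m)$ with right-hand sides involving only derivatives of $(\ve_m,p_m,h_m)$ already controlled in $\mathbb{E}_m(T_0)$,'' the commutator coefficients ``depending smoothly on $\Gamma_t$ and $\ve_0$.'' This is incorrect on two counts. First, the transformed system \eqref{eq:TransNS1}-\eqref{eq:TransNS6} is quasilinear: the nonlinearities contain, e.g., $M_4(h):\nabla^2\ve$ and $K(h)=H_{\Gamma^h_t}\circ\theta^h_t$, so the coefficients depend on the unknowns $h,\ve$, not only on the data $\Gamma_t,\ve_0$. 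Second, and decisively, differentiating tangentially produces right-hand-side terms like $M_4(h):\nabla_\tau\nabla^2\ve$ and $\mathcal{C}_0(h):\nabla_\tau\nabla^2_{\Gamma_t}h$, which involve exactly the highest-order derivatives of $\nabla_\tau\ve$ and $\nabla_\tau h$ that you are trying to bound. These are \emph{not} ``controlled by the previous steps''; one would have to absorb them into the principal part, and even then there is a foundational issue: $\nabla_\tau\ve_m$, $\nabla_\tau h_m$ are not a priori in the solution class $\mathbb{E}_m(T_0)$, so you cannot simply differentiate the PDE and read off an identity in $\mathbb{F}(T_0)$. This is precisely the obstruction that the paper's parameter trick is designed to remove: instead of differentiating the PDE, one composes the fixed-point map $G_m(w)=w-\mathcal{L}_m^{-1}\mathcal{N}_m(w)$ with truncated tangential shifts $\boldsymbol\tau_\xi$, notes that $D_w\mathcal{G}_m(0,w_m)=I-\mathcal{L}_m^{-1}D\mathcal{N}_m(w_m)$ is invertible with uniformly bounded inverse, and applies the implicit function theorem in $\xi$ to obtain that $\xi\mapsto T_\xi w_m$ is differentiable with $\partial_{\xi_j}(T_\xi w_m)|_{\xi=0}\in\mathbb{E}_m(T_0)$ uniformly in $m$. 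Only after this does one interpret $\partial_{\xi_j}$ as the tangential derivative. Your subsequent bootstrap to $L^\infty$ bounds on $\nabla_\tau\nabla\ve_m$ is fine once the parameter-trick step is in place, but as written the argument for uniform tangential regularity does not close.
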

  \begin{proof} We use the so-called parameter-trick to obtain that the tangential derivative of the solution belongs locally to the same function space as the solution itself (together with uniform bounds). To this end we follow the arguments in \cite[Section~9.4.2]{PruessSimonettMovingInterfaces} with modifications to our time dependent reference manifold $\Gamma_t$.
    Let $(t_0,x_0)\in\Gamma$ be arbitrary and $\tilde{X}\colon \overline{\Omega}\times J_{t_0} \to \overline{\Omega}$ with  $\tilde{X}(x,t)= X_t(X^{-1}_{t_0}(x))$ for all $x\in\Omega$, $t\in J_{t_0}:= [\max(t_0-\delta,0), \min(T_0,t_0+\delta)]$ as in Section~\ref{subsec:ProofLinSystem}. Moreover, let $\varphi\colon B_{3R}(0)\subseteq \R^{d-1}\to \R^d$ be a local parametrization of $\Gamma_{t_0}$ with $\varphi(0)= x_0$. Furthermore, for $t\in J_{t_0}$ let $\phi_t \colon B_{3R}(0) \times (-2\delta_0, 2\delta_0)\to \R^d$ be defined by
    \begin{equation*}
      \phi_t(s,\rho)= \tilde{X}(\varphi(s),t)+ \rho\mathbf{n}_{\Gamma_t}\quad \text{for all }(s,\rho)\in B_{3R}(0)\times (-2\delta_0, 2\delta_0).
    \end{equation*}
    Then $P_{\Gamma_t}(\phi_t(s,\rho))= \tilde{X}(\varphi(s),t)$. We define the truncated shift
    $\tau_\xi \colon J_{t_0}\times B_{3R}(0)\times (-2\delta_0, 2\delta_0)\to B_{3R}(0)\times (-2\delta_0, 2\delta_0)$
    \begin{equation*}
      \tau_\xi (t,s,\rho)= (s+\xi\eta(t)\chi_0(s)\zeta_0(\rho),\rho)\quad \text{for all }(t,s,\rho)\in J_{t_0}\times B_{3R}(0)\times (-2\delta_0, 2\delta_0),
    \end{equation*}
    where $\xi\in B_r(0)\subseteq \R^{d-1}$, $0<r\leq R$, $\chi_0\in C^\infty_0(\R^{d-1})$ with $0\leq \chi_0\leq 1$, $\supp \chi_0\subseteq B_{2R}(0)$ and $\chi_0(s)=1$ if $|s|\leq R$, $\zeta_0\in C_0^\infty(\R)$ with $\supp \zeta_0 \subseteq [-5\delta/2, 5\delta/2]$ and $\zeta_0(\rho)=1$ if $|\rho|\leq 2\delta_0$, $\eta\in C_0^\infty(\R)$ with $\eta(t)=0$ if $t\in [t_0-\delta/2,t_0+\delta/2]$ and $\supp \eta \subseteq (t_0-\delta, t_0+\delta)$. Finally, we define
    \begin{equation*}
      \boldsymbol\tau_\xi(t,x)= \phi_t(\tau_\xi(t,\phi_t^{-1}(x)))\quad\text{for all } (t,x)\in U:=\bigcup_{t\in J_{t_0}}\{t\}\times U_t,\ U_t:= \phi_t\left(B_{3R}(0)\times (-\tfrac{3\delta_0}2, \tfrac{3\delta_0}2)\right).
    \end{equation*}
    and
$
\boldsymbol\tau_\xi (t,x)= (t,x)
$
for all $t\in J_{t_0}$, $x\in \Omega\setminus U_t$ and $t\in [0,T]\setminus J_{t_0}$, $x\in\Omega$.
Note that $P_{\Gamma_t}(\boldsymbol\tau_\xi(t,x))= \tau_\xi(t,P_{\Gamma_t}(x))$ for all $x\in U_t$, $t\in J_{t_0}$ and therefore
\begin{align*}
      h\circ (\operatorname{id},P_{\Gamma_\cdot})\circ \boldsymbol\tau_\xi(t,x)= h(t, P_{\Gamma_t} (\boldsymbol\tau_{\xi}(t,x)))= (h(t,\cdot)\circ\tau_\xi)\circ (\operatorname{id},P_{\Gamma_\cdot})(t,x)
\end{align*}
for all $(t,x)\in U$ and thus
\begin{align*}
  \Theta_h(\boldsymbol\tau_\xi(t,x),t)= \Theta_{h\circ(\operatorname{id}, \tau_\xi)} (x,t)\quad \text{for all }(t,x)\in U
\end{align*}
since $d_{\Gamma_t}\circ \boldsymbol\tau_\xi(t,\cdot)= d_{\Gamma_t}$ on $U_t$ and $\chi\circ (d_{\Gamma_t}/\delta)=0$ on $\Gamma_t(2\delta)\setminus U_t$ for all $t\in J_{t_0}$.

Altogether we observe that for $r>0$ sufficiently small $\boldsymbol\tau_\xi(t,\cdot)\colon \overline{\Omega}\to \overline{\Omega}$ is a smooth diffeomorphism, which depends smoothly on $\xi \in B_r(0)$ and $t\in [0,T]$.  We denote by $T_\xi\colon \mathbb{E}_m(T)\to \mathbb{E}_m(T)$ the operator obtained by pointwise composition with $\boldsymbol\tau_\xi$. Let $G_m\colon \overline{B_R(0)}\subseteq \mathbb{E}_m(T)\to \overline{B_R(0)}$ with $G_m(w):=w-\mathcal{L}_m^{-1}(\mathcal{N}_m w)$ and $w_m$ be as in \eqref{eq:FixedPoint}. Then
\begin{equation*}
  0= T_\xi G_m (w_m)= T_\xi G_m(T_\xi^{-1}T_\xi w_m)\qquad \text{for all }\xi\in B_r(0).
\end{equation*}
Therefore we define $\mathcal{G}_m\colon B_r(0)\times B_R(0)\subseteq \R^{d-1}\times \mathbb{E}_m(T)\to \mathbb{E}_m$ by
\begin{equation*}
  \mathcal{G}_m(\xi, w):= T_\xi G_m(T_\xi^{-1} w).
\end{equation*}
Then as in \cite[Section~9.4.2]{PruessSimonettMovingInterfaces} one observes that $\mathcal{G}_m$ is continuously differentiable and $\partial_{\xi_j} \mathcal{G}_m (0,w_m)\in \mathbb{E}_m(T)$ is bounded with respect to $m\in (0,1]$. 
Furthermore $w_{m,\xi}:= T_\xi w_m$ solves
\begin{equation*}
\mathcal{G}_m(\xi, w_{m,\xi})=0\qquad \text{for all }\xi\in B_r(\xi).   
\end{equation*}
In particular, $\mathcal{G}_m(0, w_m)=0$ and
\begin{equation*}
  D_w\mathcal{G}_m(0, w_m) = DG_m(w_m)=I-\mathcal{L}_m^{-1}D\mathcal{N}_m(w_m)\colon \mathbb{E}_m(T)\to \mathbb{E}_m(T)
\end{equation*}
is invertible since $\|\mathcal{L}_m^{-1}D\mathcal{N}_m(w_m)\|_{\mathcal{L}(\mathbb{E}_m(T))}\leq \frac12$ due to Proposition~\ref{prop:contraction} for $m\in (0,m_1]$ with $m_1>0$ sufficiently small as before. Furthermore,
$\|DG_m(w_m)^{-1}\|_{\mathcal{L}(\mathbb{E}_m(T))}\leq 2 $
uniformly in $m\in (0,m_1]$. Now the Implicit Function Theorem yields that, if $r>0$ is sufficiently small, there are some $r'>0$ and continuously differentiable $\Phi_m\colon B_r(0)\to B_{r'}(w_m)\subseteq \mathbb{E}_m(T)$ such that $B_{r'}(w_m)\subseteq B_R(0)$ and
\begin{enumerate}
\item $\mathcal{G}_m(\xi, \Phi_m(\xi))=0$ for all $\xi\in B_r(0)$.
\item If $\mathcal{G}_m(\xi, u)=0$ for some $u\in B_R(0)\subseteq \mathbb{E}_m(T)$ and $\xi\in B_r(0)$, then $u= \Phi_m(\xi)$.
\end{enumerate}
Hence $\Phi_m(\xi)= w_{m,\xi}$ for all $\xi\in B_r(0)$.
To obtain uniform boundedness of $\partial_{\xi_j} w_{m,\xi}|_{\xi=0}$ we use that 
\begin{equation*}
  \partial_{\xi_j} w_{m,\xi}|_{\xi=0} = - D_w \mathcal{G}_m(0,w_m)^{-1} \partial_{\xi_j} \mathcal{G}_m(0, w_m)= - D G_m(w_m)^{-1} \partial_{\xi_j} \mathcal{G}_m(0, w_m),
\end{equation*}
where $\partial_{\xi_j} \mathcal{G}_m(0, w_m)\in \mathbb{E}_m(T)$ and $D_w G_m(w_m)^{-1}$ are uniformly bounded in $m\in (0,m_1]$ by the same observations before. Since $\partial_{\xi_j} u\circ \tau_\xi|_{\xi=0} = \partial_{\boldsymbol{\tau}_j} u$ in a neighborhood of $x_0$, where $\boldsymbol{\tau}_j(x)=\partial_{p_j} \tilde{X}(\varphi(p),t)$, $j=1,\ldots,d-1$, (with $x=\phi_t(p,q)$) form a basis of $T_x\Gamma_t$, the statement of the theorem follows. 
\end{proof}

\section*{Acknowledgements}
J.~Fischer and M.~Moser have received funding from the European Research Council (ERC) under the European Union's Horizon 2020 research and innovation programme (grant agreement No 948819)\smash{\includegraphics[scale=0.03]{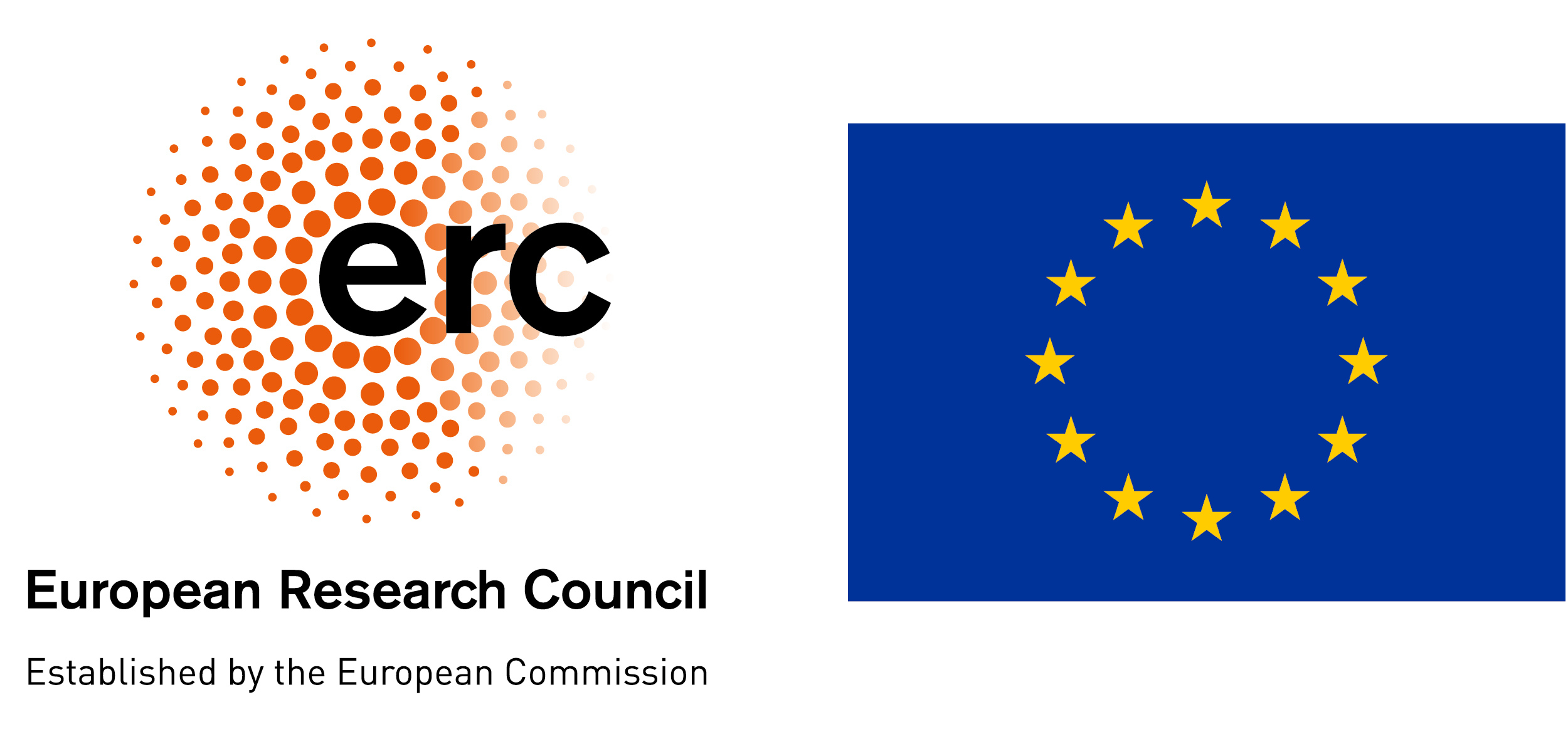}}.




\begin{thebibliography}{10}

\bibitem{GeneralTwoPhaseFlow}
H.~Abels.
\newblock On generalized solutions of two-phase flows for viscous
  incompressible fluids.
\newblock {\em Interfaces Free Bound.}, 9:31--65, 2007.

\bibitem{SILConvectiveAC}
H.~Abels.
\newblock ({N}on-)convergence of solutions of the convective {A}llen-{C}ahn
  equation.
\newblock {\em Partial Differ. Equ. Appl.}, 3(1):Paper No. 1, 11, 2022.

\bibitem{AbelsFei}
H.~Abels and M.~Fei.
\newblock Sharp interface limit for a {N}avier-{S}tokes/{A}llen-{C}ahn system
  with different viscosities.
\newblock {\em SIAM J. Math. Anal.}, 55(4):4039--4088, 2023.

\bibitem{AbelsFeiMoser}
H.~Abels, M.~Fei, and M.~Moser.
\newblock Sharp interface limit for a {N}avier-{S}tokes/{A}llen-{C}ahn system
  in the case of a vanishing mobility.
\newblock {{\em Calc. Var.}, 63(94), 2024.}

\bibitem{AGG}
H.~Abels, H.~Garcke, and G.~Gr\"{u}n.
\newblock Thermodynamically consistent, frame indifferent diffuse interface
  models for incompressible two-phase flows with different densities.
\newblock {\em Math. Models Methods Appl. Sci.}, 22(3):1150013, 40, 2012.

\bibitem{AbelsLengeler}
H.~Abels and D.~Lengeler.
\newblock On sharp interface limits for diffuse interface models for two-phase
  flows.
\newblock {\em Interfaces Free Bound.}, 16(3):395--418, 2014.

\bibitem{AbelsLiuNSAC}
H.~Abels and Y.~Liu.
\newblock Sharp interface limit for a {S}tokes/{A}llen-{C}ahn system.
\newblock {\em Arch. Ration. Mech. Anal.}, 229(1):417--502, 2018.
{\bibitem{AbelsMoserNSMF}
H.~Abels and M.~Moser.
\newblock Well-posedness of a {N}avier-{S}tokes/mean curvature flow system.
\newblock In {\em Mathematical analysis in fluid mechanics---selected recent
  results}, volume 710 of {\em Contemp. Math.}, pages 1--23. Amer. Math. Soc.,
  [Providence], RI, [2018] \copyright 2018.
\bibitem{AcerbiFusco}
E.~Acerbi and N.~Fusco.
\newblock Semicontinuity problems in the calculus of variations.
\newblock {\em Arch. Rational Mech. Anal.}, 86(2):125--145, 1984.}

\bibitem{Ambrosio2000a}
L.~Ambrosio, N.~Fusco, and D.~Pallara.
\newblock {\em Functions of Bounded Variation and Free Discontinuity Problems
  (Oxford Mathematical Monographs)}.
\newblock Oxford University Press, 2000.

\bibitem{BreitDieningGmeinederLipschitzBV}
D.~Breit, L.~Diening, and F.~Gmeineder.
\newblock The {L}ipschitz truncation of functions of bounded variation.
\newblock {\em Indiana Univ. Math. J.}, 70(6):2237--2260, 2021.

\bibitem{Chen}
X.~Chen.
\newblock Generation and propagation of interfaces for reaction-diffusion
  equations.
\newblock {\em J. Differential Equations}, 96(1):116--141, 1992.

\bibitem{DeMottoniSchatzman}
P.~de~Mottoni and M.~Schatzman.
\newblock Geometrical evolution of developed interfaces.
\newblock {\em Trans. Amer. Math. Soc.}, 347(5):1533--1589, 1995.

\bibitem{DenisovaTwoPhase}
I.~V. Denisova and V.~A. Solonnikov.
\newblock Solvability in {H}\"older spaces of a model initial-boundary value
  problem generated by a problem on the motion of two fluids.
\newblock {\em Zap. Nauchn. Sem. Leningrad. Otdel. Mat. Inst. Steklov. (LOMI)},
  188(Kraev. Zadachi Mat. Fiz. i Smezh. Voprosy Teor. Funktsii. 22):5--44, 186,
  1991.
{
\bibitem{EvansGariepy}
L.~C. Evans and R.~F. Gariepy.
\newblock {\em Measure theory and fine properties of functions}.
\newblock Textbooks in Mathematics. CRC Press, Boca Raton, FL, revised edition,
  2015.}

\bibitem{FischerHensel}
J.~Fischer and S.~Hensel.
\newblock Weak-strong uniqueness for the {N}avier-{S}tokes equation for two
  fluids with surface tension.
\newblock {\em Arch. Ration. Mech. Anal.}, 236:967--1087, 2020.

\bibitem{FischerLauxSimon}
J.~Fischer, T.~Laux, and T.~M. Simon.
\newblock Convergence rates of the {A}llen-{C}ahn equation to mean curvature
  flow: a short proof based on relative entropies.
\newblock {\em SIAM J. Math. Anal.}, 52(6):6222--6233, 2020.

\bibitem{FischerMarveggio}
J.~Fischer and A.~Marveggio.
\newblock Quantitative convergence of the vectorial {A}llen-{C}ahn equation
  towards multiphase mean curvature flow.
\newblock {\em Ann. Inst. H. Poincar{\'{e}} C Anal. Non Lin{\'{e}}aire}, 2024.
\newblock published online first.

\bibitem{GalGrasselliDCDS}
C.~G. Gal and M.~Grasselli.
\newblock Longtime behavior for a model of homogeneous incompressible two-phase
  flows.
\newblock {\em Discrete Contin. Dyn. Syst.}, 28(1):1--39, 2010.

\bibitem{GiorginiGrasselliWuJFA}
A.~Giorgini, M.~Grasselli, and H.~Wu.
\newblock On the mass-conserving {A}llen-{C}ahn approximation for
  incompressible binary fluids.
\newblock {\em J. Funct. Anal.}, 283(9):Paper No. 109631, 86, 2022.

\bibitem{GurtinTwoPhase}
M.~E. Gurtin, D.~Polignone, and J.~Vi{\~n}als.
\newblock Two-phase binary fluids and immiscible fluids described by an order
  parameter.
\newblock {\em Math. Models Methods Appl. Sci.}, 6(6):815--831, 1996.

\bibitem{HenselLiu}
S.~Hensel and Y.~Liu.
\newblock The sharp interface limit of a {N}avier--{S}tokes/{A}llen--{C}ahn
  system with constant mobility: Convergence rates by a relative energy
  approach.
\newblock {\em SIAM J. Math. Anal.}, 55(5):4751--4787, 2023.

\bibitem{HenselMoser}
S.~Hensel and M.~Moser.
\newblock Convergence rates for the {A}llen-{C}ahn equation with boundary
  contact energy: the non-perturbative regime.
\newblock {\em Calc. Var.}, 61(201):61 pp., 2022.

\bibitem{HohenbergHalperin}
P.~Hohenberg and B.~Halperin.
\newblock Theory of dynamic critical phenomena.
\newblock {\em Rev. Mod. Phys.}, 49:435--479, 1977.

\bibitem{VariableDensityNSAC}
J.~Jiang, Y.~Li, and C.~Liu.
\newblock Two-phase incompressible flows with variable density: an energetic
  variational approach.
\newblock {\em Discrete Contin. Dyn. Syst.}, 37(6):3243--3284, 2017.

\bibitem{JiangEtAlArXiv}
S.~Jiang, X.~Su, and F.~Xie.
\newblock Sharp interface limit for inhomogeneous incompressible
  {N}avier-{S}tokes/{A}llen-{C}ahn system in a bounded domain via a relative
  energy method.
\newblock {\em Preprint, arXiv:2305.09989}, 2023.

\bibitem{KoehnePruessWilkeTwoPhase}
M.~K{\"o}hne, J.~Pr{\"u}ss, and M.~Wilke.
\newblock Qualitative behaviour of solutions for the two-phase
  {N}avier-{S}tokes equations with surface tension.
\newblock {\em Math. Ann.}, 356(2):737--792, 2013.

\bibitem{KohnePruessWilkeTwoPhase}
M.~K\"{o}hne, J.~Pr\"{u}ss, and M.~Wilke.
\newblock Qualitative behaviour of solutions for the two-phase
  {N}avier-{S}tokes equations with surface tension.
\newblock {\em Math. Ann.}, 356(2):737--792, 2013.

\bibitem{LauxLiu}
T.~Laux and Y.~Liu.
\newblock Nematic--isotropic phase transition in liquid crystals: A variational
  derivation of effective geometric motions.
\newblock {\em Arch. Ration. Mech. Anal.}, 241(3):1785--1814, 2021.

\bibitem{LauxStinsonUllrich}
T.~Laux, K.~Stinson, and C.~Ullrich.
\newblock Diffuse-interface approximation and weak-strong uniqueness of
  anisotropic mean curvature flow.
\newblock {\em Preprint}, 2022.
\newblock arXiv:2212.11939.

\bibitem{LiuShenModelH}
C.~Liu and J.~Shen.
\newblock A phase field model for the mixture of two incompressible fluids and
  its approximation by a {F}ourier-spectral method.
\newblock {\em Phys. D}, 179(3-4):211--228, 2003.

\bibitem{Liu2}
Y.~Liu.
\newblock Phase transition of anisotropic ginzburg--landau equation.
\newblock {\em Preprint}, 2021.
\newblock 2111.15061.

\bibitem{Liu1}
Y.~Liu.
\newblock Phase transition of parabolic ginzburg--landau equation with
  potentials of high-dimensional wells.
\newblock {\em Preprint}, 2022.
\newblock arXiv:2207.12912.

\bibitem{PruessSimonettTwoPhaseNSt}
J.~Pr{\"u}ss and G.~Simonett.
\newblock On the two-phase {N}avier-{S}tokes equations with surface tension.
\newblock {\em Interfaces Free Bound.}, 12(3):311--345, 2010.

\bibitem{AnalyticityTwoPhaseFlow}
J.~Pr\"{u}ss and G.~Simonett.
\newblock Analytic solutions for the two-phase {N}avier-{S}tokes equations with
  surface tension and gravity.
\newblock In {\em Parabolic problems}, volume~80 of {\em Progr. Nonlinear
  Differential Equations Appl.}, pages 507--540. Birkh\"{a}user/Springer Basel
  AG, Basel, 2011.

\bibitem{PruessSimonettMovingInterfaces}
J.~Pr\"{u}ss and G.~Simonett.
\newblock {\em Moving interfaces and quasilinear parabolic evolution
  equations}, volume 105 of {\em Monographs in Mathematics}.
\newblock Birkh\"{a}user/Springer, [Cham], 2016.

\bibitem{SteinHarmonicAnalysis}
E.~M. Stein.
\newblock {\em Harmonic analysis: real-variable methods, orthogonality, and
  oscillatory integrals}, volume~43 of {\em Princeton Mathematical Series}.
\newblock Princeton University Press, Princeton, NJ, 1993.
\newblock With the assistance of Timothy S. Murphy, Monographs in Harmonic
  Analysis, III.

\end{thebibliography}

\bigskip

\noindent
\emph{ (H. Abels) Fakult\"at f\"ur Mathematik,
  Universit\"at Regensburg,
  D-93040 Regensburg}\\
\emph{ E-mail address: {\sf helmut.abels@mathematik.uni-regensburg.de} }\\[1ex]
\emph{
(J. Fischer) Institute of Science and Technology Austria, Am Campus 1, AT-3400 Klosterneuburg}\\
\emph{ E-mail address: {\sf julian.fischer@ist.ac.at} }\\[1ex]
\emph{
(M. Moser) Institute of Science and Technology Austria, Am Campus 1, AT-3400 Klosterneuburg}\\
\emph{ E-mail address: {\sf maximilian.moser@ist.ac.at} }\\[1ex]

\end{document}